\documentclass[11pt, a4paper]{amsart}

\usepackage[utf8]{inputenc}
\usepackage{graphicx}
\usepackage{amsopn}
\usepackage{amsthm}
\usepackage{amsmath}
\usepackage{amsfonts}
\usepackage[alphabetic]{amsrefs}
\usepackage{amssymb}
\usepackage{tikz}
\usepackage{caption}
\usepackage{subcaption}
\usepackage{enumerate}
\usepackage[shortlabels]{enumitem}
\usepackage[title]{appendix}
\usepackage{todonotes}
\usepackage{tikz}
\usepackage{tikz-cd}
\usepackage{hyperref}

\textwidth 6.1in
\evensidemargin .2in
\oddsidemargin .2in

\newcommand{\SU}{\mathrm{SU(2)}}
\newcommand{\U}{\mathrm{U(1)}}
\newcommand{\su}{\mathrm{su(2)}}
\renewcommand{\Re}{\operatorname{Re}}
\renewcommand{\Im}{\operatorname{Im}}
\newcommand{\tr}{\operatorname{tr}}
\newcommand{\Hom}{\operatorname{Hom}}
\newcommand{\abs}[1]{\vert #1\vert}
\newcommand{\ol}[1]{\overline{#1}}
\newcommand{\R}{\mathbb{R}}
\newcommand{\Z}{\mathbb{Z}}

\newcommand{\Q}{\mathbb{Q}}
\newcommand{\inv}{^{-1}}

\renewcommand{\P}{\widetilde{P}} %
\renewcommand{\Pr}{\mathcal{P}} %
\newcommand{\SV}{\mathcal{S}} %
\newcommand{\NS}{\mathcal{N}} %
\newcommand{\MS}{\mathcal{M}} %
\newcommand{\CSS}{\mathcal{C}} %
\newcommand{\I}{\mathcal{I}} %
\newcommand{\B}{\mathcal{B}} %
\newcommand{\RR}{R} %
\newcommand{\Ro}{\widetilde{R}} %
\newcommand{\F}{\Phi} %
\newcommand{\pcs}{w} %
\newcommand{\Pert}{D}
\renewcommand{\t}{{\Pert_t}}
\newcommand{\SOne}{S_1}
\newcommand{\STwo}{S_2}
\newcommand{\SThree}{S_3}
\newcommand{\POne}{P_1}
\newcommand{\PTwo}{P_2}
\newcommand{\PThree}{P_3}
\renewcommand{\H}{\mathcal{H}}
\newcommand{\A}{\mathcal{A}}
\newcommand{\J}{\mathcal{J}}
\newcommand{\K}{\mathcal{K}}
\newcommand{\grho}{}%
\newcommand{\twedge}{\sqcup}
\newcommand{\Sols}{\mathcal{V}}
\newcommand{\bmu}{{\boldsymbol{\mu}}}
\newcommand{\pis}{{\pi_1\cup\pi_2}}
\newcommand{\pt}{\text{a point}} %

\newcommand{\bd}{^\text{bd}}
\newcommand{\nbd}{^\text{nbd}}
\newcommand{\bmat}[1]{\begin{bmatrix}#1\end{bmatrix}}
\newcommand{\ang}[2]{\angle #1 #2}
\newcommand{\rang}[3]{\angle_{#3} #1 #2}
\DeclareMathOperator{\rank}{rank}
\DeclareMathOperator{\sinc}{sinc}
\DeclareMathOperator{\sign}{sign}
\DeclareMathOperator{\gr}{gr}
\DeclareMathOperator{\Kh}{Khr}
\DeclareMathOperator{\CS}{CS}
\DeclareMathOperator{\Stab}{Stab}
\newcommand{\wStab}{\widetilde{\Stab}}
\newcommand{\Sk}{S^1_{\k^\perp}}

\newcommand{\fcirc}[1]{\tikz\draw[#1,fill=#1] (0,0) circle (.5ex);}

\renewcommand{\i}{\textbf{i}}
\renewcommand{\j}{\textbf{j}}
\renewcommand{\k}{\textbf{k}}

\numberwithin{equation}{subsection}
\newtheorem{theorem}{Theorem}[section]
\newtheorem*{theorem*}{Theorem}
\newtheorem{lemma}[theorem]{Lemma}
\newtheorem{corollary}[theorem]{Corollary}
\newtheorem*{corollary*}{Corollary}
\newtheorem{conjecture}[theorem]{Conjecture}
\newtheorem{proposition}[theorem]{Proposition}

\newenvironment{mythm}[1]
  {\innercustomthm}
  {\endinnercustomthm}
\newenvironment{idea}[1]
  {\innercustomthmb}
  {\endinnercustomthm}
  \newenvironment{myconjecture}[1]
  {\innercustomthmc}
  {\endinnercustomthm}
\theoremstyle{definition}
\newtheorem{exmp}[theorem]{Example}
\newtheorem*{remark}{Remark}
\newtheorem{definition}[theorem]{Definition}

\usepackage[alphabetic]{amsrefs}

\title{Perturbed Traceless SU(2) Character Varieties of Tangle Sums}
\author{Kai Smith}
\email{kaismith@amss.ac.cn}

\begin{document}

\begin{abstract}

If a link $L$ can be decomposed into the union of two tangles $T\cup_{S^2} S$ along a 2-sphere intersecting $L$ in 4 points, then the intersections of perturbed traceless $\SU$ character varieties of tangles in a space called the pillowcase form a set of generators for Kronheimer and Mrowka's reduced singular instanton homology, $I^\natural$. It is conjectured by Cazassus, Herald, Kirk, and Kotelskiy that with the addition of bounding cochains, the differential of $I^\natural$ can be recovered from these Lagrangians as well.
This article gives a method to compute the perturbed character variety for a large class of tangles using cut-and-paste methods. In particular, given two tangles, $T$ and $S$, Conway defines the tangle sum $T+S$. Given the character varieties of $T$ and $S$, we show how to construct the perturbed character variety of $T+S$. This is done by first studying the perturbed character variety of a certain tangle $C_3$ properly embedded in $S^3$ with 3 balls removed. Using these results, we prove a nontriviality result for the bounding cochains in the conjecture of Cazassus, Herald, Kirk, and Kotelskiy.

\end{abstract}

\maketitle

\tableofcontents

\section{Introduction}

Given a link $L$ in a 3-manifold $X$, one can define an associated space, $\RR(X, L)$, the traceless $\SU$ character variety. For certain perturbation data, one can also define a perturbed traceless $\SU$ character variety, $\RR_\pi(X, L)$. One of the main reasons why the perturbed character variety is studied is its relation to instanton gauge theory \cites{Taubes, Herald, KMinst, KM}. In particular, there is a slight modification of the character variety, $\RR^\natural_\pi(X, L)$, whose points naturally correspond to generators of Kronheimer and Mrowka's reduced singular instanton homology $I^\natural(L)$. By decomposing the link $L$ into tangles and understanding the perturbed traceless $\SU$ character varieties of these tangles, it is conjectured that the differentials of $CI^\natural$ can be computed as well and therefore $I^\natural$ could be recovered by Lagrangian Floer methods \cites{WW, PCI, PCII, Earring}. This is a case of the Atiyah-Floer conjecture which posits that every instanton gauge theory has an isomorphic Lagrangian Floer theory \cite{AtiyahFloer}. The strategy put forth in \cites{PCI, PCII, Earring} is particularly appealing because the Lagrangian Floer homology is computed between two curves in a 2-dimensional symplectic manifold, $P$, called the pillowcase. As a result, the usual symplectic methods used to count differentials can be replaced by simpler methods from differential topology.

Traceless $\SU$ character varieties have been computed for several classes of tangles including rational tangles and specific families of tangles that come from certain decompositions of torus knots and pretzel knots \cites{PCI, PCII, FKP}.
One strategy for computing character varieties for a wider range of tangles is to use cut-and-paste operations to decompose the tangles into simpler pieces and combine the character varieties of each piece to get the character variety of the original tangle. The simplest such gluing operation for tangles is provided by Conway \cite{Conway} in the form of the \emph{tangle sum}, shown in Figure \ref{fig:TangleSum}. In particular, given two tangles, $T_1$ and $T_2$, the tangle sum $T_1+T_2$ is the result of gluing $T_1$ and $T_2$ into the tangle shown in Figure \ref{fig:C3Intro}, which we label $C_3$. The first goal of this article is to compute, given the perturbed traceless character varieties $\RR_{\pi_1}(T_1)$ and $\RR_{\pi_2}(T_2)$, the character variety $\RR_{\pi_1\cup\pi_2}(T_1+T_2)$. In addition, we compute the image of this character variety in the pillowcase.

For a traceless $\SU$ representation $\rho$, let $\widetilde{Stab}(\rho) := (\Stab(\rho), \Stab(\rho|_{\partial}))$. Composing the natural maps $p:\RR_{\pi_i}(T_i)\to P$ with a projection onto one of the coordinates of the pillowcase gives maps $\RR_{\pi_i}(T_i)\to [0,\pi]$. Let $\RR_{\pi_1}(T_1)\times_{[0,\pi]} \RR_{\pi_2}(T_2)$ be the fiber product of these maps.

\begin{mythm}{\ref{thm:notnice}}
There is a surjective map $\phi: \RR_{\pi_1\cup\pi_2}(T_1+T_2) \to \RR_{\pi_1}(T_1)\times_{[0,\pi]} \RR_{\pi_2}(T_2)$ and the fiber over $(\rho_1, \rho_2)$ is homeomorphic to:

\begin{itemize}
    \item $S^1$ if $\gamma(\rho_1) \in \{0,\pi\}$ and $\wStab(\rho_1) = \wStab(\rho_2) = (\Z/2,\Z/2)$. Parameterize the fiber by $[\rho_1+\rho_2]_\psi$ for $\psi\in S^1$. Then \[p([\rho_1+\rho_2]_\psi) = (\gamma(\rho_1),\arccos(\cos\theta(\rho_1)\cos\theta(\rho_2) +\sin\theta(\rho_1)\sin\theta(\rho_2)\cos\psi)).\] 
    
    \item $S^1$ if $\wStab(\rho_i) = (\Z/2,\U)$ and $\wStab(\rho_j)$  is  $(\Z/2,\U)$ or $(\Z/2, \Z/2)$. For any $\rho$ in this fiber over $(\rho_1,\rho_2)$, \[p(\rho) = (\gamma(\rho_1), \theta(\rho_1)+\theta(\rho_2)).\]
    
    \item $\pt$ otherwise. If $\rho$ is the unique representation in this fiber over $(\rho_1,\rho_2)$, then \[p(\rho) = (\gamma(\rho_1), \theta(\rho_1)+\theta(\rho_2)).\]
\end{itemize}
\end{mythm}

\begin{figure}
    \centering
    \includegraphics[height=1.2in]{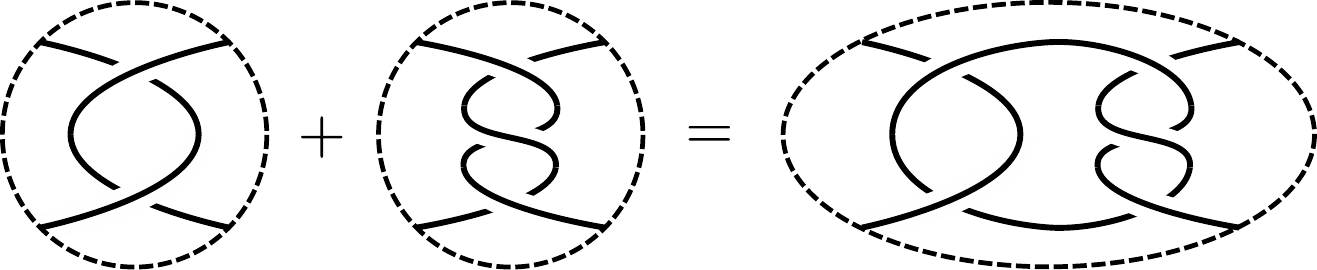}
    \caption{The tangles $T$, $S$ and the tangle sum $T+S$.}
    \label{fig:TangleSum}
\end{figure}

\begin{figure}
    \centering
    \begin{subfigure}[b]{0.3\textwidth}
        \centering
        \includegraphics[width=\textwidth]{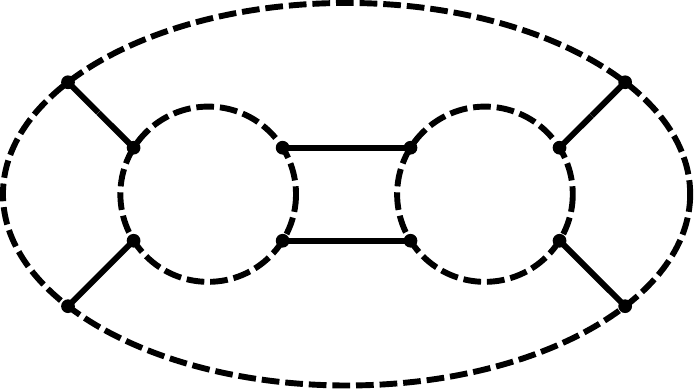}
        \caption{}
        \label{fig:C3Intro}
    \end{subfigure}
    \hspace{1cm}
    \begin{subfigure}[b]{0.3\textwidth}
        \centering
        \includegraphics[width=\textwidth]{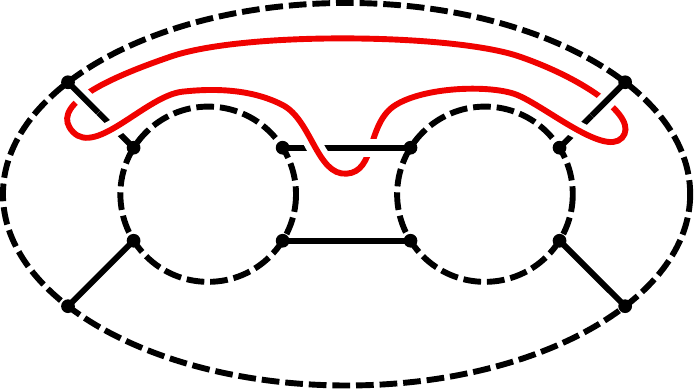}
        \caption{}
        \label{fig:C3PertIntro}
    \end{subfigure}
    \caption{A diagram for the tangle $C_3$. On the right is $C_3$ along with curve $\Pert$ which defines a perturbation of $\RR(C_3)$.}
\end{figure}

Even if the perturbations $\pi_1$ and $\pi_2$ are nontrivial, in almost all cases, the resulting character variety $\RR_{\pi_1\cup\pi_2}(T+S)$ is not a manifold. The next goal is to apply suitable perturbations so that the perturbed character variety is manifold and maps into the pillowcase as a Lagrangian.

The first step in this process is to show that arbitrarily small perturbations $\pi_1$ and $\pi_2$ can be applied to the tangles $T$ and $S$ respectively such that the character variety $\RR(T+S)$ obtained from Theorem \ref{thm:notnice} has a particularly simple structure.

\begin{mythm}{\ref{thm:gpsum}}
    Let $T$ and $S$ be tangles. Then there exists arbitrarily small perturbation data $\pi_1$ in $T$ and $\pi_2$ in $S$ such that $\RR_{\pi_1\cup \pi_2}(T+S)$ has the following structure:

    There exists $N'\subset \RR_{\pi_1\cup \pi_2}(T+S)$ which is a finite set of points such that $\RR_{\pi_1\cup \pi_2}\setminus N'$ is a 1-manifold with boundary. For every point $x\in N'$, there is a neighborhood of $x$ in $\RR_{\pi_1\cup \pi_2}(T+S)$ that is homeomorphic to a cone on four points. The points in $N'$ come in pairs such that if $x$ and $y$ are points in $N'$ which form a pair, then there exists a circle embedded in $\RR_{\pi_1\cup \pi_2}$ containing $x$ and $y$ and no other points from $N'$. The image of this circle in the pillowcase lies entirely within the edges $\{(\gamma, \theta)\in P \mid \gamma \in \{0,\pi\}, \theta \in(0,\pi)\}$. Note that this image does not intersect any corner of the pillowcase. One of the points $x$ or $y$ will attain the maximum $\theta$-coordinate of this image while the other will attain the minimum $\theta$-coordinate.
\end{mythm}

\begin{figure}
    \centering
    \begin{subfigure}[b]{0.25\textwidth}
        \centering
        \includegraphics[width=\textwidth, trim={0 .1cm 0 .1cm}, clip]{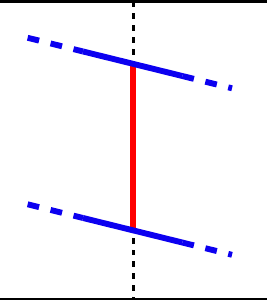}
        \caption{}
        \label{fig:Typical}
    \end{subfigure}
    \hspace{1.5cm}
    \begin{subfigure}[b]{0.25\textwidth}
        \centering
        \includegraphics[width=\textwidth, trim={0 .1cm 0 .1cm}, clip]{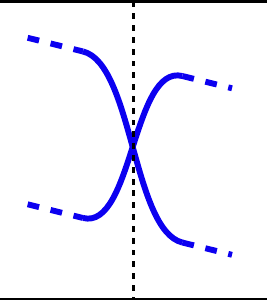}
        \caption{}
        \label{fig:TypicalResolve}
    \end{subfigure}
    \caption{Theorem \ref{thm:gpsum} tells us that a perturbed character variety $\RR_\pi(T+S)$ is a 1-manifold except for a collection of subspaces which map to the pillowcase to look like (A). Theorem \ref{thm:cross} says that by applying an additional perturbation $\pi'$, the character variety $\RR_{\pi+\pi'}(T+S)$ can be computed up to regular homotopy by replacing the regions that look like (A) by regions that look like (B).}
\end{figure}

Figure \ref{fig:Typical} depicts the neighborhood of one of these circles after being mapped to the pillowcase.
We identify a specific perturbation $D$ on the tangle $C_3$ shown in Figure \ref{fig:C3PertIntro} and study the structure of the perturbed character variety $\RR_D(C_3)$. Then we can define a perturbation $D'$ arbitrarily close to $D$ such that $\RR_{D'\cup \pi_1\cup\pi_2}(T_1+T_2)$ is a manifold. The regular homotopy type of the image of this character variety is identified, which in turn gives identifies the manifold $\RR_{D'\cup \pi_1\cup\pi_2}(T_1+T_2)$ itself.

\begin{idea}{\ref{thm:cross}}

For tangles $T_1$ and $T_2$, take perturbations $\pi_1$ and $\pi_2$ such that the conclusion of Theorem \ref{thm:gpsum} holds. Then for each circle described in the theorem, take a neighborhood of the circle in the pillowcase (which looks like Figure \ref{fig:Typical}), remove it and replace it by Figure \ref{fig:TypicalResolve}. It is possible that there are additional circle components, but as the perturbation goes to zero, each must shrink to a point. The result of this procedure is regularly homotopic to the image of $\RR_{D'\cup \pi_1\cup\pi_2}(T_1+T_2)$ in the pillowcase.

\end{idea}

We apply these results to a program initiated by Hedden, Herald, and Kirk which aims to define a link invariant by decomposing the link into two tangles and computing the Lagrangian Floer homology of their traceless $\SU$ character varieties in the pillowcase \cites{PCI, PCII, FKP, Earring}. The Lagrangian Floer homology itself is not a link invariant, but a conjecture of Cazassus, Herald, Kirk, and Kotelskiy posits that by considering extra data in the form of \emph{bounding cochains} an invariant can be recovered. Furthermore, this invariant is conjectured to be an Atiyah-Floer counterpart to $I^\natural$.

\begin{myconjecture}{\ref{con:boundingcc}}[\cite{Earring}*{Conjecture D}]
There exists an assignment that associates to every 2-tangle $T$ and its holonomy perturbed traceless character variety $\RR_\pi(T)\looparrowright P^*$ a bounding cochain $b\in CF(\RR_\pi(T), \RR_\pi(T))$, satisfying

\begin{itemize}
    \item $(R_\pi(T), b)$ is a well defined tangle invariant, as an object of the wrapped Fukaya category of $P^*$;
    \item This assignment of bounding cochains extends to tangles modified by the earring, which results in a tangle invariant $(\RR^\natural_\pi(T), b)$;
    \item Given a decomposition of a link $(S^3, L) = (D^3, T_1)\cup_{(S^2, 4)}(D^3, T_2)$, the corresponding Lagrangian Floer homology recovers the reduced singular instanton homology: \[HF((\RR_\pi(T_1), b_1), (\RR^\natural_\pi(T_2), b_2))\cong I^\natural(L).\]
\end{itemize}
\end{myconjecture}

\begin{figure}
    \centering
    \begin{subfigure}[b]{0.35\textwidth}
        \centering
        \includegraphics[width=\textwidth]{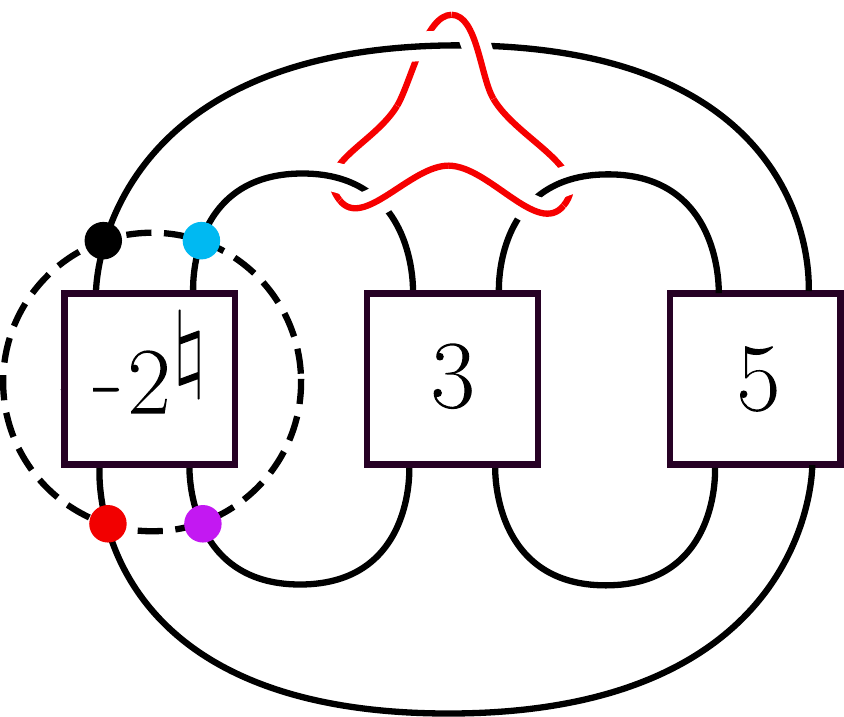}
        \caption{}
        \label{fig:ExDecomp}
    \end{subfigure}
    \hspace{1cm}
    \begin{subfigure}[b]{0.4\textwidth}
        \centering
        \includegraphics[height=2.2in]{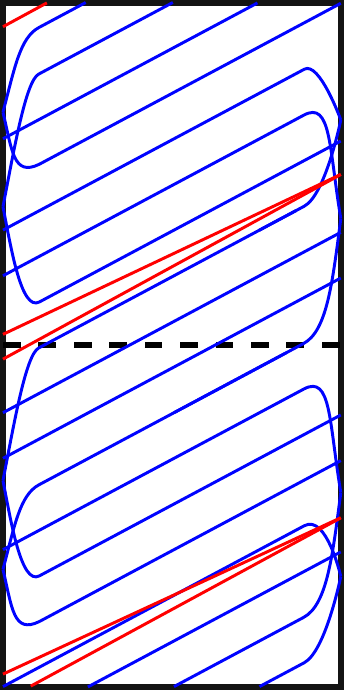}
        \caption{}
        \label{fig:comp}
    \end{subfigure}
    \caption{A decomposition of $P(-2,3,5)$ and the corresponding immersed Lagrangians of pillowcase.}
    \label{fig:compf}
\end{figure}

There is an example in \cite{Earring} which shows that there exist tangles $T$ such that $b^\natural_T\in CF(\RR^\natural_\pi(T), \RR^\natural_\pi(T))$ is nontrivial. In Section 11.1 the authors speculate that there should exist tangles $T$ such that $b_T\in CF(\RR_\pi(T), \RR_\pi(T))$ is nontrivial. To confirm this, we examine the pretzel knot $P(-2,3,5)$, decomposed into two links as shown in Figure \ref{fig:ExDecomp}. Using Theorems \ref{thm:notnice} and \ref{thm:cross}, the corresponding Lagrangians in the pillowcase can be computed, which is shown in Figure \ref{fig:comp}. By calculating the Lagrangian Floer homology of this pair (without bounding cochains) and comparing the result to $I^\natural$, a tangle $T$ is identified for which $b_T$ must be nontrivial.

\begin{mythm}{\ref{thm:main}}
If Conjecture \ref{con:boundingcc} holds, then there exist tangles $T$ such that $b_T$ is nontrivial. 
\end{mythm}

\subsection*{Acknowledgments}
The author would like to thank Paul Kirk for suggesting the initial problem which grew into this article and for the countless hours of helpful discussion. Additionally, the author would like to thank Chris Herald, Mark Ronnenberg, and Zuyi Zhang for their constructive comments.

\section{Background}
\label{sec:bg}
\subsection{The Lie Group \texorpdfstring{$\SU$}{SU(2)}}

$\SU$ is the Lie group of complex valued 2 by 2 matrices with determinant 1. Any such matrix can be written in the form $A = \bmat{a+b\i&c+d\i\\-c+d\i&a-b\i}$. The map $\phi:\SU\to \mathbb{H}$ given by $A\mapsto a+b\i+c\j+d\k$ is a Lie group isomorphism from $\SU$ to the unit quaternions. In this context, it is easy to see that $\SU$ is homeomorphic to $S^3$ embedded in the space of all quaternions, which can be identified with $\R^4$. 

For a matrix $A\in\SU$, the trace is given by $\tr(A) = 2a = 2\Re(\phi(A))$. Conjugating by an element of $\SU$ preserves the trace. Furthermore, this action is transitive on subsets with the same trace, and so the conjugacy classes of $\SU$ are completely determined by the trace. For $x\in \SU$, let $C(x)$ denote the conjugacy class of $x$. One conjugacy class of particular interest is $C(\i)$, the \textit{traceless} unit quaternions. Taking $\{\i,\j,\k\}$ as the basis for $\R^3$,$C(\i)$ is precisely the unit 2-sphere. Note that for $x\in C(\i)$, $x\inv = -x$. Going forward, $\ol{x}$ will be used as more compact notation for $x\inv$.

\begin{definition}
\label{def:coeq}
Call a set of points in $C(\i)$ \textit{coequatorial} if all of the points in the set lie on a single great circle in $C(\i)$.
\end{definition}

Since conjugation preserves trace, $C(\i)$ is preserved by conjugation by $x\in\SU$. More explicitly, if $Q\in C(\i)$ and $t\in \R$, conjugation by $x = e^{tQ}$ acts on $C(\i)$ by a rotation of angle $2t$ about the axis through $Q$.

\begin{definition}
Let $\Sk$ denote the great circle of $C(\i)$ which is perpendicular to $\k$, \[\{e^{\theta\k}\i \in \SU\mid \theta\in S^1\}.\] 
\end{definition}

\begin{definition}
\label{def:ang}
Given $x, y\in \SU$ let the \emph{angle between $x$ and $y$} be the angle of the corresponding vectors in $\R^4$, $\arccos(\mathbf{x}\cdot\mathbf{y})\in[0,\pi]$. Denote the angle between $x$ and $y$ by $\ang{x}{y}$.
\end{definition}

\begin{definition}
If $x$, $y$, and $z$ lie on the same great circle of $C(\i)$, define the \emph{angle of $x$ and $y$ relative to $z$} to be 
\[\rang{x}{y}{z} := \begin{cases}\ang{x}{y}& \text{if }\ang{x}{z}+\ang{z}{y} = \ang{x}{y} \text{ or }\ang{x}{z}-\ang{z}{y} = \ang{x}{y}\\
2\pi - \ang{x}{y} & \text{otherwise}\end{cases}\]
\end{definition}

Intuitively, for any two points $x$ and $y$ on a circle such that $y\notin\{\pm x\}$, there are two different angles between $x$ and $y$ depending on which path one takes around the circle to get from $x$ to $y$. By Definition \ref{def:ang}, $\ang{x}{y}$ is always the smaller of these two angles. But sometimes we will want to specify which of these two angles we are talking about, which is what the relative angle allows us to do. Given a third point on the circle $z$ which is also not in $\{\pm x\}$, taking the shortest path from $x$ to $z$ defines an orientation on the circle. Then $\rang{x}{y}{z}$ is defined to be the angle spanned by the path from $x$ to $y$ on the circle following this orientation.
If $y$ or $z$ is in $\{\pm x\}$, then $\rang{x}{y}{z}$ is defined to be $\ang{x}{y}$.

\begin{lemma}
\label{lem:relang}
Assume $x$, $y$, and $z$ lie on the same great circle in $C(\i)$. Then there exists $w\in\SU$ such that $wx\ol{w} = \i$, $wz\ol{w} = e^{\gamma\k}\i$ for $\gamma\in[0,\pi]$, and $wy\ol{w} = e^{\theta\k}\i$ for $\theta\in[0,2\pi)$. If $\sin(\gamma)=0$, then $w$ can be chosen to ensure that $\theta\in[0,\pi]$. 
Then $\ang{x}{z} = \gamma$ and $\rang{x}{y}{z} = \theta$.
\end{lemma}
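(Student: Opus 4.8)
The plan is to produce $w$ explicitly as a composition of rotations and then read off the two angle identities by elementary quaternion arithmetic, treating the degenerate configuration $z\in\{\pm x\}$ separately at the end.

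Conjugation by $\SU$ acts on $C(\i)\cong S^2$ by rotations (as recalled above), in particular transitively, so I would first choose $w_1$ with $w_1x\ol{w_1}=\i$. Then $w_1$ carries the great circle through $x,y,z$ to some great circle $C'$ through $\i$. Since $\Stab(\i)=\{e^{s\i}\}$ acts on $C(\i)$ by rotations about the $\i$-axis, it acts transitively on the great circles through $\i$; so there is $u=e^{s\i}$ with $uC'=\Sk$, and $w_2:=uw_1$ still carries $x$ to $\i$ while now carrying the circle to $\Sk$. Writing $w_2z\ol{w_2}=e^{\alpha\k}\i$ and noting that conjugation by $\i$ fixes $\i$ and acts on $\Sk$ by $e^{\beta\k}\i\mapsto e^{-\beta\k}\i$ (a one-line computation), I may replace $w_2$ by $\i w_2$ if $\sin\alpha<0$; this yields $w$ with $wx\ol w=\i$ and $wz\ol w=e^{\gamma\k}\i$, $\gamma\in[0,\pi]$. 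Set $wy\ol w=e^{\theta\k}\i$ with $\theta\in[0,2\pi)$. If $\sin\gamma=0$, then $wz\ol w=\pm\i$ is also fixed by conjugation by $\i$, so I retain the freedom to replace $w$ by $\i w$, which sends $\theta\mapsto 2\pi-\theta$; I would use this to arrange $\theta\in[0,\pi]$.

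Since conjugation by a unit quaternion is an isometry of $\R^4$ and $e^{\gamma\k}\i=\cos\gamma\,\i+\sin\gamma\,\j$, we get $\ang{x}{z}=\ang{\i}{e^{\gamma\k}\i}=\arccos(\cos\gamma)=\gamma$, using $\gamma\in[0,\pi]$. For the relative angle I would split on whether the configuration is degenerate, noting $\sin\gamma=0\iff wz\ol w=\pm\i\iff z\in\{\pm x\}$. If $z\in\{\pm x\}$ or $y\in\{\pm x\}$, then by definition $\rang{x}{y}{z}=\ang{x}{y}=\arccos(\cos\theta)$, which equals $\theta$ since $\theta\in[0,\pi]$ in this case. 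Otherwise $\gamma\in(0,\pi)$ and $\theta\in(0,\pi)\cup(\pi,2\pi)$, and I would compute $\ang{x}{z}=\gamma$, $\ang{x}{y}=\arccos(\cos\theta)$, and $\ang{z}{y}=\arccos(\cos(\gamma-\theta))$, then check the definition directly: for $\theta\in(0,\pi)$ exactly one of $\ang{x}{z}\pm\ang{z}{y}=\ang{x}{y}$ holds (according to the sign of $\gamma-\theta$), so $\rang{x}{y}{z}=\ang{x}{y}=\theta$; for $\theta\in(\pi,2\pi)$ neither holds (a short case check on whether $\theta-\gamma\le\pi$), so $\rang{x}{y}{z}=2\pi-\ang{x}{y}=\theta$.

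The one genuinely fiddly step is this last case analysis: translating the purely numerical definition of $\rang{x}{y}{z}$ into the position of $y$ on $\Sk$ splits into several sign cases and is where errors are most likely, even though conceptually it merely records that once $\Sk$ is oriented so the short arc from $\i$ to $e^{\gamma\k}\i$ runs in the direction of increasing parameter, $\theta$ is precisely the arc swept from $x$ to $y$ in that direction. Everything else reduces to transitivity of the rotation action and direct quaternion computation.
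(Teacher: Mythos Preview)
Your argument is correct and follows essentially the same route as the paper: construct $w$ via transitivity plus a rotation about $\i$, invoke rotation-invariance of angles to reduce to $\Sk$, and finish with a case analysis verifying that the definition of $\rang{x}{y}{z}$ returns $\theta$. You are more explicit than the paper about how to force $\gamma\in[0,\pi]$ and about the degenerate cases $z\in\{\pm x\}$ or $y\in\{\pm x\}$, and you slice the case analysis by the location of $\theta$ rather than by the signs of $\cos\theta$ and $\cos(\gamma-\theta)$, but these are cosmetic differences.
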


\begin{proof}
Because conjugation is transitive on $C(\i)$, there exists some $w'\in \SU$ such that $w'x\ol{w'} = \i$. Further conjugating by a quaternion of the form $e^{\delta\i}$ can guarantee the hypothesis. So $w = e^{\delta\i}w'e^{-\delta\i}$.

Because angles are invariant under rotation, $\ang{x}{z} = \ang{(wx\ol{w})}{(wz\ol{w})} = \gamma$. The invariance of angles under rotation also implies that 
\begin{align*}
\rang{x}{y}{z} &= \rang{(wx\ol{w})}{(wy\ol{w})}{wz\ol{w}} \\&= \begin{cases}\arccos(\cos(\theta))& \text{if }\gamma\pm\arccos(\cos(\gamma-\theta)) = \arccos(\cos(\theta)) \\
2\pi - \arccos(\cos(\theta)) & \text{otherwise}\end{cases}
\end{align*}

This splits into four cases depending on the signs of $\cos(\theta)$ and $\cos(\gamma-\theta)$. Checking these cases shows that this expression is always equal to $\theta$.
\end{proof}

\subsection{Tangles}

\begin{definition}
A \emph{generalized tangle} is a pair $(X, T)$ where $X$ is a compact manifold with (possibly empty) boundary and $T$ is a codimension-2, properly embedded submanifold with (possibly empty) boundary.
\end{definition}

If $X$ is clear from context, we will often write $T$ as shorthand for $(X,T)$. Additionally, $\partial(X, T)$ is used to denote $(\partial X, \partial T)$.

One important generalized tangle to consider is the case where $X = S^2$ and $T = \{a,b,c,d\}$ is a set of four points. For brevity, we denote this generalized tangle $(S^2, 4)$. Let $\mathbf{S}$ be a specific model for $(S^2, 4)$ embedded in $\R^3$ given by the unit 2-sphere centered at the origin with $a = (-\frac{1}{\sqrt{2}}, \frac{1}{\sqrt{2}},0)$, $b = (-\frac{1}{\sqrt{2}}, -\frac{1}{\sqrt{2}},0)$, $c = (\frac{1}{\sqrt{2}}, \frac{1}{\sqrt{2}},0)$, and $d = (\frac{1}{\sqrt{2}}, -\frac{1}{\sqrt{2}},0)$.

\begin{definition}
\label{def:tangle}
Let $(X, T)$ be a generalized tangle where $X$ is a 3-manifold and $\partial(X, T) = \bigsqcup (S^2, 4)$. Label each boundary component $S^T_i$. Let $\{\psi^T_i\}$ be a collection of homeomorphisms $\psi^T_i:S^T_i\to \mathbf{S}$.
The data $((X, T), \{\psi^T_i\})$ is called a \emph{tangle}.
\end{definition}

\begin{definition}
\label{def:equiv}
Let $(X_1, T_1)$ and $(X_2, T_2)$ be tangles such that $\partial(X_i, T_i) \cong \bigsqcup_{i=1}^n (S^2, 4)$. Let $\phi: (X_1, T_1) \to (X_2, T_2)$ be a diffeomorphism and let $\sigma$ be the permutation on $\{1, 2, \dots, n\}$ satisfying $\psi(S^{T_1}_i) = S^{T_2}_{\sigma(i)}$. Then $\phi$ is called a \emph{tangle equivalence} if $\psi^{T_1}_i$ is isotopic to $\psi^{T_2}_{\sigma(i)}\circ \phi$ relative to $\{a, b, c, d\}$.
Two tangles are called \emph{equivalent} if there exists a tangle equivalence between them.
\end{definition}

Given two tangles $(X_1, T_1)$ and $(X_2, T_2)$, a new tangle can be constructed by gluing $S^{T_1}_i$ to $S^{T_2}_j$, which is given by taking the pushout of the diagram:

\begin{center}
\begin{tikzcd}
\mathbf{S} \ar[r,"(\psi^{T_1}_i)\inv"]\ar[d, "(\psi^{T_2}_j)\inv"']& S^{T_1}_i \ar[r, hookrightarrow] & (X_1, T_1)\\
S^{T_2}_j \ar[d,hookrightarrow]&&\\
(X_2, T_2)&&
\end{tikzcd}
\end{center}

For any diagram of a tangle in this article, the boundary $(S^2,4)$ components are drawn as dashed circles with four colored marked points ($\fcirc{black}$, $\fcirc{red}$, $\fcirc{cyan}$, $\fcirc{violet}$). To visualize the tangle, one should imagine 2-spheres intersecting the page at each such circle with marked points. The diagram implicitly defines a map $\psi: (S^2, 4)\to \mathbf{S}$ for each boundary component (up to homotopy relative to the four marked points) satisfying the following conditions:

\begin{itemize}
    \item The circle in the tangle diagram is mapped to the unit circle $\{(x, y, z)\in \mathbf{S} \mid z=0\}$ such that $\psi(\fcirc{black}) =a$, $\psi(\fcirc{red}) = b$, $\psi(\fcirc{cyan}) = c$, and $\psi(\fcirc{violet}) = d$.

    \item The open hemisphere coming out of the page, toward the reader, is mapped to $\{(x, y, z)\in \mathbf{S} \mid z>0\}$.

    \item The other open hemisphere is mapped to $\{(x, y, z)\in \mathbf{S} \mid z<0\}$.
\end{itemize}

\begin{definition}
If $(X, T)$ is a tangle such that $T\cong S^0\times I$, then $(X, T)$ is called a \emph{2-stranded tangle}.
\end{definition}

\subsection{Traceless \texorpdfstring{$\SU$}{SU(2)} Character Varieties}
\label{sec:charvar}

\begin{definition}
Given a generalized tangle, $(X, T)$, define the \emph{traceless $\SU$ representation variety} of $(X,T)$ to be \[\Ro(X, T):= \{\rho\in \Hom(\pi_1(X\setminus T), \SU)\mid \tr(\rho(m))=0 \text{ for each } m \text{ a meridian of } T\}.\]
\end{definition}

\begin{definition}
The \emph{traceless $\SU$ character variety} of the generalized tangle $(X,T)$ is \[\RR(X,T) := \Ro(X, T)/{\sim}\] %
where $\rho_1\sim\rho_2$ if  there exists an $x\in \SU$ such that $\rho_2(\gamma) = x\rho_1(\gamma)\ol{x}$ for every $\gamma\in \pi_1(X\setminus T)$.
\end{definition}

\begin{remark}
For a topological space $X$, $\Ro(X)$ and $\RR(X)$ are used as shorthand for $\Ro(X,\emptyset)$ and $\RR(X,\emptyset)$ respectively. Since no traceless condition is being imposed in these cases, these are simply known as the \emph{$\SU$ representation variety} of $X$ and the \emph{$\SU$ character variety} of $X$ respectively.
\end{remark}

\begin{remark}
If the generalized tangle $(X, T)$ is equivalent to a disjoint sum $\bigsqcup_i (X_i, T_i)$, then $\Ro(X, T)$ is defined to be the product $\prod_i \Ro(X_i, T_i)$.
\end{remark}

Working directly with conjugacy classes can be tedious. To avoid this issue, one can use a process called gauge fixing which identifies each conjugacy class $[\rho]\in\RR(X, T)$ with a unique representation in $\Ro(X, T)$.
Assume $X$ is simply connected. Then the meridians of $T$ generate $\pi_1(X\setminus T)$. Let $\boldsymbol{\mu} = (\mu_1, \dots, \mu_n)$ be an ordered set of meridians of $T$ which generate $\pi_1(X\setminus T)$.
An element $[\rho]\in \RR(X,T)$ is determined by the image of $\boldsymbol{\mu}$ under a representative $\rho$. Thus, an element of $\RR(X, T)$ is determined by a set of marked points on $C(\i)$, given by the images of each $\mu_i$ under a representative $\rho$.

\begin{definition}
A traceless representation $\rho\in\Ro(X, T)$ is in \emph{standard gauge} with respect to $\boldsymbol{\mu}=(\mu_1, \dots, \mu_n)$ if

\begin{itemize}
    \item $\rho(\mu_1) = \i$;
    \item $\rho(\mu_i) \in \{\pm\i\}$ for $1<i\le n$
\end{itemize}
or if there exists $m\in\Z$, $1<m<n$ such that:
\begin{itemize}
    \item $\rho(\mu_1) = \i$;
    \item $\rho(\mu_i) \in \{\pm\i\}$ for $1<i<m$;
    \item $\rho(\mu_m) = e^{\phi \k}\i$ for $\phi \in (0,\pi)$.
\end{itemize}
\end{definition}

\begin{definition}
Let \[\Ro_{\boldsymbol{\mu}}(X, T) := \{\rho\in\Ro(X, T) \mid \rho \text{ is in standard gauge with respect to } \boldsymbol{\mu}\}\]
\end{definition}

\begin{lemma}
\label{lem:gaugefixing}
If $X$ is simply connected and $T\ne \emptyset$, for any ordering $\boldsymbol{\mu}$, there is a bijection $f:\Ro_{\boldsymbol{\mu}}(X, T)\to \RR(X,T)$.%
\end{lemma}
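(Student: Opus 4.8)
The plan is to produce the bijection $f$ directly by sending a standard-gauge representative to its conjugacy class, and to show this is well defined, surjective, and injective by a gauge-fixing argument. First I would define $f([\rho]$'s would-be inverse from the other side): given $\rho \in \Ro_{\boldsymbol{\mu}}(X,T)$, set $f(\rho) := [\rho]\in\RR(X,T)$; surjectivity is the content of the claim that every class can be put in standard gauge, and injectivity is the claim that two standard-gauge representatives in the same class coincide. Since $X$ is simply connected and $T\neq\emptyset$, the meridians $\mu_1,\dots,\mu_n$ generate $\pi_1(X\setminus T)$, so a representation is determined by the tuple $(\rho(\mu_1),\dots,\rho(\mu_n))\in C(\i)^n$ (each $\rho(\mu_i)$ is traceless), and conjugation acts diagonally on this tuple by the $\SO(3)$-action on $C(\i)$.

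For surjectivity, start with an arbitrary $\rho\in\Ro(X,T)$. Using transitivity of conjugation on $C(\i)$ (noted after Definition \ref{def:coeq}), conjugate so that $\rho(\mu_1)=\i$. The residual stabilizer of $\i$ is the circle $\{e^{\delta\i}\}$, acting on $C(\i)$ by rotation about the $\i$-axis. If every $\rho(\mu_i)$ is equal to $\pm\i$ we are in the first case and done. Otherwise let $m$ be the smallest index with $\rho(\mu_m)\notin\{\pm\i\}$; then $\rho(\mu_m)$ lies on the equatorial $2$-sphere's... more precisely it is a traceless quaternion not equal to $\pm\i$, and a rotation $e^{\delta\i}$ about the $\i$-axis moves it to a unique point of the form $e^{\phi\k}\i$ with $\phi\in(0,\pi)$, since such rotations act simply transitively on the set of traceless quaternions at a fixed (nonzero, non-$\pi$) angle from $\i$. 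This puts $\rho$ in standard gauge, establishing surjectivity of $f$.

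For injectivity, suppose $\rho,\rho'\in\Ro_{\boldsymbol{\mu}}(X,T)$ with $\rho' = x\rho\ol{x}$ for some $x\in\SU$. Then $x$ fixes $\rho(\mu_1)=\i=\rho'(\mu_1)$, so $x=\pm e^{\delta\i}$ for some $\delta$; since $\pm 1$ act trivially by conjugation we may take $x=e^{\delta\i}$, a rotation about the $\i$-axis. If both representatives are of the first type, this rotation must fix each $\rho(\mu_i)\in\{\pm\i\}$, which it does automatically, but it must also send the full tuple to the full tuple; the only way a nontrivial such rotation could fail to be detected is if \emph{every} $\rho(\mu_i)\in\{\pm\i\}$, in which case all representatives with $\rho(\mu_1)=\i$ and $\rho(\mu_i)=\pm\i$ in the same class are literally equal as tuples, so $\rho=\rho'$. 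If instead both are of the second type with the same $m$ (the index $m$ is intrinsic: it is the smallest index whose image is not $\pm\i$, and this is conjugation-invariant), then $e^{\delta\i}$ fixes $e^{\phi\k}\i = \rho(\mu_m)$ with $\phi\in(0,\pi)$, forcing $e^{\delta\i}=\pm1$ and hence $\rho=\rho'$. Finally one must rule out a first-type representative being conjugate to a second-type one, but this is immediate since the number of indices with image outside $\{\pm\i\}$ is a conjugation invariant.

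I expect the main obstacle to be purely bookkeeping rather than conceptual: carefully checking that the case division (first type vs.\ second type, and the value of $m$) is genuinely conjugation-invariant and exhaustive, and handling the degenerate situation where $X\setminus T$ has abelian or otherwise small fundamental group so that all $\rho(\mu_i)$ land in $\{\pm\i\}$. One should also confirm that well-definedness of $f$ as a map \emph{out of} $\Ro_{\boldsymbol{\mu}}(X,T)$ is trivial and that the surjectivity/injectivity statements above are exactly what is needed for $f$ to be a bijection; no smoothness or topology on these spaces is used, so the argument is entirely set-theoretic.
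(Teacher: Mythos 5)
Your proposal is correct and takes essentially the same approach as the paper: define $f(\rho)=[\rho]$, establish surjectivity by conjugating an arbitrary representative to send $\mu_1\mapsto\i$ and then rotating about the $\i$-axis to normalize the first non-$\pm\i$ meridian image, and establish injectivity by noting that the residual $\U$-stabilizer of $\i$ cannot move one standard-gauge representative to a distinct one. The paper compresses the uniqueness step into a single sentence, whereas you spell out the case division; both arguments are the same in substance.
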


\begin{proof}
Let $f(\rho) = [\rho]$. To define $f\inv([\rho])$, start by picking a representative $\rho\in[\rho]$.
Conjugation by $\SU$ acts by rotation on $C(\i)$ and each rotation is achievable by some conjugation. Thus for any representation $\rho$, there is a conjugate representation $\rho'$ such that $\rho'(\mu_1) = \i$. Let $m$ be the lowest number, if it exists, such that $\rho'(\mu_m)\ne \pm\i$. Then $\rho'$ can be conjugated about the $\i$ axis to obtain a new representation $\rho''$ such that $\rho''(\mu_m) = e^{\phi\k}\i$ for $\phi\in(0,\pi)$. It is clear that any further rotation either fixes $\rho''$ or brings it to a new representation which is not in standard gauge, thus $p''$ is uniquely defined.

\end{proof}

\begin{lemma}
$\RR$ is an invariant of tangle equivalence.
\end{lemma}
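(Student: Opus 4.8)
The plan is to reduce the statement to the observation that a tangle equivalence induces an isomorphism between the fundamental groups of the link complements carrying meridians to meridians, and that the character variety is manifestly functorial under such isomorphisms.

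First I would unpack Definition \ref{def:equiv}: a tangle equivalence $\phi:(X_1,T_1)\to(X_2,T_2)$ is in particular a diffeomorphism of pairs, so it restricts to a diffeomorphism $X_1\setminus T_1\to X_2\setminus T_2$. After choosing basepoints and a path between them, this induces an isomorphism $\phi_\ast:\pi_1(X_1\setminus T_1)\to\pi_1(X_2\setminus T_2)$, well defined up to an inner automorphism (if the complements are disconnected I would argue componentwise, using the convention that $\Ro$ of a disjoint sum is a product). Since $X_i$ is a compact manifold, these fundamental groups are finitely generated, so $\Hom(\pi_1(X_i\setminus T_i),\SU)$ carries a natural topology, independent of the chosen generating set, as a subspace of a product of copies of $\SU$, and $\phi_\ast$ induces a homeomorphism of these Hom-spaces because $\phi_\ast$ and $\phi_\ast^{-1}$ carry generators to words in generators.

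The key geometric input is that $\phi_\ast$ takes meridians of $T_1$ to meridians of $T_2$, which I would prove with the tubular neighborhood theorem: $\phi$ carries a tubular neighborhood of $T_1$ diffeomorphically onto one of $T_2$, matching normal disk fibers with normal disk fibers, and a meridian is by definition freely homotopic in the complement to the boundary of such a normal disk, so free homotopy classes of meridians are preserved. Granting this, precomposition $\rho\mapsto\rho\circ\phi_\ast$ is a bijection $\Hom(\pi_1(X_2\setminus T_2),\SU)\to\Hom(\pi_1(X_1\setminus T_1),\SU)$ that preserves the condition $\tr(\rho(m))=0$ for all meridians $m$ (the trace is a class function and meridians go to meridians), hence restricts to a homeomorphism $\Ro(X_2,T_2)\to\Ro(X_1,T_1)$. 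Because this map intertwines the $\SU$-conjugation actions, $(x\rho\ol{x})\circ\phi_\ast = x(\rho\circ\phi_\ast)\ol{x}$, it descends to a homeomorphism $\RR(X_2,T_2)\cong\RR(X_1,T_1)$; the inner-automorphism ambiguity in the choice of $\phi_\ast$ only moves $\rho\circ\phi_\ast$ within its conjugacy class, so the induced map on $\RR$ is canonical.

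I expect the only genuine obstacle to be the meridian-preservation step, and even that is routine once phrased via tubular neighborhoods. It is worth remarking in passing that the boundary-parametrization data of Definition \ref{def:equiv} plays no role in this argument — it is needed to make gluing of tangles well defined, not for the bare character variety — so in fact $\RR$ is already invariant under diffeomorphisms of pairs, with tangle equivalence a stronger hypothesis than necessary here.
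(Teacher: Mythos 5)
Your argument is correct and is precisely the unpacking of what the paper dismisses as ``Clear from Definition \ref{def:equiv}''; the chain diffeomorphism of pairs $\Rightarrow$ isomorphism of fundamental groups carrying meridians to meridians (via tubular neighborhoods) $\Rightarrow$ conjugation-equivariant homeomorphism of $\Ro$ $\Rightarrow$ homeomorphism of $\RR$ is exactly the intended reasoning. Your closing observation is also well taken: $\RR(X,T)$ is in fact invariant under arbitrary diffeomorphisms of pairs, and the boundary-parametrization data in Definition \ref{def:equiv} is needed only to make gluing (and hence the restriction maps to the pillowcase) well defined, not for the bare character variety.
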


\begin{proof}
Clear from Definition \ref{def:equiv}.
\end{proof}

\begin{remark}
If $(Y, S)$ is a submanifold of $(X, T)$, the inclusion $\pi_1(Y\setminus S)\to \pi_1(X\setminus T)$ induces maps $\RR(X, T)\to \RR(Y, S)$ and $\Ro(X, T)\to \Ro(Y, S)$. Denote the image of $\rho$ under either of these maps by $\rho|_{(Y, S)}$.
\end{remark}

\begin{remark}
The definition of standard gauge can be modified to work with tangles $(X, T)$ even when $X$ is not simply connected or $T=\emptyset$, but this is not necessary for the present article.
\end{remark}

\begin{definition}
\label{def:bd}
A representation $\rho\in \RR(X, T)$ is called \textit{binary dihedral} if $\rho$ sends all of the meridians of $T$ to a coequatorial set. 
Let $\RR\bd(X, T) := \{[\rho]\in\RR(X, T)\mid \rho \text{ is binary dihedral}\}$ and $\RR\nbd(X,T) := \RR(X, T)\setminus \RR\bd(X, T)$.
\end{definition}

\subsubsection{Tangent Spaces}

For a conjugacy class $[\rho]\in\RR(X)$, Weil \cite{Weil} shows that the \emph{formal tangent space} at $[\rho]$ is identified with the first cohomology:
\[T_{[\rho]}\RR(X) = H^1(\pi_1(X); \su_{\operatorname{ad}\rho}) = H^1(X; \su_{\operatorname{ad}\rho}).\]

For a generalized tangle, the formal tangent space of $[\rho]\in\RR(X,T)$ is given by the following kernel (see \cite{CHK}):
\[T_{[\rho]}\RR(X) = \ker\left[H^1(X; \mathfrak{su}(2)_{\operatorname{ad}\rho})\to H^1(T; \mathfrak{su}(2)_{\operatorname{ad}\rho})\right]\]

\begin{definition}
\label{def:reg}
A point $[\rho]$ in $\RR(X, T)$ is called \emph{regular} if $\dim T_{[\rho]}\RR(X)$ is locally constant in a neighborhood of $[\rho]$. %
\end{definition}

\subsection{The Pillowcase}
\label{sec:pillowcase}

\begin{figure}
    \centering
    \includegraphics[height=2in]{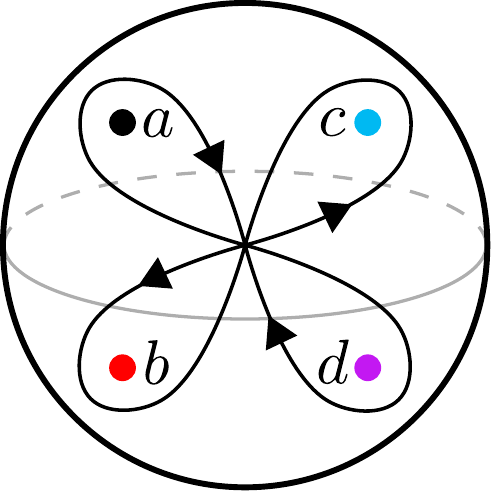}
    \caption{A slightly distorted picture of $\mathbf{S}$ and the loops that generate its fundamental group.}
    \label{fig:4PS}
\end{figure}

A particularly important example is the traceless character variety of $\mathbf{S}\cong (S^2, 4)$. Abusing notation, let $a$, $b$, $c$, and $d$ denote the meridians around the punctures of the same name, oriented as in Figure \ref{fig:4PS}. The fundamental group is generated by $a$, $b$, $c$, and $d$ and there is one relation called the \textit{pillowcase relation}, $a\ol{c}=b\ol{d}$. Thus, $\pi_1(\mathbf{S} \setminus 4) = \{a,b,c,d\mid a\ol{c} = b\ol{d}\}$. 

Let $\rho\in\Ro(\mathbf{S})$. $\rho$ can be conjugated so that $\rho(a) = \i$ and $\rho(b) = e^{\gamma \k}\i$ for $\gamma\in S^1$. Then to see where $c$ can be sent, note that $\rho(d) = \rho(c\ol{a}b) = \rho(c)e^{-\gamma\k}$ must be traceless. Thus, $\rho(c)$ cannot have a real component or a $\k$ component and so $\rho(c) = e^{\theta \k}\i$ for $\theta\in S^1$. Then $\rho(d) = (e^{\theta \k}\i)(-\i) (e^{\gamma \k}\i) =e^{(\gamma+\theta)\k}\i$. The above argument proves the following useful lemma:

\begin{lemma}
\label{lem:boundcond}
For $\rho\in \Ro(\mathbf{S})$, $\{\rho(a), \rho(b), \rho(c), \rho(d)\}$ is a coequatorial set. 
\end{lemma}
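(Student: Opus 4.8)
The statement is essentially the content of the computation sketched in the paragraph immediately preceding it, so the plan is to organize that computation cleanly and, in addition, to handle the one degenerate case that the sketch glosses over. First I would use transitivity of the conjugation action on $C(\i)$ to put $\rho$ in a convenient gauge: conjugate so that $\rho(a) = \i$, and then conjugate further by an element of the form $e^{\delta\i}$ (which rotates about the $\i$-axis and hence fixes $\rho(a)$) so that $\rho(b) = e^{\gamma\k}\i$ for some $\gamma\in S^1$. This is possible because $e^{\delta\i}$-conjugation acts transitively on each circle $\{q\in C(\i)\mid \langle q,\i\rangle = t\}$, and $\Sk = \{e^{\gamma\k}\i\}$ meets every such circle; note $e^{\gamma\k}\i = \cos\gamma\,\i + \sin\gamma\,\j$ lies in the $\i\j$-plane, so after this step $\rho(a)$ and $\rho(b)$ both lie on $\Sk$.

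Next I would exploit the pillowcase relation. Rewriting $a\ol c = b\ol d$ as $d = c\ol a b$ and applying $\rho$ gives $\rho(d) = \rho(c)\,\ol{\i}\,e^{\gamma\k}\i = \rho(c)\,e^{-\gamma\k}$, using the quaternion identity $-\i\,e^{\gamma\k}\i = e^{-\gamma\k}$. Writing the traceless unit quaternion $\rho(c) = x_1\i + x_2\j + x_3\k$ and imposing $\tr\rho(d) = 0$, i.e. $\Re(\rho(c)e^{-\gamma\k}) = 0$, a short quaternion multiplication shows the real part equals $x_3\sin\gamma$. Hence either $\sin\gamma\neq 0$, forcing $x_3 = 0$, or $\sin\gamma = 0$.

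In the generic case $x_3 = 0$ we get $\rho(c) = e^{\theta\k}\i\in\Sk$ for some $\theta$, and then $\rho(d) = e^{\theta\k}\i\,e^{-\gamma\k} = e^{(\gamma+\theta)\k}\i\in\Sk$ as well (using $\i\,e^{-\gamma\k} = e^{\gamma\k}\i$); since $\rho(a)$ and $\rho(b)$ already lie on $\Sk$, all four images lie on the single great circle $\Sk$ and are therefore coequatorial. The only point that needs genuine care — and the only gap in the sketch in the text — is the degenerate branch $\sin\gamma = 0$: there $\rho(b) = \pm\i$ and $\rho(d) = \pm\rho(c)$, so the four images lie in $\{\pm\i\}\cup\{\pm\rho(c)\}$, which is contained in the unique great circle through $\i$ and $\rho(c)$ (or in any great circle through $\i$ if $\rho(c) = \pm\i$), hence again coequatorial. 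I do not expect a serious obstacle: the only things to be vigilant about are the sign conventions in the quaternion arithmetic and not overlooking the $\sin\gamma = 0$ branch, which the weaker "coequatorial" conclusion of this lemma (as opposed to "contained in $\Sk$") is precisely designed to accommodate.
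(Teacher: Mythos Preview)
Your proposal is correct and follows essentially the same approach as the paper: the paper's proof is precisely the paragraph preceding the lemma, and you have reproduced that computation. In fact your treatment is slightly more careful than the paper's, which asserts that $\rho(c)$ cannot have a $\k$-component without noting that this deduction requires $\sin\gamma\neq 0$; your explicit handling of the degenerate branch $\sin\gamma=0$ fills that small gap.
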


The space of representations of the type described above can be parameterized by $(\gamma,\theta)\in\R^2$. Clearly $(\gamma,\theta)$ and $(\gamma+2n\pi,\theta+2m\pi)$ denote the same representation. In addition, $(\gamma,\theta)$ and $(-\gamma,-\theta)$ are related by conjugation by $\i$. It is easy to check that there are no other conjugates.

\begin{lemma}
\label{lem:R2lift}
$\RR(\mathbf{S})$ is a quotient of $\R^2$ by the group of orientation-preserving isometries generated by the maps
\[(\gamma, \theta) \mapsto (\gamma + 2\pi, \theta),~ (\gamma, \theta)\mapsto (\gamma, \theta+2\pi), (\gamma,\theta)\mapsto (-\gamma,-\theta).\]
\end{lemma}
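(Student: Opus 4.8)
The plan is to combine the two explicit descriptions of $\RR(\mathbf{S})$ already at hand: the gauge-fixed parameterization from the discussion preceding Lemma~\ref{lem:boundcond} and the observations about which $(\gamma,\theta)$ represent the same point. First I would recall that every $\rho\in\Ro(\mathbf{S})$ is conjugate to one with $\rho(a)=\i$, $\rho(b)=e^{\gamma\k}\i$, $\rho(c)=e^{\theta\k}\i$, $\rho(d)=e^{(\gamma+\theta)\k}\i$, so the assignment $\R^2\to\RR(\mathbf{S})$, $(\gamma,\theta)\mapsto[\rho_{\gamma,\theta}]$, is surjective. This gives the quotient map; the content of the lemma is to identify exactly the fibers, i.e.\ to show the equivalence relation it induces on $\R^2$ is generated by the three stated isometries.

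Next I would verify that each of the three maps does preserve the point in $\RR(\mathbf{S})$. The translations $(\gamma,\theta)\mapsto(\gamma+2\pi,\theta)$ and $(\gamma,\theta)\mapsto(\gamma,\theta+2\pi)$ are immediate since $e^{(\phi+2\pi)\k}=e^{\phi\k}$, so $\rho_{\gamma,\theta}$ is literally unchanged. For $(\gamma,\theta)\mapsto(-\gamma,-\theta)$ I would conjugate by $\i$: since $\i\,e^{\phi\k}\i\,\ol{\i}=\i e^{\phi\k}=e^{-\phi\k}\i$ (using $\i\k=-\k\i$, equivalently $\i e^{\phi\k}=e^{-\phi\k}\i$), conjugation by $\i$ fixes $\rho(a)=\i$ and sends $e^{\phi\k}\i\mapsto \i e^{\phi\k}\i\ol{\i}=e^{-\phi\k}\i$, hence $\rho_{\gamma,\theta}\mapsto\rho_{-\gamma,-\theta}$. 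Thus the map factors through the quotient by the group $G$ generated by these three isometries.

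It then remains to prove injectivity on $\R^2/G$: if $[\rho_{\gamma,\theta}]=[\rho_{\gamma',\theta'}]$ then $(\gamma',\theta')$ and $(\gamma,\theta)$ are in the same $G$-orbit. Here I would take $x\in\SU$ with $x\rho_{\gamma,\theta}(g)\ol{x}=\rho_{\gamma',\theta'}(g)$ for all $g$. Since $\rho_{\gamma,\theta}(a)=\i=\rho_{\gamma',\theta'}(a)$, $x$ commutes with $\i$, so $x=e^{s\i}$ for some $s$ (or $x=\pm\i\cdot e^{s\i}$ in the cases where $\rho(b)=\pm\i$ too, but these are exactly the degenerate orbits accounted for below). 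Conjugation by $e^{s\i}$ rotates the great circle $\Sk=\{e^{\phi\k}\i\}$: one checks $e^{s\i}(e^{\phi\k}\i)e^{-s\i}=e^{(\phi+2s)\k}\i$, so it sends $(\gamma,\theta)\mapsto(\gamma+2s,\theta+2s)$ as marked points on $\Sk$ — but this does \emph{not} preserve $\rho(a)=\i$ unless $2s\in 2\pi\Z$ (since it would move $\i$ as well); wait — here I must be careful: $\i$ itself is the \emph{point} $\rho(a)$, fixed by $e^{s\i}$, while $e^{\phi\k}\i$ is rotated. The correct statement is that $e^{s\i}$ commuting with $\i$ rotates the equatorial circle of $C(\i)$ containing $\i$, fixing $\pm\i$, and acting on $e^{\phi\k}\i$ (which does not lie on that circle for generic $\phi$)... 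Rather than belabor this, the clean approach is: conjugation by $e^{s\i}$ fixes $\i$ and sends $e^{\phi\k}\i\mapsto e^{(\phi-2s)\k}\i$ only when... In fact $e^{s\i}e^{\phi\k}\i e^{-s\i}$: since $e^{\phi\k}\i=\cos\phi\,\i+\sin\phi\,\j\cdot(\text{sign})$ lies in the $\i$-$\j$ plane rotated — the honest computation gives conjugation by $e^{s\i}$ acts as rotation by $2s$ about $\i$ on $C(\i)$, so it fixes $\i$ and rotates the plane spanned by $\j,\k$; writing $e^{\phi\k}\i=(\cos\phi)\i-(\sin\phi)\j$ one gets it mapped to $(\cos\phi)\i-(\sin\phi)(\cos 2s\,\j+\sin 2s\,\k)$, which is traceless but generically not in $\Sk$. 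So for the image to again be in standard form $e^{\phi'\k}\i$ we need $\sin 2s\cdot\sin\phi=0$. Analyzing the cases $\sin 2s=0$ (giving $s\in\frac{\pi}{2}\Z$, i.e.\ $x\in\{\pm 1,\pm\i\}$, which yield the identity and the $(\gamma,\theta)\mapsto(-\gamma,-\theta)$ maps) versus $\sin\phi=0$ for all of $\gamma,\theta,\gamma+\theta$ (the degenerate orbit of $(0,0)$-type points, where extra conjugators appear but still land in the same $G$-orbit) completes the proof.

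The main obstacle is precisely this last injectivity step: controlling the stabilizer-dependent conjugators at the special points where $\rho(b)$, $\rho(c)$, or $\rho(d)\in\{\pm\i\}$, where the gauge is not rigid and one must check that the additional conjugations do not produce identifications outside $G$. I would handle this by splitting into the cases organized by which of $\gamma,\theta,\gamma+\theta$ lie in $\pi\Z$, using Lemma~\ref{lem:gaugefixing}'s standard-gauge bijection as the organizing device — the lemma already pins down a unique standard-gauge representative, so the map $\R^2\to\RR(\mathbf{S})$ is exactly the composite of a parameterization of $\Ro_{\bmu}(\mathbf{S})$ with that bijection, and one reads off the fibers directly. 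Finally I would note all three generators are orientation-preserving isometries of the Euclidean plane (two translations and the rotation by $\pi$ about the origin), so $G$ acts by orientation-preserving isometries, giving the stated form of the quotient.
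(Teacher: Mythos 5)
Your proof proposal is correct and follows essentially the same route as the paper, which simply asserts that ``it is easy to check that there are no other conjugates''; you supply that check by noting any conjugator must centralize $\i$, hence lie in $\{e^{s\i}\}$, and that conjugation by $e^{s\i}$ preserves $\Sk$ only when $\sin 2s = 0$ or the representation is central, with the meandering middle of your argument eventually landing on the correct condition $\sin 2s\cdot\sin\phi = 0$. (Two trivial slips that do not affect the conclusion: $e^{\phi\k}\i = \cos\phi\,\i + \sin\phi\,\j$ since $\k\i=\j$, and the parenthetical ``$x=\pm\i\cdot e^{s\i}$'' is redundant since such elements are already of the form $e^{s'\i}$.)
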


$\RR(\mathbf{S})$ is parameterized by the coordinates $(\gamma, \theta)$ induced by this quotient map. For $\rho\in\RR(\mathbf{S})$, let $\gamma(\rho)$ and $\theta(\rho)$ be the projection onto these coordinates. 
When $\rho\in\Ro(\mathbf{S})$ is in standard gauge, applying Lemma \ref{lem:relang} gives
\begin{gather}
\label{eq:gammaS}\gamma(\rho) = \ang{\rho(a)}{\rho(b)} = \ang{\rho(c)}{\rho(d)}\\
\label{eq:thetaS}\theta(\rho) = \rang{\rho(a)}{\rho(c)}{\rho(b)} = \rang{\rho(b)}{\rho(d)}{\rho(\ol{a})}
\end{gather}
Since these angles are invariant under conjugation of $\rho$ by an element of $\SU$, it is possible to take these as the definition for the coordinates of $\RR(\mathbf{S})$.

\begin{definition}
The space $\RR(\mathbf{S})$ is homeomorphic to a space called the \emph{pillowcase} which is denoted by $P$.
\end{definition}

\begin{figure}
    \centering
    \begin{subfigure}[b]{0.22\textwidth}
        \centering
        \includegraphics[width=\textwidth]{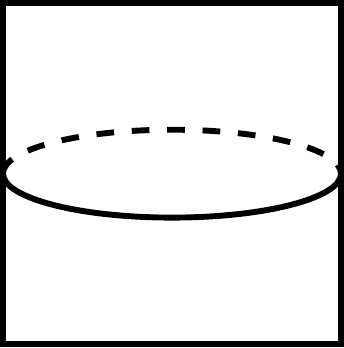}
        \caption{}
        \label{fig:PCase1}
    \end{subfigure}
    \hspace{3cm}
    \begin{subfigure}[b]{0.22\textwidth}
        \centering
        \includegraphics[width=\textwidth]{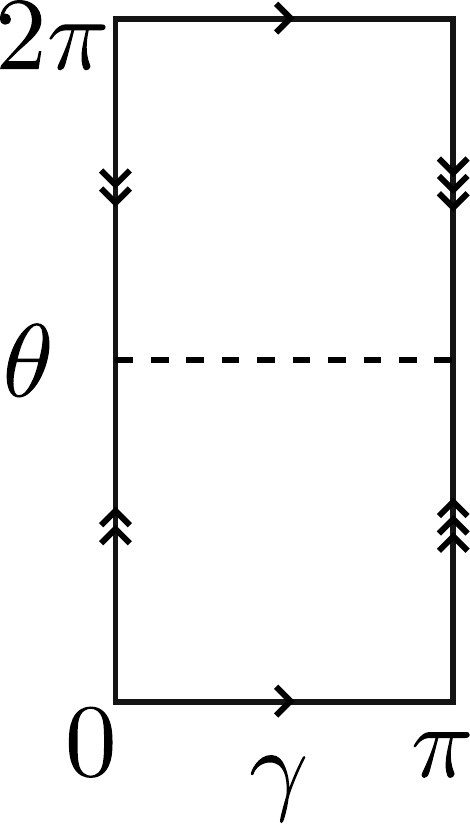}
        \caption{}
        \label{fig:Pcase2}
    \end{subfigure}
    \caption{The pillowcase and its parametrization.}
\end{figure}

A fundamental domain for $\RR(\mathbf{S})$ in $\R^2$ is shown in Figure \ref{fig:Pcase2}. $\RR(\mathbf{S})$ is homeomorphic to a 2-sphere (see Figure \ref{fig:PCase1}). The character variety is regular in the sense of Definition \ref{def:reg} except at the four points $(0,0)$, $(0,\pi)$, $(\pi,0)$, and $(\pi,\pi)$. These points are called the \textit{corners} of the pillowcase and will be denoted by $P^c$. $P^c$ is the 0-dimensional stratum of the pillowcase. Let $P^*$ denote the smooth 2-dimensional stratum of the pillowcase, $P\setminus P^c$. 
$P^*$ has a symplectic structure inherited from the standard symplectic structure $d\gamma\wedge d\theta$ on $\R^2$ via the quotient map.

Now suppose $(X, T)$ is a tangle. Recall that there is a homeomorphism $\psi_i$ from each boundary component $S_i$ to $\mathbf{S}$. Thus, $P_i:=\RR(S_i) \cong P$ and $P_i$ can be parameterized by coordinates $(\gamma_i, \theta_i)$ induced by pulling back the coordinates $(\gamma, \theta)$ on $\RR(\mathbf{S})$ along $\psi_i$.
Let $a_i$, $b_i$, $c_i$, and $d_i$ be the meridians of $S_i$ which map to $a$, $b$, $c$, and $d$ under $\psi$. Then for $\rho\in \RR(S_i)$, Equations \ref{eq:gammaS} and \ref{eq:thetaS} give
\begin{gather}
\label{eq:gamma}\gamma_i(\rho) = \ang{\rho(a_i)}{\rho(b_i)} = \ang{\rho(c_i)}{\rho(d_i)}\\
\label{eq:theta}\theta_i(\rho) = \rang{\rho(a_i)}{\rho(c_i)}{\rho(b_i)} = \rang{\rho(b_i)}{\rho(d_i)}{\rho(\ol{a_i})}
\end{gather}

\begin{definition}
\label{def:pcasecoords}
Let $(X, T)$ be a tangle and let $\rho\in\RR(X, T)$; Then let
\[\gamma_i(\rho):= \gamma_i(\rho|_{S_i})\in[0,\pi],~ \theta_i(\rho) := \theta_i(\rho|_{S_i})\in[0,2\pi]\].

If $\partial(X, T)$ only has one component, $S_1$, then let \[\gamma(\rho):= \gamma_1(\rho|_{S_1})\in[0,\pi],~ \theta(\rho) := \theta_1(\rho|_{S_1})\in[0,2\pi]\]
\end{definition}

\begin{remark}
The conventions chosen for the $\pi_1(\mathbf{S})$ differ from some of the other literature, but are nice since they allow several nice properties:

\begin{itemize}
    \item The character varieties of rational tangles interact nicely with the slope of the tangle (Proposition \ref{prop:rational}).
    \item Tangle addition (in the sense of Conway) corresponds to a natural addition on the pillowcase with this parameterization (Proposition \ref{thm:notniceA}).
\end{itemize}
\end{remark}

\subsection{Holonomy Perturbations}
\label{sec:holpert}

In order to extract information out of character varieties, it is often necessary to perturb them. The types of perturbations considered in this article are called \emph{holonomy perturbations} and are chosen in this context because they agree with the types of perturbations used for instanton gauge theory.

\begin{definition}
\emph{Perturbation data} for the tangle $(X, T)$ is a set of tuples $\{(\Pert'_i, f_i, t_i)\}$ where:
\begin{itemize}
\item $\{\Pert'_i\}$ is a family of pairwise disjoint embedding $S^1\times D^2 \to X\setminus T$;
\item $f_i$ is a smooth, odd, $2\pi$-periodic function;
\item $t_i\in \R$.
\end{itemize}

The resulting perturbed character variety will only depend on the embeddings $\Pert'_i$ up to isotopy class. Thus, it is often convenient to specify the embedding $\Pert'_i$ with an embedding of a circle $\Pert_i: S^1\to X\setminus T$ along with a framing (a nowhere zero choice of normal section).
Each $\Pert_i$ is called a \emph{perturbation curve}. Let $\mu_i$ and $\lambda_i$ be a choice of meridian and longitude of $\Pert_i$ with respect to the specified framing. $f_i$ is called the \emph{perturbation function} for $\Pert_i$ and $t_i$ is called the \emph{perturbation parameter}.
\end{definition}

\begin{definition}
Let $\pi = \{(\Pert_i, f_i, t_i)\}$ be the perturbation data for a tangle $(X, T)$.
Following \cite{Herald}*{Lemma 61}, the \emph{perturbed character variety} $\RR_\pi(X,T)$ is given by the subspace of %
$\RR(X\setminus(\bigcup_i \Pert_i), T)$ satisfying the following \emph{perturbation condition}:

\begin{equation}
\label{eq:holpert}
\text{If } \rho(\lambda_i) = e^{u_iQ_i} \text{ for some } u_i\in \R \text{ and } Q_i\in C(\i) \text{, then }\rho(\mu_i) = e^{t_if_i(u_i)Q_i} 
\end{equation}
\end{definition}

\begin{remark}
For any representation satisfying this condition, $\rho(\mu_i)$ and $\rho(\lambda_i)$ commute.
\end{remark}

Let $\Im: \SU\to \su$ be the projection which sends $w+x\i+y\j+z\k$ to $x\i+y\j+z\k$. Note that $\Im(\rho(\lambda_i)) = \sin(u_i)Q_i$. Therefore, by choosing $f_i = \sin$, the perturbation condition above simplifies to 
\begin{equation}
\label{eq:pert}
\rho(\mu_i) = e^{t\Im(\rho(\lambda_i))}.
\end{equation}
For this article, unless otherwise specified, each $f_i$ is assumed to be $\sin$, and each diagram of a perturbation curve is assumed to have the blackboard framing.

\subsubsection{Shearing Perturbations}
\label{sec:shearing}

\begin{figure}
    \centering
    \begin{subfigure}[b]{0.35\textwidth}
        \centering
        \includegraphics[width=\textwidth]{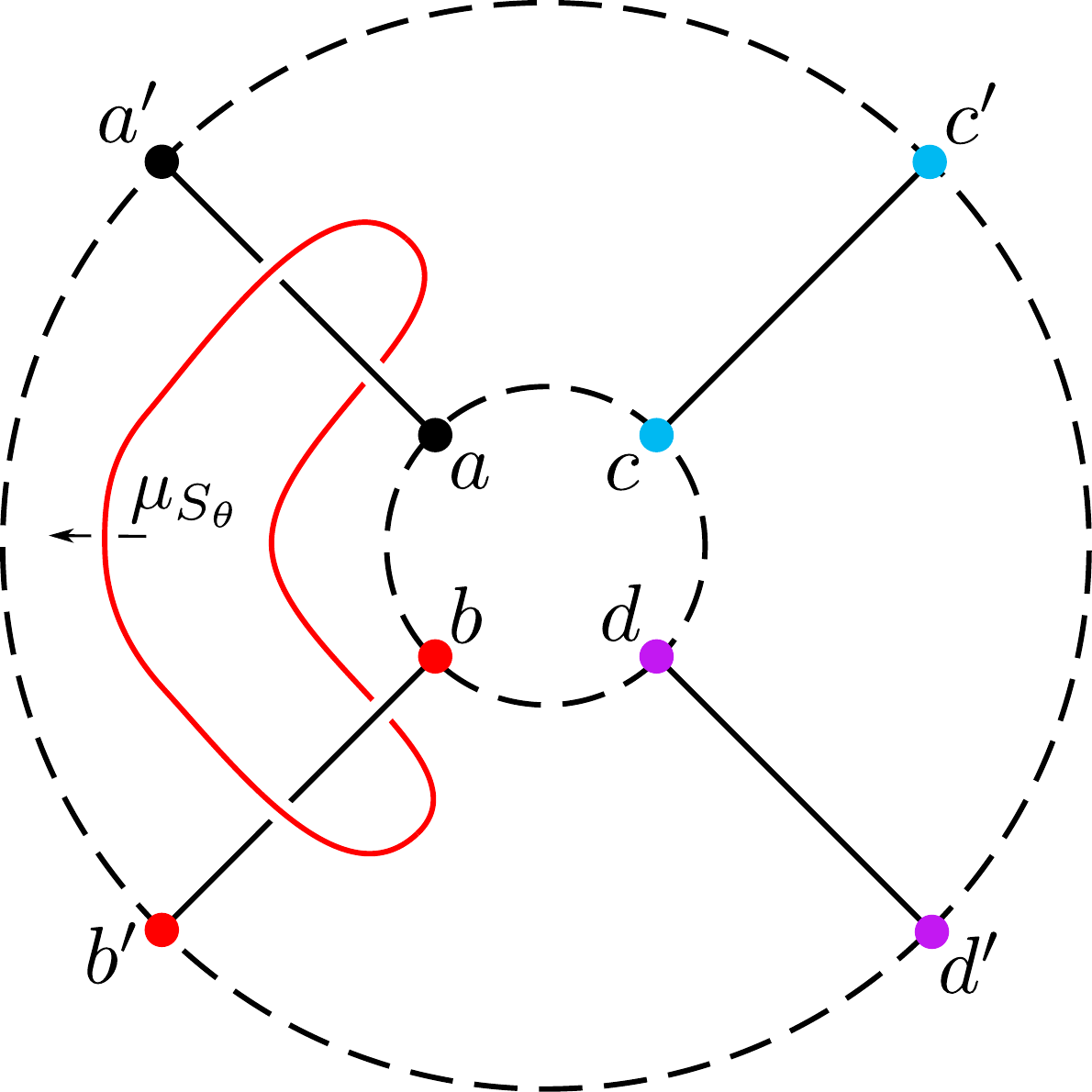}
        \caption{}
        \label{fig:ShearingA}
    \end{subfigure}
    \hspace{2cm}
    \begin{subfigure}[b]{0.35\textwidth}
        \centering
        \includegraphics[width=\textwidth]{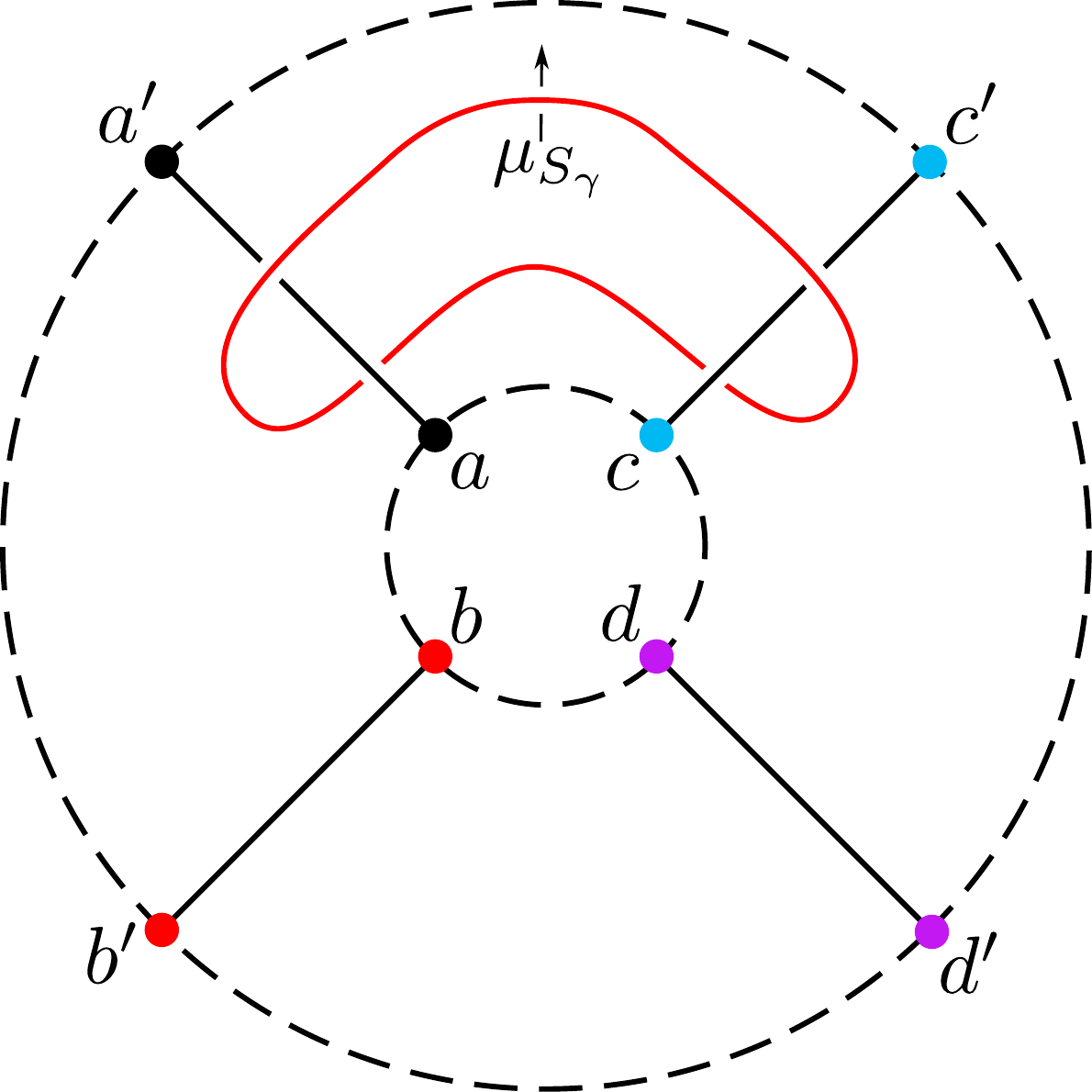}
        \caption{}
        \label{fig:ShearingB}
    \end{subfigure}
    \caption{The perturbation curves which induce the shearing perturbations.}
    \label{fig:Shearing}
\end{figure}

Shearing perturbations are particularly simple and easy to control holonomy perturbations.
Consider the tangle in Figure \ref{fig:ShearingA}, which is $(S^2,4)\times I$ along with the perturbation curve $S_\theta$. \[\rho(\lambda_{S_\theta}) = \rho(\ol{a}b) = e^{-\gamma\k}.\] So the perturbation condition, Equation \ref{eq:holpert}, tells us that \[\rho(\mu_{S_\theta}) = e^{tf(-\gamma)\k}.\] Thus it follows that 
\[\rho(a') = \rho(pa\ol{p}) = e^{2tf(-\gamma)\k}\i\]
\[\rho(b') = \rho(pb\ol{p}) = e^{(2tf(-\gamma)+\gamma)\k}\i.\] Then by Equations \ref{eq:gamma} and \ref{eq:theta}, 
\[\gamma' = \ang{\rho(a')}{\rho(b')} =  \gamma\]
\begin{equation}
\label{eq:sheartheta}
\theta' = \rang{\rho(a')}{\rho(c')}{\rho(b')} = \theta-2tf(\gamma).
\end{equation}
Suppose $(X, T)$ is a tangle such that $\partial(X, T)=(S^2, 4)$. Call the result of gluing $(X, T)$ to the inner $(S^2,4)$ of Figure \ref{fig:ShearingA} \emph{shearing $(X, T)$ in the $\theta$ direction}.

The same calculation shows that for the perturbation curve $S_\gamma$ in Figure \ref{fig:ShearingB}, $\gamma' = \gamma-2tf(\theta)$ and $\theta' = \theta$.
Call the result of gluing $(X, T)$ to the inner $(S^2,4)$ of Figure \ref{fig:ShearingA} \emph{shearing $(X, T)$ in the $\gamma$ direction}.

\subsection{Character Varieties of 2-stranded Tangles}
\label{sec:pcslope}

If $(X, T)$ is a 2-stranded tangle, then $\partial(X, T) = (S^2, 4)$. %
Let $p$ be the map $\RR(X, T)\to \RR(\partial(X, T))\cong P$ and let $\gamma(\rho)$ and $\theta(\rho)$ be the coordinates of $p(\rho)$ as in Definition \ref{def:pcasecoords}.

\begin{lemma}[\cite{FKP}*{Theorem 3.2}]
\label{lem:fkpc}
If $T$ is a 2-stranded tangle in a $\Z/2$ homology 3-ball, then $\RR\bd(T)\cong I \sqcup(\bigsqcup S^1)$. Call the $I$ component of $\RR\bd(T)$ the \textit{arc of binary dihedrals}, or $I\bd$. $p: \RR(T)\to P$ maps $I\bd$ linearly into the pillowcase, meaning its lift from $P$ to $\R^2$ (induced by the quotient map in Lemma \ref{lem:R2lift}) is a line segment.
\end{lemma}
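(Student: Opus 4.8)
The plan is to analyze binary dihedral representations directly by the structure of a coequatorial set and the constraints imposed by the meridian relations coming from $T$ together with the ``$\Z/2$-homology ball'' hypothesis. First I would fix a great circle $S \subset C(\i)$ and recall that conjugation by elements $e^{t\i}$ (after normalizing so that $S = \Sk$) acts on $S$ by rotation while fixing $\i$ and $-\i$. A binary dihedral $\rho$ sends every meridian of $T$ into $S$, so up to conjugation $\rho$ is determined by a point in a torus: one angular parameter $\phi$ for where the distinguished meridian $\mu_1$ goes (say $\rho(\mu_1) = e^{\phi\k}\i$), and the constraints from $\pi_1(X \setminus T)$. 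The $\Z/2$-homology ball hypothesis enters through the fact that $H_1(X\setminus T;\Z/2) \cong \Z/2$ generated by a meridian, so all meridians are $\Z/2$-homologous; this forces a rigid relationship between the images of the meridians in $S$ (they differ by the ``rotation'' part, which lives in an abelian group), and in fact shows the abelianization governs the reducible locus.

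The key steps, in order, would be: (1) Identify $\RR\bd(T)$ with the representation variety of $\pi_1(X\setminus T)$ into the binary dihedral group $N(\U) = \U \cup \U\j \subset \SU$ with the traceless condition, modulo conjugation. Every meridian must land in the ``reflection'' part $C(\i)\cap N(\U) \subset \U\j$ (after conjugating $S$ to be the circle in the $\i,\j$-plane). (2) Pass to the double cover / index-two subgroup picture: a representation to $N(\U)$ with all meridians in the reflection coset is the same as a representation of the orbifold fundamental group, and the $\U$-part is governed by a $\Z/2$-twisted homology group $H^1(X\setminus T; \Z_{w})$ where $w$ is the orientation-type character sending meridians to $-1$. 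Using the $\Z/2$-homology ball hypothesis and a Mayer--Vietoris or half-lives-half-dies argument for the double branched cover $\Sigma_2(T)$, show this twisted homology is $\Z$ (giving a circle's worth of deformations) plus torsion. (3) Deduce that $\RR\bd(T)$ is a union of circles, together with possibly one arc: the arc arises from the ``abelian'' or central representations where one meridian image is allowed to be $\pm\i$ (a fixed point of the rotation), and this $1$-parameter family is the arc $I\bd$; it is an interval rather than a circle because its endpoints are exactly the two binary dihedral reducibles forced by the homology, and varying $\phi \in [0,\pi]$ traces it out. (4) Compute the pillowcase image: for a representation in $I\bd$, write $\rho(a), \rho(b), \rho(c), \rho(d)$ explicitly on $\Sk$ in terms of $\phi$ and the fixed ``rotation amounts'' determined by the linking numbers of the four endpoints with the strands of $T$; since conjugation acts affinely on the angle coordinates and the homology class of each $b_i$ relative to $a_i$ is linear in $\phi$, Equations \ref{eq:gamma} and \ref{eq:theta} give $\gamma$ and $\theta$ as affine functions of $\phi$, hence the lift to $\R^2$ is a line segment.

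The main obstacle I expect is Step (2)/(3): pinning down precisely that the binary dihedral character variety has exactly \emph{one} arc component and that its rank-one deformation space is genuinely an interval rather than a circle. This is where the $\Z/2$-homology ball hypothesis must be used in an essential way — it controls $H_1$ of the double branched cover and hence the number and type of components — and where one must be careful about the difference between the reducible stratum (meridians hitting $\pm\i$) and the truly irreducible binary dihedral locus. Handling the non-compactness/boundary behavior of the arc, i.e.\ showing its two endpoints are the distinguished reducibles and that it does not close up, will require a careful local analysis near those reducible points (essentially a tangent space computation via the twisted cohomology of Definition \ref{def:reg}). Since the statement is quoted from \cite{FKP}*{Theorem 3.2}, I would ultimately lean on that reference for the structural claim and supply only the pillowcase-linearity computation in Step (4) in full detail, as that is the part actually used in the sequel.
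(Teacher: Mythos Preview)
The paper does not prove this lemma: it is stated with a citation to \cite{FKP}*{Theorem 3.2} and no argument is given. So there is nothing in the paper to compare your proposal against. Your final instinct --- to defer the structural claim to the cited reference and at most flesh out the pillowcase-linearity computation --- is precisely what the paper does, and is the right call here. Your outlined strategy (passing to $N(\U)$, reading off the binary dihedral locus from twisted cohomology of the double branched cover, then computing the boundary map affinely in the angular parameter) is a plausible reconstruction of how the cited result is proved, but verifying the details would require consulting \cite{FKP} directly rather than anything in the present paper.
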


\begin{definition}
\label{def:pcs}
The endpoints of the interval $I\bd$ are central representations, and therefore $I\bd$ can be lifted $\R^2$ in a natural way so that one end is sent to $(0,0)$ and the other to $(\gamma',\theta')\in \Z^2$. Then let $\pcs(X, T):= \frac{\theta'}{\gamma'}$. Call this the \textit{pillowcase slope} of $(X,T)$. 
\end{definition}

\begin{figure}
    \centering
    \includegraphics[height=1.5in]{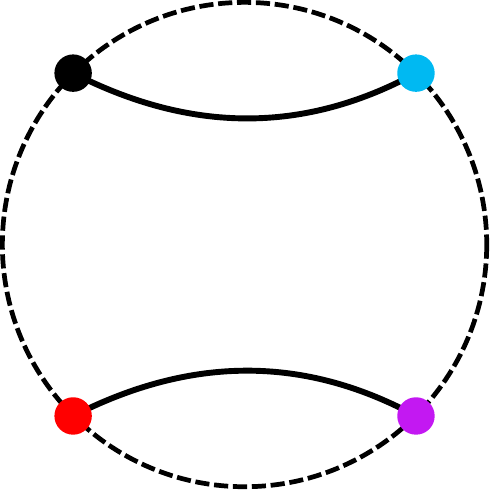}
    \caption{Rational tangle of slope 0, $Q_0$.}
    \label{fig:slope0}
\end{figure}

\begin{definition}
A tangle $(X, T)$ is called a \emph{rational tangle} if it is homeomorphic (but not necessarily equivalent in the sense of Definition \ref{def:equiv}) to the tangle in Figure \ref{fig:slope0}.
\end{definition}

Conway showed that rational tangles are classified by an element of $\Q\cup\{\infty\}$ called the slope \cites{Conway}. Let $Q_{\frac{p}{q}}$ denote the rational tangle with slope $\frac{p}{q}$. 

\begin{proposition}
\label{prop:rational}
The character variety $\RR(Q_{\frac{p}{q}})\cong I$. Furthermore, $\pcs(Q_{\frac{p}{q}}) = \frac{p}{q}$.
\end{proposition}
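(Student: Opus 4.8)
The plan is to prove both statements simultaneously by induction on the complexity of the rational tangle, using the standard fact that every rational tangle is built from the trivial tangle $Q_0$ (Figure \ref{fig:slope0}) by a sequence of elementary twist operations: adding a horizontal twist (which acts on the slope by $\tfrac{p}{q}\mapsto \tfrac{p}{q}+1$, or some such fixed Möbius transformation) and adding a vertical twist (which acts by $\tfrac{p}{q}\mapsto \tfrac{1}{\tfrac{1}{p/q}+1}$, together with the rotation swapping the two twist regions). Concretely, each such twist is realized by gluing the tangle to a fixed $(S^2,4)\times I$ carrying a single crossing, i.e.\ it is a cut-and-paste operation on the boundary. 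For the base case, $Q_0$ is (equivalent to) two unknotted, unlinked arcs in a ball, so $\pi_1$ of the complement is free on two meridians $a$, $c$ with $b$, $d$ determined by the pillowcase relation $a\bar c = b\bar d$; imposing tracelessness, a representation is determined by the pair $(\rho(a),\rho(c))\in C(\i)\times C(\i)$, but the defining relations of $Q_0$ force $\rho(a)$ and $\rho(c)$ to be conjugate in a way that, after gauge fixing via Lemma \ref{lem:gaugefixing}, leaves a single parameter, the angle between them; hence $\RR(Q_0)\cong I$, and a direct computation using Equations \ref{eq:gamma} and \ref{eq:theta} shows $p$ maps this arc to the segment from $(0,0)$ to $(\pi, 0)$ (or $(0,\pi)$ depending on which trivial tangle one starts from), giving pillowcase slope $0$ (resp.\ $\infty$), matching the slope of $Q_0$.

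For the inductive step, suppose $Q_{p/q}$ has $\RR(Q_{p/q})\cong I$ with $\pcs(Q_{p/q}) = p/q$, and let $Q'$ be obtained from $Q_{p/q}$ by one elementary twist. First I would show that twisting does not change the homeomorphism type of the character variety: the twist is a gluing with a tangle whose own character variety, mapped to the pillowcase, induces an invertible (piecewise-linear, lifting to a linear) self-map of $P$ — indeed, the two generating twist operations act on $\RR((S^2,4)\times I)$-coordinates $(\gamma,\theta)$ by the affine maps $(\gamma,\theta)\mapsto(\gamma,\theta\pm 2\gamma)$ and $(\gamma,\theta)\mapsto(\gamma\pm 2\theta, \theta)$ (these are exactly the integral Dehn-twist-type formulas; note the shearing computations in Section \ref{sec:shearing} give the perturbed analogue, and the unperturbed twist is the $t\to$ appropriate limiting case). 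The fiber product description of a boundary gluing then identifies $\RR(Q')$ with $\RR(Q_{p/q})$ via a homeomorphism covering one of these affine maps of the pillowcase. Consequently $\RR(Q')\cong \RR(Q_{p/q})\cong I$, and the image $p(\RR(Q'))$ is the image of the segment for $Q_{p/q}$ under the corresponding affine map. Tracking the endpoints (which remain central representations, hence land at lattice points) shows the new segment runs from $(0,0)$ to the lattice point obtained by applying the integral matrix $\left(\begin{smallmatrix}1&0\\ \pm 2&1\end{smallmatrix}\right)$ or $\left(\begin{smallmatrix}1&\pm 2\\ 0&1\end{smallmatrix}\right)$ to $(\gamma',\theta')$ — wait, more precisely one must use the correct normalization so that the induced action on slopes $\theta'/\gamma'$ agrees with Conway's Möbius action on rational-tangle slopes; this is a finite check that the elementary twist on slopes and the elementary affine map on the pillowcase are intertwined by $p$.

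The main obstacle, and the step requiring the most care, is precisely this last compatibility: verifying that the action of an elementary twist on the pillowcase image is given by exactly the same Möbius transformation (in the correct $\mathrm{PSL}_2(\Z)$-normalization) that Conway's calculus assigns to that twist on rational-tangle slopes. This requires (i) pinning down orientation and framing conventions so the twist formulas $(\gamma,\theta)\mapsto(\gamma,\theta\pm 2\gamma)$ carry the right signs, (ii) confirming that the endpoints of $I\bd$ are genuinely sent to the claimed lattice points in the $\R^2$-lift (using Lemma \ref{lem:R2lift} and the fact, from Lemma \ref{lem:fkpc}, that $p$ is linear on $I\bd$), and (iii) checking the base case orientation so that $\pcs(Q_0)=0$ rather than $\infty$. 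Everything else — that boundary gluing with an invertible-on-$P$ tangle preserves the $I$-homeomorphism type, that $\RR(Q_0)\cong I$ — is routine given Lemma \ref{lem:gaugefixing}, Lemma \ref{lem:fkpc}, and the fiber-product formalism already set up in the paper. Alternatively, if a slicker route is desired, one can bypass the induction for the homeomorphism claim by citing that a rational tangle complement is a genus-2 handlebody-like space whose traceless character variety is well known to be an arc, and then only the slope computation needs the inductive twist-tracking argument above.
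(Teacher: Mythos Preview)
Your overall strategy matches the paper's proof exactly: compute $\RR(Q_0)$ directly as the base case, then induct using the elementary operations that generate all rational tangles, tracking the induced $\mathrm{PSL}_2(\Z)$ action on the pillowcase and checking it agrees with Conway's action on slopes. The paper does precisely this, invoking Lemma~\ref{lem:nTwist} (a single twist acts by $\left(\begin{smallmatrix}1&0\\1&1\end{smallmatrix}\right)$, so $\pcs\mapsto\pcs+1$) and Lemma~\ref{lem:rotate} (rotation acts by $\left(\begin{smallmatrix}0&1\\1&0\end{smallmatrix}\right)$, so $\pcs\mapsto -1/\pcs$), and then cites the standard fact that these two moves generate all rational tangles with Conway slope transforming the same way.

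There is, however, a genuine error in your execution of the inductive step. You claim the elementary crossing twist acts on pillowcase coordinates by $(\gamma,\theta)\mapsto(\gamma,\theta\pm 2\gamma)$ and justify this by appealing to the shearing computations of Section~\ref{sec:shearing}. This is a confusion of two unrelated operations: the shearing perturbation is a holonomy perturbation along an auxiliary curve, and its effect $\theta'=\theta-2tf(\gamma)$ involves the perturbation parameter $t$ and limits to the \emph{identity} as $t\to 0$, not to a crossing. The actual effect of adding a single crossing is computed directly in Lemma~\ref{lem:nTwist} (by reading off the new meridians $a'=a$, $b'=b$, $c'=d$) and is $(\gamma,\theta)\mapsto(\gamma,\theta+\gamma)$, with no factor of $2$. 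With your formula the slope of $Q_1$ would come out as $2$ rather than $1$, so the compatibility check you flag as the ``main obstacle'' would in fact fail. The fix is simply to replace the shearing heuristic by the direct meridian computation already carried out in Lemmas~\ref{lem:nTwist} and~\ref{lem:rotate}; once those are in hand the rest of your outline goes through verbatim.
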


\begin{remark}
A version of this theorem can be found in \cite{PCI}*{Lemma 10.1}. Note that the conventions in \cite{PCI} are chosen differently so that $\RR(Q)_\frac{p}{q}$ maps to the pillowcase with slope $\frac{q-p}{q}$. 
\end{remark}

\begin{remark}
$\operatorname{PSL}(2,\Z)$ acts on the pillowcase by homeomorphisms \cite{CHK}*{Theorem B}. For $M:=\bmat{a&b\\c&d}\in \operatorname{SL}(2,\Z)$, %
$M(\gamma,\theta) = (a\gamma+b\theta, c\gamma+d\theta)$. Then to check that this action is well defined, note that if $(\gamma,\theta)$ is a point on the pillowcase,$(\gamma,\theta) \sim (\gamma+2m\pi,\theta+2n\pi) \sim (-\gamma,-\theta)$. It is easy to check that $M(\gamma,\theta)\sim M(\gamma+2m\pi,\theta+2n\pi) \sim M(-\gamma,-\theta)$.
\end{remark}

\begin{figure}
    \centering
    \begin{subfigure}[b]{0.3\textwidth}
        \centering
        \includegraphics[width=\textwidth]{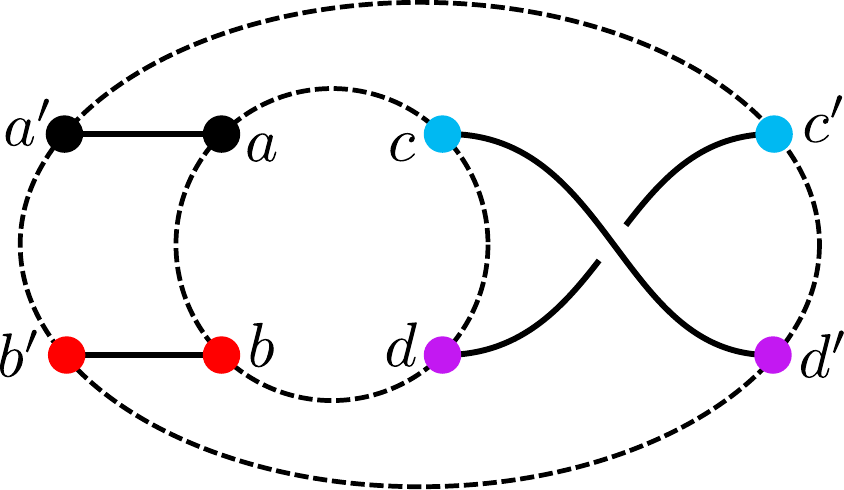}
        \caption{}
        \label{fig:Twist}
    \end{subfigure}
    \hspace{2cm}
    \begin{subfigure}[b]{0.3\textwidth}
        \centering
        \includegraphics[width=\textwidth]{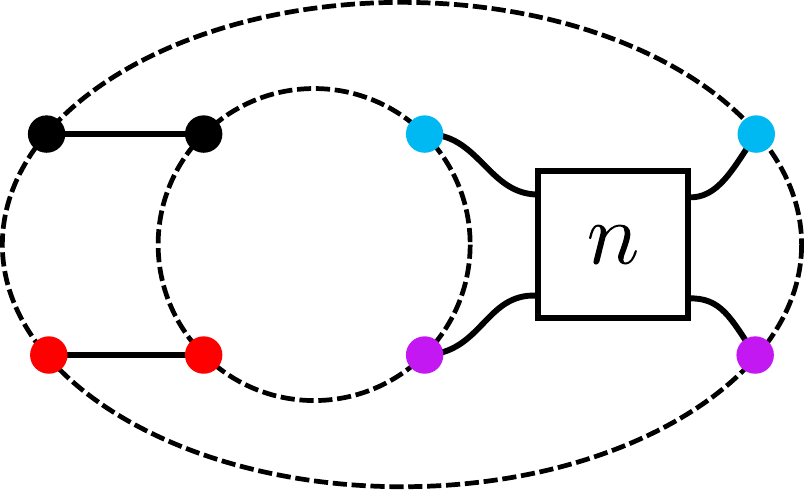}
        \caption{}
        \label{fig:nTwist}
    \end{subfigure}
    \caption{The tangles $T_1$ and $T_n$ as described in Lemma \ref{lem:nTwist}.}
\end{figure}

\begin{lemma}
\label{lem:nTwist}
Let $T_n$ be the result of gluing a tangle $T$ with one boundary component into the inner boundary of the tangle in \ref{fig:nTwist}. Then $\RR(T_n)\cong \RR(T)$ and $p(\RR(T_n)) = \bmat{1&0\\n&1}p(\RR(T))$. As a result, $\pcs(T_n) = \pcs(T)+n$.
\end{lemma}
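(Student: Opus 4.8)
The plan is to present $T_n$ as the product tangle $(S^2,4)\times I$ equipped with a boundary parameterization whose outer component has been altered by $n$ twists, so that gluing $T$ into its inner boundary leaves the character variety unchanged and only post-composes the pillowcase map with a power of a fixed element of $\mathrm{PSL}(2,\Z)$.

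First I would observe that the $n$-twist tangle of Figure \ref{fig:nTwist} is homeomorphic, as a manifold pair, to $(S^2,4)\times I$: the homeomorphism unwinds the twists at the cost of twisting the outer boundary, so its two boundary parameterizations differ by the mapping class $\tau^n$, where $\tau$ is the class of a single twist. Since $(S^2\setminus 4)\times I$ deformation retracts onto $S^2\setminus 4$, the inclusion of either boundary component induces an isomorphism on $\pi_1$. Gluing $T$ into the inner boundary, van Kampen identifies $\pi_1$ of the complement of $T_n$ with $\pi_1$ of the complement of $T$ (the two are glued along $S^2\setminus 4$, and on the twist side the inclusion $S^2\setminus 4\hookrightarrow(S^2\setminus 4)\times I$ induces an isomorphism, so the amalgamated product collapses), whence the inclusion-induced restriction $\RR(T_n)\to\RR(T)$ is a homeomorphism. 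Under this identification the inner boundary restriction is the original $p\colon\RR(T)\to P$, while the outer one is $h_n\circ p$, where $h_n\colon P\to P$ is the self-homeomorphism induced by $\tau^n$; since $\sigma\mapsto h_\sigma$ is a homomorphism, $h_n=h_1^{\,n}$, so it remains to identify $h_1$.

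To compute $h_1$, recall that a single twist is a half-twist in the product exchanging two of the four strands; on $\pi_1(S^2\setminus 4)$ it fixes the two generators carried by the untwisted strands and sends the other two, say, by $a\mapsto ab\ol{a}$, $b\mapsto a$. Putting a representative of $\rho|_T$ in standard gauge on the inner boundary — so that $\rho(a)=\i$, $\rho(b)=e^{\gamma\k}\i$, $\rho(c)=e^{\theta\k}\i$, $\rho(d)=e^{(\gamma+\theta)\k}\i$ by Lemma \ref{lem:boundcond} and the relation $a\ol{c}=b\ol{d}$ — one evaluates $\rho$ on the outer meridians $a',b',c',d'$ and reads off, via Equations \ref{eq:gamma} and \ref{eq:theta} and Lemma \ref{lem:relang}, that $\gamma'=\ang{\rho(a')}{\rho(b')}=\gamma$ and $\theta'=\rang{\rho(a')}{\rho(c')}{\rho(b')}=\gamma+\theta$. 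Thus $h_1$ acts as $\bmat{1&0\\1&1}$ in the sense of the $\mathrm{PSL}(2,\Z)$ remark preceding the lemma, hence $h_n=\bmat{1&0\\n&1}$ and $p(\RR(T_n))=\bmat{1&0\\n&1}p(\RR(T))$. Finally, if $T$ lies in a $\Z/2$-homology $3$-ball then so does $T_n$, so Lemma \ref{lem:fkpc} applies to $T_n$; the above homeomorphism carries $I\bd(T_n)$ to $I\bd(T)$, whose lift as in Definition \ref{def:pcs} runs from $(0,0)$ to $(\gamma',\theta')\in\Z^2$, and $\bmat{1&0\\n&1}$ carries this to the segment from $(0,0)$ to $(\gamma',n\gamma'+\theta')$, giving $\pcs(T_n)=(n\gamma'+\theta')/\gamma'=n+\pcs(T)$.

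The step I expect to be the main obstacle is the $h_1$ computation: determining exactly which generators of $\pi_1(S^2\setminus 4)$ are fixed and how the twisted ones transform (which requires writing the relevant loop in the generators $a,b,c,d$ via the pillowcase relation), and pinning down the handedness/orientation conventions in Figure \ref{fig:nTwist} so that the answer is precisely $\bmat{1&0\\1&1}$ rather than $\bmat{1&0\\-1&1}$ or a larger entry such as $\bmat{1&0\\2&1}$ (the contribution of a full Dehn twist rather than a single crossing). As a consistency check, specializing to $T=Q_0$ recovers $\pcs(Q_n)=n$, in agreement with Proposition \ref{prop:rational}.
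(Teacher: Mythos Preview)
Your approach is correct and shares the paper's core strategy: compute the $n=1$ case explicitly, then iterate using $\bmat{1&0\\1&1}^n=\bmat{1&0\\n&1}$. The difference is in packaging. You frame the single twist as a mapping class on $(S^2,4)\times I$, invoke van Kampen for $\RR(T_n)\cong\RR(T)$, and then worry about identifying the half-twist action on $\pi_1(S^2\setminus 4)$ and its handedness. The paper bypasses all of this by reading the meridian correspondence directly off Figure~\ref{fig:Twist}: one sees immediately that $a'=a$, $b'=b$, $c'=d$, and since $d=e^{(\gamma+\theta)\k}\i$ in standard gauge this gives $(\gamma',\theta')=(\gamma,\gamma+\theta)$ with no ambiguity about sign or which strands carry the twist. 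Your flagged ``main obstacle'' (pinning down exactly how the twist acts and whether the entry is $\pm 1$ or $\pm 2$) simply does not arise when one works from the diagram, and your tentative formula $a\mapsto ab\ol a$, $b\mapsto a$ is on the wrong pair of strands for this figure. The mapping-class viewpoint is conceptually cleaner and explains \emph{why} the answer lands in $\mathrm{PSL}(2,\Z)$, but for this lemma the direct diagrammatic reading is both shorter and avoids the convention-chasing you anticipated.
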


\begin{proof}
Note that $T_1$ is the result of plugging $T$ into the inner boundary of the tangle in Figure \ref{fig:Twist}. Using the labels in the figure, $a'=a$ and $b'=b$. Furthermore, $c' = d$. In standard gauge with respect to the inner boundary, $a = \i$, $b = e^{\gamma \k}\i$ and $c = e^{\theta \k}\i$ forcing $d = e^{\theta+\gamma \k}\i$, then we get $a' = \i$, $b' = e^{\gamma \k}\i$ and $c' = e^{\gamma+\theta \k}\i$ and so $\gamma' = \gamma$ while $\theta' = \theta+\gamma$. So $(\gamma',\theta') = \bmat{1&0\\1&1}(\gamma,\theta)$. In addition, $\pcs(T_1) = \frac{\theta'}{\gamma'} = \frac{\theta+\gamma}{\gamma} = \frac{\theta}{\gamma}+1 = \pcs(T)+1$. 

It is clear that composing with the tangle in Figure \ref{fig:nTwist}, can be recast as $n$ iterations of the previous tangle for $n>0$, showing that it acts on $p(\RR(T))$ by $\bmat{1&0\\1&1}^n=\bmat{1&0\\n&1}$ and $\pcs(T_n) = \pcs(T)+n$. A similar calculation shows that it works for $n\le 0$ as well.
\end{proof}

\begin{figure}
    \centering
    \includegraphics[height=1.5in]{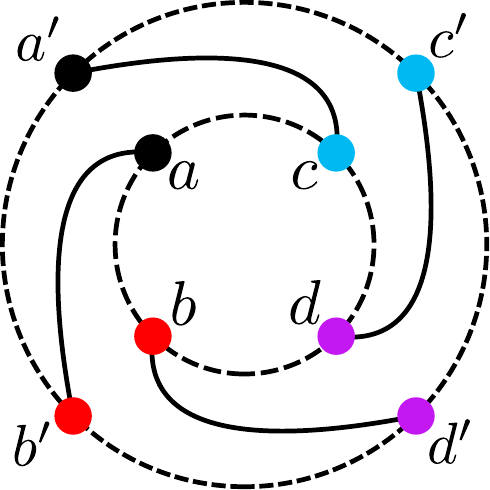}
    \caption{}
    \label{fig:rotate}
\end{figure}

Following the notation of \cite{Rational}, let $-\frac{1}{T}$ be the result of gluing $T$ into the inner boundary of Figure \ref{fig:rotate}.

\begin{lemma}
\label{lem:rotate}
The character variety $\RR(-\frac{1}{T}) \cong \RR(T)$ and $p(\RR(-\frac{1}{T})) = \bmat{0&1\\1&0}p(\RR(T))$. As a result, $\pcs(-\frac{1}{T}) = -\frac{1}{\pcs(T)}$.
\end{lemma}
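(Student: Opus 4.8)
The plan is to follow the same template used for Lemma~\ref{lem:nTwist}: identify the tangle shown in Figure~\ref{fig:rotate} explicitly, glue $T$ into its inner boundary, and track what happens to the meridians $a,b,c,d$ of the boundary sphere under the gluing. Since $-\frac1T$ is obtained from $T$ by a single cut-and-paste move that does not touch the strands of $T$ itself away from the inner boundary sphere, there is an obvious bijection $\pi_1$-level identification $\pi_1((-\frac1T)\setminus(-\frac1T)) \cong \pi_1(X\setminus T)$, and hence a homeomorphism $\RR(-\frac1T)\cong\RR(T)$; the only content is the effect on the pillowcase coordinates.

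First I would fix standard gauge with respect to the inner boundary sphere of Figure~\ref{fig:rotate}, so that the inner meridians satisfy $a=\i$, $b=e^{\gamma\k}\i$, $c=e^{\theta\k}\i$, and consequently (by the pillowcase relation $a\ol c=b\ol d$) $d=e^{(\gamma+\theta)\k}\i$. Then I would read off from the diagram in Figure~\ref{fig:rotate} how the outer meridians $a',b',c',d'$ are expressed in terms of $a,b,c,d$. The tangle implementing $-\frac1T$ is (up to the conventions of \cite{Rational}) the one that rotates the four boundary points by a quarter turn, so the expected identification is that the outer meridians are a cyclic relabelling of the inner ones — concretely something like $a'=a$, $b'=c$, $c'=b$ (with $d'$ then forced by the pillowcase relation). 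Plugging these into Equations~\ref{eq:gamma} and~\ref{eq:theta} gives $\gamma'=\ang{\rho(a')}{\rho(b')}=\ang{\i}{e^{\theta\k}\i}=\theta$ and $\theta'=\rang{\rho(a')}{\rho(c')}{\rho(b')}=\rang{\i}{e^{\gamma\k}\i}{e^{\theta\k}\i}=\gamma$, so that $(\gamma',\theta')=\bmat{0&1\\1&0}(\gamma,\theta)$. The pillowcase slope statement is then immediate: $\pcs(-\frac1T)=\frac{\theta'}{\gamma'}=\frac{\gamma}{\theta}=\bigl(\frac{\theta}{\gamma}\bigr)^{-1}\cdot(-1)\cdot(-1)$; more carefully, one uses Definition~\ref{def:pcs}, lifting $I\bd$ so one end is at $(0,0)$ and the other at $(\gamma',\theta')\in\Z^2$, and observes that applying $\bmat{0&1\\1&0}$ sends the lift with endpoint $(\gamma',\theta')$ to the lift with endpoint $(\theta',\gamma')$, whence $\pcs(-\frac1T)=\frac{\gamma'}{\theta'}=\frac1{\pcs(T)}$; the sign discrepancy with the statement is absorbed by the precise convention for the rotation tangle in Figure~\ref{fig:rotate}, i.e.\ whether it realizes $\bmat{0&1\\1&0}$ or $\bmat{0&-1\\1&0}$, and I would pin this down from the figure so that the answer reads $-\frac1{\pcs(T)}$ exactly as stated.

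The main obstacle is bookkeeping rather than conceptual: one must get the orientations and the labelling of the four marked points exactly right when reading off the outer meridians from Figure~\ref{fig:rotate}, because a different choice of cyclic permutation of $\{a,b,c,d\}$ produces $\bmat{0&\pm1\\\pm1&0}$ or even a shear-composed version, which would change the sign or require a base-change argument. The safest way to handle this is to verify the claim on a known example — e.g.\ check that $-\frac1{Q_0}=Q_\infty$ is consistent with Proposition~\ref{prop:rational}, since $\pcs(Q_0)=0$ and $-\frac10=\infty=\pcs(Q_\infty)$, and that $-\frac1{Q_1}=Q_{-1}$ matches $-\frac11=-1$ — which simultaneously fixes the convention and confirms the formula. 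I do not expect the homeomorphism $\RR(-\frac1T)\cong\RR(T)$ to pose any difficulty, as it is literally the same argument as in the proof of Lemma~\ref{lem:nTwist}: the gluing region is a product neighborhood of $(S^2,4)$ carrying no representation-theoretic data beyond the relabelling of generators.
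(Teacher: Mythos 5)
Your plan mirrors the paper's: read the meridian identification off Figure~\ref{fig:rotate}, put the glued representation into standard gauge, and apply Equations~\ref{eq:gamma} and~\ref{eq:theta}. But the specific identification you propose, $a'=a$, $b'=c$, $c'=b$, is not a 4-cycle (it fixes $a$ and transposes $b$ and $c$), which contradicts your own observation that a quarter-turn should cyclically permute the four marked points, and it is not what the figure shows. The paper reads off the 4-cycle $a'=c$, $b'=a$, $c'=d$, $d'=b$. In standard gauge on the inner sphere ($a=\i$, $b=e^{\gamma\k}\i$, $c=e^{\theta\k}\i$, $d=e^{(\gamma+\theta)\k}\i$), conjugating by $e^{-\theta\k/2}$ to bring $a'=c$ to $\i$ sends $b'=a\mapsto e^{-\theta\k}\i$ and $c'=d\mapsto e^{\gamma\k}\i$, so $(\gamma',\theta')=(-\theta,\gamma)$ and $\pcs(-\tfrac1T)=\theta'/\gamma'=-1/\pcs(T)$. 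Your proposed permutation yields $(\gamma',\theta')=(\theta,\gamma)$, hence $+1/\pcs(T)$: the sign is wrong, not merely convention-dependent, and no lift ambiguity repairs it.

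You flag the sign issue and propose to resolve it by checking examples, but the example you lead with cannot settle it: $\pcs(Q_0)=0$, and $1/0$ and $-1/0$ both equal $\infty$, so the check $-\tfrac1{Q_0}=Q_\infty$ does not distinguish $1/s$ from $-1/s$. Your second check with $Q_1$ would decide the sign, but only after accepting from Conway's calculus that the quarter-turn of $Q_1$ is $Q_{-1}$; at that point one might as well read the correct 4-cycle directly off the figure, as the paper does, rather than guess a permutation and then correct it by example. As written, the proposal leaves the slope claim unproved.
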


\begin{proof}
$a' = c$, $b' = a$, $d' = b$, and $c' = d$. Conjugating gives $a' = \i$, $b' = e^{-\theta\k}\i$, and $c' = e^{\gamma\k}\i$. So $\theta' = \gamma$ and $\gamma' = -\theta$. Thus, $s' = -\frac{1}{s}$.
\end{proof}

For a tangle $(B^3, T)$, let $-T$ be the result of an orientation-reversing homeomorphism on $B^3$. Given a tangle diagram of $T$, a tangle diagram of $-T$ can be obtained by reversing each crossing.

\begin{lemma}
For any rational tangle $T$, $\pcs(-T) = -\pcs(T)$.%
\end{lemma}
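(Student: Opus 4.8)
The plan is to induct on the number of elementary moves needed to build the rational tangle $T$ from the trivial tangle $Q_0$. By Conway's classification, together with the fact that the M\"obius transformations $s\mapsto s+1$ and $s\mapsto -1/s$ generate $\operatorname{PSL}(2,\Z)$ (which acts transitively on $\mathbb P^1(\Q)$) and Proposition~\ref{prop:rational}, every rational tangle is obtained from $Q_0$ by a finite sequence of operations, each of which is either $S\mapsto S_n$ (inserting $n$ horizontal twists as in Lemma~\ref{lem:nTwist}) or $S\mapsto -\tfrac1S$ (the quarter-turn of Lemma~\ref{lem:rotate}): indeed, picking a word $w$ in these two generators with $w(0)=\pcs(T)$, the tangle obtained by applying the corresponding moves to $Q_0$ has pillowcase slope $w(0)=\pcs(T)$ by Lemmas~\ref{lem:nTwist} and~\ref{lem:rotate}, hence equals $T$. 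So it suffices to show that if $\pcs(-S)=-\pcs(S)$ for a rational tangle $S$, then the same holds for $S_n$ and for $-\tfrac1S$.

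The key geometric observation is how mirroring a diagram (reversing every crossing) interacts with these moves. Inserting $n$ horizontal twists and then reversing all crossings is the same as first reversing all crossings of $S$ and then inserting $-n$ twists, since the twist box of $n$ crossings becomes a box of $-n$ crossings while the crossings of $S$ become those of $-S$; thus $-(S_n)=(-S)_{-n}$. On the other hand, the quarter-turn tangle of Figure~\ref{fig:rotate} has no crossings (it merely relabels the four boundary points, as in the proof of Lemma~\ref{lem:rotate}), so reversing the crossings of $-\tfrac1S$ affects only $S$; thus $-\!\left(-\tfrac1S\right)=-\tfrac1{(-S)}$.

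Granting these, the induction is immediate. The base case $-Q_0=Q_0$ gives $\pcs(-Q_0)=0=-\pcs(Q_0)$. For $T=S_n$, Lemma~\ref{lem:nTwist} and $-(S_n)=(-S)_{-n}$ give $\pcs(-T)=\pcs(-S)-n=-\pcs(S)-n=-\pcs(S_n)$. For $T=-\tfrac1S$, Lemma~\ref{lem:rotate} and $-\!\left(-\tfrac1S\right)=-\tfrac1{(-S)}$ give $\pcs(-T)=-1/\pcs(-S)=-1/(-\pcs(S))=1/\pcs(S)=-\pcs\!\left(-\tfrac1S\right)$, where the arithmetic is carried out in $\Q\cup\{\infty\}$ with the convention $-\infty=\infty$ to cover the degenerate slopes $0$ and $\infty$. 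This closes the induction.

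The step I expect to need the most care is the last sentence of the first paragraph and the two commutation identities of the second: one must check that $-T$ is genuinely a rational tangle with a well-defined slope and that these identities hold at the level of tangles rather than just diagrams, so that $\pcs$ may legitimately be applied on both sides. This is exactly where the distinction between homeomorphism and tangle equivalence flagged in the definition of a rational tangle enters, and it should be dispatched by noting that the orientation-reversing homeomorphism used to form $-T$ carries a standard model of $Q_0$ (and the quarter-turn and twist tangles) to equivalent such models; everything after that is bookkeeping with Lemmas~\ref{lem:nTwist} and~\ref{lem:rotate} and Proposition~\ref{prop:rational}.
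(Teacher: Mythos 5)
Your proof is correct and takes essentially the same approach as the paper: both build $T$ from $Q_0$ by twist and quarter-turn moves, observe that mirroring turns $S_n$ into $(-S)_{-n}$ while commuting with the crossingless quarter-turn, and track the effect on $\pcs$ via Lemmas~\ref{lem:nTwist} and~\ref{lem:rotate}. Your write-up simply makes explicit the commutation identities and the induction that the paper's one-sentence proof leaves implicit.
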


\begin{proof}
Any rational tangle can be constructed by performing a series of moves described in Lemma \ref{lem:nTwist} and \ref{lem:rotate}. To get the mirror image, every time $T$ was turned into $T_n$ as described in Lemma \ref{lem:nTwist}, it is replaced by $T_{-n}$. Following the effect of this on $\pcs$ shows that $\pcs(-T)=-\pcs(T)$.
\end{proof}

\begin{proof}[Proof of Proposition \ref{prop:rational}]
Consider the tangle $Q_0$ in Figure \ref{fig:slope0} and a representation $\rho\in\RR(Q_0)$. By noting the directions of the meridians given in Figure \ref{fig:4PS}, it is easy to see that $a = c$ and $b = d$. Thus, $\theta(\rho) = 0$ and $\gamma(\rho) \in [0,\pi]$. So the image on the pillowcase has slope 0.

Composing with the tangles in Lemmas \ref{lem:nTwist} and \ref{lem:rotate} is sufficient for generating all rational tangles from $Q_0$. Each of these compositions affects the pillowcase slope the same way as Conway's slope (compare \cite{Rational}*{Proposition 5}). Thus, $\pcs(Q_\frac{p}{q}) = \frac{p}{q}$.
\end{proof}

\begin{figure}
    \centering
    \begin{subfigure}[b]{0.3\textwidth}
        \centering
        \includegraphics[width=\textwidth]{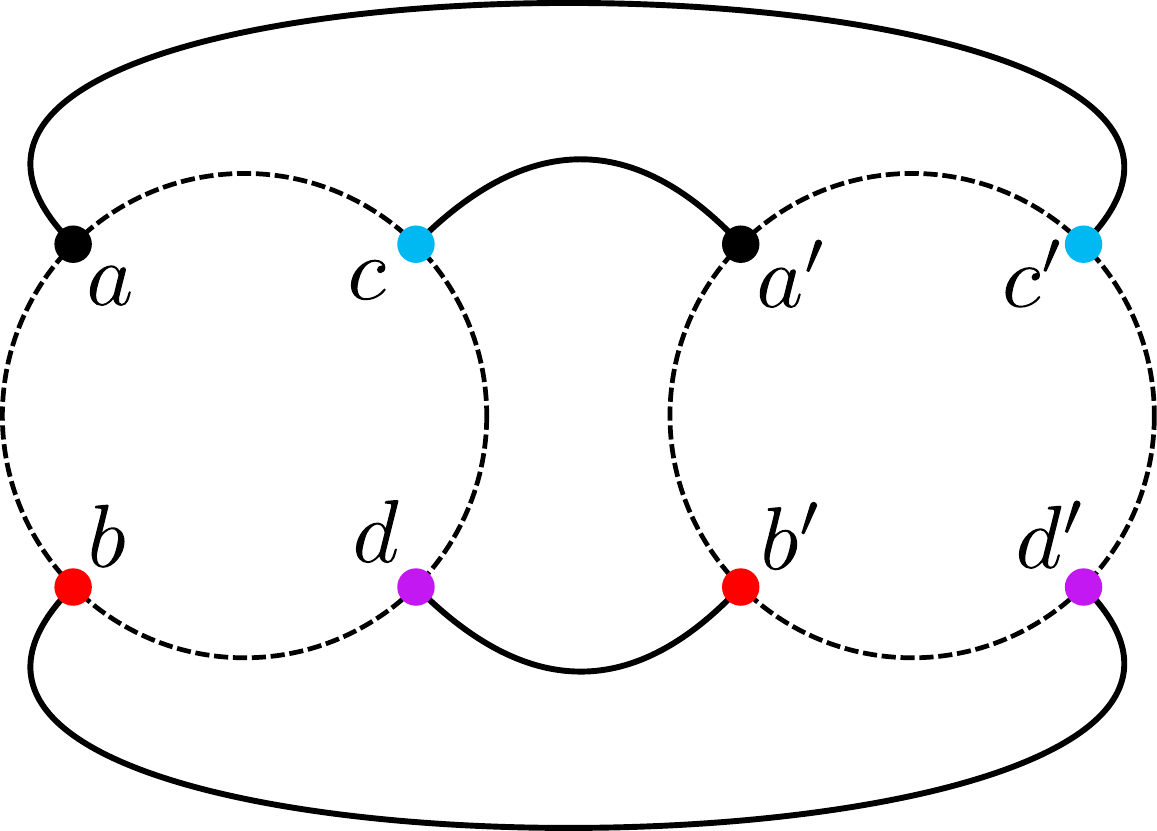}
        \caption{}
        \label{fig:flip}
    \end{subfigure}
    \hspace{1cm}
    \begin{subfigure}[b]{0.25\textwidth}
        \centering
        \includegraphics[width=\textwidth]{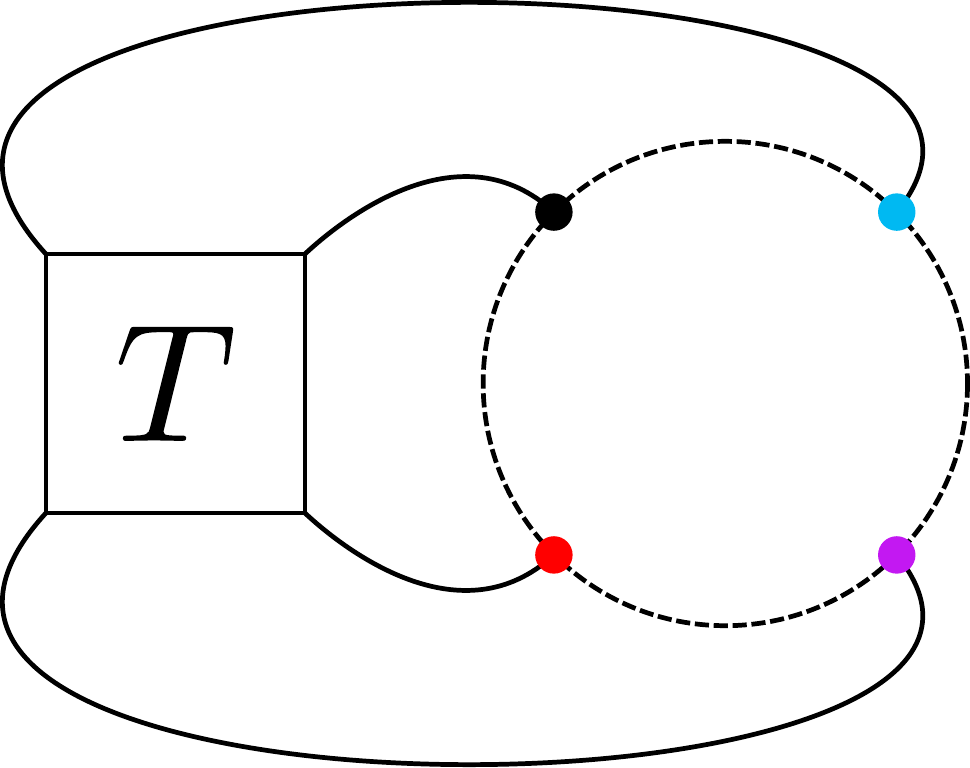}
        \caption{}
        \label{fig:outside}
    \end{subfigure}
    \hspace{1cm}
    \begin{subfigure}[b]{0.15\textwidth}
        \centering
        \includegraphics[width=\textwidth]{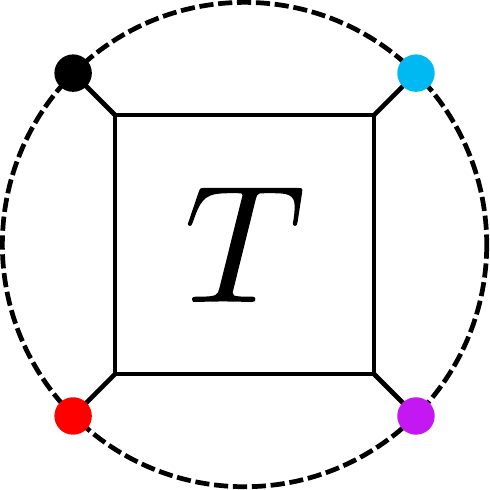}
        \caption{}
        \label{fig:inside}
    \end{subfigure}

    \caption{(A) shows a certain tangle cobordism. The tangle (B) can be obtained by gluing tangle (C) to tangle (A).}
\end{figure}

For a tangle $(X,T)$ such that $\partial(X,T) = (S^2, 4)$, let $\widehat{(X,T)}$ denote the tangle obtained by gluing $(X,T)$ into the tangle cobordism in Figure \ref{fig:flip}. There is a homeomorphism $\phi:(X,T)\to\widehat{(X,T)}$, however $\phi$ is not a tangle equivalence in the sense of Definition \ref{def:equiv} because $\psi^T$ and $\psi^{\widehat{T}}\circ \phi$ differ by an orientation reversing involution on $\mathbf{S}$. As a consequence, $\widehat{\widehat{T}} = T$. %
Diagrammatically, going from $T$ to $\widehat{T}$ is equivalent to switching between the tangle diagrams in Figures \ref{fig:outside} and \ref{fig:inside}.

\begin{lemma}
\label{lem:flip}
The homeomorphism $\phi:T\to \widehat{T}$ induces a homeomorphism $\phi^*: \RR(T)\to\RR(\widehat{T})$ such that $\bmat{1&0\\0&-1}\circ p = p\circ \phi^*$.
\end{lemma}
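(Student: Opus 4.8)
The plan is to identify $\phi^*$ with the map on character varieties induced by $\phi$, and then to reduce the asserted identity to a computation that takes place entirely on the pillowcase $P=\RR(\mathbf S)$.

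First I would make $\phi^*$ precise. Since $\phi\colon(X,T)\to\widehat{(X,T)}$ is a homeomorphism of pairs, it induces an isomorphism $\phi_*\colon\pi_1(X\setminus T)\to\pi_1(\widehat X\setminus\widehat T)$; precomposition with $\phi_*^{-1}$ sends meridians to meridians, hence traceless representations to traceless representations, and it commutes with conjugation, so it descends to a homeomorphism $\phi^*\colon\RR(T)\to\RR(\widehat T)$, $[\rho]\mapsto[\rho\circ\phi_*^{-1}]$ (continuous with continuous inverse, since the topologies come from tuples in $\SU$). Restriction to the boundary commutes with $\phi^*$, so, writing $\iota:=\psi^{\widehat T}\circ(\phi|_\partial)\circ(\psi^T)^{-1}\colon\mathbf S\to\mathbf S$ for the orientation-reversing involution produced by the flip cobordism of Figure~\ref{fig:flip}, the definition of $p$ (restrict to the boundary, then identify with $\RR(\mathbf S)$ via $\psi$) yields a commuting square whose content is
\[ p\circ\phi^* \;=\; \iota^*\circ p, \]
where $\iota^*\colon P=\RR(\mathbf S)\to\RR(\mathbf S)=P$ is the involution induced by $\iota$. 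Getting this square right is mostly bookkeeping: tracking which $\psi$ appears and in which direction, and noting that the inner-automorphism ambiguity in $\phi_*$ and in $\iota_*$ disappears once one passes to conjugacy classes.

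It then remains to show $\iota^*=\bmat{1&0\\0&-1}$ on $P$. From the flip cobordism I would read off that $\iota$ interchanges the punctures in pairs compatibly with the pillowcase relation, say $a\leftrightarrow c$ and $b\leftrightarrow d$, and, being orientation-reversing while preserving (up to that swap) each small meridian disk, sends each meridian to a conjugate of the inverse of its partner. So if $[\rho]\in\RR(\mathbf S)$ is put in standard gauge, $\rho(a)=\i$, $\rho(b)=e^{\gamma\k}\i$, $\rho(c)=e^{\theta\k}\i$, $\rho(d)=e^{(\gamma+\theta)\k}\i$, then a representative of $\iota^*[\rho]$ takes values on $a,b,c,d$ equal to the negatives of $\rho(c),\rho(d),\rho(a),\rho(b)$ in the appropriate order. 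Now apply Equations~\ref{eq:gammaS}--\ref{eq:thetaS}: for the $\gamma$-coordinate, $\ang{\cdot}{\cdot}$ is unchanged when both arguments are negated and rotated, so $\gamma(\iota^*[\rho])=\ang{\rho(c)}{\rho(d)}=\gamma([\rho])$; for the $\theta$-coordinate, the relative angle $\rang{\cdot}{\cdot}{\cdot}$ is invariant under the common $\pi$-rotation that negates all three of its arguments but changes to $2\pi$ minus itself when its reference point is replaced by its antipode, so running the same case bookkeeping as in the proof of Lemma~\ref{lem:relang} gives $\theta(\iota^*[\rho])=2\pi-\theta([\rho])$. Hence $\iota^*(\gamma,\theta)=(\gamma,-\theta)$ in $P$, which is exactly the action of $\bmat{1&0\\0&-1}$; combined with the commuting square this is the lemma.

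The main obstacle I anticipate is not any single hard step but pinning down $\iota$ unambiguously from the picture of the flip cobordism, and then keeping the orientation and relative-angle conventions consistent throughout: a sign slip there would replace $\bmat{1&0\\0&-1}$ by the identity or by the coordinate swap $\bmat{0&1\\1&0}$. As a safeguard I would compute $\widehat T$ for small rational tangles: $\widehat{Q_0}=Q_0$ rules out the coordinate swap, while computing $\widehat{Q_1}$ (a crossing viewed from behind reverses, so $\pcs(\widehat{Q_1})=-1=-\pcs(Q_1)$) rules out the identity, which together force the matrix to be $\bmat{1&0\\0&-1}$.
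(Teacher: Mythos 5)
Your argument is correct and essentially matches the paper's proof, which simply records $a'=c$, $b'=d$, $c'=a$, $d'=b$ and conjugates to standard gauge to read off $(\gamma',\theta')=(\gamma,-\theta)$. The only discrepancy is that you take each meridian to a conjugate of the inverse of its partner (so $a'\sim\ol{c}=-c$ rather than $c$), but since negation is a rotation of $C(\i)$ this preserves the angles $\ang{\cdot}{\cdot}$ and $\rang{\cdot}{\cdot}{\cdot}$ and leads to the same conclusion.
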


\begin{proof}
$a' = c$, $b' = d$, $c' = a$, and $d' = b$. Conjugating to be in in standard gauge with respect to the ordering $(a', b', c', d')$ gives $a' = \i$, $b' = e^{\gamma\k}\i$, and $c' = e^{-\theta\k}\i$. Thus $\gamma'=\gamma$ and $\theta' = -\theta$. Take $\rho\in\RR(T)$ with $p(\rho) = (\gamma,\theta)$, then $p(\phi^*(\rho)) = (\gamma',\theta') = (\gamma,-\theta)$.
\end{proof}

\subsection{Gluing Characters}

Finding the character variety of a complicated space can be achieved using cut and paste methods to decompose the space into simpler pieces and gluing the character varieties of these simpler spaces together. However, there may be many ways to glue two characters together depending on their stabilizers. Applying \cite{CHK}*{Lemma 6.1} to tangles makes this statement more precise:

\begin{lemma}
\label{lem:bending}
    Let $(X,T) = (X_1, T_1) \cup_{(\Sigma, S)} (X_2, T_2)$. Let $\pi_i$ be perturbation data for $(X_i, T_i)$. Then there is a surjection 
    \[f: \RR_{\pi_1\cup \pi_2}(X, T) \to \RR_{\pi_1}(X_1, T_1) \times_{\RR(\Sigma, S)} \RR_{\pi_2}(X_2, T_2).\]
    The fiber over $([\rho_1], [\rho_2])$ is homeomorphic to the double coset space 
    \[\Stab(\rho_1)\backslash \Stab(\rho_1|_{(\Sigma, S)})/\Stab(\rho_2).\]
\end{lemma}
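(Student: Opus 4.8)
The statement to prove is Lemma~\ref{lem:bending}, which is an application of \cite{CHK}*{Lemma 6.1} to the tangle setting. Let me sketch a proof.

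The plan is to reduce to the cited result by translating the van Kampen setup for tangles into the setup of \cite{CHK}*{Lemma 6.1}. First I would set up notation: write $Y = X \setminus T$, $Y_i = X_i \setminus T_i$, and $Z = \Sigma \setminus S$, so that $Y = Y_1 \cup_Z Y_2$ is a decomposition along the (connected, since $\Sigma = S^2$ and $S$ is $4$ points, so $Z$ is a $4$-punctured sphere which is connected) codimension-$0$ submanifold $Z$. By van Kampen, $\pi_1(Y) = \pi_1(Y_1) *_{\pi_1(Z)} \pi_1(Y_2)$. A representation $\rho: \pi_1(Y) \to \SU$ is then precisely a pair of representations $\rho_1: \pi_1(Y_1)\to\SU$, $\rho_2:\pi_1(Y_2)\to\SU$ that agree on $\pi_1(Z)$, i.e.\ a point of the fiber product $\Ro(Y_1) \times_{\Ro(Z)} \Ro(Y_2)$. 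I would then check that the traceless and perturbation conditions are local: the meridians of $T$ are partitioned among those coming from $T_1$, those from $T_2$, and those on $S$ (which lie in both), so the traceless condition on $\rho$ holds iff it holds on each $\rho_i$; likewise the perturbation curves of $\pi_1 \cup \pi_2$ are disjointly embedded in $Y_1$ and $Y_2$ respectively, so Equation~\ref{eq:holpert} for $\rho$ is equivalent to the conjunction of the conditions for $\rho_1$ and $\rho_2$. Hence $\Ro_{\pi_1\cup\pi_2}(X,T) = \Ro_{\pi_1}(X_1,T_1) \times_{\Ro(\Sigma,S)} \Ro_{\pi_2}(X_2,T_2)$ at the level of representation varieties.

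Next I would pass to character varieties by quotienting by conjugation. This is exactly where \cite{CHK}*{Lemma 6.1} does the work: the issue is that conjugating $\rho_1$ and $\rho_2$ each by some $x_i \in \SU$ preserves the condition that they agree on $\pi_1(Z)$ only if $x_1$ and $x_2$ differ by an element of the stabilizer $\Stab(\rho|_{(\Sigma,S)}) = \Stab(\rho_1|_{\pi_1(Z)}) = \Stab(\rho_2|_{\pi_1(Z)})$. So the map $f$ sending $[\rho] \mapsto ([\rho_1],[\rho_2])$ is well defined and surjective (surjectivity: given $([\rho_1],[\rho_2])$ in the character-variety fiber product, choose representatives; they need not literally agree on $\pi_1(Z)$, but their restrictions to $\pi_1(Z)$ are conjugate, so after conjugating $\rho_2$ by a suitable element we may assume they agree, producing a preimage). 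For the fiber: two glued representations $(\rho_1,\rho_2)$ and $(\rho_1', \rho_2')$ with $[\rho_i]=[\rho_i']$ give the same point of $\RR(X,T)$ iff there is a single $x\in\SU$ conjugating $\rho_1$ to $\rho_1'$ and $\rho_2$ to $\rho_2'$ simultaneously; parameterizing the choices of gluing by the element of $\Stab(\rho_1|_{(\Sigma,S)})$ needed to reconcile the two sides, and modding out on the left by $\Stab(\rho_1)$ (ambiguity in the representative of $[\rho_1]$) and on the right by $\Stab(\rho_2)$ (ambiguity in the representative of $[\rho_2]$), identifies the fiber over $([\rho_1],[\rho_2])$ with the double coset space $\Stab(\rho_1)\backslash \Stab(\rho_1|_{(\Sigma,S)}) / \Stab(\rho_2)$. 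I would verify this identification is a homeomorphism, not just a bijection, by noting both sides carry the quotient topology from $\SU$ and the map is induced by a continuous $\SU$-equivariant construction.

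The main obstacle is the topological bookkeeping in the fiber computation --- making precise that the fiber of $f$ is genuinely a \emph{double} coset space and that all three groups in the formula are the correct stabilizers, in particular that $\Stab(\rho_1|_{(\Sigma,S)})$ is independent of which side's representative we restrict (it isn't literally, but the two are conjugate, and the double coset space is well defined up to homeomorphism once we fix representatives). Since \cite{CHK}*{Lemma 6.1} is stated for representation varieties of spaces glued along a connected subspace, essentially all of this is already done there; the only genuinely new content is checking that the traceless and holonomy-perturbation conditions respect the decomposition, which is the elementary locality argument above. So in practice the proof is: ``The hypotheses of \cite{CHK}*{Lemma 6.1} are satisfied with $Y = Y_1\cup_Z Y_2$ because $Z$ is connected and the traceless/perturbation conditions are local; apply it.''

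\begin{proof}
Write $Y = X \setminus T$, $Y_i = X_i \setminus T_i$, and $Z = \Sigma \setminus S$. Since $\Sigma \cong S^2$ and $S$ is four points, $Z$ is a connected $4$-punctured sphere, and $Y = Y_1 \cup_Z Y_2$. By van Kampen, $\pi_1(Y) = \pi_1(Y_1) *_{\pi_1(Z)} \pi_1(Y_2)$, so a representation $\rho: \pi_1(Y)\to\SU$ is the same as a pair $(\rho_1,\rho_2)$ with $\rho_i: \pi_1(Y_i)\to\SU$ and $\rho_1|_{\pi_1(Z)} = \rho_2|_{\pi_1(Z)}$. Every meridian of $T$ is freely homotopic in $Y$ to a meridian of $T_1$, a meridian of $T_2$, or a meridian of $S$ (which is simultaneously a meridian of $T_1$ and of $T_2$), so the traceless condition on $\rho$ is equivalent to the conjunction of the traceless conditions on $\rho_1$ and $\rho_2$. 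The perturbation curves of $\pi_1 \cup \pi_2$ lie disjointly in $Y_1$ and $Y_2$, so the perturbation condition of Equation~\ref{eq:holpert} for $\rho$ is equivalent to its holding for $\rho_1$ and for $\rho_2$. Hence
\[\Ro_{\pi_1\cup\pi_2}(X,T) = \Ro_{\pi_1}(X_1,T_1) \times_{\Ro(\Sigma,S)} \Ro_{\pi_2}(X_2,T_2).\]
Quotienting by conjugation, this is precisely the situation of \cite{CHK}*{Lemma 6.1}, applied to the decomposition $Y = Y_1 \cup_Z Y_2$ along the connected subspace $Z$ (the traceless and perturbation conditions cut out $\SU$-invariant subspaces of the representation varieties, so they are carried along by the cited result). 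That lemma produces the surjection
\[f: \RR_{\pi_1\cup \pi_2}(X, T) \to \RR_{\pi_1}(X_1, T_1) \times_{\RR(\Sigma, S)} \RR_{\pi_2}(X_2, T_2),\ [\rho]\mapsto ([\rho_1],[\rho_2]),\]
whose surjectivity is as follows: given $([\rho_1],[\rho_2])$ in the target, pick representatives $\rho_1, \rho_2$; their restrictions to $\pi_1(Z)$ are conjugate, so after replacing $\rho_2$ by a conjugate we may assume $\rho_1|_{\pi_1(Z)} = \rho_2|_{\pi_1(Z)}$, giving a glued representation mapping to $([\rho_1],[\rho_2])$. Finally, \cite{CHK}*{Lemma 6.1} identifies the fiber over $([\rho_1],[\rho_2])$: two gluings $(\rho_1,\rho_2)$ and $(\rho_1',\rho_2')$ of fixed conjugacy classes give the same point of $\RR(X,T)$ iff a single element of $\SU$ simultaneously conjugates $\rho_1\mapsto\rho_1'$ and $\rho_2\mapsto\rho_2'$; parameterizing the possible gluings by the element of $\Stab(\rho_1|_{(\Sigma,S)})$ required to reconcile the two restrictions, and dividing by the residual conjugation freedom $\Stab(\rho_1)$ on the left and $\Stab(\rho_2)$ on the right, yields a homeomorphism from the fiber to the double coset space
\[\Stab(\rho_1)\backslash \Stab(\rho_1|_{(\Sigma, S)})/\Stab(\rho_2),\]
both sides carrying the quotient topology induced from $\SU$.
\end{proof}
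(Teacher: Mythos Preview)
Your proposal is correct and takes essentially the same approach as the paper, which does not supply a proof at all but simply states the lemma as an application of \cite{CHK}*{Lemma 6.1}; you have filled in the routine verification that the traceless and holonomy-perturbation conditions are local to each piece, which is exactly what is needed. One small caveat: you assume $\Sigma\cong S^2$ so that $Z$ is connected, but the paper later applies this lemma with $(\Sigma,S) = S_1\sqcup S_2$ (see Proposition~\ref{prop:decomp1}), so in full generality one should either allow disconnected $Z$ in the van Kampen step or note that the cited \cite{CHK}*{Lemma 6.1} already handles this.
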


These fibers, $\Stab(\rho_1)\backslash \Stab(\rho_1|_{(\Sigma, S)})/\Stab(\rho_2)$, are called \emph{gluing parameters} (or sometimes \emph{bending parameters}). In the case of $\SU$, representations can have stabilizers isomorphic to one of three subgroups: $\SU$, $\U$, or $\Z/2$. Note that the stabilizer type is conjugation invariant, so any point in an $\SU$ character variety has a well defined stabilizer type. For $\rho\in\RR_\pi(X,T)$ with $T$ non-empty, the stabilizer of $\rho$ cannot be $\SU$ since there is a traceless element of $\SU$ in its image. When $T$ is non-empty, an $\SU$ representation has stabilizer $\U$ if it is abelian, which means that the image is contained in a subgroup of the form $\{e^{\theta Q}\in\SU \mid \theta\in S^1\}$ for some fixed $Q\in C(\i)$. In particular, for $\rho\in\RR_\pi(X,T)$ with stabilizer $U(1)$ and $T$ non-empty, each meridian must be sent to $\{\pm Q\}$. If the representation does not satisfy this condition, the stabilizer is $\Z/2$. Following the notation of \cite{HKRegularity}, let $\RR_\pi(X, T)^{\Z/2}$ and $\RR_\pi(X, T)^\U$ be the subspaces of $\RR_\pi(X, T)$ with the prescribed stabilizers. So we have a decomposition: $\RR_\pi(X, T) = \RR_\pi(X, T)^{\Z/2} \sqcup \RR_\pi(X, T)^\U$.

As an example, recall that $\RR(S^2, 4)$ is the pillowcase $P$. It is not difficult to see that $\RR(S^2, 4)^\U = P^c$, the corners of the pillowcase and $\RR(S^2, 4)^{\Z/2} = P^*$, the smooth stratum of the pillowcase.

When restricting a representation to a submanifold, the stabilizer of the restriction is at least as big as the stabilizer of the original representation. Of particular interest is when the representation is restricted to the boundary. Then for a tangle $(X,T)$ such that $T\ne \emptyset$ and $\partial T\ne \emptyset$, we can decompose its character variety as follows: \[\RR_\pi(X,T) = \RR_\pi(X,T)^{\Z/2, \Z/2} \sqcup \RR_\pi(X,T)^{\Z/2, \U} \sqcup \RR_\pi(X,T)^{\U, \U}\] where $\RR_\pi(X,T)^{\Z/2, \U}$ is the subspace of $\RR_\pi(X,T)$ which has stabilizer $\Z/2$, but the restriction to the boundary $(\partial X, \partial T)$ has $\U$ stabilizer. $\RR_\pi(X,T)^{\Z/2, \Z/2}$ and $\RR_\pi(X,T)^{\U, \U}$ are defined analogously. 

\begin{definition}
For $\rho\in\RR_\pi(X,T)$ and $Y\subset X$ a submanifold, define \begin{equation}
\label{eq:stab}
\wStab_{Y}(\rho) := (\Stab(\rho), \Stab(\rho|_{(Y, Y\cap T)})) \text{ and } \wStab(\rho) := \wStab_{\partial X}(\rho).
\end{equation}
\end{definition}

By Lemma \ref{lem:bending}, the gluing parameters can be non-trivial if $\rho_1$ or $\rho_2$ are in $\RR_\pi(X,T)^{\Z/2, \U}$. For this reason, we are particularly interested in character varieties $\RR_\pi(X,T)$ for which $\RR_\pi(X,T)^{\Z/2, \U}$ is empty. Note that if $(\partial X, \partial T) = (S^2, 4)$, then $\RR_\pi(X,T)^{\Z/2,\U}$ is empty if and only if the non-abelian representations map to $P^*$.

\begin{lemma}
\label{lem:generic}
If either $\RR_{\pi_1}(X_1, T_1)^{\Z/2, \U}$ or $\RR_{\pi_2}(X_2, T_2)^{\Z/2, \U}$ is empty, then the map \[f: \RR_{\pi_1\cup \pi_2}(X, T) \to \RR_{\pi_1}(X_1, T_1) \times_{\RR(\Sigma, S)} \RR_{\pi_2}(X_2, T_2)\] as described in Lemma \ref{lem:bending} is a continuous bijection of compact Hausdorff spaces.
\end{lemma}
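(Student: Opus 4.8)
The plan is to deduce the statement from Lemma~\ref{lem:bending} together with a short analysis of the possible gluing parameters. Lemma~\ref{lem:bending} already supplies the surjection
$f\colon \RR_{\pi_1\cup \pi_2}(X, T) \to \RR_{\pi_1}(X_1, T_1) \times_{\RR(\Sigma, S)} \RR_{\pi_2}(X_2, T_2)$,
whose fiber over $([\rho_1],[\rho_2])$ is the double coset $\Stab(\rho_1)\backslash \Stab(\rho_1|_{(\Sigma, S)})/\Stab(\rho_2)$; moreover $f$ is continuous, being the map on character varieties induced by the inclusions $\pi_1(X_i\setminus T_i)\hookrightarrow \pi_1(X\setminus T)$ (perturbation curves deleted). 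So the only things left to establish are that both spaces are compact Hausdorff and that $f$ is injective, i.e.\ that each of these double cosets is a single point.

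Compactness and the Hausdorff property are routine bookkeeping: $\RR_{\pi_1\cup\pi_2}(X,T)$ is the quotient, by the conjugation action of the compact group $\SU$, of a closed — hence compact — subset of $\SU^N$ (the image of a finite generating set), cut out by the traceless and perturbation conditions; the quotient of a compact Hausdorff space by a compact group action is again compact Hausdorff. The fiber product is the preimage of the diagonal under the continuous map into $P\times P$ given by the two restriction maps, hence a closed subset of a product of compact Hausdorff spaces.

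For injectivity, fix a point of the fiber product and choose representatives with $\rho_1|_{(\Sigma,S)} = \rho_2|_{(\Sigma,S)}$, so that with $G:=\Stab(\rho_1|_{(\Sigma,S)}) = \Stab(\rho_2|_{(\Sigma,S)})$, $H:=\Stab(\rho_1)$, $K:=\Stab(\rho_2)$ we have $H\subseteq G$ and $K\subseteq G$. Suppose $\RR_{\pi_1}(X_1,T_1)^{\Z/2,\U}=\emptyset$ (the case $\RR_{\pi_2}(X_2,T_2)^{\Z/2,\U}=\emptyset$ is symmetric, interchanging the roles of $\rho_1$ and $\rho_2$). Since every traceless representation of a nonempty tangle has stabilizer $\U$ (when abelian) or $\Z/2$ (otherwise), emptiness of $\RR_{\pi_1}(X_1,T_1)^{\Z/2,\U}$ forces $\wStab(\rho_1)\in\{(\U,\U),(\Z/2,\Z/2)\}$, hence $\Stab(\rho_1)=\Stab(\rho_1|_{(\Sigma,S)})$, i.e.\ $H=G$. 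Then for every $g\in G$ one has $HgK=(Hg)K=GK=G$ because $K\subseteq G$, so $H\backslash G/K$ is a single double coset. Thus every fiber of $f$ is a point, so $f$ is a continuous bijection of compact Hausdorff spaces (in fact a homeomorphism, although only the weaker statement is claimed).

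The step I expect to require the most care is the identification $H=G$: one must be sure that emptiness of $\RR^{\Z/2,\U}$, which is phrased via $\Stab(\rho|_{\partial X_i})$, genuinely controls $\Stab(\rho|_{(\Sigma,S)})$ for the boundary component $(\Sigma,S)$ along which the gluing takes place. When $(\Sigma,S)$ is all of $\partial(X_i,T_i)$ this is immediate; in general one uses $\Stab(\rho_i)\subseteq \Stab(\rho_i|_{\partial X_i})\subseteq \Stab(\rho_i|_{(\Sigma,S)})$ and the fact that the only way the composite inclusion can be strict is $\Stab(\rho_i)=\Z/2$, $\Stab(\rho_i|_{(\Sigma,S)})=\U$ — precisely the excluded configuration, since an abelian restriction to a single $(S^2,4)$ boundary component lands in $P^c$. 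Everything else — the double-coset arithmetic and the point-set topology — is entirely routine.
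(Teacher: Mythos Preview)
Your proof is correct and follows the same approach as the paper: invoke Lemma~\ref{lem:bending}, deduce from the hypothesis that $\Stab(\rho_1)=\Stab(\rho_1|_{(\Sigma,S)})$, and conclude the double coset collapses to a point; you additionally supply the compactness/Hausdorff verification that the paper omits. One remark on your final paragraph: the concern you raise (that $(\Sigma,S)$ might be a proper part of $\partial X_1$) is legitimate, but your proposed resolution does not quite close it---an abelian restriction to one $(S^2,4)$ component of $\partial X_1$ need not force $\Stab(\rho_1|_{\partial X_1})=\U$, so the configuration you describe is not literally the excluded one; the paper's own proof simply asserts the needed equality without comment, tacitly taking $(\Sigma,S)=\partial X_1$, which holds in every application made of the lemma.
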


\begin{proof}
 Without loss of generality, let $\RR_{\pi_1}(X_1, T_1)^{\Z/2, \U}$ be empty. As a result, $\Stab(\rho_1) = \Stab(\rho_1|_{(\Sigma, S)})$ and so $\Stab(\rho_1)\backslash \Stab(\rho_1|_{(\Sigma, S)}) \cong \{I\}$ where $I\in\SU$ is the identity. Then by Lemma \ref{lem:bending}, \[f\inv(([\rho_1], [\rho_2])) \cong \Stab(\rho_1)\backslash \Stab(\rho_1|_{(\Sigma, S)})/\Stab(\rho_2) \cong \{I\}/\Stab(\rho_2) \cong \{I\}\]
 Thus the fiber of $f$ over any point is always a point, finishing the lemma.
\end{proof}

As a result of this, many results will be simplified if the character varieties involved have no representations with stabilizer $(\Z/2, \U)$. Fortunately, the following result of Herald and Kirk ensures that this condition can be achieved with holonomy perturbations.

\begin{theorem}[\cite{HKRegularity}*{Corollary D}]
\label{thm:reg}
Let $(X,T)$ be a 2-tangle in a homology ball. There exists an arbitrarily small holonomy perturbation $\pi$ so that the restriction map \[\RR_\pi(X,T)^{\Z/2,\Z/2}\to P\] is a Lagrangian immersion into $P^*$ and $\RR_\pi(X,T)^{\Z/2, \U}$ is empty.
\end{theorem}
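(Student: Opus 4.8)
Theorem~\ref{thm:reg} is a genericity statement — an arbitrarily small holonomy perturbation puts the stratum $\RR_\pi(X,T)^{\Z/2,\Z/2}$ in general position — so the plan is a Sard--Smale argument, as carried out in \cite{HKRegularity}. First I would fix once and for all a countable family $\{\Pert_i\}$ of pairwise disjoint framed embedded circles in $X\setminus T$ whose isotopy classes are $C^0$-dense among all framed embedded loops. Perturbation data supported on this family, with perturbation functions in a fixed separable Banach space of odd $2\pi$-periodic functions with rapidly decaying Fourier coefficients and perturbation parameters in $\ell^1$, forms a Banach space $\Pr$; an element $\pi\in\Pr$ of small norm realizes an ``arbitrarily small'' perturbation.

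The technical heart is to show the universal moduli space
\[
\mathcal{U}\;=\;\{(\pi,[\rho]):\pi\in\Pr,\ [\rho]\in\RR_\pi(X,T)^{\Z/2,\Z/2}\}
\]
is a smooth separable Banach manifold. The flatness, traceless-meridian, and perturbation conditions~\eqref{eq:holpert} cut out $\mathcal{U}$ as the zero set of a smooth section of a Banach bundle over $\Pr$ times a configuration space of representations, and one must check the linearization is surjective at each solution; its cokernel is identified, via the formal tangent space $T_{[\rho]}\RR(X)=\ker[H^1(X;\su_{\operatorname{ad}\rho})\to H^1(T;\su_{\operatorname{ad}\rho})]$ of \cite{CHK}, with an honest cohomology group of $X\setminus\bigcup_i\Pert_i$. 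Surjectivity follows from (i) the fact that varying $f_i$ and $t_i$ moves $\rho(\mu_i)$ in a two-parameter family transverse to the constraint along $\Pert_i$, and (ii) a unique-continuation (Aronszajn-type) argument: a class annihilating all such variations vanishes on a $C^0$-dense set of loops, hence is a coboundary. Step (ii) is exactly where the hypothesis $\wStab(\rho)=(\Z/2,\Z/2)$ is used — the holonomy subgroup is already non-abelian — which is why one works on this stratum from the start. Granting this, the projection $\Pi:\mathcal{U}\to\Pr$ is Fredholm, and a Poincar\'e--Lefschetz duality / Euler characteristic computation with the tangent space formula, using that the boundary traceless character variety is the two-dimensional pillowcase (so the restriction image is half-dimensional), shows its index — the expected dimension of $\RR_\pi(X,T)^{\Z/2,\Z/2}$ — equals $1$. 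Sard--Smale then produces a residual, hence dense, set of $\pi\in\Pr$ of arbitrarily small norm for which $\RR_\pi(X,T)^{\Z/2,\Z/2}$ is a smooth $1$-manifold, regular in the sense of Definition~\ref{def:reg}.

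It then remains to recognize the restriction map to $P$ as a Lagrangian immersion into $P^*$. At such a regular $\pi$ it is an immersion — injectivity on the $1$-dimensional tangent space follows from the half-lives-half-dies property of the homology ball $X$ together with the regularity just established — and its image is isotropic for $d\gamma\wedge d\theta$ by the usual Stokes-theorem argument, namely that the Atiyah--Bott $2$-form on $P^*$ pulls back to an exact form over the filling $X$ and so restricts to zero on the tangle's character variety; a $1$-dimensional isotropic submanifold of the $2$-dimensional symplectic $P^*$ is Lagrangian. To see the image avoids $P^c$, equivalently that $\RR_\pi(X,T)^{\Z/2,\U}$ is empty: such a $\rho$ is non-abelian but its boundary restriction is abelian, i.e. sends all four boundary meridians simultaneously into $\{\pm\i\}$, which is two independent codimension-one conditions (they cut the $2$-dimensional $P$ down to the $0$-dimensional $P^c$). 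Running the same universal-moduli-space construction for this restricted problem — transversality still holds because such $\rho$ retain stabilizer $\Z/2$ — yields a Banach manifold whose projection to $\Pr$ has Fredholm index $1-2=-1<0$, so by Sard--Smale the generic fiber is empty. Intersecting the two residual subsets of $\Pr$ gives a single arbitrarily small $\pi$ with both properties; by Lemma~\ref{lem:generic} this is precisely the input that makes the downstream gluing maps of Lemma~\ref{lem:bending} have singleton fibers.

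The step I expect to be the main obstacle is showing $\mathcal{U}$ is a Banach manifold, i.e. surjectivity of the linearized perturbation operator: this is where $\{\Pert_i\}$ must be chosen rich enough to probe every obstruction class, and where the only genuinely analytic ingredients appear — the unique-continuation estimate for flat connections and the identification of the cokernel with a cohomology group. The remaining ingredients (the index computation, the application of Sard--Smale, the immersion and isotropy properties, and the codimension bookkeeping ruling out the $(\Z/2,\U)$ stratum) are by comparison formal.
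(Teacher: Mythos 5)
The paper does not prove Theorem~\ref{thm:reg}; it quotes it directly as Corollary~D of \cite{HKRegularity} and treats it as a black box — for instance it is invoked in Proposition~\ref{prop:goodpair} and Lemma~\ref{lem:Kgood} but never argued. So there is no in-paper proof to compare your proposal against, and strictly speaking you have reconstructed the cited reference's argument rather than one belonging to this paper.

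As a reconstruction of Herald and Kirk's argument your sketch is plausible: a Banach family of holonomy perturbation data rich enough to hit every obstruction class, surjectivity of the linearized operator on the non-abelian stratum (where the stabilizer is $\Z/2$) via a unique-continuation/density argument, a Fredholm index count giving expected dimension $1$ for the $(\Z/2,\Z/2)$ stratum and $1-2=-1$ for the locus restricting into $P^c$, Sard--Smale to pass to a residual set of small perturbations, the Stokes/Chern--Simons argument for isotropy, and a final intersection of residual sets. These are the standard components of that kind of transversality theorem, and the reasons you give for each are the expected ones. Two things I would flag as needing more than the one-line justifications you give before this would pass as a full proof: the assertion that half-lives-half-dies plus regularity yields \emph{injectivity} of the boundary restriction on $H^1$ (half-lives-half-dies controls rank, not kernel, so the injectivity must come from the transversality statement itself, which you should say explicitly), and the identification of the cokernel of the linearization with a group cohomology of the perturbed complement — that is the genuinely analytic step and is exactly the part a reader would need to look up in \cite{HKRegularity}. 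Neither is a gap so much as a compression; but since the paper under review does not supply the proof, I cannot certify that your route coincides with the one actually taken there.
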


\subsection{Lagrangian Correspondences}

Suppose that $M_0$, $M_1$, and $M_2$ are manifolds. Let $N_{01}\hookrightarrow M_0\times M_1$ and $N_{12}\hookrightarrow M_1\times M_2$ be subspaces.

\begin{definition}
The \emph{geometric composition} of $N_{01}$ and $N_{12}$ is $N_{01}\circ N_{12}:= \pi_{02}(N_{01}\times_{M_1}N_{12})$ where $\pi_{02}: M_0\times M_1 \times M_1 \times M_2 \to M_0\times M_2$ is the projection map.

\end{definition}

Geometric compositions play nicely with the structure of symplectic manifolds and their Lagrangian submanifolds. If $M = (M,\omega)$ is a symplectic manifold, let $M^-:=(M,-\omega)$. If $M$ and $N$ are symplectic manifolds, then a \emph{Lagrangian correspondence} from $M$ to $N$ is a Lagrangian submanifold $L\subset M^-\times N$.

\begin{theorem}[\cite{WW}*{Lemma 2.0.5}]
\label{thm:lagcomp}
Let $M_0$, $M_1$, and $M_2$ be symplectic manifolds and let $L_{01}\subset M^-_0\times M_1$ and $L_{12}\subset M^-_1\times M_2$ be Lagrangian correspondences.
If $\pi_{1}(L_{01})\pitchfork\pi_{1}(L_{12})$, then $L_{01}\times_{M_1} L_{12}$ is a manifold and the geometric composition $L_{01}\circ L_{12}$ is an immersed Lagrangian submanifold of $M^-_0\times M_2$.
\end{theorem}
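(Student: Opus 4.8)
The plan is to reduce the statement to the standard fact that the composition of Lagrangian correspondences is Lagrangian whenever the intersection defining the geometric composition is transverse and the projection to the outer factors is an embedding, and then to remove the embedding hypothesis by passing to an \emph{immersed} geometric composition. Concretely, write $L_{01}\subset M_0^-\times M_1$ and $L_{12}\subset M_1^-\times M_2$, and consider the product $L_{01}\times L_{12}\subset M_0^-\times M_1\times M_1^-\times M_2$. Inside $M_1\times M_1^-$ sits the diagonal $\Delta_{M_1}$, which is Lagrangian in $M_1^-\times M_1$ (equivalently, $\Delta_{M_1}\subset M_1\times M_1^-$ is Lagrangian with the sign conventions chosen so that the middle factors pair up correctly). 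The fiber product $L_{01}\times_{M_1}L_{12}$ is exactly $(L_{01}\times L_{12})\cap (M_0\times \Delta_{M_1}\times M_2)$, so the first step is to show this intersection is clean, indeed transverse after a dimension count, using the hypothesis $\pi_1(L_{01})\pitchfork \pi_1(L_{12})$.

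The key steps, in order, are: (1) Observe $\dim L_{01}=\dim M_0$ via $\dim L_{01}=\tfrac12(\dim M_0+\dim M_1)$ is wrong in general — rather $\dim L_{01}=\tfrac12\dim(M_0^-\times M_1)$, and similarly for $L_{12}$; combine these to compute the expected dimension of $L_{01}\times_{M_1}L_{12}$ and of $L_{01}\circ L_{12}$, which should come out to $\tfrac12\dim(M_0^-\times M_2)$. (2) Show that the transversality of $\pi_1(L_{01})$ and $\pi_1(L_{12})$ inside $M_1$ is precisely the condition that makes $L_{01}\times_{M_1}L_{12}$ a smooth manifold of the expected dimension: at a point $(x,y,y,z)$ in the fiber product, the tangent spaces of $L_{01}\times L_{12}$ and $M_0\times\Delta_{M_1}\times M_2$ span, because the only possible failure is in the $T_yM_1\times T_yM_1$ directions, which is governed exactly by the images of $dL_{01}$ and $dL_{12}$ under projection to $T_yM_1$. (3) Having established smoothness, verify that the projection $\pi_{02}$ restricted to $L_{01}\times_{M_1}L_{12}$ is an \emph{immersion}: its differential has kernel contained in the directions tangent to the $M_1$-factor, and a short linear-algebra argument (again using transversality of the projections) shows this kernel is trivial. (4) Finally, pull back the symplectic form: on $L_{01}\times_{M_1}L_{12}$ the form $\omega_0^-\oplus\omega_2$ pulled back via $\pi_{02}$ agrees with the restriction of $\omega_0^-\oplus\omega_1\oplus\omega_1^-\oplus\omega_2$, and the latter vanishes on $L_{01}\times L_{12}$ by the Lagrangian property while the $\omega_1\oplus\omega_1^-$ contribution cancels along $\Delta_{M_1}$; a dimension count then upgrades "isotropic immersed submanifold" to "Lagrangian immersed submanifold."

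I expect the main obstacle to be step (3): showing that $\pi_{02}$ is an immersion on the fiber product, as opposed to merely a smooth map with Lagrangian-ish image. This is where the transversality hypothesis must be used a \emph{second} time and where the distinction between an embedded geometric composition (the usual Weinstein–Woodward "embedded composition" hypothesis) and the weaker immersed conclusion stated here becomes delicate. The point is that without any cleanliness or embeddedness assumption beyond $\pi_1(L_{01})\pitchfork\pi_1(L_{12})$, one only gets an immersion — self-intersections of the image are genuinely possible — so the argument must be careful to claim only injectivity of the differential, not injectivity of $\pi_{02}$ itself. Everything else is a bookkeeping exercise with signs and dimensions; citing \cite{WW}*{Lemma 2.0.5} directly, as the statement does, is of course the intended route, so in the paper this "proof" is really just the above reduction together with the reference.
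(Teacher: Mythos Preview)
The paper does not prove this theorem at all: it is stated with attribution to \cite{WW}*{Lemma 2.0.5} and used as a black box, with no argument given. Your sketch is a reasonable outline of the standard proof from that reference, and you yourself note at the end that citing \cite{WW} is the intended route; so there is nothing to compare against in the paper beyond the citation.
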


\subsection{Overview of Reduced Singular Instanton Homology}
\label{sec:instanton}

\begin{figure}
    \centering
    \includegraphics[height=2in]{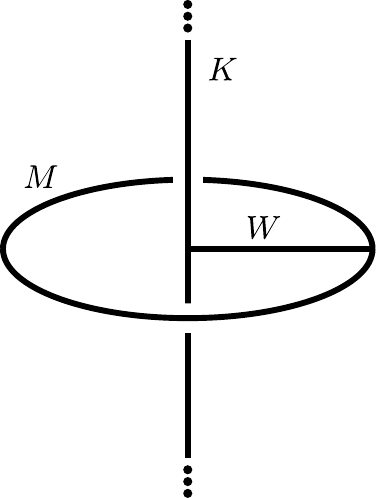}
    \caption{}    
    \label{fig:Earring}
\end{figure}

Let $X$ be an integer homology sphere and let $K:S^1\to X$ be a knot. For technical reasons, Kronheimer and Mrowka modify the knot in a process known as adding a marking. The particular atom considered here is called the \emph{earring}. Let $x\in K$ and let $M$ be a meridian of $K$ at $x$ and let $W$ be an (unknotted) arc from $M$ to $x$ as in Figure \ref{fig:Earring}. Let $K^\natural := K \cup M \cup W$ be $K$ \emph{with an earring}. Then consider the principal $\SU$ bundle over $X$, $E$.

Let $A$ be the space of connections $c$ on $E$ such that:

\begin{itemize}
    \item $c$ is singular on $K^\natural$ in the sense of \cite{KMinst}.
    \item If $\gamma_i$ is a sequence of loops in $X\setminus K^\natural$ such that $\gamma_i$ is a meridian of $K$ or $M$, limiting to a single point on $K$ or $M$ respectively, then the holonomy of $\gamma_i$ should limit to an element of $C(\i)$.
    \item If $\gamma_i$ is a sequence of loops in $X\setminus K^\natural$ such that $\gamma_i$ is a meridian of $W$, limiting to a single point on $W$, then the holonomy of $\gamma_i$ should limit to -1.
\end{itemize}

\begin{definition}
The \emph{Chern-Simons function} $\CS:A\to \R$ is given by
\[\CS(a) = \frac{1}{8\pi^2}\int_X \tr(a\wedge da + \frac{2}{3}a\wedge a\wedge a) \text{ for }a\in A.\]
\end{definition}

Then let $G$ be the gauge group and $B:= A/G$. The Chern-Simons function descends to a well defined function $\CS:B\to \R/\A$. The chain group $CI^\natural$ is freely generated by the \textit{flat} connections mod gauge in $B$, which are the critical points of $\CS$. Generally, $\CS$ is not Morse, so it is often necessary to perturb $\CS$ with a holonomy perturbation, $\pi$, which takes as input the same data as mentioned in Section \ref{sec:holpert}. In this case, $CI^\natural$ is generated by \emph{holonomy perturbed flat connections} in $B$, which are flat outside of a neighborhood of the perturbation curves in $\pi$ and satisfy a slightly different equation inside those neighborhoods. See \cites{Taubes, KMinst} for details.

\begin{theorem}[\cites{Taubes ,Herald}]
\label{thm:translate}
For (possibly trivial) holonomy perturbation data $\pi$, there is a one-to-one correspondence between the moduli space of holonomy perturbed flat connections in $B$ and the perturbed character variety $\RR_\pi(X, K^\natural)$ (which is defined with the analogous rule that meridians of $W$ get sent to $-1\in\SU$, see Section \ref{sec:Earring}).
\end{theorem}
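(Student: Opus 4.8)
The plan is to route the correspondence through the classical equivalence between flat connections modulo gauge and conjugacy classes of fundamental-group representations, treating the case of trivial $\pi$ first and then grafting on the holonomy perturbation. For the unperturbed case: a connection $c\in A$ that is singular along $K^\natural$ in the sense of \cite{KMinst} restricts, away from $K^\natural$, to an honest smooth $\SU$ connection on the open manifold $X\setminus K^\natural$, and $c$ being a critical point of $\CS$ is equivalent to flatness of that restriction. Thus $c$ is determined up to gauge by its holonomy $\rho_c\colon\pi_1(X\setminus K^\natural)\to\SU$. The three asymptotic conditions defining $A$ translate item by item into algebraic conditions on $\rho_c$: the requirement that the holonomies of small meridians of $K$, respectively of $M$, limit into $C(\i)$ forces $\tr\rho_c(m)=0$ for each such meridian $m$, while the requirement that the holonomies of small meridians of $W$ limit to $-1$ forces $\rho_c$ to send each meridian of $W$ to $-1\in\SU$; conversely, any representation satisfying these conditions arises as the holonomy of such a singular flat connection. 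Since gauge-equivalent connections have conjugate holonomies and conversely, passing to $B=A/G$ yields a bijection onto $\RR(X,K^\natural)$, with the convention on meridians of $W$ of Section~\ref{sec:Earring}; this is exactly the dictionary recorded in \cites{KMinst, Herald}.

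Next I would incorporate the perturbation $\pi=\{(\Pert_i,f_i,t_i)\}$, whose curves are supported in pairwise disjoint solid tori $\Pert'_i\colon S^1\times D^2\hookrightarrow X\setminus K^\natural$. The holonomy-perturbed Chern--Simons functional agrees with $\CS$ outside $\bigcup_i\Pert'_i(S^1\times D^2)$, so its critical points are genuinely flat there; inside each solid torus the perturbed equation constrains the connection so that, writing the holonomy along the longitude as $\rho(\lambda_i)=e^{u_iQ_i}$, the holonomy along the meridian is $\rho(\mu_i)=e^{t_if_i(u_i)Q_i}$, which is precisely the perturbation condition of Equation~\ref{eq:holpert}. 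Hence the holonomy map now lands in the representations of $\pi_1\!\bigl(X\setminus(K^\natural\cup\bigcup_i\Pert_i)\bigr)$ satisfying the traceless and $W$-meridian conditions together with Equation~\ref{eq:holpert} --- that is, in $\RR_\pi(X,K^\natural)$ --- and it is still surjective and descends to a bijection on gauge-equivalence versus conjugacy classes. This step is the content of \cite{Herald}*{Lemma 61}, combined with Taubes's construction of the perturbed moduli space \cite{Taubes}, applied in the singular setting of $K^\natural$. Concatenating the two cases gives the claimed one-to-one correspondence.

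The genuinely hard part --- and the reason this is attributed to Taubes and Herald rather than derived from scratch --- is analytic, not topological. One must fix the correct weighted Sobolev model for connections singular along the graph $K^\natural$ and verify, within that model, that ``the holonomies of small meridians converge'' is equivalent to the stated limiting values in $C(\i)$ or $\{-1\}$; in particular one must check that no monodromy data is lost at the singular locus and that the earring decoration along $W$ is exactly what removes reducible connections and keeps the moduli space well behaved. One must also verify that the specific local model chosen for the holonomy perturbation reproduces Equation~\ref{eq:holpert} exactly rather than to leading order only, and that compactness and regularity are good enough for the bijection to be a homeomorphism onto $\RR_\pi(X,K^\natural)$. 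Granting these analytic inputs, the correspondence and its equivariance under gauge transformations versus conjugation are formal.
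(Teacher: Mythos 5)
The paper does not prove this theorem; it is stated as a cited result, attributed to Taubes and Herald, with the technical content delegated entirely to those references (and, for the holonomy-perturbation dictionary, to \cite{Herald}*{Lemma 61}, which the paper also invokes directly when defining $\RR_\pi$ in Section~\ref{sec:holpert}). There is therefore no ``paper's own proof'' to compare against. Given that, your proposal is a faithful and honest reconstruction of what the citation is standing in for: holonomy gives the Riemann--Hilbert correspondence between flat connections mod gauge on $X\setminus K^\natural$ and conjugacy classes of $\pi_1$-representations; the singular boundary conditions along $K$, $M$, $W$ translate to the traceless and $-1$ conditions on meridians; the perturbed critical-point equation localizes in the solid tori and reproduces Equation~\ref{eq:holpert}; and gauge equivalence matches conjugation. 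You also correctly flag that the genuine content is analytic --- the weighted Sobolev model near the singular graph, the precise sense in which meridional holonomies ``limit,'' the role of the earring in excluding reducibles, and the exactness (not just leading order) of the local perturbation model --- and that this is precisely what the attribution to \cites{Taubes, Herald} is carrying. In short: there is no gap to report relative to the paper, because the paper itself proves nothing here; your sketch is accurate as an outline and correctly locates the parts that cannot be made rigorous without the machinery in the cited sources.
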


The differentials of the chain complex come from instantons, which are connections on a principal $\SU$ bundle over $(X,K^\natural)\times \R$ satisfying the anti-self dual equation:
\[F_A = -\star F_A\] where $F_A$ is the curvature 2-form of $A$ and $\star$ is the Hodge star operator. Then the \emph{reduced singular instanton homology}, $I^\natural$, is obtained by taking the homology of this chain complex.

Given a knot $K$, it is extremely difficult to compute $I^\natural(K)$ directly. However, there are some easy to calculate bounds on the rank. There is a spectral sequence from the reduced Khovanov homology of $\ol{K}$ to $I^\natural(K)$ \cite{KM} and thus $\rank(I^\natural(K)) \le \rank(\Kh(\ol{K}))$. Let $u(K):= \rank(\Kh(\ol{K}))$ be this upper bound. Kronheimer and Mrowka proved that $I^\natural$ detects the unknot, and thus Khovanov homology detects the unknot. If the Alexander polynomial of $K$ is given by $\sum_{-d}^d a_i t^i$ then $\rank(I^\natural(K))$ is bounded below by $l(K):= \sum_{-d}^d \abs{a_i}$ \cites{KMAlex, Lim}.

The Atiyah-Floer conjecture posits that every instanton Floer theory has a corresponding, isomorphic Lagrange Floer theory defined using character varieties \cite{AtiyahFloer}. In some settings, Atiyah-Floer counterparts have been constructed for instanton theories \cites{DostSal, DaemiFukLip}, but there is no counterpart known for $I^\natural$ yet. A program initiated by Hedden, Herald, and Kirk seeks to find an Atiyah-Floer counterpart to $I^\natural$ \cites{PCI, PCII}. This counterpart is explored further in Section \ref{sec:Hnat}.

\section{Unperturbed Character Varieties}
\label{sec:unpert}

\subsection{Unperturbed Traceless \texorpdfstring{$\SU$}{SU(2)} Character Variety of \texorpdfstring{$C_3$}{C3}}
\label{sec:C3Unpert}

\begin{figure}
    \centering
    \includegraphics[height=2.5in]{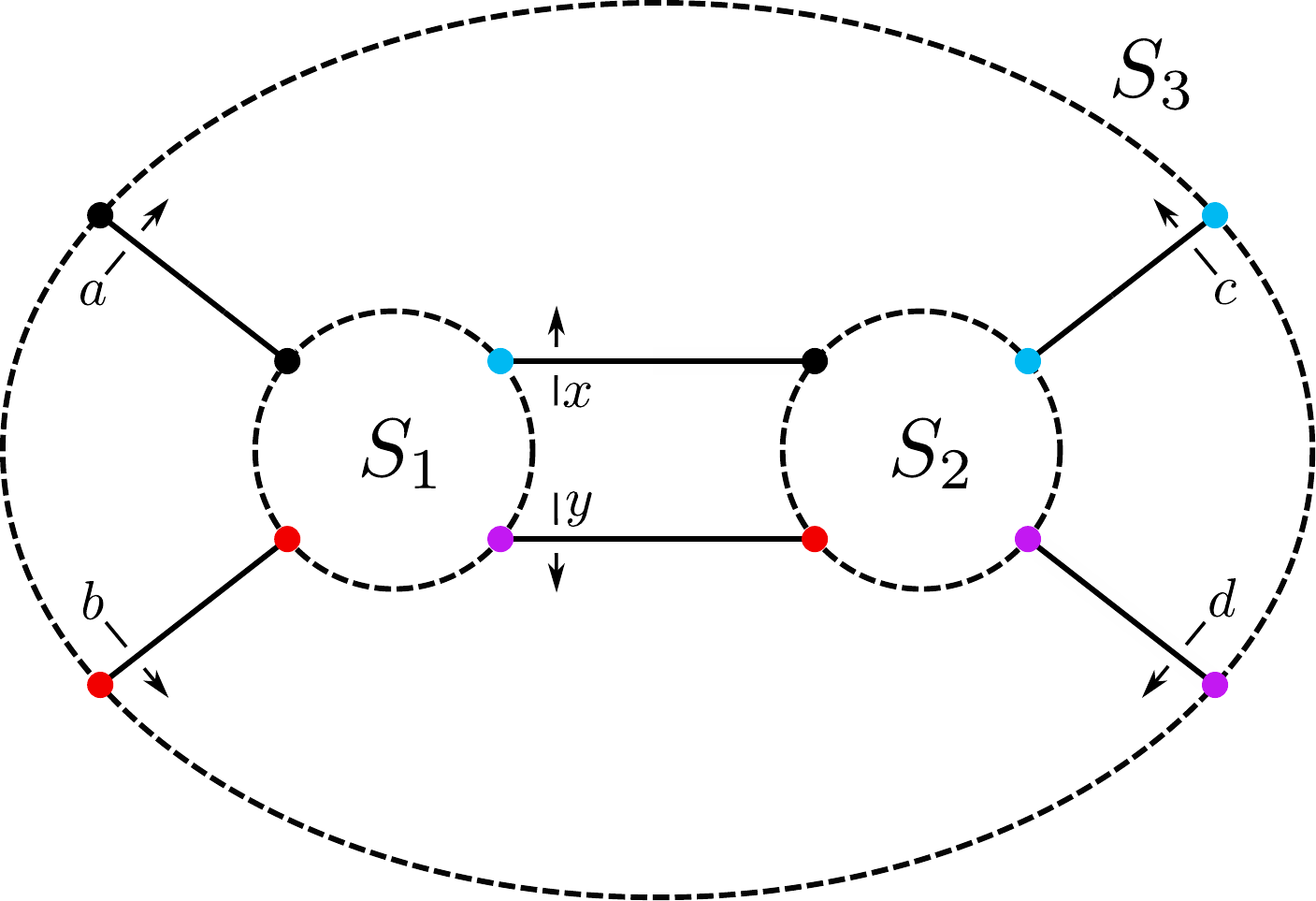}
    \caption{The tangle $C_3$}
    \label{fig:C3}
\end{figure}

Let $C_3 = (S^3\setminus 3B^3, T)$ be the tangle shown in Figure \ref{fig:C3}. Label the meridians of $C_3$ as in the figure. Note that $\partial C_3$ is a disjoint union of 3 copies of $(S^2, 4)$. Call these components $S_i$ as labeled in the figure. In the literature of arborescent knots, $C_3$ is also known as the \textit{(hollow) elementary tangle} \cites{This, BonSieb}.

The tangle sum, $T_1+T_2$, is constructed by gluing tangles $T_1$ and $T_2$, into $C_3$ along $S_1$ and $S_2$. Therefore, a representation $\rho\in\RR(T_1+T_2)$ can be decomposed into a representation $\rho|_{T_1}\in\RR(T_1)$, a representation $\rho|_{T_2}\in\RR(T_2)$, and a representation $\rho|_{C_3}\in\RR(C_3)$. 
Conversely, given representations $\rho_1\in\RR(T_1)$, $\rho_2\in\RR(T_2)$ and $\rho_C\in\RR(C_3)$ such that each $\rho_i$ and $\rho_C$ agree when restricted to $S_i$, these representations can be glued together to obtain representations of $T_1+T_2$.
In this section, the character variety $\RR(C_3)$ is studied, and its structure is used to compute the character varieties of tangle sums, $\RR(T_1+T_2)$.

\begin{proposition}
A presentation of the fundamental group of the tangle complement
\[X_0 := (S^3\setminus 3B^3)\setminus T\]
is given as follows:

\begin{description}
\item[Generators] The set $\{a,b,c,d,x,y\}$ generates the group.

\item[Relations] The following relations are complete.
\begin{enumerate}[{\upshape(I)}]
    \item $\ol{a}b = \ol{c}d$ (from $\SThree$)
    \label{eq:rel1p}
    
    \item $y = x\ol{a}b$ (from $\SOne$)
    
    \item $\ol{x}y=\ol{c}d$ (from $\STwo$)
\end{enumerate}
\end{description}
\end{proposition}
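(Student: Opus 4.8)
The plan is to compute a presentation of $\pi_1(X_0)$ by van Kampen's theorem, building $X_0$ from pieces whose fundamental groups and boundary data we already understand. The space $X_0 = (S^3 \setminus 3B^3) \setminus T$ deformation retracts onto a handlebody-like complement, so first I would set up a decomposition of $S^3 \setminus 3B^3$ along with the tangle $T$ that makes the three boundary two-spheres $S_1, S_2, S_3$ and their four-punctured structure visible. Concretely, I would think of $S^3 \setminus 3B^3$ as $(S^2 \times I)$ glued appropriately, or more efficiently, use the diagram in Figure \ref{fig:C3} to read off a Wirtinger-type presentation. The generators $a,b,c,d$ are the meridians on the boundary sphere $S_3$ (labeled as in the figure), and $x,y$ are the meridians on the arcs visible in the interior; the claim is that these six meridians suffice to generate.

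The key steps, in order: (1) Identify the arcs of $T$ in the diagram and their meridians; verify via a Wirtinger/van Kampen count that $\{a,b,c,d,x,y\}$ generate — this follows because each boundary sphere $S_i$ contributes a four-punctured sphere whose $\pi_1$ is generated by the four meridians around its punctures, and the meridians of $S_1$ and $S_2$ can each be expressed through $\{a,b,c,d,x,y\}$ using the tangle structure of $C_3$. (2) Derive relation (I): the boundary sphere $S_3$ is a four-punctured sphere, so its meridians $a,b,c,d$ satisfy exactly the pillowcase relation; by the conventions set in Section \ref{sec:pillowcase} this reads $\bar a b = \bar c d$ (i.e. $a\bar c = b \bar d$ rewritten), coming from the fact that a loop encircling all four punctures of $S_3$ is nullhomotopic in the handle on that side. (3) Derive relations (II) and (III): these are the pillowcase relations for the boundary spheres $S_1$ and $S_2$ respectively, again expressing that the corresponding four-punctured-sphere relation holds, where the meridians of $S_1$ are $\{x, y, a\text{-related}, b\text{-related}\}$ and similarly for $S_2$ with $\{x,y,c,d\}$ — here one must carefully track which meridian of $C_3$'s interior arcs corresponds to which puncture, reading off $y = x\bar a b$ from how the strands connect $S_1$ to $S_3$, and $\bar x y = \bar c d$ from how they connect $S_2$ to $S_3$. (4) Check completeness: count that $S^3 \setminus 3B^3$ minus the tangle, built from these pieces, has no further relations — i.e. after gluing the three four-punctured spheres to the "core" of $C_3$ (which is itself essentially a product region contributing no new relations beyond identifying generators), relations (I)–(III) are all that arise. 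This is a Mayer–Vietoris / van Kampen bookkeeping argument.

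The main obstacle I expect is step (3) and the completeness check in step (4): getting the relations \emph{exactly right} — including orientations and the precise words $y = x\bar a b$ and $\bar x y = \bar c d$ — requires carefully fixing basepoints and orientation conventions for all the meridians in Figure \ref{fig:C3} and tracing each strand of $T$ through the tangle. A sign error or a transposition of two punctures would give a superficially similar but wrong presentation. The cleanest way to avoid this is to apply the Wirtinger algorithm directly to the diagram of $C_3$ (treating the dashed boundary circles as additional "pseudo-strands" whose meridians are the boundary generators), then simplify, and finally reconcile the simplified presentation with the four-punctured-sphere structure on each $S_i$ to confirm that exactly three relations survive and that they take the stated form. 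I would also sanity-check by abelianizing: $H_1(X_0)$ should have the expected rank given that $X_0$ is a complement of a tangle with the homology of a certain number of circles, and the three relations should cut the free abelian group on six generators down to the right rank.
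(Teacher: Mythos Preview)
The paper gives no proof of this proposition; it is stated as a routine computation, with the labels ``from $S_1$'', ``from $S_2$'', ``from $S_3$'' on the relations serving as the entire justification. The intended method is exactly the Wirtinger presentation read off from the diagram of $C_3$ in Figure~\ref{fig:C3} --- indeed, for the perturbed analogue in Section~\ref{sec:pertC3} the paper says explicitly ``The fundamental group of $C_3$ with the perturbation curve $D$ is calculated using the Wirtinger presentation of the diagram in Figure~\ref{fig:C3Pert}.'' Your plan to apply Wirtinger directly to the diagram (treating the dashed boundary circles' punctures as meridional generators) and then recognize the surviving relations as the three pillowcase relations on $S_1,S_2,S_3$ is exactly this, so your approach matches the paper's.

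One small correction to your description: $x$ and $y$ are not ``interior arc'' meridians in addition to the boundary data --- they \emph{are} boundary meridians, appearing on both $S_1$ and $S_2$ (the two inner spheres share the pair $x,y$ and each pairs them with two of $a,b,c,d$). Once you see this from the figure, the three pillowcase relations $\bar a b = \bar c d$, $y = x\bar a b$, $\bar x y = \bar c d$ drop out immediately, and completeness is automatic because the complement of $T$ in $S^3\setminus 3B^3$ deformation retracts to a neighborhood of the boundary, so there are no relations beyond those forced by the three $(S^2,4)$'s. Your abelianization sanity check is a good idea but unnecessary once this picture is in place.
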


\begin{remark}
Relation (III) can be derived from Relations (I) and (II):
$\ol{a}b=\ol{c}d \implies \ol{x}x\ol{a}b=\ol{c}d \stackrel{\text{(II)}}{\implies} \ol{x}y = \ol{c}d$, which is Relation (III).
\end{remark}

\begin{corollary}
$\pi_1(X_0)  = \langle a, b, c, x\rangle$ and thus $\Ro(X_0)\cong \SU^4$.
\end{corollary}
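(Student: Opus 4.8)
The plan is to reduce the given six-generator presentation of $\pi_1(X_0)$ to a presentation of a free group of rank four via Tietze transformations, eliminating the generators $y$ and $d$, and then to identify $\Ro(X_0)$ — which by the shorthand convention for a topological space is $\Ro(X_0,\emptyset) = \Hom(\pi_1(X_0),\SU)$ — with $\SU^4$ by evaluation on the four surviving generators.

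First I would use Relation (II), $y = x\ol{a}b$, which expresses $y$ as a word in $a,b,x$. The corresponding Tietze transformation deletes the generator $y$ and Relation (II), substituting $x\ol{a}b$ for $y$ in every other relation. Relation (III) then becomes $\ol{x}(x\ol{a}b) = \ol{c}d$, i.e.\ $\ol{a}b = \ol{c}d$, which is exactly Relation (I); this is precisely the redundancy recorded in the Remark. So after this step the presentation is $\langle a,b,c,d,x \mid \ol{a}b = \ol{c}d\rangle$. Next I would rewrite the single remaining relation $\ol{a}b = \ol{c}d$ as $d = c\ol{a}b$, which expresses $d$ as a word in $a,b,c$; the corresponding Tietze transformation deletes $d$ together with this relation, leaving $\langle a,b,c,x \mid\ \rangle$. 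Hence $\pi_1(X_0)$ is free on $a,b,c,x$.

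For the final assertion, since $\pi_1(X_0)$ is free of rank $4$, a homomorphism $\rho \in \Hom(\pi_1(X_0),\SU)$ is uniquely and freely determined by the tuple $(\rho(a),\rho(b),\rho(c),\rho(x)) \in \SU^4$, and conversely every such tuple extends to a homomorphism; the evaluation map $\rho \mapsto (\rho(a),\rho(b),\rho(c),\rho(x))$ is a continuous bijection with continuous inverse, so $\Ro(X_0) \cong \SU^4$. The argument is essentially routine, and the only point needing a moment of care is the bookkeeping in the Tietze transformations — specifically that Relation (III) collapses into Relation (I) once $y$ is eliminated, so that the two remaining relations reduce to a single one that in turn eliminates $d$; performing the eliminations in the order $y$ then $d$ keeps this transparent.
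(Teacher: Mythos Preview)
Your proof is correct and takes essentially the same approach as the paper, which leaves the corollary without an explicit proof but clearly intends it to follow from the preceding presentation and the Remark that Relation (III) is redundant. Your Tietze-transformation argument makes explicit exactly the eliminations the paper implicitly relies on.
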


\begin{remark}
$\Ro(X_0) = \Ro(X_0,\emptyset)$ and $\Ro(C_3)$ are both spaces of representations of $\pi_1(X_0)$ but $\Ro(C_3)$ requires that the meridians $a$, $b$, $c$, $d$, $x$, and $y$ are sent to traceless elements while $\Ro(X_0)$ does not.
\end{remark}

Let
\[\Pr := \{\i\}\times \Sk \times \Sk \times C(\i)\cong T^2\times S^2\]
and define the embedding $\Lambda: \Pr\to\Ro(X_0)$ so that $\Lambda(a_0, b_0, c_0, x_0)$ is the representation $\rho$ determined by $\rho(a) = a_0$, $\rho(b) = b_0$, $\rho(c) = c_0$, and $\rho(x) = x_0$.

It will be convenient to parameterize the $S^2$ factor of $\Pr$ with spherical coordinates. So take the map $\ol{\Gamma}:T^2\to C(\i)$ given by \[\ol{\Gamma}(\alpha, \beta) = \sin\alpha\cos\beta\i+\sin\alpha\sin\beta\j+\cos\alpha\k.\] With this in mind, define $\widetilde{\Pr} = T^4$ parametrized by $(\gamma,\theta,\alpha,\beta)$ along with the map 
\[\widetilde{\Gamma}: \widetilde{\Pr}\to \Pr\]
given by 
\[\widetilde{\Gamma}(\gamma, \theta, \alpha, \beta) = (\i, e^{\gamma\k}\i, e^{\theta\k}\i, \ol{\Gamma}(\alpha, \beta)).\]
Furthermore, define $\Gamma:\widetilde{\Pr}\to \Ro(X_0)$ to be the composition $\Lambda \circ \widetilde{\Gamma}$. The following diagram summarizes the spaces and maps defined so far:

\begin{center}
\begin{tikzcd}
\widetilde{\Pr}\ar[d,"\widetilde{\Gamma}"]\ar[dd, "\Gamma"',bend right = 80] \cong T^4  \\
\Pr \ar[d,"\Lambda"] \cong T^2\times S^2 \\
\Ro(X_0)\cong \SU^4
\end{tikzcd}
\end{center}

There is an involution on $\widetilde{\Pr}$ defined by the map
\[(\gamma, \theta, \alpha, \beta) \mapsto (-\gamma, -\theta, \alpha+\pi, \pi-\beta).\]
This induces an involution on $\Pr$ given by conjugation by $\i$, or more explicitly, 
\[(\i, e^{\gamma\k}\i, e^{\theta\k}\i, x_1\i+x_2\j+x_3\k)\mapsto (\i, e^{-\gamma\k}\i, e^{-\theta\k}\i, x_1\i-x_2\j-x_3\k).\]
Abusing notation, we refer to both of these involutions as $\iota$.
Note that $\Ro(C_3)\subset \Ro(X_0)$ is the subset of representations for which every element in the set $\{a,b,c,d,x,y\}\subset \pi_1(X_0)$ is sent to a traceless element of $\SU$. Let \[\Sols := \Lambda\inv(\Ro(C_3))\] be the subspace of $\Pr$ which $\Lambda$ maps to these traceless representations.

\begin{theorem}
\label{thm:C3}
$\Sols = \{\widetilde{\Gamma}(\gamma,\theta,\alpha,\beta)\in \Pr \mid \sin\gamma\cos\alpha=0\} \cong T^3\cup_{S^0\times T^2} (S^0\times S^1\times S^2)$
\end{theorem}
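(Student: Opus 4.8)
The plan is to compute, for a representation $\rho = \Gamma(\gamma,\theta,\alpha,\beta)$, the images of all six meridians $a,b,c,d,x,y$ and record which trace conditions actually constrain $(\gamma,\theta,\alpha,\beta)$. By definition of $\Gamma = \Lambda\circ\widetilde{\Gamma}$, the elements $\rho(a)=\i$, $\rho(b)=e^{\gamma\k}\i$, $\rho(c)=e^{\theta\k}\i$ all lie on $\Sk\subset C(\i)$, and $\rho(x)=\ol{\Gamma}(\alpha,\beta)\in C(\i)$, so these impose nothing. Relation (I) gives $\rho(d) = \rho(c)\,\rho(\ol{a})\,\rho(b) = e^{\theta\k}\i\,(-\i)\,e^{\gamma\k}\i = e^{(\theta+\gamma)\k}\i$, which again lies on $\Sk$, so $d$ is automatically traceless. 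Relation (II) gives $\rho(y) = \rho(x)\,\rho(\ol{a})\,\rho(b) = \ol{\Gamma}(\alpha,\beta)\,e^{-\gamma\k}$, after simplifying $(-\i)e^{\gamma\k}\i = e^{-\gamma\k}$; expanding this quaternion product (using $\Re(uv) = \Re u\,\Re v - \langle \Im u,\Im v\rangle$) shows $\Re(\rho(y)) = \cos\alpha\,\sin\gamma$. Hence $\Gamma(\gamma,\theta,\alpha,\beta)\in\Ro(C_3)$ exactly when $\sin\gamma\cos\alpha = 0$; since $\Lambda$ is an embedding and $\widetilde{\Gamma}$ is onto $\Pr$, this yields the first assertion $\Sols = \{\widetilde{\Gamma}(\gamma,\theta,\alpha,\beta)\mid \sin\gamma\cos\alpha = 0\}$.

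For the homeomorphism type, I would split the locus $\{\sin\gamma\cos\alpha = 0\}\subset\widetilde{\Pr}\cong T^4$ into $A = \{\cos\alpha = 0\}$ and $B = \{\sin\gamma = 0\}$ and analyze $\widetilde{\Gamma}(A)$, $\widetilde{\Gamma}(B)$, and $\widetilde{\Gamma}(A)\cap\widetilde{\Gamma}(B)$ inside $\Pr$. On $A$ the two components $\alpha\in\{\tfrac\pi2,\tfrac{3\pi}2\}$ each send $\ol{\Gamma}(\alpha,\cdot)$ onto the equator $\Sk$ of $C(\i)$, so $\widetilde{\Gamma}(A) = \{\i\}\times\Sk\times\Sk\times\Sk \cong T^3$. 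On $B$ the coordinate $\rho(b)$ is frozen at $\pm\i$ while $(\theta,\alpha,\beta)$ sweeps out $\Sk\times C(\i)$ (the spherical parametrization of the last factor), so $\widetilde{\Gamma}(B)\cong S^0\times S^1\times S^2$, the $S^0$ recording the value $\rho(b)\in\{\pm\i\}$. The overlap consists precisely of the tuples in $\Pr$ with $\rho(b)\in\{\pm\i\}$ and $\rho(x)\in\Sk$, i.e.\ $\{\i\}\times\{\pm\i\}\times\Sk\times\Sk\cong S^0\times T^2$; it sits inside $\widetilde{\Gamma}(A)\cong T^3$ as $\{\pm\i\}\times\Sk\times\Sk$ and inside $\widetilde{\Gamma}(B)\cong S^0\times S^1\times S^2$ as $S^0\times S^1\times(\text{equator of }S^2)$. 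Since distinct tuples of $\Pr$ are distinct points, $\Sols = \widetilde{\Gamma}(A)\cup\widetilde{\Gamma}(B)$ carries no identifications beyond this overlap, which is exactly the claimed decomposition $\Sols\cong T^3\cup_{S^0\times T^2}(S^0\times S^1\times S^2)$.

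The delicate point is not the algebra — the computations of $\rho(d)$ and $\rho(y)$ are short, though one must keep the sign conventions for $e^{\gamma\k}\i$ straight, cross-checking against Section \ref{sec:pillowcase} and Lemma \ref{lem:relang} — but rather the passage from the equation in $\widetilde{\Pr}=T^4$ to the honest image in $\Pr = T^2\times S^2$. The map $\widetilde{\Gamma}$ is generically two-to-one and degenerates along $\{\sin\alpha = 0\}$, so I expect the main effort to be verifying that $\{\cos\alpha=0\}$ and $\{\sin\gamma=0\}$ map onto exactly the stated submanifolds and that their images meet in exactly $S^0\times T^2$ rather than something larger. Once this bookkeeping is pinned down, the theorem follows.
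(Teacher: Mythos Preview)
Your proposal is correct and follows essentially the same approach as the paper's proof: both reduce the traceless condition to the single constraint $\Re(\rho(y))=\sin\gamma\cos\alpha=0$, then identify the two pieces $\{\cos\alpha=0\}$ and $\{\sin\gamma=0\}$ and their overlap inside $\Pr$. The only cosmetic difference is that the paper cites Lemma~\ref{lem:boundcond} to dispose of the traceless condition on $d$, whereas you compute $\rho(d)=e^{(\theta+\gamma)\k}\i$ directly; your extra paragraph on the care needed in passing from $\widetilde{\Pr}$ to $\Pr$ is apt but the paper handles it with the same bookkeeping.
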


\begin{proof}
We start by identifying $\Gamma\inv(\Ro(C_3))$.
Consider $\rho = \Gamma(\gamma,\theta,\alpha,\beta)$. By construction, $\rho(a)$, $\rho(b)$, $\rho(c)$, and $\rho(x)$ are traceless. Furthermore, by Lemma \ref{lem:boundcond}, $\rho(d)$ is traceless. Thus, $\rho\in \Ro(C_3)$ if and only if $\rho(y)$ is traceless. Using Relation (II), the trace of $\rho(y)$ can be written as \[\tr(\rho(y)) = 2\Re(\rho(y)) = 2\Re(\rho(x\ol{a}b)) = -2\Re(\rho(x)e^{\gamma \k})=-2\sin\gamma\Re(\rho(x)\k) = 2\sin\gamma\cos\alpha.\]
Thus $\Gamma\inv(\Ro(C_3))$ is a reducible analytic variety, \[\{(\gamma,\theta,\alpha,\beta)\in\widetilde{\Pr}\mid \sin\gamma=0\}\cup \{(\gamma,\theta,\alpha,\beta)\in\widetilde{\Pr}\mid \cos\alpha=0\}.\]
Since $\widetilde{\Gamma}$ is surjective, \[\Sols = \widetilde{\Gamma}(\Gamma\inv(\Ro(C_3))) = \{\widetilde{\Gamma}(\gamma,\theta,\alpha,\beta)\in\Pr\mid \sin\gamma=0\}\cup \{\widetilde{\Gamma}(\gamma,\theta,\alpha,\beta)\in \Pr\mid \cos\alpha=0\}.\]

To understand the topology of this space, note that
\begin{align*}
\{\ol{\Gamma}(\alpha,\beta)\in C(\i) \mid \cos\alpha = 0\} &= \{\ol{\Gamma}(\frac{\pi}{2},\beta)\in C(\i) \} \cup \{\ol{\Gamma}(-\frac{\pi}{2},\beta)\in C(\i) \} = \Sk.
\end{align*}
Thus,
\[\{\widetilde{\Gamma}(\gamma,\theta,\alpha,\beta)\in \Pr\mid \cos\alpha=0\}= \{(\i,b, c, x)\in\Pr \mid b, c, x\in\Sk\}  \cong T^3\]
\[\{\widetilde{\Gamma}(\gamma,\theta,\alpha,\beta)\in\Pr\mid \sin\gamma=0\}= \{(\i,b, c, x)\in\Pr \mid b\in\{\pm \i\},~ c\in\Sk, ~ x\in C(\i)\}\cong S^0\times S^1\times S^2\]
\[\{(\gamma,\theta,\alpha,\beta)\in\widetilde{\Pr}\mid \sin\gamma=\cos\alpha=0\} = \{(\i,b, c, x)\in\Pr\mid b\in\{\pm\i\},~ c,x\in\Sk\}  \cong S^0\times T^2\]

\end{proof}

\begin{figure}
    \centering
    \includegraphics[height=2in]{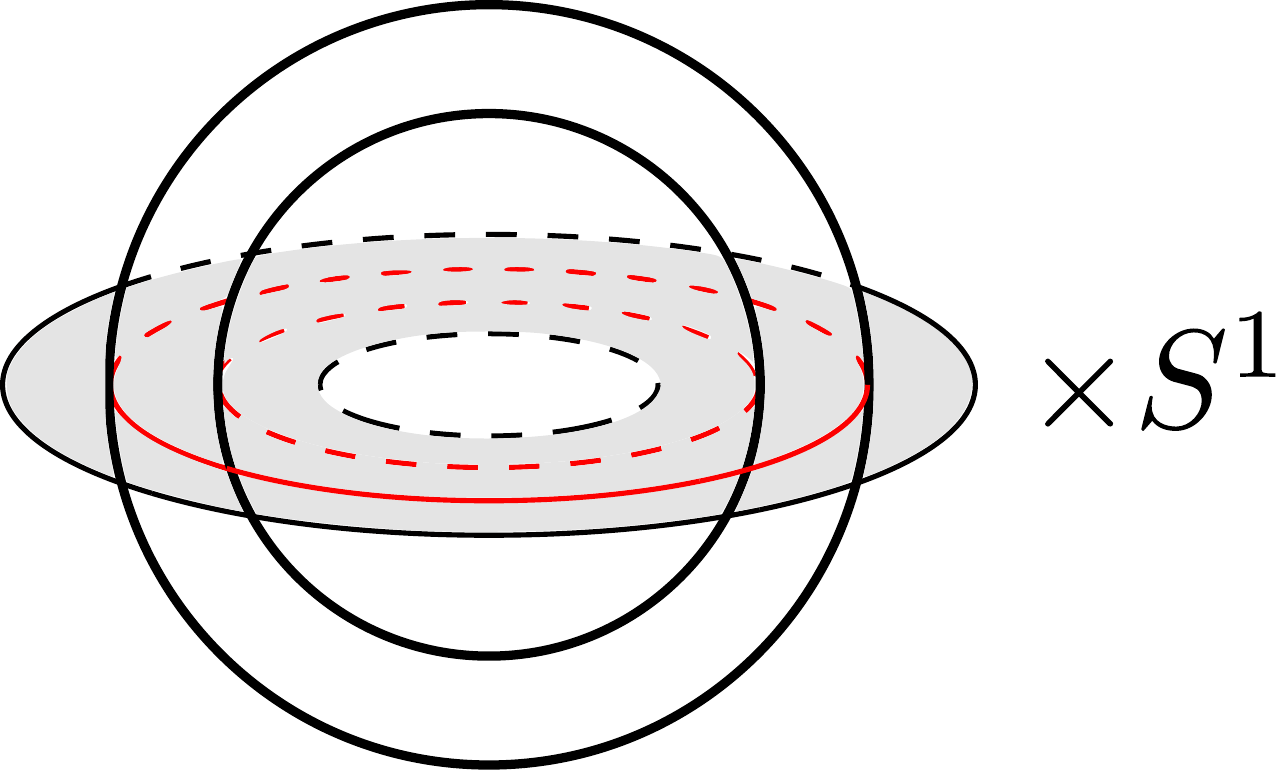}
    \caption{The space $\Sols$. The spherical coordinate is given by $\ol{\Gamma}(\alpha, \beta)$ and the radial coordinate is given by $\gamma$. The extra $S^1$ coordinate comes from $\theta$. The boundary circles of the shaded annulus are identified, giving a torus.}
    \label{fig:C3CV}
\end{figure}

Figure \ref{fig:C3CV} depicts $\Sols$. Define \[h:\Sols\to \RR(C_3),\quad h(x) = [\Lambda(x)].\]

\begin{proposition}
\label{prop:hfibers}
The map $h$ is surjective and the fiber over $[\rho]\in\RR(C_3)$ is:
\[\begin{cases}
    S^1 &\text{if }\rho(b), \rho(c) \in \{\pm\rho(a)\},~ \rho(x)\not\in\{\pm\rho(a)\}\\
    \pt &\text{if }\rho(b),\rho(c),\rho(x)\in\{\pm\rho(a)\}\\
    S^0 &\text{otherwise}
\end{cases}\]
\end{proposition}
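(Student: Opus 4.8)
The plan is to analyze the map $h$ fiber-by-fiber using the explicit coordinates on $\Sols$ established in Theorem \ref{thm:C3}, together with the fact (Lemma \ref{lem:gaugefixing}) that conjugacy classes in $\RR(C_3)$ are pinned down by the images of the generating meridians. Since $\Lambda$ is an embedding, $h(x) = [\Lambda(x)]$ and the fiber $h^{-1}([\rho])$ is the set of points in $\Sols$ whose associated representation is conjugate to $\rho$. Because conjugation acts on $C(\i) \cong S^2$ by rotations (transitively, and realizing every rotation), and because every point of $\Pr = \{\i\}\times \Sk \times \Sk \times C(\i)$ already has $\rho(a) = \i$, the conjugations that preserve $\Pr$ (i.e. fix $\i$) are exactly rotations about the $\i$-axis, together with one further discrete identification. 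First I would recall from the discussion just before Lemma \ref{lem:boundcond} that $(\gamma,\theta)$ and $(-\gamma,-\theta)$ are conjugate via $\i$ itself, which is precisely the involution $\iota$ on $\Pr$ already introduced; so $h$ factors through $\Sols/\iota$ and I must then count how a given $[\rho]$ meets $\Sols$ after accounting for both the circle of $\i$-axis rotations and $\iota$.

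The body of the argument is a case split matching the three cases of the statement, governed by how many of $\rho(b),\rho(c),\rho(x)$ lie in $\{\pm\i\} = \{\pm\rho(a)\}$ (the fixed locus of the $\i$-axis rotation action on $C(\i)$ and on $\Sk$). \emph{Case 1:} $\rho(b),\rho(c)\in\{\pm\i\}$ but $\rho(x)\notin\{\pm\i\}$; then on $\Sols$ we are forced to $\sin\gamma = 0$, so we sit on the $S^0\times S^1\times S^2$ piece, with $b\in\{\pm\i\}$, $c\in\Sk$, $x\in C(\i)$. Here the $S^1$ of $\i$-axis rotations acts on $x\in C(\i)$ with stabilizer $\Z/2$ at the poles but freely away from them; since $\rho(x)\notin\{\pm\i\}$ and $\rho(c)$ is forced (up to the global $\iota$) once $\rho(x)$ is fixed — because $c$ is determined by the traceless/pillowcase relations on $S_2$ — the orbit through a given representation, intersected with $\Sols$, is a circle: the residual freedom is the rotation angle, i.e. the $\theta$-coordinate is free while $(\gamma,\alpha,\beta)$ are pinned. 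I would verify that $\iota$ identifies this circle with itself without collapsing it, so the fiber is $S^1$. \emph{Case 2:} all of $\rho(b),\rho(c),\rho(x)\in\{\pm\i\}$; this is the central/abelian-on-these-generators situation, the image lies in a single great circle through $\i$, conjugation by the $\i$-axis rotations fixes everything, and the only remaining identification is $\iota$, which here also acts trivially on the relevant data (every generator goes to $\pm\i$), so $h^{-1}([\rho])$ is a single point. \emph{Case 3 (otherwise):} at least one of $\rho(b),\rho(c)$ is not in $\{\pm\i\}$. Since $\rho(b) = e^{\gamma\k}\i$ and $\rho(c)=e^{\theta\k}\i$ already lie in $\Sk$ with $\gamma,\theta\in S^1$, and since a representation conjugate to $\rho$ that still lies in $\Pr$ must have $\rho'(a)=\i$, the only conjugations available are $\i$-axis rotations and $\iota$; an $\i$-axis rotation by angle $\psi$ sends $(\gamma,\theta)\mapsto(\gamma,\theta)$ is wrong — it fixes $\i$ but rotates $\Sk$, sending $e^{\gamma\k}\i$ to $e^{(\gamma\pm2\psi)\k}\i$, so to stay equal to $\rho(b)$ forces $\psi\equiv 0$ unless $\rho(b)\in\{\pm\i\}$; hence the only nontrivial element relating points of the fiber is $\iota$, giving exactly two points (which do not coincide precisely because $(\gamma,\theta)\ne(-\gamma,-\theta)$ when $\rho$ is not central), i.e. $S^0$. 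Throughout, surjectivity of $h$ is immediate since every $[\rho]\in\RR(C_3)$ has a representative in standard gauge, and by Theorem \ref{thm:C3} such a representative, after possibly rotating $\rho(x)$ into position, lies in $\Sols$.

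The main obstacle I anticipate is Case 1: keeping careful track of the interaction between (a) the circle of $\i$-axis conjugations acting on the $C(\i)$-factor, (b) the constraint that $\rho(c)\in\Sk$ is simultaneously determined by relations (I)/(III), and (c) the involution $\iota$ — and checking that the net result is a genuinely embedded circle in $\Sols$ rather than a circle double-covered or pinched by $\iota$. I would handle this by picking the explicit standard-gauge representative, writing $\rho(x) = \ol\Gamma(\alpha,\beta)$ with $\alpha\in(0,\pi)$ fixed by the conjugacy class of $\rho(x)$ (its angle to $\i$), letting $\beta$ be the free rotation parameter, solving relation (II) $y = x\ol a b$ for the traceless condition $\sin\gamma\cos\alpha = 0$ (here $\sin\gamma=0$ since $\cos\alpha\ne0$), and observing that $\theta$ remains entirely free while the other coordinates are determined up to $\iota$; then a direct check shows $\iota$ permutes the $(\theta,\beta)$-torus's relevant circle freely, so the quotient is again $S^1$. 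A secondary, easier obstacle is making sure the case hypotheses are stated intrinsically in terms of $[\rho]$ (the conditions ``$\rho(b)\in\{\pm\rho(a)\}$'' etc.\ are conjugation-invariant, being equivalent to $\ang{\rho(a)}{\rho(b)}\in\{0,\pi\}$), which I would note at the outset so the case split is well posed.
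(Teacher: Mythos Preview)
Your overall strategy matches the paper's: show that the only conjugations preserving $\Pr$ are those by $e^{\psi\i}$, then compute the orbit through a given point of $\Sols$ case by case. However, several steps in your execution are incorrect and would need to be fixed.

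First, you treat the involution $\iota$ as something separate from the circle of $\i$-axis rotations, but $\iota$ \emph{is} conjugation by $\i = e^{(\pi/2)\i}$, so it is already one of the rotations $e^{\psi\i}$. There is no ``rotations plus $\iota$'' two-step; the relevant group is exactly $\{e^{\psi\i} : \psi \in S^1\}$, which acts as $\U/(\Z/2)$ by conjugation. Second, in Case~3 you write that conjugation by $e^{\psi\i}$ sends $e^{\gamma\k}\i$ to $e^{(\gamma\pm 2\psi)\k}\i$; this is conjugation by $e^{\psi\k}$, not $e^{\psi\i}$. An $\i$-axis rotation sends $\cos\gamma\,\i + \sin\gamma\,\j$ to $\cos\gamma\,\i + \sin\gamma(\cos 2\psi\,\j + \sin 2\psi\,\k)$, which leaves $\Sk$ entirely unless $\sin\gamma = 0$ or $\sin 2\psi = 0$. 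That is the real reason only $e^{\psi\i}\in\{\pm 1,\pm\i\}$ survive, giving two points in the fiber. Third, your Case~1 analysis is muddled: $\theta$ is \emph{not} free (since $c\in\{\pm\i\}$ forces $\theta\in\{0,\pi\}$), and $c$ is a free generator, not ``determined by the traceless/pillowcase relations on $S_2$.'' The circle in the fiber arises simply because, with $b,c\in\{\pm\i\}$ fixed by every $e^{\psi\i}$ and $x\notin\{\pm\i\}$, the rotations move $x$ around a latitude of $C(\i)$ with stabilizer $\{\pm 1\}$, so the orbit is $\U/(\Z/2)\cong S^1$. Once these corrections are made, your argument coincides with the paper's.
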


\begin{proof}
To see that this map is surjective, first note that, by construction, the image of $\Lambda:\Pr\to \Ro(X_0)$ contains the subset of representations of $X_0$ in standard gauge with respect to $\bmu = (a, b, c, x)$, $\Ro_{\bmu}(X_0)$. As a result, $\Ro_\bmu(C_3)\subset \Lambda(\Sols)$.  Then quotienting out by the conjugation action of $\SU$ gives $\RR(C_3)\subset h(\Sols)$.

In order to identify the fibers, note that for $w\in\SU$ and $(\i,b,c,x)\in\Pr$, $w(\i,b,c,x)\ol{w} = (w\i\ol{w},wb\ol{w},wc\ol{w},wx\ol{w}) \in\Pr$ only if $w = e^{\psi\i}$ for $\psi\in S^1$, because otherwise $w\i\ol{w}\ne \i$. 

Let $[\rho]\in\RR(C_3)$ such that $\rho(b), \rho(c) \in \{\pm \rho(a)\}$ and $\rho(x)\not\in\{\pm\rho(a)\}$. Since $h$ is surjective, there is $(\i,b,c,x)\in\Pr$ such that $h(\i,b,c,x) = \rho$. Thus, $b,c\in\{\pm\i\}$ and $\rho(x)\not\in\{\pm\i\}$. So for $\psi,\psi'\in S^1$, $e^{\psi\i}(\i, b,c,x)e^{-\psi\i} = e^{\psi'\i}(\i, b,c,x)e^{-\psi'\i}$ if and only if $\psi' = \psi+\pi$k. So, the fiber of $[\rho]$ is homeomorphic to $\U/(\Z/2)\cong S^1$.

Next, let $[\rho]\in\RR(C_3)$ such that $\rho(b), \rho(c), \rho(x) \in \{\pm \rho(a)\}$. Since $h$ is surjective, there is $(\i,b,c,x)\in\Pr$ such that $h(\i,b,c,x) = \rho$ with $b,c,x\in\{\pm\i\}$. Thus, for any $\psi\in S^1$, $e^{\psi\i}(\i, b,c,x)e^{-\psi\i} = (\i,b,c,x)$. So, the fiber of $[\rho]$ is homeomorphic to a point.

Finally, let $[\rho]\in\RR(C_3)$ such that either $\rho(b)$ or $\rho(c)$ is not in $\{\pm \rho(a)\}$. Since $h$ is surjective, there is $(\i,b,c,x)\in\Pr$ such that $h(\i,b,c,x) = \rho$ such that $b$ or $c$ are in $\Sk\setminus\{\pm\i\}$. Thus, in order for $e^{\psi\i}be^{-\psi\i}\in\Sk$ and $e^{\psi\i}be^{-\psi\i}\in\Sk$, $e^{\psi\i}\in\{\pm 1,\pm\i\}$. Thus, $(\i,b,c,x)$ and $-\i(\i,b,c,x)\i$ are the unique points in the fiber of $[\rho]$ and so the fiber is homeomorphic to $S^0$.

\end{proof}

Let $\Sols^\dag:= \{(\i,b,c,x)\in\Sols\mid b\not\in\{\pm\i\} \text{ or } c\not\in\{\pm\i\}\}$ and let $\RR(C_3)^\dag:= h(\Sols^\dag)$. Note that $\RR(C_3)^\dag$ is precisely the subspace of $\RR(C_3)$ such that the fiber of $h$ over any point in $\RR(C_3)^\dag$ is $S^0$.

\begin{proposition}
\label{prop:restdcov}
$\Sols^\dag$ is a double cover of $\RR(C_3)^\dag$.
\end{proposition}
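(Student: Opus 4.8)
The plan is to realize the involution $\iota$ as the nontrivial deck transformation of the would‑be cover, and then identify $\Sols^\dag/\iota$ with $\RR(C_3)^\dag$ by a compactness argument. First I would check that $\iota$ restricts to a \emph{free} involution of $\Sols^\dag$: from the explicit formula for $\iota$ on $\Pr$, the point $(\i,e^{\gamma\k}\i,e^{\theta\k}\i,x_1\i+x_2\j+x_3\k)$ is $\iota$‑fixed precisely when $e^{2\gamma\k}=e^{2\theta\k}=1$ and $x_2=x_3=0$, i.e.\ precisely when $b,c,x\in\{\pm\i\}$; since the defining condition of $\Sols^\dag$ already forbids $b,c\in\{\pm\i\}$ holding together, the ($\iota$‑invariant) set $\Sols^\dag$ contains no $\iota$‑fixed point. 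Proposition \ref{prop:hfibers} moreover says that for $[\rho]\in\RR(C_3)^\dag$ the fiber $h\inv([\rho])$ is exactly $\{p,\iota(p)\}$ (the point written there as $-\i(\i,b,c,x)\i$ is $\iota(p)$). So $h|_{\Sols^\dag}$ is two‑to‑one with fibers equal to the $\iota$‑orbits, hence factors as $\Sols^\dag\xrightarrow{\pi}\Sols^\dag/\iota\xrightarrow{\bar h}\RR(C_3)^\dag$ with $\pi$ the orbit map and $\bar h$ a continuous bijection.

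The next step, which is the crux, is to upgrade $\bar h$ to a homeomorphism. For this I would first note that $h\colon\Sols\to\RR(C_3)$ is itself a quotient map: by Theorem \ref{thm:C3}, $\Sols$ is cut out of $\Pr\cong T^2\times S^2$ by the closed condition ``$b\in\{\pm\i\}$ or $x\in\Sk$'', hence is compact; $\RR(C_3)=\Ro(C_3)/\SU$ is compact Hausdorff; and $h$ is continuous and surjective by Proposition \ref{prop:hfibers}; so $h$ is a closed surjection and therefore a quotient map. Second, $\RR(C_3)^\dag$ is open in $\RR(C_3)$: its complement is $\{[\rho]\mid\langle\rho(a),\rho(b)\rangle^2=\langle\rho(a),\rho(c)\rangle^2=1\}$, and the conjugation‑invariant functions $[\rho]\mapsto\langle\rho(a),\rho(b)\rangle^2$ and $[\rho]\mapsto\langle\rho(a),\rho(c)\rangle^2$ are continuous on $\RR(C_3)$. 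Third, $h\inv(\RR(C_3)^\dag)=\Sols^\dag$ — this is precisely the remark preceding the Proposition, obtained by reading off the trichotomy of Proposition \ref{prop:hfibers}. Thus $\Sols^\dag$ is a saturated open subset of $\Sols$, and since the restriction of a quotient map over an open set is a quotient map, $h|_{\Sols^\dag}\colon\Sols^\dag\to\RR(C_3)^\dag$ is a quotient map. As $h|_{\Sols^\dag}=\bar h\circ\pi$ is a quotient map and $\pi$ is a continuous surjection, $\bar h$ is a quotient map; being bijective, it is a homeomorphism.

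Finally, the orbit map $\pi\colon\Sols^\dag\to\Sols^\dag/\iota$ of a free action of $\Z/2$ on the Hausdorff space $\Sols^\dag$ is a two‑fold covering map (a free action of a finite group on a Hausdorff space is properly discontinuous, so the orbit space is covered by the total space). Composing with the homeomorphism $\bar h$ identifies $h|_{\Sols^\dag}\colon\Sols^\dag\to\RR(C_3)^\dag$ as a two‑fold covering, which is exactly the assertion. The only delicate point in this argument is the homeomorphism $\bar h$ in the middle paragraph: all the fiber information is handed to us by Proposition \ref{prop:hfibers}, but one still must promote a continuous bijection to a homeomorphism, and the clean way to do this is to exploit the compactness of the \emph{ambient} space $\Sols$ — note that $\Sols^\dag$ itself is noncompact — via the stability of quotient maps under restriction to saturated open sets.
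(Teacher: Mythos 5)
Your proposal is correct and takes essentially the same route as the paper: realize $\iota$ (conjugation by $\i$) as the nontrivial deck transformation, observe via Proposition \ref{prop:hfibers} that it swaps the two points in each $h$-fiber over $\RR(C_3)^\dag$, check that the action is free there (since $\Sols^\dag$ avoids the fixed set $\Sols^0$), and conclude that the quotient is a two-fold cover. The one place you add value is the middle step: the paper simply asserts ``it's easy to see that $h(\Sols^\dag)\cong\Lambda(\Sols^\dag)/\iota$,'' whereas you carefully promote the continuous bijection $\bar h$ to a homeomorphism by observing that $h\colon\Sols\to\RR(C_3)$ is a closed map between compact Hausdorff spaces (hence a quotient map), that $\Sols^\dag=h^{-1}(\RR(C_3)^\dag)$ is a saturated open subset, and that restricting a quotient map to a saturated open set again yields a quotient map. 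That argument is the right way to plug the gap, and your explicit warning that $\Sols^\dag$ itself is noncompact — so one cannot just invoke the compact-to-Hausdorff trick — is a genuine subtlety worth recording.
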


\begin{proof}

Because $\Lambda:\Pr\to \Ro(X_0)$ is an embedding and $\Sols$ is compact, $\Sols\cong \Lambda(\Sols)$. Because a restriction of a homeomorphism is a homeomorphism, $\Sols^\dag\cong \Lambda(\Sols^\dag)$. The quotient map $\Lambda(\Sols^\dag)\to \RR(C_3)^\dag$ has fiber $S^0$ and thus there is an involution action on $\Lambda(\Sols^\dag)$ swapping the points in the fiber. By the proof of Proposition \ref{prop:hfibers}, this involution is given by $\iota$, which conjugates by $\i$ on $\Lambda(\Sols^\dag)$. Proposition \ref{prop:hfibers} also shows that for any $\rho\in\Lambda(\Sols^\dag)$, either $\rho(b)$ or $\rho(c)$ is not $\pm\i$ and so $\iota$ acts properly discontinuously on $\Lambda(\Sols^\dag)$. Thus taking the quotient by the action of $\iota$ is a covering map. It's easy to see that \[h(\Sols^\dag)  \cong \Lambda(\Sols^\dag)/\iota = \RR(C_3)^\dag.\]

\end{proof}

A recurring theme for the remainder of this section is that the binary dihedral representations behave differently from the non-binary dihedral representations and so some results will take the form of showing what happens in each case separately before combining the results to the general case. In order to facilitate this strategy, it is useful to define and understand various subspaces of $\Sols$:

\begin{itemize}
    \item $\H := \{\widetilde{\Gamma}(\gamma, \theta, \alpha, \beta)\in \Sols \mid \sin\gamma = 0,~ \cos\alpha \ne 0\}$. In Figure \ref{fig:C3CV}, $\H$ is the white spheres minus the equators shown in red (resulting in four open hemispheres) times the $S^1$ factor.

    \item $\H^\dag = \H\cap \Sols^\dag$. Note that $h(\H^\dag) = \RR\nbd(C_3)$, the space of representations in $\RR(C_3)$ which are not binary dihedral because these are precisely the representations for which the images of $a$, $b$, $c$, and $x$ do not form a coequatorial set.

    \item $\A := \{\widetilde{\Gamma}(\gamma, \theta, \alpha, \beta)\in \Sols \mid \sin\gamma \ne 0,~ \cos\alpha = 0\}$. In Figure \ref{fig:C3CV}, $\A$ is depicted as the gray torus minus the red circles (leaving two annuli) times the $S^1$ factor. Note that $h(\ol{\A}) = \RR\bd(C_3)$, the space of representations in $\RR(C_3)$ which are binary dihedral.
    
    \item $\SV := \ol{\H}\cap\ol{\A} \cong \{\widetilde{\Gamma}(\gamma, \theta, \alpha, \beta)\in \Pr \mid \sin\gamma = 0,~ \cos\alpha = 0\} = \{(\i,b,c,x)\in\Sols\mid b = \pm\i, x\in\Sk\}$. Note that $\Sols = \ol{\H}\cup_\SV \ol{\A}$. In Figure \ref{fig:C3CV}, $\SV$ is depicted as the two red circles times the $S^1$ factor.

    \item $\Sols^0 := \{(\i, b, c, x)\in \Sols \mid b,c,x\in\{\pm \i\}\}$. $h(\Sols^0) = \RR(C_3)^\U$. $\Sols^0$ is composed of eight points, all in $\SV$.
       
    \item $\Sols^* :=  (\H\cup \A)\cap\Sols^\dag = \Sols^\dag\setminus \SV$. 
    Note that \[\Sols^*\cong \left[S^2 \times (S^1\setminus S^0)\right] \sqcup \left[ T^2\times (S^1\setminus S^0)\right]\] is a manifold.
\end{itemize}

\begin{definition}
\label{def:rcstar}
Let $\RR(C_3)^* := h(\Sols^*)$. Because $\Sols^*\subset \Sols^\dag$, $\Sols^*$ double covers $\RR(C_3)^*$. Then $\RR(C_3)^*$ is a manifold because $\Sols^*$ is a manifold. Furthermore, $\RR(C_3)\setminus \RR(C_3)^* = h(\SV)$. 
\end{definition}

Recall that $\partial C_3$ is made up of three copies of $(S^2, 4)$ which are labeled $S_1$, $S_2$, and $S_3$, whose respective character varieties are the pillowcases $P_1$, $P_2$, and $P_3$.
For $[\rho] = \Gamma(\gamma, \theta, \alpha, \beta)\in\RR(C_3)$, Equations \ref{eq:gamma} and \ref{eq:theta} lead to the following formulae which determine the image of $[\rho]$ under each of the restriction maps $p_i: \RR(C_3)\to P_i$.

\begin{align}
\gamma_1(\rho) &= \ang{\rho(a)}{\rho(b)} = \ang{\rho(x)}{\rho(y)} = \gamma \label{eq:pccoords1} \\ 
\gamma_2(\rho) &= \ang{\rho(x)}{\rho(y)} = \ang{\rho(c)}{\rho(d)} = \gamma \label{eq:pccoords3} \\
\gamma_3(\rho) &= \ang{\rho(a)}{\rho(b)} = \ang{\rho(c)}{\rho(d)} = \gamma \label{eq:pccoords5} \\
\theta_1(\rho) &= \rang{\rho(a)}{\rho(x)}{\rho(b)} = 
\begin{cases}
\arccos(\sin\alpha\cos\beta) & \text{ if } \sin\gamma=0\\
\beta\sin\alpha & \text{ if }\sin\gamma > 0
\end{cases} \label{eq:pccoords2} \\
\theta_2(\rho) &= \rang{\rho(x)}{\rho(c)}{\rho(y)} =
\begin{cases}
\arccos(\sin\alpha\cos\beta\cos\theta + \sin\alpha\sin\beta\sin\theta) & \text{ if } \sin\gamma = 0\\
\theta - \beta\sin\alpha& \text{ if } \sin\gamma > 0
\end{cases} \label{eq:pccoords4} \\
\theta_3(\rho) &= \rang{\rho(a)}{\rho(c)}{\rho(b)} = \theta \label{eq:pccoords6} 
\end{align}

Consider the map $p_1\times p_2:\RR(C_3)\to P_1\times P_2$ given by \[\rho\mapsto \left(\gamma_1(\rho),\theta_1(\rho),\gamma_2(\rho),\theta_2(\rho)\right).\] It is important to understand the preimage of this map because for $\rho_1\in\RR_{\pi_1}(T_1)$ and $\rho_2\in\RR_{\pi_2}(T_2)$, $(p_1\times p_2)\inv(\gamma(\rho_1), \theta(\rho_1), \gamma(\rho_2), \theta(\rho_2))$ gives the space of representations in $\RR(C_3)$ which agree with $\rho_1$ and $\rho_2$ along $S_1$ and $S_2$. Using the decomposition $\RR(C_3) = h(\ol{\H})\cup h(\ol{\A})$, the preimage of $p_1\times p_2$ can be identified in two steps by identifying the preimages of the maps \[p_{h(\ol{\A})}:= (p_1\times p_2)|_{h(\ol{\A})} \text{ and } p_{h(\ol{\H})}:=(p_1\times p_2)|_{h(\ol{\H})}.\] 

\begin{lemma}
\label{lem:revengA}
Fix $z_1\in[0,\pi]$ and $z_2, z_3 \in S^1$.

\begin{enumerate}
    \item If $z_1\in(0,\pi)$ there is a unique $\rho\in p_{h(\ol{\A})}\inv(z_1, z_2, z_1, z_3)$, and $\theta_2(\rho) = z_3$. Regarding its image in the pillowcase, $p_3(\rho) = (z_1, z_2+z_3)$.

    \item If $z_1\in \{0, \pi\}$ and $z_2, z_3\in(0,\pi)$, then there are two representations in the preimage: $\rho_1, \rho_2\in p_{h(\ol{\A})}\inv(z_1, z_2, z_1, z_3)$. In the pillowcase, $p_3(\rho_1) = (z_1, z_2+z_3)$ and $p_3(\rho_2) = (z_1, z_2-z_3)$.

    \item If $z_1\in \{0, \pi\}$ and $z_2, z_3\in[0,\pi]$ and at least one of $z_2$ or $z_3$ is in $\{0,\pi\}$ then there is a unique $\rho\in p_{h(\ol{\A})}\inv(z_1, z_2, z_1, z_3)$. In addition, $p_3(\rho) = (z_1, z_2+z_3)$.
\end{enumerate}

\end{lemma}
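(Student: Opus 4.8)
The plan is to carry out the whole computation on the torus $\ol{\A}\cong T^3$ and then descend to $h(\ol{\A})$. Recall that $\ol{\A}=\{(\i,b,c,x)\in\Pr\mid b,c,x\in\Sk\}$, so I coordinatize it by $(\gamma,\theta,\phi)\in(\R/2\pi\Z)^3$ through $\rho(a)=\i$, $\rho(b)=e^{\gamma\k}\i$, $\rho(c)=e^{\theta\k}\i$, $\rho(x)=e^{\phi\k}\i$; Relations (I) and (II) then force $\rho(d)=e^{(\gamma+\theta)\k}\i$ and $\rho(y)=e^{(\gamma+\phi)\k}\i$. As in the proof of Proposition \ref{prop:hfibers}, the only conjugation preserving $\Pr$ is by $\i$, so $h|_{\ol{\A}}$ descends to a bijection $\ol{\A}/\iota\to h(\ol{\A})$, where $\iota\colon(\gamma,\theta,\phi)\mapsto(-\gamma,-\theta,-\phi)$. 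Hence $p_{h(\ol{\A})}\inv(z_1,z_2,z_1,z_3)$ is exactly the set of $\iota$-orbits of triples $(\gamma,\theta,\phi)$ realizing that quadruple of pillowcase coordinates, and the claimed values of $p_3$ will be read off once those orbits are enumerated.

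First I would record the pillowcase coordinates of $\rho=\rho(\gamma,\theta,\phi)$ by feeding the six meridian values above into Lemma \ref{lem:relang} (this is cleaner than quoting Equations \ref{eq:pccoords1}--\ref{eq:pccoords6}, whose $(\alpha,\beta)$ form mixes the two branches $\sin\alpha=\pm1$). One finds $\gamma_1=\gamma_2=\gamma_3=\arccos(\cos\gamma)$ and
\[\theta_1=\rang{\i}{e^{\phi\k}\i}{e^{\gamma\k}\i},\qquad \theta_2=\rang{e^{\phi\k}\i}{e^{\theta\k}\i}{e^{(\gamma+\phi)\k}\i},\qquad \theta_3=\rang{\i}{e^{\theta\k}\i}{e^{\gamma\k}\i}.\]
By Lemma \ref{lem:relang}, for $\gamma\in(0,\pi)$ these equal, as elements of $[0,2\pi)$, $\theta_1\equiv\phi$, $\theta_2\equiv\theta-\phi$, $\theta_3\equiv\theta$; for $\gamma\in\{0,\pi\}$ the reference point $\rho(b)=e^{\gamma\k}\i$ lies in $\{\pm\rho(a)\}$ and $\rho(y)=e^{(\gamma+\phi)\k}\i$ lies in $\{\pm\rho(x)\}$, so the relative angles degenerate to ordinary angles and $\theta_1=\arccos(\cos\phi)$, $\theta_2=\arccos(\cos(\theta-\phi))$, $\theta_3=\arccos(\cos\theta)$, all in $[0,\pi]$.

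The three parts are then a direct case check. In part (1), $z_1\in(0,\pi)$, so any realizing triple has $\gamma\equiv\pm z_1$, and after applying $\iota$ I may assume $\gamma=z_1\in(0,\pi)$; then $\theta_1=z_2$ forces $\phi=z_2$ and $\theta_2=z_3$ forces $\theta=z_2+z_3$, so the triple is unique up to $\iota$, giving a single $\rho$ with $\theta_2(\rho)=z_3$ (by construction) and $p_3(\rho)=(\gamma_3,\theta_3)=(z_1,z_2+z_3)$. In parts (2) and (3), $\gamma=z_1\in\{0,\pi\}$, so $-\gamma\equiv\gamma$ and $\iota$ acts on realizing triples by $(\gamma,\theta,\phi)\mapsto(\gamma,-\theta,-\phi)$; the equations $\arccos(\cos\phi)=z_2$ and $\arccos(\cos(\theta-\phi))=z_3$ have solutions $\phi\in\{\pm z_2\}$, $\theta-\phi\in\{\pm z_3\}$, i.e.\ up to four triples $(\gamma,\ \pm z_2\pm z_3,\ \pm z_2)$. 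In part (2), with $z_2,z_3\in(0,\pi)$, the four are pairwise distinct, and $\iota$ pairs $(\gamma,z_2+z_3,z_2)\leftrightarrow(\gamma,-z_2-z_3,-z_2)$ and $(\gamma,z_2-z_3,z_2)\leftrightarrow(\gamma,z_3-z_2,-z_2)$, leaving exactly two distinct orbits $\rho_1,\rho_2$ with $p_3(\rho_1)=(z_1,z_2+z_3)$ and $p_3(\rho_2)=(z_1,z_2-z_3)$, each value read off from the first representative of its pair. In part (3), one of $z_2,z_3$ lies in $\{0,\pi\}$, so the corresponding $\pm$ is redundant (as $2z_2\equiv0$ or $2(\theta-\phi)\equiv0$ mod $2\pi$); the surviving two triples form a single $\iota$-orbit, giving a unique $\rho$ with $p_3(\rho)=(z_1,z_2+z_3)$. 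Finally, the three parts exhaust all possibilities over $\ol{\A}$: when $\gamma=z_1\in\{0,\pi\}$ one has $\theta_1,\theta_2\in[0,\pi]$, so a nonempty fiber forces $z_2,z_3\in[0,\pi]$.

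I expect the only genuine difficulty to be the bookkeeping in the degenerate range $\gamma\in\{0,\pi\}$: one must consistently use the ordinary-angle form of $\theta_1,\theta_2,\theta_3$ there, and verify that the apparent multiplicity of solutions collapses under $\iota$ to exactly one orbit precisely when some $z_j\in\{0,\pi\}$ (part (3)) and to exactly two orbits otherwise (part (2)) --- that is, that the split between parts (2) and (3) is exactly the split ``$z_2,z_3\notin\{0,\pi\}$ or not.''
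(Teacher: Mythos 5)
Your proof is correct, and it takes essentially the same route as the paper's: parameterize $\ol{\A}$, express the pillowcase coordinates in terms of the parameters, and solve the inverse problem case by case. Two small differences are worth noting but don't amount to a different approach. First, you coordinatize $\ol{\A}$ directly by $\phi$ via $\rho(x)=e^{\phi\k}\i$ rather than going through the paper's spherical parameterization at $\alpha=\pi/2$; this sidesteps the redundancy $\widetilde{\Gamma}(\gamma,\theta,-\pi/2,\beta)=\widetilde{\Gamma}(\gamma,\theta,\pi/2,\beta+\pi)$ and is a genuine bookkeeping improvement. Second, you organize all three parts around counting $\iota$-orbits in $\ol{\A}/\iota$, whereas the paper handles part (3) by two ad hoc arguments (one identifying $z_2+z_3$ with $z_2-z_3$ mod $2\pi$ when $z_3\in\{0,\pi\}$, the other by an explicit conjugation computation when $z_2\in\{0,\pi\}$); your uniform orbit count subsumes both. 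The one step you should spell out slightly more carefully is the initial claim that $h|_{\ol{\A}}$ descends to a bijection $\ol{\A}/\iota\to h(\ol{\A})$: the fiber of $h$ over $[\rho]$ can be $S^1$ (when $b,c\in\{\pm\i\}$, $x\notin\{\pm\i\}$), yet only the $\iota$-orbit of that $S^1$ lies in $\ol{\A}$ because rotating $x\in\Sk\setminus\{\pm\i\}$ about the $\i$-axis by anything other than $0$ or $\pi$ leaves $\Sk$. That observation is what makes your orbit count the right one to do, so it's worth stating explicitly rather than only citing the structure of Proposition \ref{prop:hfibers}.
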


\begin{proof}
For any of the cases, if $\rho = \Gamma(\gamma, \theta, \alpha, \beta)\in\RR(C_3)$ then clearly by Equation \ref{eq:pccoords1}, $\gamma_i(\rho) = z_1$. In $\A$ we know that $\cos\alpha = 0$, and so $\rho$ must be of the form $\Gamma(z_1, \theta,\frac{\pi}{2}, \beta)$ for some $\theta$ and $\beta$. Finally, once we know that $\rho = \Gamma(\gamma, \theta, \alpha, \beta)\in\RR(C_3)$, then $p_3(\rho) = (\gamma, \theta)$.

\begin{enumerate}
    \item If $z_1\in (0,\pi)$, Equations \ref{eq:pccoords2} and \ref{eq:pccoords4} simplify to give $\beta = z_2$ and $\theta = \beta+z_3 = z_2+z_3$. So $\rho = \Gamma(z_1, z_2+z_3, \frac{\pi}{2}, z_2)$, showing the first claim.

    \item When $z_1\in\{0,\pi\}$, Equation \ref{eq:pccoords2} becomes $z_2 = \arccos(\cos\beta)$. Since $z_2\in(0,\pi)$, $\beta = z_2$. Equation \ref{eq:pccoords4} becomes 
    \begin{align*} z_3 &= \arccos(\cos\beta\cos\theta+\sin\beta\sin\theta)\\
    &= \arccos(\cos(\beta-\theta))\\
    &= \arccos(\cos(z_2-\theta)).
    \end{align*}
    There are two possible values for $\theta$ which solve this, $\theta = z_2\pm z_3$. So, let $\rho_1 = \Gamma(z_1, z_2+z_3, \frac{\pi}{2}, z_2)$ and $\rho_2 = \Gamma(z_1, z_2-z_3, \frac{\pi}{2}, z_2)$.

    \item This follows the same logic as the last case, but if $z_3\in\{0,\pi\}$, then $z_2 + z_3 = z_2 - z_3$ in $S^1$ and so $\rho_1 = \rho_2$. 
    On the other hand, if $z_2\in\{0,\pi\}$, 
    \begin{align*}
    -\i\widetilde{\Gamma}(z_1,z_2+z_3,\frac{\pi}{2},z_2))\i &= \widetilde{\Gamma}(-z_1, -z_2-z_3,-\frac{\pi}{2}, \pi-z_2)\\
    &= \widetilde{\Gamma}(-z_1, -z_2-z_3,\frac{\pi}{2}, -z_2)\\
    &= \Gamma(z_1,z_2-z_3,\frac{\pi}{2},z_2)
    \end{align*}
    since $z_1=-z_1$ and $z_2=-z_2$. Taking the quotient by the conjugation action of $\SU$ gives $\rho_1 = \rho_2$.
\end{enumerate}

\end{proof}

\begin{figure}
    \centering
    \includegraphics[height=2in]{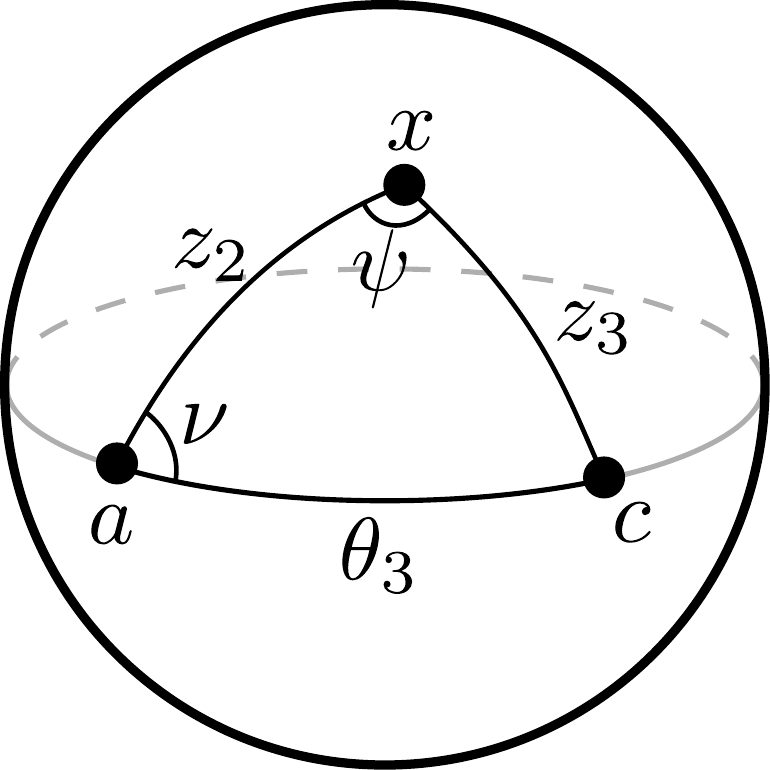}
    \caption{Spherical triangle formed by the images of $a$, $c$, and $x$.}    
    \label{fig:spheretriangle}
\end{figure}

\begin{lemma}
\label{lem:revengH}
Fix $z_1\in\{0,\pi\}$, $z_2,z_3\in[0,\pi]$.

\begin{enumerate}
    \item If $z_2,z_3\in(0,\pi)$, then $p_{h(\ol{\H})}\inv(z_1, z_2, z_1, z_3)\cong S^1$. This space can be parametrized by $\{\rho_\psi\}_{\psi\in S^1}$ such that \[p_3(\rho_\psi) = (z_1, \arccos(\cos z_2\cos z_3 + \sin z_2\sin z_3\cos\psi)).\]
    The image of $\{\rho_\psi\}_{\psi\in S^1}$ under $p_3$ is onto the line segment
    \[\{(\gamma,\theta)\in P\mid \gamma = z_1, \theta\in[\arccos(\cos(z_2-z_3)),\arccos(\cos(z_2+z_3))]\}.\]
    In particular, $\rho_0, \rho_\pi\in h(\ol{\A})$ and the endpoints of this line segment are given by \[p(\rho_0) = (z_1\arccos(\cos(z_2-z_3)))\]
    \[p(\rho_\pi) = (z_1,\arccos(\cos(z_2+z_3))).\]

    \item If $z_2$ or $z_3$ is in $\{0,\pi\}$, then there is a unique representation $\rho\in p_{h(\ol{\H})}\inv(z_1, z_2, z_1, z_3)$. In this case, $\rho\in h(\ol{\A})$ as well and $p_3(\rho) = (z_1, z_2+z_3)$.
\end{enumerate}
\end{lemma}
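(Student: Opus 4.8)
\emph{Proof proposal.} The plan is to translate everything into spherical trigonometry on $C(\i)\cong S^2$ and to use the restriction map $p_3$ as a height function on the fiber $p_{h(\ol{\H})}\inv(z_1,z_2,z_1,z_3)$.

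First I would set up the bookkeeping. Since $z_1\in\{0,\pi\}$, every $\rho$ in this fiber has $\gamma(\rho)=z_1$, so $\sin\gamma=0$, and $\rho$ lies in $\ol{\H}$ precisely because $\ol{\H}=\{\widetilde{\Gamma}(\gamma,\theta,\alpha,\beta)\mid \sin\gamma=0\}$; conversely any such data gives a point of $\ol{\H}$ since $\tr\rho(y)=2\sin\gamma\cos\alpha=0$. Reading Equations~\ref{eq:pccoords1}--\ref{eq:pccoords6} in the $\sin\gamma=0$ branch, the conditions $\gamma_1=\gamma_2=z_1$, $\theta_1=z_2$, $\theta_2=z_3$ say exactly that, after conjugating so that $\rho(a)=\i$, one has $\rho(b)=e^{z_1\k}\i$, $\rho(c)=e^{\pm\theta_3\k}\i\in\Sk$, and $\rho(x)\in C(\i)$ with $\ang{\rho(a)}{\rho(x)}=z_2$ and $\ang{\rho(x)}{\rho(c)}=z_3$, i.e. $(\rho(a),\rho(x),\rho(c))$ is a (possibly degenerate) spherical triangle with two prescribed side lengths; moreover, since $\rho(b)\in\{\pm\rho(a)\}$, the relative angle collapses and $\theta_3(\rho)=\ang{\rho(a)}{\rho(c)}$.

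Next I would analyze $p_3$ on this fiber, $[\rho]\mapsto(z_1,\theta_3(\rho))$. By the spherical triangle inequality a vertex $\rho(x)$ at distance $z_2$ from $\i$ and $z_3$ from $\rho(c)$ exists iff $\ang{\i}{\rho(c)}\in[\ell_{\min},\ell_{\max}]$ with $\ell_{\min}=\arccos\cos(z_2-z_3)$ and $\ell_{\max}=\arccos\cos(z_2+z_3)$, and every value is realized because $\Sk$ passes through $\pm\i$; so the image of $p_3$ is $\{\gamma=z_1,\ \theta\in[\ell_{\min},\ell_{\max}]\}$. I would then compute $p_3$-fibers by counting admissible pairs $(\rho(c),\rho(x))$ modulo the conjugations preserving $\rho(a)=\i$ — these are rotations about the $\i$-axis, and of them only $\pm\i$ (acting as $\iota$) keep $\rho(c)$ on $\Sk$ unless $\rho(c)=\pm\i$. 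Over an interior value $\theta_3\in(\ell_{\min},\ell_{\max})$ there are $2\times 2$ pairs forming $2$ conjugacy classes, so the fiber is $S^0$; over an endpoint with $\rho(c)\ne\pm\i$ the two circles are tangent, giving $2$ pairs in $1$ class; and over an endpoint with $\rho(c)=\pm\i$ (the case $z_2=z_3$ or $z_2+z_3=\pi$) the two defining circles coincide and the whole latitude circle of admissible $\rho(x)$ is one conjugacy class — so in every case the $p_3$-fiber over each endpoint is a single point. Hence $p_{h(\ol{\H})}\inv(z_1,z_2,z_1,z_3)$ is two copies of $[\ell_{\min},\ell_{\max}]$ glued along both endpoints: a circle when $\ell_{\min}<\ell_{\max}$ (which holds exactly when $z_2,z_3\in(0,\pi)$) and a point when $\ell_{\min}=\ell_{\max}$ (part~2). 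To pin down the parametrization, the spherical law of cosines gives $\cos\theta_3=\cos z_2\cos z_3+\sin z_2\sin z_3\cos\psi$ with $\psi$ the angle at $\rho(x)$, so $\psi\mapsto\theta_3$ is a $2$-to-$1$ map $S^1\to[\ell_{\min},\ell_{\max}]$ with the same combinatorics as $p_3$; choosing the two branches consistently lifts it to a continuous bijection $S^1\to p_{h(\ol{\H})}\inv$, hence a homeomorphism of compact Hausdorff spaces, realizing the asserted formula for $p_3(\rho_\psi)$ and its image. At $\psi\in\{0,\pi\}$ the triangle degenerates and $\rho(a),\rho(x),\rho(c)$ are coequatorial, hence (via the pillowcase relation and $\rho(y)=\pm\rho(x)$) $\rho_0$ and $\rho_\pi$ are binary dihedral, so they lie in $h(\ol{\A})$ with $p(\rho_0)=(z_1,\ell_{\min})$ and $p(\rho_\pi)=(z_1,\ell_{\max})$; the same computation with $\ell_{\min}=\ell_{\max}$ yields part~2.

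The step I expect to be the main obstacle is the fiber count at the endpoints of $[\ell_{\min},\ell_{\max}]$, and in particular reconciling it with Proposition~\ref{prop:hfibers}: when $z_2=z_3$ or $z_2+z_3=\pi$ one endpoint has $\rho(c)=\pm\rho(a)$, where the $h$-fiber jumps to $S^1$, so one must verify carefully that the corresponding latitude circle of representatives nonetheless forms a \emph{single} conjugacy class in $\RR(C_3)$ and contributes only one point to $p_{h(\ol{\H})}\inv$ — so that the gluing produces an honest circle and not a non-manifold. Keeping precise track of which conjugations by $e^{\psi'\i}$ send $\rho(c)$ back into $\Sk$ is exactly what makes this delicate.
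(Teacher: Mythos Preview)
Your proposal is correct and rests on the same underlying geometry as the paper's proof: both reduce the problem to a spherical triangle with vertices $\rho(a),\rho(x),\rho(c)$, two prescribed side-lengths $z_2,z_3$, and both read off $\theta_3$ via the spherical law of cosines. The difference is organizational. The paper parametrizes directly: it declares $\psi$ to be the vertex angle at $\rho(x)$, asserts (with minimal justification) that this determines the triangle up to rotation and hence the conjugacy class, and then computes $\theta_3(\rho_\psi)$ and works out the four endpoint cases explicitly. You instead run a Morse-style analysis of $p_3$ as a height function on the fiber --- counting preimages over each $\theta_3$, gluing two intervals along their endpoints to get an $S^1$, and only afterwards matching this with the $\psi$-parametrization.

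Your route is slightly longer but is more careful about the topology of the fiber, and in particular you explicitly confront the delicate endpoint case $z_2=z_3$ or $z_2+z_3=\pi$ (where $\rho(c)=\pm\rho(a)$ and the $h$-fiber jumps to $S^1$ per Proposition~\ref{prop:hfibers}); the paper's argument glides over this. One small point: your final step, ``choosing the two branches consistently lifts it to a continuous bijection $S^1\to p_{h(\ol{\H})}\inv$,'' is a bit underspecified as written --- you need to say which of the two classes over a given $\theta_3$ corresponds to which sign of $\sin\psi$. The cleanest fix is simply to adopt the paper's direct definition of $\rho_\psi$ (triangle with vertex angle $\psi$ at $\rho(x)$) and observe it is continuous and surjective onto the fiber you have already identified as $S^1$; your fiber count then shows it is injective.
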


\begin{proof}

\begin{enumerate}
    \item By Equations \ref{eq:pccoords1}, \ref{eq:pccoords3}, and \ref{eq:pccoords5}, if $z_1 = 0$, then $\rho(b) = \rho(a)$, $\rho(d) = \rho(c)$, and $\rho(y) = \rho(x)$. On the other hand if $z_1 = \pi$, then $z_1$ means that $\rho(b) = -\rho(a)$, $\rho(d) = -\rho(c)$, and $\rho(y) = -\rho(x)$. In either case, the values of $\rho(a)$, $\rho(c)$, and $\rho(x)$ determine the representation. Additionally, if $\rho\in p_{\ol{\H}}\inv(z_1, z_2, z_1, z_3)$, then $\ang{\rho(a)}{\rho(x)} =\theta_1(\rho) = z_2$ and $\ang{\rho(x)}{\rho(c)} = \theta_2(\rho) = z_3$. Thus the images of $\rho(a)$, $\rho(c)$, and $\rho(x)$ make a triangle on the sphere where the length of the side from $\rho(a)$ to $\rho(x)$ is $z_2$ and the length of the side from $\rho(x)$ to $\rho(c)$ is $z_2$. %
    Specifying the angle $\psi$ between these two sides uniquely specifies a triangle in the sphere up to rotation (with marked vertices) and thus a representation on $C_3$ up to conjugation, see Figure \ref{fig:spheretriangle}. Define $\rho_\psi \in \RR(C_3)$ to be the corresponding representation. Note that $\rho(a)$, $\rho(c)$, and $\rho(x)$ are coequatorial if and only if $\psi\in\{0,\pi\}$. Thus $\rho_\psi$ is binary dihedral and thus in $h(\ol{\A})$ if $\psi\in\{0,\pi\}$.

    By Equation \ref{eq:pccoords6}, $\theta_3(\rho_\psi) = \ang{\rho(a)}{\rho(c)}$ and is thus the length of the side from $\rho_\psi(a)$ to $\rho_\psi(c)$. Let $\nu(\rho_\psi)$ be the angle between the side from $\rho_\psi(a)$ to $\rho_\psi(x)$ and the side form $\rho_\psi(a)$ to $\rho_\psi(c)$, as shown in Figure \ref{fig:spheretriangle}. Using spherical trigonometry, $\theta_3(\rho_\psi)$ and $\nu(\rho_\psi)$ can be expressed in terms of $\psi$, $z_2$, and $z_3$.

    The spherical law of cosines and the spherical law of sines respectively give: 
    \[\cos(\theta_3(\rho_\psi)) = \cos z_2\cos z_3 +\sin z_2\sin z_3\cos\psi\]
    and
    \[\sin(\nu(\rho_\psi)) = \frac{\sin z_3\sin\psi}{\sin(\theta_3(\rho_\psi))}.\]
    
    Thus if $\rho_\psi$ is in standard gauge, 
    \[a\mapsto \i,\quad b\mapsto \pm \i,\quad c\mapsto e^{\theta_3(\rho_\psi)\k}\i\]
    \[x\mapsto \cos(\theta_1) \i + \sin(\theta_1)\cos(\nu)\j + \sin(\theta_1)\sin(\nu)\k = e^{\theta_1 e^{\nu\i}\k}\i.\]

The most interesting cases are when $\psi\in\{0,\pi\}$ in which case the expression for $\theta_3(\rho_\psi)$ can be considerably simplified.
First consider $\psi = 0$. Since $z_2,z_3\in[0,\pi]$, $\sin z_2\sin z_3>0$ and so $\cos(\theta_3(\rho_\psi))$ attains its maximum value at $\psi=0$ and so $\theta_3(\rho_\psi)$ attains its minimum value. In particular, $\cos(\theta_3(\rho_0)) = \cos(z_2-z_3)$ and so $\theta_3(\rho_0) = \arccos(\cos(z_2-z_3))$. If $z_2\ge z_3$, then $\theta_3(\rho_\psi) = z_2-z_3$. If $z_2\le z_3$, then since cosine is even $\theta_3(\rho_\psi) = z_3-z_2$.

Now consider the case $\psi = \pi$. Again, $\sin z_2\sin z_3>0$ and so now $\cos(\theta_3(\rho_\psi))$ attains its minimum value at $\psi=\pi$ and so $\theta_3(\rho_\psi)$ attains its maximum value. Now $\cos(\theta_3(\rho_\psi)) = \cos(z_2+z_3)$ and so $\theta_3(\rho_\psi) = \arccos(\cos(z_2+z_3))$.
If $z_2+z_3\le\pi$, then $\theta_3(\rho_\psi) = z_2+z_3$. If $z_2 + z_3 > \pi$, then $\theta_3(\rho_\psi) = 2\pi-(z_2+z_3)$.

Combining these possibilities, there are four possible cases:

\noindent\textbf{Case 1}: $z_2\ge z_3$ and $z_2+z_3\le \pi$

$\rho_0$ sends $a\mapsto \i$, $b\mapsto \pm \i$, $x\mapsto e^{z_2\k}\i$, $c\mapsto e^{(z_2-z_3)\k}\i$, so $\theta_3(\rho_0) = z_2-z_3$

$\rho_\pi$ sends $a\mapsto \i$, $b\mapsto \pm \i$, $x\mapsto  e^{z_2\k}\i$, $c\mapsto e^{(z_2+z_3)\k}\i$, so $\theta_3(\rho_\pi) = z_2+z_3$

\noindent\textbf{Case 2}: $z_2\ge z_3$ and $z_2+z_3> \pi$

$\rho_0$ sends $a\mapsto \i$, $b\mapsto \pm \i$, $x\mapsto e^{-z_2\k}\i$, $c\mapsto e^{(z_2-z_3)\k}\i$, so $\theta_3(\rho_0) = z_2-z_3$

$\rho_\pi$ sends $a\mapsto \i$, $b\mapsto \pm \i$, $x\mapsto  e^{-z_2\k}\i$, $c\mapsto e^{(2\pi-z_2-z_3)\k}\i$, so $\theta_3(\rho_0) = 2\pi-z_2-z_3$

\noindent\textbf{Case 3}: $z_2\le z_3$ and $z_2+z_3\le \pi$

$\rho_0$ sends $a\mapsto \i$, $b\mapsto \pm \i$, $x\mapsto e^{-z_2\k}\i$, $c\mapsto e^{(z_3-z_2)\k}\i$, so $\theta_3(\rho_0) = z_3-z_2$

$\rho_\pi$ sends $a\mapsto \i$, $b\mapsto \pm \i$, $x\mapsto e^{z_2\k}\i$, $c\mapsto e^{(z_2+z_3)\k}\i$, so $\theta_3(\rho_0) = z_2+z_3$

\noindent\textbf{Case 4}: $z_2\le z_3$ and $z_2+z_3> \pi$

$\rho_0$ sends $a\mapsto \i$, $b\mapsto \pm \i$, $x\mapsto e^{-z_2\k}\i$, $c\mapsto e^{(z_3-z_2)\k}\i$, so $\theta_3(\rho_0) = z_3-z_2$

$\rho_\pi$ sends $a\mapsto \i$, $b\mapsto \pm \i$, $x\mapsto e^{z_2\k}\i$, $c\mapsto e^{(2\pi-z_2-z_3)\k}\i$, so $\theta_3(\rho_0) = 2\pi-z_2-z_3$ 
    
\item If $z_1, z_2\in\{0,\pi\}$ then any $\rho\in p_{h(\ol{\H})}\inv(z_1, z_2, z_1, z_3)$ must send $a$, $b$, $x$, and $y$ to $\pm\i$. Further, $\rho(d) = \pm \rho(c)$. Thus $\rho$ is binary dihedral and thus $\rho\in\ol{\A}$ and is thus covered by Case 3 of Lemma \ref{lem:revengA}. If instead  $z_1, z_3\in\{0,\pi\}$ then $\rho$ sends $a$ and $b$ to $\pm\i$ and $\rho$ maps $c$, $d$, $x$, and $y$ to $\pm e^{z_2\k}\i$. Thus, $\rho$ is again binary dihedral and covered by Case 3 of Lemma \ref{lem:revengA}.
\end{enumerate}

\end{proof}

\subsection{Tangle Sums}
\label{sec:keylemma}

\begin{definition}
Suppose $T_1$ and $T_2$ are tangles such that $\partial T_i = (S^2,4)$. Define the \emph{tangle sum}, $T_1+T_2$, to be $T_1\cup_{S_1} C_3 \cup_{S_2} T_2$ (see Figure \ref{fig:TangleSum}).
\end{definition}

This notation is used since this operation corresponds to Conway's notion of tangle addition \cite{Conway}. 
Note that $\partial(T_1+T_2) = S_3$. Our goal is to describe the character variety $\RR(T_1+T_2)$ and its image in the pillowcase $\PThree = \RR(S_3)$. If $\pi_i$ is perturbation data for $T_i$, then we are more generally interested in the perturbed character variety $\RR_{\pi_1\cup \pi_2}(T_1+T_2)$. In this section, we will see how this character variety relates to $\RR_{\pi_1}(T_1)$ and $\RR_{\pi_2}(T_2)$.

\begin{remark}
There are two useful decompositions of the tangle sum $T_1+T_2$.
First there is
\begin{equation}
\label{eq:sequential}
T_1+T_2 = (C_3\cup_{\SOne} T_1)\cup_{\STwo} T_2.
\end{equation}
Taking this perspective will allow us to compare $\RR_{\pi_1\cup \pi_2}(T_1+T_2)$ to $\RR_{\pi_1}(T_1)$ and $\RR_{\pi_2}(T_2)$.
The other decomposition is  
\begin{equation}
\label{eq:disjoint}
T_1+T_2 = C_3\cup_{\SOne\sqcup \STwo}(T_1\sqcup T_2).
\end{equation}
This subtle change in perspective allows us instead to compare $\RR_{\pi_1\cup \pi_2}(T_1+T_2)$ to $\RR_{\pi_1\cup \pi_2}(T_1 \twedge T_2)$. %
\end{remark}

\subsubsection{Constructing $\RR_{\pi_1\cup\pi_2}(T_1+T_2)$}

Applying Lemma \ref{lem:bending} to the decomposition (\ref{eq:disjoint}) gives the following:

\begin{proposition}
\label{prop:decomp1}
There is a surjective map 
\[\RR_{\pi_1\cup \pi_2}(T_1+T_2) \to \RR_{\pi_1\cup\pi_2}(T_1\twedge T_2)\times_{P_1\times P_2} \RR(C_3)\] 
where the fiber over a point $(\rho_1, \rho_2)\in \RR_{\pi_1\cup\pi_2}(T_1\twedge T_2)\times_{P_1\times P_2} \RR(C_3) \subset \RR_{\pi_1\cup\pi_2}(T_1\twedge T_2)\times \RR(C_3)$ is homeomorphic to the double coset space \[\Stab(\rho_1)\backslash \Stab(\rho_1|_{(S_1\twedge S_2)})/ \Stab(\rho_2).\]
\end{proposition}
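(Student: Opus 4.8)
The plan is to obtain this statement by applying the general gluing result, Lemma \ref{lem:bending}, to the specific decomposition of the tangle sum recorded in Equation \eqref{eq:disjoint}, namely $T_1+T_2 = C_3 \cup_{S_1 \sqcup S_2}(T_1 \sqcup T_2)$. In the notation of Lemma \ref{lem:bending}, we set $(X_1, T_1') = C_3$, $(X_2, T_2') = T_1 \sqcup T_2$, and $(\Sigma, S) = (S_1 \sqcup S_2, \partial T_1 \sqcup \partial T_2)$; the perturbation data on the $(T_1 \sqcup T_2)$ side is $\pi_1 \cup \pi_2$, while $C_3$ carries no perturbation. The first step is to check that this is a legitimate decomposition of tangles along a disjoint union of $(S^2,4)$ components, so that Lemma \ref{lem:bending} applies verbatim: this is immediate from the definition of the tangle sum and the fact that $\partial C_3 = S_1 \sqcup S_2 \sqcup S_3$, with $S_1 \sqcup S_2$ glued to $\partial(T_1 \sqcup T_2)$.

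Next I would identify the fiber product appearing in Lemma \ref{lem:bending}. We have $\RR(\Sigma, S) = \RR(S_1 \sqcup S_2) = \RR(S_1) \times \RR(S_2) = P_1 \times P_2$, using the convention that the character variety of a disjoint union is the product. The restriction map $\RR_{\pi_1 \cup \pi_2}(T_1 \sqcup T_2) \to P_1 \times P_2$ is the product of the two pillowcase maps for $T_1$ and $T_2$, and the restriction map $\RR(C_3) \to P_1 \times P_2$ is exactly $p_1 \times p_2$. Hence Lemma \ref{lem:bending} yields a surjection
\[
\RR_{\pi_1 \cup \pi_2}(T_1+T_2) \longrightarrow \RR_{\pi_1\cup\pi_2}(T_1 \sqcup T_2) \times_{P_1\times P_2} \RR(C_3),
\]
which is the asserted map once we note $\RR_{\pi_1\cup\pi_2}(T_1 \sqcup T_2) = \RR_{\pi_1}(T_1) \times \RR_{\pi_2}(T_2)$ is what the paper denotes $\RR_{\pi_1\cup\pi_2}(T_1 \twedge T_2)$.

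Finally I would transcribe the fiber description. Lemma \ref{lem:bending} says the fiber over $([\rho_1],[\rho_2])$ — here $\rho_1 \in \RR_{\pi_1\cup\pi_2}(T_1 \twedge T_2)$ and $\rho_2 \in \RR(C_3)$ — is the double coset space $\Stab(\rho_1) \backslash \Stab(\rho_1|_{(\Sigma,S)}) / \Stab(\rho_2)$. Substituting $(\Sigma, S) = (S_1 \sqcup S_2, \cdot)$ gives $\Stab(\rho_1) \backslash \Stab(\rho_1|_{S_1 \sqcup S_2}) / \Stab(\rho_2)$, which is the claimed formula. Strictly speaking there is one small compatibility check worth spelling out: one must confirm that the fiber product appearing in Lemma \ref{lem:bending} for this decomposition really is the fiber product over $P_1 \times P_2$ of the two restriction maps as just described, i.e.\ that the identification $\RR(S_1 \sqcup S_2) \cong P_1 \times P_2$ intertwines the two relevant restriction maps with $p_1 \times p_2$ on the $C_3$ side. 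This is straightforward from the functoriality of restriction, and I do not anticipate any genuine obstacle — the proposition is essentially a bookkeeping specialization of Lemma \ref{lem:bending}, and the only mild subtlety is keeping the roles of the two factors $(X_1,T_1')$ and $(X_2,T_2')$ straight so that the double coset has $\Stab(\rho_1)$ (the $T_1 \sqcup T_2$ side) on the left and $\Stab(\rho_2)$ (the $C_3$ side) on the right, matching the statement.
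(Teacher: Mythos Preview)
Your proposal is correct and matches the paper's approach exactly: the paper simply states that Proposition~\ref{prop:decomp1} is obtained by applying Lemma~\ref{lem:bending} to the decomposition~\eqref{eq:disjoint}, without writing out any further details. Your careful identification of the pieces $(X_1,T_1')=C_3$, $(X_2,T_2')=T_1\sqcup T_2$, $(\Sigma,S)=S_1\sqcup S_2$, and $\RR(\Sigma,S)=P_1\times P_2$ is precisely the bookkeeping the paper leaves implicit.
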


Since $\RR_{\pi_1\cup\pi_2}(T_1\twedge T_2)$ is a little tricky to work with, we can get a more explicit description of $\RR_{\pi_1\cup\pi_2}(T_1 + T_2)$ by using the decomposition in Equation \ref{eq:sequential}. Then Lemma \ref{lem:bending} tells us that $\RR_{\pi_1\cup\pi_2}(T_1 + T_2)$ can be understood by studying the fibers of a surjective map \[\phi:\RR_{\pi_1\cup\pi_2}(T_1 + T_2) \to \RR_{\pi_2}(T_2)\times_{P_2}(\RR_{\pi_1}(T_1)\times_{P_1} \RR(C_3)).\]
It will be useful to decompose this character variety into pieces. Recall that $\RR(C_3) = h(\ol{\A})\cup h(\ol{\H})$.
Thus 
\begin{align*}
\RR_{\pi_2}(T_2)\times_{P_2}(\RR_{\pi_1}(T_1)\times_{P_1} \RR(C_3)) =& \RR_{\pi_2}(T_2)\times_{P_2}(\RR_{\pi_1}(T_1)\times_{P_1} h(\ol{\H}))\cup \\&\RR_{\pi_2}(T_2)\times_{P_2}(\RR_{\pi_1}(T_1)\times_{P_1} h(\ol{\A})).
\end{align*}
Using this, we can define:
\[\RR_{\pi_1\cup\pi_2}^{\ol{\H}}(T_1 + T_2) := \phi\inv(\RR_{\pi_2}(T_2)\times_{P_2}(\RR_{\pi_1}(T_1)\times_{P_1} h(\ol{\H})))\]
\[\RR_{\pi_1\cup\pi_2}^{\ol{\A}}(T_1 + T_2) := \phi\inv(\RR_{\pi_2}(T_2)\times_{P_2}(\RR_{\pi_1}(T_1)\times_{P_1} h(\ol{\A})))\]
and thus
\[\RR_{\pi_1\cup\pi_2}(T_1+T_2) = \RR_{\pi_1\cup\pi_2}^{\ol{\H}}(T_1 + T_2)\cup \RR_{\pi_1\cup\pi_2}^{\ol{\A}}(T_1 + T_2)\]

\begin{definition}
Recall the map $\gamma_i:\RR_{\pi_i}(T_i)\to [0,1]$ which sends $\rho$ to the $\gamma$-coordinate of its image in the pillowcase. Let $\RR_{\pi_1}(T_1)\times_{[0,\pi]} \RR_{\pi_2}(T_2)$ be the fiber product with respect to the maps $\gamma_1$ and $\gamma_2$. 
\end{definition}

\begin{proposition}
\label{thm:notniceA}
Then there is a surjection $\phi^{\ol{\A}}: \RR_{\pi_1\cup\pi_2}^{\ol{\A}}(T_1 + T_2) \to \RR_{\pi_1}(T_1)\times_{[0,\pi]} \RR_{\pi_2}(T_2)$ given by $\rho\mapsto (\rho|_{T_1}, \rho|_{T_2})$. The fiber over $(\rho_1, \rho_2)$ is homeomorphic to:
\begin{itemize}
\item $S^0$ if $\gamma(\rho_1)\in\{0,\pi\}$ and $\wStab(\rho_1) = \wStab(\rho_2) = (\Z/2,\Z/2)$. In this case, there are two representations $\rho_1, \rho_2 \in \RR_{\pi_1\cup\pi_2}^{\ol{\A}}(T_1 + T_2)$ with $\phi^{\ol{\A}}(\rho) = (\rho_1,\rho_2)$ such that $p_3(\rho_1) = (\gamma(\rho_1), \gamma(\rho_1)+\gamma(\rho_2))$ and $p_3(\rho_1) = (\gamma(\rho_1), \gamma(\rho_1)-\gamma(\rho_2))$.

\item $S^1$ if $\wStab(\rho_i) = (\Z/2,\U)$ and $\wStab(\rho_j)$ is $(\Z/2,\U)$ or $(\Z/2, \Z/2)$. For any $\rho$ in this fiber over $(\rho_1,\rho_2)$, \[p_3(\rho) = (\gamma(\rho_1), \gamma(\rho_1)+\gamma(\rho_2)).\]

\item $\pt$ otherwise. If $\rho$ is the unique representation in the fiber over $(\rho_1, \rho_2)$, then \[p_3(\rho) = (\gamma(\rho_1), \gamma(\rho_1)+\gamma(\rho_2)).\]
\end{itemize}

\end{proposition}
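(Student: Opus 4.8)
The plan is to apply Lemma~\ref{lem:bending} to the decomposition $T_1+T_2 = (C_3\cup_{S_1}T_1)\cup_{S_2}T_2$ and then to combine this with the detailed description of $p_{h(\ol{\A})}$ already obtained in Lemma~\ref{lem:revengA}. First I would note that, by definition, $\RR^{\ol{\A}}_{\pi_1\cup\pi_2}(T_1+T_2)$ is exactly the subspace of $\RR_{\pi_1\cup\pi_2}(T_1+T_2)$ whose image in $\RR(C_3)$ under the restriction map lies in $h(\ol{\A})$. The map $\phi^{\ol{\A}}$ sending $\rho\mapsto(\rho|_{T_1},\rho|_{T_2})$ lands in $\RR_{\pi_1}(T_1)\times_{[0,\pi]}\RR_{\pi_2}(T_2)$ because, by Equations~\ref{eq:pccoords1}, \ref{eq:pccoords3}, \ref{eq:pccoords5}, any representation of $C_3$ satisfies $\gamma_1 = \gamma_2 = \gamma_3$, so the $\gamma$-coordinates of $\rho|_{T_1}$ and $\rho|_{T_2}$ must agree; conversely this is the only constraint coming from the $\gamma$-directions, which is why the fiber product is over $[0,\pi]$ rather than over $P_1\times P_2$. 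Surjectivity of $\phi^{\ol{\A}}$ will follow once I exhibit, for each compatible pair $(\rho_1,\rho_2)$, at least one representation of $C_3$ in $h(\ol{\A})$ restricting correctly to $S_1$ and $S_2$; this is precisely what Lemma~\ref{lem:revengA} provides, since for binary dihedral $C_3$-representations the $\theta$-coordinate $\theta_1$ is free (the images of $a,b,c,d$ are coequatorial and $x$ lies on the same circle up to the freedom measured by $z_2,z_3$).

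The core of the argument is a case analysis on $\wStab(\rho_1)$, $\wStab(\rho_2)$, and whether $\gamma(\rho_1)=\gamma(\rho_2)\in\{0,\pi\}$, carried out in two layers. The outer layer is the double coset computation of Lemma~\ref{lem:bending}: the fiber of the full map $\phi$ over a triple $(\rho_1,\rho_2,\rho_C)$ is $\Stab(\rho_1)\backslash\Stab(\rho_1|_{S_1})/\Stab(\rho_C)$ or the analogous coset for the $S_2$-gluing, and this contributes an extra $S^1$ exactly when some restriction has stabilizer $\U$ while the ambient representation has stabilizer $\Z/2$ — i.e.\ when $\wStab(\rho_i)=(\Z/2,\U)$ for $i=1$ or $2$. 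The inner layer is the count of $\rho_C\in h(\ol{\A})$ compatible with $(\rho_1,\rho_2)$, which is governed by Lemma~\ref{lem:revengA} with $z_1 = \gamma(\rho_i)$, $z_2 = \theta(\rho_1)$ (or rather the $\theta_1$-coordinate forced by $\rho_1$), $z_3 = \theta_2$ forced by $\rho_2$. When $z_1\in(0,\pi)$ there is a unique such $\rho_C$ (Case~1 of Lemma~\ref{lem:revengA}); when $z_1\in\{0,\pi\}$ and both $\theta$-values are in $(0,\pi)$ there are two (Case~2); and when $z_1\in\{0,\pi\}$ with at least one $\theta$-value in $\{0,\pi\}$ there is again one (Case~3). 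Crucially, the hypothesis $\wStab(\rho_i)=(\Z/2,\Z/2)$ forces $\theta(\rho_i)\in(0,\pi)$ (otherwise $\rho_i|_{S_i}$ would land in $P^c$, giving $\U$ boundary stabilizer), so the two-point fiber in the first bullet corresponds precisely to Case~2, while in all other subcases either the inner count drops to one or the double coset contributes the $S^1$. I then need to check these two sources of extra structure never both occur simultaneously (if $\wStab(\rho_i)=(\Z/2,\U)$ then $\theta(\rho_i)\in\{0,\pi\}$, landing us in Case~3 with a unique $\rho_C$, so the $S^1$ comes only from the coset), and that the fibers of $\phi^{\ol{\A}}$ are obtained from the fibers of $\phi$ by the evident projection forgetting $\rho_C$.

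Finally, the pillowcase formulas: for each representation $\rho$ in the fiber, $p_3(\rho) = (\gamma_3(\rho),\theta_3(\rho)) = (\gamma(\rho_1),\theta_3)$, and Lemma~\ref{lem:revengA} records $\theta_3$ in terms of $z_1,z_2,z_3$ — namely $z_2+z_3$ for the generic and boundary-degenerate representatives, and additionally $z_2-z_3$ for the second representative in Case~2. Translating $z_2 = \theta(\rho_1)$, $z_3 = \theta(\rho_2)$ via Equations~\ref{eq:pccoords2}, \ref{eq:pccoords4} evaluated on binary dihedral representations (where $\theta_1 = \beta\sin\alpha$ simplifies because $\sin\gamma = 0$ is false on $\A$ but $\sin\gamma\to 0$ on $\ol\A\cap\SV$; one must be slightly careful at the $\H$–$\A$ interface, but on $h(\ol\A)$ the relevant identities are $\theta_1 = \theta(\rho_1)$ and $\theta_2 = \theta(\rho_2)$ by the gluing) gives the stated $(\gamma(\rho_1),\gamma(\rho_1)\pm\gamma(\rho_2))$ — here I read the statement's ``$\gamma$'' in the pillowcase formulas as the $\theta$-coordinate transported through $C_3$, matching the notation of the theorem. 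The main obstacle I anticipate is bookkeeping at the stratum boundaries: ensuring that representations lying in $h(\SV) = h(\ol\H\cap\ol\A)$ are counted consistently between the $\ol\A$ and $\ol\H$ pieces, and that the claimed homeomorphism type of each fiber (not just its point-count) is correct — in particular that the $S^1$ fibers from the double coset $\U/(\Z/2)$ or $\Z/2\backslash\U/\U$ genuinely assemble into a circle rather than a half-open arc or a point with multiplicity. This I would handle by invoking Proposition~\ref{prop:hfibers} and the properly-discontinuous action argument of Proposition~\ref{prop:restdcov} to control the topology, combined with the compactness/Hausdorff argument of Lemma~\ref{lem:generic} to upgrade the set-theoretic fiber descriptions to homeomorphisms.
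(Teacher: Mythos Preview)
Your proposal is correct and follows essentially the same approach as the paper: identify the compatible $\rho_C\in h(\ol{\A})$ via Lemma~\ref{lem:revengA}, then compute the gluing parameters via Lemma~\ref{lem:bending}, and combine the two counts. The paper organizes this as seven explicit cases indexed by the pair $(\wStab(\rho_1),\wStab(\rho_2))$ and whether $\gamma(\rho_1)\in\{0,\pi\}$, whereas you phrase it as a two-layer argument with the key observation that the ``two-point inner count'' and the ``$S^1$ outer coset'' never coexist; these are the same content, and your concerns about the topology of the $S^1$ fibers are resolved in the paper simply by noting that $\Z/2\backslash\U/\Z/2\cong S^1$ since $\Z/2$ is central.
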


\begin{proof}
A priori, the range of $\phi^{\ol{\A}}$ is $\RR_{\pi_1}(T_1)\times \RR_{\pi_2}(T_2)$. However, if $\phi^{\ol{\A}}(\rho) = (\rho_1, \rho_2)$, then $\gamma(\rho_1) = \gamma_1(\rho) = \gamma_2(\rho) = \gamma(\rho_2)$, $\theta_1(\rho) = \theta(\rho_1)$ and $\theta_2(\rho) = \theta(\rho_2)$, showing that 
the image of $\phi^{\ol{\A}}$ lies in the subspace $\RR_{\pi_1}(T_1)\times_{[0,\pi]} \RR_{\pi_2}(T_2)$.

To identify the fiber of the map over $(\rho_1, \rho_2)$, it is first necessary to calculate \[p_{h(\ol{\A})}\inv(\gamma(\rho_1), \theta(\rho_1), \gamma(\rho_2), \theta(\rho_2)).\] For each $\eta\in p_{h(\ol{\A})}\inv(\gamma(\rho_1), \theta(\rho_1), \gamma(\rho_2), \theta(\rho_2))$, one must then compute the space of gluing parameters representing the possible ways to glue the representations $\eta\cup\rho_1\cup\rho_2$. The first of these spaces is identified by Lemma \ref{lem:revengA}, while the second can be identified using Lemma \ref{lem:bending}.

\noindent\textbf{Case 1:} $\gamma(\rho_1)\in(0,\pi)$.

\noindent By Lemma \ref{lem:revengA}, there is a unique representation $\eta\in p_{h(\ol{\A})}\inv(\gamma(\rho_1), \theta(\rho_1), \gamma(\rho_2), \theta(\rho_2))$. Because $\sin(\gamma(\eta)) \ne 0$, we know that $\wStab_{S_1}(\eta) = \wStab(\rho_1) = \wStab(\rho_2) = (\Z/2, \Z/2)$. By Lemma \ref{lem:bending}, there is a unique way to glue the representations $\eta \cup \rho_1$. Clearly $\wStab_{P_2}(\eta\cup \rho_1) = (\Z/2, \Z/2)$ and so there is a unique representation $\eta \cup \rho_1 \cup \rho_2$. Thus the fiber over $(\rho_1, \rho_2)$ is a point.

\noindent\textbf{Case 2:} $\gamma(\rho_1)\in\{0,\pi\}$ and $\theta(\rho_1), \theta(\rho_2)\in(0,\pi)$.

\noindent By Lemma \ref{lem:revengA}, there are two representations $\eta_1,\eta_2\in p_{h(\ol{\A})}\inv(\gamma(\rho_1), \theta(\rho_1), \gamma(\rho_2), \theta(\rho_2))$.
Since $\theta(\rho_1)$ and $\theta(\rho_2)$ lie in $(0,\pi)$, we know that $\wStab_{S_1}(\eta_i) = \wStab_{S_2}(\eta_i) = \wStab(\rho_1) = \wStab(\rho_2) = (\Z/2, \Z/2)$. By Lemma \ref{lem:bending}, there is a unique way to glue the representations $\eta_i \cup \rho_1$. Clearly $\wStab_{P_2}(\eta_i\cup \rho_1) = (\Z/2, \Z/2)$ and so there is a unique representation $\eta_i \cup \rho_1 \cup \rho_2$. Thus the fiber over $(\rho_1, \rho_2)$ is two points.

\noindent\textbf{Case 3:} $\gamma(\rho_1)\in\{0,\pi\}$, $\wStab(\rho_i) = (\U,\U)$, and $\wStab(\rho_j) = (\Z/2,\Z/2)$.

\noindent By Lemma \ref{lem:revengA}, there is a unique $\eta\in p_{h(\ol{\A})}\inv(\gamma(\rho_1), \theta(\rho_1), \gamma(\rho_2), \theta(\rho_2))$.
$\wStab_{S_j}(\eta) = \wStab(\rho_j) = (\Z/2, \Z/2)$. Thus there is a unique representation $\eta\cup\rho_j$ and $\wStab_{\rho_i}(\eta\cup\rho_j) = (\Z/2, \U)$.
Lemma \ref{lem:bending} tells us that there is a unique representation $\eta\cup\rho_1\cup\rho_2$ and so the fiber of $(\rho_1, \rho_2)$ is a point.

\noindent\textbf{Case 4:} $\gamma(\rho_1)\in\{0,\pi\}$, $\wStab(\rho_i) = (\Z/2,\U)$, and $\wStab(\rho_j) = (\Z/2,\Z/2)$.

\noindent By Lemma \ref{lem:revengA}, there is a unique $\eta\in p_{h(\ol{\A})}\inv(\gamma(\rho_1), \theta(\rho_1), \gamma(\rho_2), \theta(\rho_2))$.
$\wStab_{S_j}(\eta) = (\Z/2, \Z/2)$. Thus there is a unique representation $\eta\cup\rho_j$ and $\wStab_{\rho_i}(\eta\cup\rho_j) = (\Z/2, \U)$. Lemma \ref{lem:bending} tells us that the gluing parameter for gluing $\eta\cup\rho_1$ to $\rho_2$ is $\Z/2\backslash \U/\Z/2\cong S^1$. So the fiber over $(\rho_1,\rho_2)$ is $S^1$.

\noindent\textbf{Case 5:} $\gamma(\rho_1)\in\{0,\pi\}$ and $\wStab(\rho_1) = \wStab(\rho_2) = (\Z/2,\U)$.

\noindent By Lemma \ref{lem:revengA}, there is a unique $\eta\in p_{h(\ol{\A})}\inv(\gamma(\rho_1), \theta(\rho_1), \gamma(\rho_2), \theta(\rho_2))$.
$\wStab_{S_1}(\eta) = (\U, \U)$. There is a unique representation $\eta\cup\rho_1$ and $\wStab_{S_2}(\eta\cup\rho_1) = (\Z/2, \U)$. There is a unique representation $\eta\cup\rho_1\cup\rho_2$ and so the fiber over $(\rho_1, \rho_2)$ is a point.

\noindent\textbf{Case 6:} $\gamma(\rho_1)\in\{0,\pi\}$, $\wStab(\rho_i) = (\Z/2,\U)$, and $\wStab(\rho_j) = (\U,\U)$.

\noindent By Lemma \ref{lem:revengA}, there is a unique $\eta\in p_{h(\ol{\A})}\inv(\gamma(\rho_1), \theta(\rho_1), \gamma(\rho_2), \theta(\rho_2))$.
$\wStab_{S_j}(\eta) = (\U, \U)$. There is a unique representation $\eta\cup\rho_j$ and $\wStab_{S_i}(\eta\cup\rho_j) = (\Z/2, \U)$. Lemma \ref{lem:bending} tells us that the gluing parameter for gluing $\eta\cup\rho_1$ to $\rho_2$ is $\Z/2\backslash \U/\Z/2\cong S^1$. So the fiber over $(\rho_1,\rho_2)$ is $S^1$.

\noindent\textbf{Case 7:} $\gamma(\rho_1)\in\{0,\pi\}$, $\wStab(\rho_i) = (\U,\U)$, and $\wStab(\rho_j) = (\U,\U)$.

\noindent By Lemma \ref{lem:revengA}, there is a unique $\eta\in p_{h(\ol{\A})}\inv(\gamma(\rho_1), \theta(\rho_1), \gamma(\rho_2), \theta(\rho_2))$.
$\wStab_{S_1}(\eta) = (\U, \U)$. There is a unique representation $\eta\cup\rho_1$ and $\wStab_{S_2} = (\U, \U)$. There is a unique representation $\eta\cup\rho_1\cup\rho_2$ and so the fiber over $(\rho_1, \rho_2)$ is a point.

\bigskip

\noindent To see the images of the fibers under $p_3$ note that $p_3(\eta\cup\rho_1\cup\rho_2) = p_3(\eta)$, which is given in Lemma \ref{lem:revengA}.

\end{proof}

\begin{definition}
Let $\RR_{\pi_1}(T_1)\times_{\{0,\pi\}}\RR_{\pi_2}(T_2) := \{(\rho_1, \rho_2)\in \RR_{\pi_1}(T_1)\times_{\{0,\pi\}}\RR_{\pi_2}(T_2) \mid \gamma(\rho_1)\in\{0,\pi\}\}$. 
\end{definition}

\begin{proposition}
\label{thm:notniceH}
There is a surjective map $\phi^{\ol{\H}}: \RR_{\pi_1\cup\pi_2}^{\ol{\H}}(T_1+T_2)\to \RR_{\pi_1}(T_1)\times_{\{0,\pi\}}\RR_{\pi_2}(T_2)$ and the fiber over $(\rho_1,\rho_2)$ is homeomorphic to: 
\begin{itemize}
    \item $S^1$ if $\wStab(\rho_1) = \wStab(\rho_2) = (\Z/2,\Z/2)$. In this case, parameterize the fiber by $[\rho_1+\rho_2]_\psi$ for $\psi\in S^1$. Then \[p_3([\rho_1+\rho_2]_\psi) = (\gamma(\rho_1),\arccos(\cos\theta(\rho_1)\cos\theta(\rho_2) +\sin\theta(\rho_1)\sin\theta(\rho_2)\cos\psi)).\] 
    The image of $\{[\rho_1+\rho_2]_\psi\}_{\psi\in S^1}$ under $p_3$ is the line segment
    \[\{(\gamma,\theta)\in P\mid \gamma = \gamma(\rho_1), \theta\in[\arccos(\cos(\theta(\rho_1)-\theta(\rho_2))),\arccos(\cos(\theta(\rho_1)+\theta(\rho_2)))]\}.\]
    In particular, $\rho_0, \rho_\pi\in \ol{\A}$ and the endpoints of this line segment are given by \[p(\rho_0) = (\gamma(\rho_1),\arccos(\cos(\theta(\rho_1)-\theta(\rho_2))))\]
    \[p(\rho_\pi) = (\gamma(\rho_1),\arccos(\cos(\theta(\rho_1)+\theta(\rho_2)))).\]
    
    \item $S^1$ if $\wStab(\rho_i) = (\Z/2,\U)$ and $\wStab(\rho_j)$  is  $(\Z/2,\U)$ or $(\Z/2, \Z/2)$. For any $\rho$ in this fiber over $(\rho_1,\rho_2)$, \[p_3(\rho) = (\gamma(\rho_1), \gamma(\rho_1)+\gamma(\rho_2)).\]
    
    \item $\pt$ otherwise. If $\rho$ is the unique representation in this fiber over $(\rho_1,\rho_2)$, then \[p_3(\rho) = (\gamma(\rho_1), \gamma(\rho_1)+\gamma(\rho_2)).\]
\end{itemize}

\end{proposition}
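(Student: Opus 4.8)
The plan is to mirror the proof of Proposition \ref{thm:notniceA}, working with the decomposition \eqref{eq:sequential} and feeding Lemma \ref{lem:revengH} (in place of Lemma \ref{lem:revengA}) into the gluing Lemma \ref{lem:bending}. First I would check that $\phi^{\ol{\H}}$ is well defined with the claimed range: if $\rho\in\RR_{\pi_1\cup\pi_2}^{\ol{\H}}(T_1+T_2)$ then $\rho|_{C_3}\in h(\ol{\H})$ has $\sin\gamma=0$, so Equations \eqref{eq:pccoords1} and \eqref{eq:pccoords3} give $\gamma(\rho|_{T_1})=\gamma_1(\rho)=\gamma_2(\rho)=\gamma(\rho|_{T_2})\in\{0,\pi\}$ while $\theta_i(\rho)=\theta(\rho|_{T_i})$, so $(\rho|_{T_1},\rho|_{T_2})\in\RR_{\pi_1}(T_1)\times_{\{0,\pi\}}\RR_{\pi_2}(T_2)$. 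Surjectivity is then immediate: for $(\rho_1,\rho_2)$ in the target, Lemma \ref{lem:revengH} produces a nonempty $p_{h(\ol{\H})}\inv(\gamma(\rho_1),\theta(\rho_1),\gamma(\rho_2),\theta(\rho_2))$, any element $\eta$ of which agrees with $\rho_1$ on $S_1$ and with $\rho_2$ on $S_2$, and the surjectivity clause of Lemma \ref{lem:bending} supplies a representation glued from $\eta\cup\rho_1\cup\rho_2$ that lies in $\RR^{\ol{\H}}_{\pi_1\cup\pi_2}(T_1+T_2)$ and maps to $(\rho_1,\rho_2)$.

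To compute the fiber over $(\rho_1,\rho_2)$ I would first compute $p_{h(\ol{\H})}\inv(\gamma(\rho_1),\theta(\rho_1),\gamma(\rho_2),\theta(\rho_2))$ by Lemma \ref{lem:revengH}, and then, for each $\eta$ in that preimage, compute the iterated double-coset spaces of gluing parameters (for $\eta\cup\rho_1$, then for $\eta\cup\rho_1\cup\rho_2$) via Lemma \ref{lem:bending}, exactly as in the proof of Proposition \ref{thm:notniceA}. Since $\gamma(\rho_1)\in\{0,\pi\}$, the hypothesis $\wStab(\rho_1)=\wStab(\rho_2)=(\Z/2,\Z/2)$ is equivalent to $\theta(\rho_1),\theta(\rho_2)\in(0,\pi)$ (otherwise some $\rho_i|_\partial$ would hit a corner of the pillowcase), and in that case Lemma \ref{lem:revengH}(1) gives $p_{h(\ol{\H})}\inv\cong S^1=\{\rho_\psi\}_{\psi\in S^1}$. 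Each $\rho_\psi$ sends $x$ outside $\{\pm\i\}$ (because $\theta_1(\rho_\psi)=\theta(\rho_1)\in(0,\pi)$, via Equation \eqref{eq:pccoords2}), so $\Stab(\rho_\psi)=\Z/2$, and neither of its restrictions to $S_1$, $S_2$ is at a corner, so $\wStab_{S_1}(\rho_\psi)=\wStab_{S_2}(\rho_\psi)=(\Z/2,\Z/2)$; hence both gluing steps are $\Z/2\backslash\Z/2/\Z/2\cong\pt$. Consequently the restriction map $\rho\mapsto\rho|_{C_3}$ is a continuous bijection from the (compact Hausdorff) fiber onto $p_{h(\ol{\H})}\inv\cong S^1$, so the fiber is homeomorphic to $S^1$, and $p_3$ on the fiber is $p_3$ on the $\rho_\psi$'s, which Lemma \ref{lem:revengH}(1) identifies with the stated arc whose two endpoints lie in $\ol{\A}$.

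In every remaining case at least one of $\theta(\rho_1),\theta(\rho_2)$ lies in $\{0,\pi\}$, so Lemma \ref{lem:revengH}(2) yields a \emph{unique} $\eta\in p_{h(\ol{\H})}\inv(\ldots)$, which moreover lies in $h(\ol{\A})$ with $p_3(\eta)$ as given there. The fiber is then the space of gluings of $\eta\cup\rho_1\cup\rho_2$, which I would analyze by the same stabilizer case split as Cases 3--7 of the proof of Proposition \ref{thm:notniceA}: when $\wStab(\rho_i)=(\Z/2,\U)$ and $\wStab(\rho_j)\in\{(\Z/2,\U),(\Z/2,\Z/2)\}$, exactly one of the two gluing steps has the form $\Z/2\backslash\U/\Z/2\cong S^1$ and the other is a point, so the fiber is $S^1$; in all other configurations both steps are a point and the fiber is a point. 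The one extra subtlety here -- and the reason the ``$\pt$ otherwise'' branch is correct even though nearby configurations give $S^1$ in Proposition \ref{thm:notniceA} -- is that $\gamma(\rho_j)\in\{0,\pi\}$ is forced throughout, so whenever $\wStab(\rho_j)=(\U,\U)$ one has $\theta(\rho_j),\gamma(\rho_j)\in\{0,\pi\}$, which forces $\eta\in\Sols^0$ and hence $\Stab(\eta)=\U$, collapsing the coset $\U\backslash\U/\Stab(\rho_j)$ to a point. In all non-main cases $p_3$ of every representation in the fiber equals $p_3(\eta)$.

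The hard part is the bookkeeping in the fiber computation: one must correctly pin down $\Stab(\eta)$, $\Stab(\eta|_{S_i})$, and the stabilizers of the partially glued representations in each stratum, since a single misidentification of $\U$ versus $\Z/2$ turns a double coset from a point into a circle. The delicate stratum is the corner locus $\theta\in\{0,\pi\}$, where $\eta$ can acquire a $\U$-stabilizer, and the case split above is designed precisely to isolate when that happens.
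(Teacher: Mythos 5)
Your proposal is correct and follows essentially the same route as the paper: feed Lemma \ref{lem:revengH} into the gluing Lemma \ref{lem:bending}, using part (1) of that lemma for the $(\Z/2,\Z/2)$ case to get the $S^1$ fiber with trivial gluing parameters, and part (2) for the remaining cases, where the unique $\eta$ is binary dihedral so the fiber lands in $\RR^{\ol{\A}}_{\pi_1\cup\pi_2}(T_1+T_2)$ and the stabilizer analysis of Proposition \ref{thm:notniceA} applies. The paper is more terse (it simply cites Proposition \ref{thm:notniceA} for the last two cases rather than redoing the coset bookkeeping), but the underlying argument is the same.
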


\begin{proof}

In the last two cases, Lemma \ref{lem:revengH} shows that every representation in the fiber also lies in $\RR^{\ol{\A}}_{\pi_1\cup \pi_2}(T_1+T_2)$, and thus the results follow directly from Proposition \ref{thm:notniceA}. Thus, only the first case needs to be examined.

If $\wStab(\rho_1) = \wStab(\rho_2) = (\Z/2,\Z/2)$, then $\theta(\rho_1),\theta(\rho_2)\in (0,\pi)$ and by Lemma \ref{lem:revengH}, \[p_{\ol{\H}}\inv (\gamma(\rho_1), \theta(\rho_1), \gamma(\rho_2), \theta(\rho_2))\cong S^1.\] Let $\eta_\psi\in\RR(C_3)$ be the parameterization of this fiber from Lemma \ref{lem:revengH}. According to Lemma \ref{lem:bending}, when gluing along $S_1$ and $S_2$, the gluing parameters are both trivial. Thus, there is a circle's worth of representations $[\rho_1+\rho_2]_\psi := \rho_1\cup\eta_\psi\cup \rho_2$ in the fiber. $p_3([\rho_1+\rho_2]_\psi) = p_3(\eta_\psi)$ and so the rest of the theorem follows from Lemma \ref{lem:revengH}.
\end{proof}

\begin{remark}
If $(\rho_1,\rho_2)\in \RR_{\pi_1}(T_1)\times_{\{0,\pi\}}\RR_{\pi_2}(T_2)$ such that $\wStab(\rho_i) = (\Z/2,\Z/2)$ then $[\rho_1+\rho_2]_\psi$ is necessarily non-binary dihedral if $\sin\psi\ne 0$. This is because the restriction of such a representation to $C_3$ is not binary dihedral, which is clear from Figure \ref{fig:spheretriangle}.
\end{remark}

Combining the results of Propositions \ref{thm:notniceA} and \ref{thm:notniceH}, the fibers of $\phi: \RR_{\pi_1\cup\pi_2}(T_1+T_2) \to \RR_{\pi_1}(T_1)\times_{[0,\pi]} \RR_{\pi_2}(T_2)$ can be identified.

\begin{theorem}
\label{thm:notnice}
There is a surjective map $\phi: \RR_{\pi_1\cup\pi_2}(T_1+T_2) \to \RR_{\pi_1}(T_1)\times_{[0,\pi]} \RR_{\pi_2}(T_2)$ and the fiber over $(\rho_1, \rho_2)$ is homeomorphic to:

\begin{itemize}
    \item $S^1$ if $\gamma(\rho_1) \in \{0,\pi\}$ and $\wStab(\rho_1) = \wStab(\rho_2) = (\Z/2,\Z/2)$. In this case, parameterize the fiber by $[\rho_1+\rho_2]_\psi$ for $\psi\in S^1$. Then \[p_3([\rho_1+\rho_2]_\psi) = (\gamma(\rho_1),\arccos(\cos\theta(\rho_1)\cos\theta(\rho_2) +\sin\theta(\rho_1)\sin\theta(\rho_2)\cos\psi)).\] 
    The image of $\{[\rho_1+\rho_2]_\psi\}_{\psi\in S^1}$ under $p_3$ is the line segment
    \[\{(\gamma,\theta)\in P\mid \gamma = \gamma(\rho_1), \theta\in[\arccos(\cos(\theta(\rho_1)-\theta(\rho_2))),\arccos(\cos(\theta(\rho_1)+\theta(\rho_2)))]\}.\]
    In particular, $\rho_0, \rho_\pi\in \ol{\A}$ and the endpoints of this line segment are given by \[p(\rho_0) = (\gamma(\rho_1),\arccos(\cos(\theta(\rho_1)-\theta(\rho_2))))\]
    \[p(\rho_\pi) = (\gamma(\rho_1),\arccos(\cos(\theta(\rho_1)+\theta(\rho_2)))).\]
    
    \item $S^1$ if $\wStab(\rho_i) = (\Z/2,\U)$ and $\wStab(\rho_j)$  is  $(\Z/2,\U)$ or $(\Z/2, \Z/2)$. For any $\rho$ in this fiber , $p_3(\rho) = (\gamma(\rho_1), \theta(\rho_1)+\theta(\rho_2))$.
    
    \item $\pt$ otherwise. If $\rho$ is the unique representation in this fiber over $(\rho_1,\rho_2)$, then $p_3(\rho) = (\gamma(\rho_1), \theta(\rho_1)+\theta(\rho_2))$.
\end{itemize}
\end{theorem}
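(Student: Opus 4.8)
The map $\phi$ is the restriction $\rho\mapsto(\rho|_{T_1},\rho|_{T_2})$, and under the decomposition $\RR_{\pi_1\cup\pi_2}(T_1+T_2)=\RR^{\ol{\H}}_{\pi_1\cup\pi_2}(T_1+T_2)\cup\RR^{\ol{\A}}_{\pi_1\cup\pi_2}(T_1+T_2)$ it restricts to the surjections $\phi^{\ol{\A}}$ and $\phi^{\ol{\H}}$ of Propositions \ref{thm:notniceA} and \ref{thm:notniceH}. Since $\phi^{\ol{\A}}$ already surjects onto $\RR_{\pi_1}(T_1)\times_{[0,\pi]}\RR_{\pi_2}(T_2)$, so does $\phi$, and for every point $(\rho_1,\rho_2)$ of the fiber product one has, as subsets of $\RR_{\pi_1\cup\pi_2}(T_1+T_2)$,
\[\phi\inv(\rho_1,\rho_2)=(\phi^{\ol{\A}})\inv(\rho_1,\rho_2)\cup(\phi^{\ol{\H}})\inv(\rho_1,\rho_2),\]
the second piece being empty unless $\gamma(\rho_1)\in\{0,\pi\}$ (by Equation \ref{eq:pccoords1}, any character in $\RR^{\ol{\H}}_{\pi_1\cup\pi_2}(T_1+T_2)$ has $\gamma$-coordinate in $\{0,\pi\}$). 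So the statement reduces to computing this union, and in particular the overlap of the two pieces, over the finitely many possibilities for $(\wStab(\rho_1),\wStab(\rho_2))$.

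Two families of cases reduce to a single piece. If $\gamma(\rho_1)\in(0,\pi)$, then $\rho_i|_{S_i}$ is non-abelian, so $\wStab(\rho_1)=\wStab(\rho_2)=(\Z/2,\Z/2)$ is forced, the $\ol{\H}$-piece is empty, and Proposition \ref{thm:notniceA} gives a single character in the $\ol{\A}$-piece with $p_3$-image $(\gamma(\rho_1),\theta(\rho_1)+\theta(\rho_2))$; this is the third bullet. If instead $\gamma(\rho_1)\in\{0,\pi\}$ but $\wStab(\rho_1)$ or $\wStab(\rho_2)$ is not $(\Z/2,\Z/2)$, then $\theta(\rho_1)\in\{0,\pi\}$ or $\theta(\rho_2)\in\{0,\pi\}$, so by Lemma \ref{lem:revengH}(2) every character in the $\ol{\H}$-fiber restricts to a binary dihedral character on $C_3$ and therefore lies in $\RR^{\ol{\A}}_{\pi_1\cup\pi_2}(T_1+T_2)$ as well (recall $h(\ol{\A})=\RR\bd(C_3)$); hence $\phi\inv(\rho_1,\rho_2)=(\phi^{\ol{\A}})\inv(\rho_1,\rho_2)$, and Proposition \ref{thm:notniceA} supplies the last two alternatives — an $S^1$ with constant image $(\gamma(\rho_1),\theta(\rho_1)+\theta(\rho_2))$ when one of $\wStab(\rho_i)$ is $(\Z/2,\U)$ and the other is $(\Z/2,\U)$ or $(\Z/2,\Z/2)$, and a single point with that same image otherwise.

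The remaining, and only delicate, case is $\gamma(\rho_1)\in\{0,\pi\}$ with $\wStab(\rho_1)=\wStab(\rho_2)=(\Z/2,\Z/2)$, equivalently $\theta(\rho_1),\theta(\rho_2)\in(0,\pi)$. By Proposition \ref{thm:notniceH} the $\ol{\H}$-fiber is a circle $\{[\rho_1+\rho_2]_\psi\}_{\psi\in S^1}$ with the stated $p_3$-formula, and $[\rho_1+\rho_2]_0,[\rho_1+\rho_2]_\pi\in\ol{\A}$ realize its extreme $\theta$-values. The point is that $(\phi^{\ol{\A}})\inv(\rho_1,\rho_2)=\{[\rho_1+\rho_2]_0,[\rho_1+\rho_2]_\pi\}$, so the union is exactly this circle: by Lemma \ref{lem:revengA} the two characters of $p_{h(\ol{\A})}\inv(\gamma(\rho_1),\theta(\rho_1),\gamma(\rho_2),\theta(\rho_2))$ both satisfy $\sin\gamma=\cos\alpha=0$, hence lie in $\SV\subset\ol{\H}$, so their unique gluings with $\rho_1$ and $\rho_2$ are binary dihedral characters of $\RR^{\ol{\H}}_{\pi_1\cup\pi_2}(T_1+T_2)$, which by the remark following Proposition \ref{thm:notniceH} must be $[\rho_1+\rho_2]_\psi$ with $\psi\in\{0,\pi\}$; conversely the $C_3$-restrictions of $[\rho_1+\rho_2]_0$ and $[\rho_1+\rho_2]_\pi$ are binary dihedral characters over the same point of $P_1\times P_2$, hence are exactly those two characters (and they glue uniquely with $\rho_1,\rho_2$ since all stabilizers are $(\Z/2,\Z/2)$). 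Therefore $\phi\inv(\rho_1,\rho_2)$ is this circle, with all pillowcase data inherited from Proposition \ref{thm:notniceH}; this is the first bullet. This overlap step — verifying that the $S^0$ of binary dihedral gluings produced from $\ol{\A}$ is precisely the pair of endpoints of the circle produced from $\ol{\H}$, rather than two extra isolated points attached to it — is the crux of the argument, and it is exactly here that the inclusion $\SV\subset\ol{\H}\cap\ol{\A}$ and the explicit descriptions in Lemmas \ref{lem:revengA} and \ref{lem:revengH} are used; everything else is bookkeeping over stabilizer types.
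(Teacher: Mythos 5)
Your proof is correct and follows the same route as the paper, which derives Theorem \ref{thm:notnice} simply by combining Propositions \ref{thm:notniceA} and \ref{thm:notniceH} over the decomposition $\RR_{\pi_1\cup\pi_2}(T_1+T_2)=\RR^{\ol{\A}}_{\pi_1\cup\pi_2}(T_1+T_2)\cup\RR^{\ol{\H}}_{\pi_1\cup\pi_2}(T_1+T_2)$. In fact you supply the details the paper elides, correctly isolating the one step that needs an argument: that in the first bullet the $\ol{\A}$-piece of the fiber is precisely $\{[\rho_1+\rho_2]_0,[\rho_1+\rho_2]_\pi\}$, the two binary-dihedral endpoints of the $\ol{\H}$-circle, so the union is an $S^1$ rather than a circle with two extra isolated points attached.
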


\begin{remark}
Tangle addition is \textit{not} commutative, however by Theorem \ref{thm:notnice} $\RR_{\pi_1\cup \pi_2}(T_1+T_2) = \RR_{\pi_1\cup \pi_2}(T_2+T_1)$.
\end{remark}

\subsubsection{Good Pairs}

\begin{definition}
For tangles $T_1$ and $T_2$ with boundary $(S^2,4)$ and perturbation data $\pi_1$ and $\pi_2$, call $(T_1,\pi_1)$ and $(T_2,\pi_2)$ a \emph{good pair} if the following conditions hold:

\begin{enumerate}[label={\bfseries (G\arabic*):}, ref=(G\arabic*), leftmargin=3\parindent]
    \item \label{con:generic} $\RR_{\pi_i}(T_i)^{(\Z/2,\U)}$ is empty. This is equivalent to the condition that $p_i(\RR_{\pi_i}(T_i)^{\Z/2})$ misses the corners of the pillowcase.
    
    \item \label{con:immersion} Each $\RR_{\pi_i}(T_i)$ is a manifold with boundary and $p_i:\RR_{\pi_i}(T_i)\to P_i$ are immersions.
    
    \item \label{con:edges} %
    $p_i$ is transverse to $\{(\gamma,\theta)\in P_{i+1} \mid \gamma\in \{0,\pi\}, \theta\in(0,\pi)\}$.
    
    \item \label{con:samevert} If there exist $\rho_1\in\RR_{\pi_1}(T_1)$ and $\rho_2\in\RR_{\pi_2}(T_2)$ such that $d_{\rho_i}p_i(T_{\rho_i}\RR_{\pi_1}(T_i)) = \langle (0,1) \rangle$ with respect to the basis $(\frac{\partial}{\partial \gamma}, \frac{\partial}{\partial\theta})$ of $TP_i$, then $\gamma(\rho_1)\ne \gamma(\rho_2)$.
\end{enumerate}
\end{definition}

\begin{proposition}
\label{prop:goodpair}
For tangles $T_1$ and $T_2$ it is always possible to find arbitrarily small perturbation data $\pi_1$ and $\pi_2$ so that $(T_1, \pi_1)$ and $(T_2, \pi_2)$ are a good pair.
\end{proposition}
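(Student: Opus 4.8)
The plan is to obtain the four conditions \ref{con:generic}--\ref{con:samevert} in two stages: first invoke the regularity theorem of Herald and Kirk (Theorem~\ref{thm:reg}), applied to $T_1$ and to $T_2$ separately, to secure \ref{con:generic} and \ref{con:immersion}; then arrange the two transversality conditions \ref{con:edges} and \ref{con:samevert} by a further arbitrarily small perturbation, using a parametric transversality argument inside the family of holonomy perturbations. Each of the conditions below holds for a residual (hence dense) set of small perturbation data, so a finite intersection of them is again dense and meets every neighborhood of the trivial perturbation.

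For \ref{con:generic} and \ref{con:immersion}: applying Theorem~\ref{thm:reg} to $T_i$ produces an arbitrarily small holonomy perturbation $\pi_i$ with $\RR_{\pi_i}(T_i)^{\Z/2,\U}$ empty -- which is precisely \ref{con:generic}, by the equivalence recorded in its statement -- and with $\RR_{\pi_i}(T_i)^{\Z/2,\Z/2}$ a Lagrangian immersion into the smooth stratum of $P_i$. The only remaining stratum is the abelian locus $\RR_{\pi_i}(T_i)^{\U,\U}$: since $T_i$ lies in a homology ball, every abelian traceless representation sends all meridians of $T_i$ to $\{\pm Q\}$ for a single $Q\in C(\i)$, so by Equations~\ref{eq:gamma} and~\ref{eq:theta} its image in $P_i$ is a corner, and (after a generic choice of $\pi_i$) this locus is a finite set of points forming the boundary of the $1$-manifold $\RR_{\pi_i}(T_i)^{\Z/2,\Z/2}$ (compare Lemma~\ref{lem:fkpc} and Proposition~\ref{prop:rational}). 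Hence $\RR_{\pi_i}(T_i)$ is a compact $1$-manifold with boundary, its interior mapping into the smooth stratum of $P_i$, and $p_i$ restricts to it as an immersion; this is \ref{con:immersion}.

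For \ref{con:edges} and \ref{con:samevert}: keeping $\pi_1$ and $\pi_2$ as above, perturb further. Both conditions are transversality statements about the immersions $p_i\colon \RR_{\pi_i}(T_i)\to P_i$, so what is needed is that the assignment sending a holonomy perturbation to the immersion $p_i$ is rich enough that each condition is residual -- exactly the kind of parametric transversality statement that underlies Theorem~\ref{thm:reg} and is established in \cite{HKRegularity}. For \ref{con:edges}: the open vertical edges $\{(\gamma,\theta)\mid \gamma\in\{0,\pi\},\ \theta\in(0,\pi)\}$ form a $1$-dimensional submanifold of the smooth stratum of $P_i$, and transversality to them of the $1$-dimensional $\RR_{\pi_i}(T_i)$ holds for a residual set of small perturbations; concretely, $p_i$ then meets these edges in finitely many points, at none of which its image is tangent to the direction $(0,1)$. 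For \ref{con:samevert}: the set $Z_i\subset \RR_{\pi_i}(T_i)$ of points where $d p_i$ has image $\langle (0,1)\rangle$ is the critical set of the composite $\gamma_i\circ p_i\colon \RR_{\pi_i}(T_i)\to[0,\pi]$; for a residual set of small perturbations $Z_i$ is finite, so $\gamma_i(Z_i)$ is a finite subset of $[0,\pi]$. Fixing a generic $\pi_1$ and then choosing $\pi_2$ generically, the further residual condition $\gamma_1(Z_1)\cap\gamma_2(Z_2)=\emptyset$ can be imposed, which is exactly \ref{con:samevert}. Intersecting the finitely many residual conditions, and keeping all perturbations small enough that \ref{con:generic} and \ref{con:immersion} survive, yields a good pair arbitrarily close to the trivial perturbation.

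The main obstacle is the transversality bookkeeping inside the non-linear family of holonomy perturbations: one must check that perturbing $\pi_i$ moves the immersion $p_i$ in enough independent directions to make each of the conditions above residual, and that -- since \ref{con:edges} and \ref{con:samevert} concern the corners of $P_i$ and the degenerate tangent direction $(0,1)$, loci where $P_i$ and its parametrization are non-generic -- achieving them does not reintroduce $(\Z/2,\U)$ representations. This is handled by importing the perturbation calculus of \cite{HKRegularity} as a black box: each added genericity requirement cuts out a residual subset of an already-residual set, so all of \ref{con:generic}--\ref{con:samevert} hold simultaneously on a dense set of arbitrarily small perturbation data.
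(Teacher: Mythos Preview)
Your argument is correct in outline: you obtain \ref{con:generic} and \ref{con:immersion} via Theorem~\ref{thm:reg} exactly as the paper does, and then argue that \ref{con:edges} and \ref{con:samevert} are residual conditions in the space of holonomy perturbations, invoking the perturbation calculus of \cite{HKRegularity} as a black box. This works, but it is a heavier and less self-contained route than the paper's. The paper instead uses the \emph{shearing perturbations} of Section~\ref{sec:shearing}: after achieving \ref{con:generic} and \ref{con:immersion} with some $\pi_i'$, it composes $T_i$ with the tangle cobordism of Figure~\ref{fig:ShearingB}, adding the perturbation $(S_\gamma,f_i,t_i)$. The effect of this on the pillowcase image is the explicit diffeomorphism $(\gamma,\theta)\mapsto(\gamma-2t_if_i(\theta),\theta)$, so one can directly choose the smooth odd $2\pi$-periodic functions $f_i$ to push the image transversely off the vertical edges and to separate the $\gamma$-values at which the tangent is vertical. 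Because this shearing acts by a smooth map, \ref{con:immersion} is preserved, and because \ref{con:generic} is open, it survives for small $t_i$. The advantage of the paper's approach is that it is constructive and avoids any appeal to parametric transversality beyond what is already packaged in Theorem~\ref{thm:reg}; the advantage of yours is that it is conceptually uniform (everything is ``residual''), at the cost of leaning on unstated details of \cite{HKRegularity} for the specific transversality statements you need.
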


\begin{proof}
First, Theorem \ref{thm:reg} guarantees that there is some arbitrarily small perturbation $\pi_i'$ such that $\RR_{\pi_i'}(T_i)$ satisfies Conditions \ref{con:generic} and \ref{con:immersion}. Then each $T_i$ can be composed with the tangle cobordism in Figure \ref{fig:ShearingB}. The result is the same tangle, but with perturbation $\pi_i := \pi_i'\cup (S_\gamma,f_i, t_i)$. Since the shearing functions $f_i:\R\to \R$ are smooth, this guarantees that this $\RR_{\pi_i}(T_i)$ still satisfies Condition \ref{con:immersion}. Since Condition \ref{con:generic} is an open condition, for small enough $t_i>0$, $\RR_{\pi_i}(T_i)$ still satisfies Condition \ref{con:generic}.
It is not hard to see that functions $f_i$ can always be chosen such that Conditions \ref{con:edges} and \ref{con:samevert} are also satisfied.
\end{proof}

\begin{remark}
Being a good pair is a stable condition, that is, if $(T_1, \pi_1)$ and $(T_2,\pi_2)$ are a good pair and $\pi'_1$ and $\pi'_2$ are perturbations on $T_1$ and $T_2$ with perturbation parameters $t_1$ and $t_2$, then there exist constants $0<s_1, s_2$ such that for all $t_i\in[0, s_i)$ the pair $(T_1,\pi_1+\pi'_1)$ and $(T_2, \pi_2+\pi'_2)$ is good. This can be seen by observing that each individual condition is stable in this sense.
\end{remark}

To get a more explicit description of $\RR_{\pi_1\cup \pi_2}(T_1+T_2)$ along with its image in the pillowcase, we can get simplified versions of Proposition \ref{prop:decomp1} and Theorem \ref{thm:notnice} when Condition \ref{con:generic} is satisfied.

\begin{lemma}
\label{prop:nicecomp}
If the pair $(T_1,\pi_1)$ and $(T_2,\pi_2)$ satisfy condition \ref{con:generic}, then $\RR_{\pi_1\cup \pi_2}(T_1+T_2) \cong \RR_{\pi_1\cup\pi_2}(T_1\twedge T_2)\times_{P_1\times P_2} \RR(C_3)$
\end{lemma}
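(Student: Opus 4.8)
The plan is to read the homeomorphism straight off Lemma~\ref{lem:generic}, applied to the decomposition
\[T_1+T_2 = C_3\cup_{\SOne\twedge \STwo}(T_1\twedge T_2)\]
of Equation~\eqref{eq:disjoint}. Taking $C_3$ as the first piece, $T_1\twedge T_2$ as the second, and $\SOne\twedge \STwo$ as the gluing surface, the fiber product over $\RR(\SOne\twedge\STwo)=P_1\times P_2$ is exactly $\RR_{\pi_1\cup\pi_2}(T_1\twedge T_2)\times_{P_1\times P_2}\RR(C_3)$, and the map in question is the one from Proposition~\ref{prop:decomp1}. Lemma~\ref{lem:generic} then gives that this map is a continuous bijection of compact Hausdorff spaces, hence a homeomorphism, provided one of the two $(\Z/2,\U)$-loci is empty; I would verify this for the $T_1\twedge T_2$ side.

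So the only real content is to show that Condition~\ref{con:generic} for $(T_1,\pi_1)$ and $(T_2,\pi_2)$ forces $\RR_{\pi_1\cup\pi_2}(T_1\twedge T_2)^{(\Z/2,\U)}=\emptyset$. By the disjoint-union convention, $\RR_{\pi_1\cup\pi_2}(T_1\twedge T_2)=\RR_{\pi_1}(T_1)\times\RR_{\pi_2}(T_2)$ with boundary $\SOne\twedge\STwo$, so for a point $(\rho_1,\rho_2)$ of this product one has $\Stab((\rho_1,\rho_2))=\Stab(\rho_1)\cap\Stab(\rho_2)$ and $\Stab((\rho_1,\rho_2)|_\partial)=\Stab(\rho_1|_{\SOne})\cap\Stab(\rho_2|_{\STwo})$, intersections taken inside $\SU$. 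Suppose $(\rho_1,\rho_2)$ were in the $(\Z/2,\U)$-locus, i.e. $\Stab(\rho_1)\cap\Stab(\rho_2)\cong\Z/2$ while $\Stab(\rho_1|_{\SOne})\cap\Stab(\rho_2|_{\STwo})\cong\U$. Since the latter intersection is contained in each factor, each $\Stab(\rho_i|_{S_i})\cong\U$, i.e. each $\rho_i|_{S_i}$ is a corner of $P_i$, so $\wStab(\rho_i)$ is either $(\U,\U)$ or $(\Z/2,\U)$. Condition~\ref{con:generic} rules out $(\Z/2,\U)$, so $\Stab(\rho_i)\cong\U$; as $\Stab(\rho_i)\subseteq\Stab(\rho_i|_{S_i})$ and a circle subgroup of $\SU$ is maximal, $\Stab(\rho_i)=\Stab(\rho_i|_{S_i})$. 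Hence $\Stab(\rho_1)\cap\Stab(\rho_2)=\Stab(\rho_1|_{\SOne})\cap\Stab(\rho_2|_{\STwo})\cong\U$, contradicting $\cong\Z/2$. Thus the locus is empty and Lemma~\ref{lem:generic} applies, completing the proof.

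The argument is pure bookkeeping with stabilizers, so there is no serious obstacle; the only point that needs care is the interplay between "$\Stab(\rho_1|_{\SOne})\cap\Stab(\rho_2|_{\STwo})$ is a circle" (which forces both boundary restrictions to be pillowcase corners) and Condition~\ref{con:generic} (which then upgrades $\wStab(\rho_i)$ from $(\Z/2,\U)$ to $(\U,\U)$), plus the elementary fact that an inclusion of circle subgroups of $\SU$ is an equality. If one prefers not to invoke Lemma~\ref{lem:generic} as a black box, the same stabilizer analysis can instead be run on the fibers $\Stab(\rho_1)\backslash\Stab(\rho_1|_{\SOne\twedge\STwo})/\Stab(\rho_2)$ of Proposition~\ref{prop:decomp1}: Condition~\ref{con:generic} gives $\Stab(\rho_1)=\Stab(\rho_1|_{\SOne\twedge\STwo})$ for every relevant $\rho_1$, so each fiber is a single double coset, i.e. a point, and the resulting continuous bijection of compact Hausdorff spaces is a homeomorphism.
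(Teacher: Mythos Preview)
Your approach is correct and matches the paper's: verify that Condition~\ref{con:generic} forces $\RR_{\pi_1\cup\pi_2}(T_1\twedge T_2)^{\Z/2,\U}=\emptyset$ and then apply Lemma~\ref{lem:generic}, which the paper does in a single sentence without your detailed stabilizer analysis. One minor slip: the character variety of $T_1\twedge T_2$ is not literally $\RR_{\pi_1}(T_1)\times\RR_{\pi_2}(T_2)$ (only the representation varieties factor as a product under the disjoint-union convention), but since your stabilizer computation is really for a representative $(\rho_1,\rho_2)\in\Ro_{\pi_1}(T_1)\times\Ro_{\pi_2}(T_2)$ this does not affect the argument.
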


\begin{proof}
If the pair $(T_1,\pi_1)$ and $(T_2,\pi_2)$ satisfy Condition \ref{con:generic}, then $\RR_{\pi_1\cup\pi_2}(T_1\twedge T_2)^{\Z/2,\U}$ is empty. Then Lemma \ref{lem:generic} proves the claim.%
\end{proof}

\begin{theorem}
\label{thm:nice}
If the pair $(T_1,\pi_1)$ and $(T_2, \pi_2)$ satisfy Condition \ref{con:generic} then there is a surjective map $\phi: \RR_{\pi_1\cup\pi_2}(T_1+T_2) \to \RR_{\pi_1}(T_1)\times_{[0,\pi]} \RR_{\pi_2}(T_2)$ such that the fiber over $(\rho_1, \rho_2)$ is homeomorphic to:

\begin{itemize}
    \item $S^1$ if $\gamma(\rho_1) \in \{0,\pi\}$ and $\theta(\rho_1),\theta(\rho_2)\in(0,\pi)$. In this case parametrize the fiber by $[\rho_1+\rho_2]_\psi$ for $\psi\in S^1$. Then $p_3([\rho_1+\rho_2]_\psi)$ is described by the same formula as Theorem \ref{thm:notnice}.
    
    \item $\pt$ otherwise. If $\rho$ is the unique representation in this fiber over $(\rho_1,\rho_2)$, then \[p_3(\rho) = (\gamma(\rho_1), \theta(\rho_1)+\theta(\rho_2)).\]
\end{itemize}
\end{theorem}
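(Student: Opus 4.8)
The plan is to deduce Theorem~\ref{thm:nice} directly from Theorem~\ref{thm:notnice}, using Condition~\ref{con:generic} only to collapse its three-case fiber description into the stated two-case one. The engine of the argument is that Condition~\ref{con:generic} forbids representations of stabilizer type $(\Z/2,\U)$ and, more precisely, sets up a clean dictionary between the stabilizer type of $\rho_i\in\RR_{\pi_i}(T_i)$ and whether its pillowcase image $p_i(\rho_i)$ is a corner.

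First I would record this dictionary. If $\Stab(\rho_i)=\U$ then $\rho_i$ is abelian, so every meridian of $T_i$ — in particular every boundary meridian — is sent to $\{\pm Q\}$ for a fixed $Q\in C(\i)$; feeding this into Equations~\ref{eq:gamma} and~\ref{eq:theta} gives $\gamma_i(\rho_i),\theta_i(\rho_i)\in\{0,\pi\}$, i.e.\ $p_i(\rho_i)\in P^c$. Conversely, if $p_i(\rho_i)\in P^c$ then $\Stab(\rho_i|_{\partial T_i})=\U$ since $P^c$ is exactly the $\U$-stratum of $\RR(S^2,4)$, and since Condition~\ref{con:generic} rules out $\wStab(\rho_i)=(\Z/2,\U)$ we must have $\Stab(\rho_i)=\U$. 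Hence, under Condition~\ref{con:generic}, $\wStab(\rho_i)=(\U,\U)$ iff $p_i(\rho_i)\in P^c$, $\wStab(\rho_i)=(\Z/2,\Z/2)$ iff $p_i(\rho_i)\in P^*$, and $(\Z/2,\U)$ never occurs.

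Now I would feed this into Theorem~\ref{thm:notnice}. Its surjection $\phi$ and all of the pillowcase formulas transfer verbatim. The second bullet of Theorem~\ref{thm:notnice} (fiber $S^1$ when some $\wStab(\rho_i)=(\Z/2,\U)$) is vacuous by the dictionary, so it disappears. For the first bullet, note that a point of $\RR_{\pi_1}(T_1)\times_{[0,\pi]}\RR_{\pi_2}(T_2)$ has $\gamma(\rho_1)=\gamma(\rho_2)$; when this common value lies in $\{0,\pi\}$, the condition $\wStab(\rho_i)=(\Z/2,\Z/2)$ is equivalent, via the dictionary, to $p_i(\rho_i)\notin P^c$, which (since $\gamma(\rho_i)\in\{0,\pi\}$) is equivalent to $\theta(\rho_i)\in(0,\pi)$. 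This is exactly the hypothesis of the first bullet of Theorem~\ref{thm:nice}, and the circle fiber together with the formula for $p_3([\rho_1+\rho_2]_\psi)$ is copied from Theorem~\ref{thm:notnice}. Every remaining pair falls into the third bullet of Theorem~\ref{thm:notnice}, giving a single point with $p_3(\rho)=(\gamma(\rho_1),\theta(\rho_1)+\theta(\rho_2))$, which is the ``otherwise'' case of Theorem~\ref{thm:nice}.

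The argument is essentially bookkeeping, so I do not anticipate a genuine obstacle; the one point that needs care is the dictionary itself — specifically the claim that a $\U$-stabilizer representation always lands in $P^c$ — which relies on writing out the boundary holonomies explicitly and applying Equations~\ref{eq:gamma}--\ref{eq:theta}, together with the identification (recorded earlier in the paper) of $P^c$ as the $\U$-stratum of $\RR(S^2,4)$.
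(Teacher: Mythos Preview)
Your proposal is correct and matches the paper's approach: the paper states Theorem~\ref{thm:nice} as a simplified version of Theorem~\ref{thm:notnice} under Condition~\ref{con:generic} without writing out a separate proof, and your argument supplies exactly the bookkeeping that makes this reduction precise. The dictionary you set up between stabilizer type and corner/non-corner position is the implicit content of the paper's remark that Condition~\ref{con:generic} is equivalent to $p_i(\RR_{\pi_i}(T_i)^{\Z/2})$ missing the corners.
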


Using the fact from Lemma \ref{prop:nicecomp} that for good pairs, $\RR_{\pi_1\cup\pi_2}(T_1+T_2) \cong \RR_{\pi_1\cup\pi_2}(T_1\twedge T_2)\times_{P_1\times P_2} \RR(C_3)$ and the decomposition $\RR(C_3) = \RR(C_3)^*\twedge h(\SV)$ from Definition \ref{def:rcstar}, we can decompose $\RR_{\pi_1\cup\pi_2}(T_1+T_2)$ as $M\sqcup N$ where:
\begin{equation}
\label{eq:decompM}
M := \RR_{\pi_1\cup\pi_2}(T_1\twedge T_2)\times_{P_1\times P_2} \RR(C_3)^*
\end{equation}
\begin{equation}
\label{eq:decompN}
N := \RR_{\pi_1\cup\pi_2}(T_1\twedge T_2)\times_{P_1\times P_2} h(\SV)
\end{equation}

The next goal is to show that $M$ is a 1-manifold and that $p_3|_M:M\to P_3$ is an immersion. First note the maps $\RR(C_3)^*\to P_1\times P_2\times P_3$ and $\RR_{\pi_1\cup\pi_2}(T_1\twedge T_2)\to P_1\times P_2$ are Lagrangian immersions by \cite{CHK}*{Theorem A} and the fact that Condition \ref{con:immersion} holds. Then by Theorem \ref{thm:lagcomp}, $M$ is a manifold and $p_3|_M$ is an immersion if the maps $\widehat{\mu}:\RR(C_3)^*\to P_1\times P_2$ and $\nu:\RR_{\pi_1\cup\pi_2}(T_1\twedge T_2)\to P_1\times P_2$ are transverse. 
Define the map $\mu:\Sols^* \to P_1\times P_2$ as the composition of the double cover map $\Sols^*\to \RR(C_3)^*$ and $\widehat{\mu}$.
Because this quotient map is surjective, %
$\widehat{\mu}$ and $\nu$ are transverse if $\mu$ and $\nu$ are transverse.

\begin{proposition}
\label{prop:transverse}
If $(T_1, \pi_1)$ and $(T_2, \pi_2)$ are a good pair, then $\mu:\Sols^*\to \POne\times \PTwo$ and $\nu:\RR_{\pi_1}(T_1)\times \RR_{\pi_2}(T_2)\to \POne\times \PTwo$ are transverse. 
\end{proposition}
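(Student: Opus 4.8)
The plan is to verify transversality by a pointwise argument: fix a point $(\rho_1,\rho_2)\in\RR_{\pi_1}(T_1)\times\RR_{\pi_2}(T_2)$ and $s\in\Sols^*$ with $\mu(s)=\nu(\rho_1,\rho_2)=(q_1,q_2)\in P_1\times P_2$, and show that $d\mu(T_s\Sols^*)+d\nu(T_{(\rho_1,\rho_2)}(\RR_{\pi_1}(T_1)\times\RR_{\pi_2}(T_2)))=T_{(q_1,q_2)}(P_1\times P_2)$. Since $\dim(P_1\times P_2)=4$, $\dim\Sols^*=3$, and $\dim(\RR_{\pi_1}(T_1)\times\RR_{\pi_2}(T_2))=2$, the dimension count is exactly right for transversality to be generic, so no dimensions are wasted and we genuinely must check that the two images span. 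First I would split into the two components of $\Sols^*$, namely the $S^2\times(S^1\setminus S^0)$ piece coming from $\H$ (where $\sin\gamma=0$, i.e. $q_1,q_2$ lie on the edges $\gamma\in\{0,\pi\}$) and the $T^2\times(S^1\setminus S^0)$ piece coming from $\A$ (where $\cos\alpha=0$, i.e. the binary dihedral locus).

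For the $\A$-component: here $\mu$ has image inside $P_1\times P_2$ whose first and second factors both have the same $\gamma$-coordinate $\gamma(s)\in(0,\pi)$; parametrizing by $(\gamma,\theta,\beta)$ with $\alpha=\pi/2$, Equations~\ref{eq:pccoords1}--\ref{eq:pccoords4} give $\mu(\gamma,\theta,\beta)=(\gamma,\beta,\gamma,\theta-\beta)$ (using $\sin\alpha=1$), so $d\mu$ has full rank $3$ with image the hyperplane $\{dq_1^\gamma=dq_2^\gamma\}$. Since $\nu=(p_1\times p_2)$ and the good-pair condition \ref{con:generic} forces both $p_i$ to land in $P_i^*$ near such points, the relevant thing is whether $d(p_1\times p_2)$ contributes a vector with $dq_1^\gamma\ne dq_2^\gamma$. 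If $\gamma(\rho_1)=\gamma(\rho_2)=\gamma(s)\in(0,\pi)$, then at least one of $p_1,p_2$ has a tangent direction not purely vertical (otherwise condition \ref{con:samevert} would be violated since both would have image $\langle(0,1)\rangle$ while having equal $\gamma$); that tangent direction has nonzero $\gamma$-component on one factor and (generically) differing $\gamma$-components, which together with the $3$-dimensional image of $d\mu$ spans. The subtlety is that if both $dp_1$ and $dp_2$ happen to be purely vertical at these points we would fail — but that is exactly what \ref{con:samevert} rules out, or the immersion condition \ref{con:immersion} combined with nonvanishing derivative handles.

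For the $\H$-component: here $q_1,q_2$ lie on edges $\gamma_i\in\{0,\pi\}$, and condition \ref{con:generic} says the $(\Z/2,\U)$ locus is empty so $\rho_i$ restricted to the boundary has $\Z/2$ stabilizer — meaning $\theta(\rho_i)\in(0,\pi)$, the genuinely interior edge points. Parametrizing $\Sols^*$ near such $s$ by the spherical-triangle coordinates of Figure~\ref{fig:spheretriangle} (equivalently $(\theta,\alpha,\beta)$ with the $\gamma$-direction frozen at its integer value), the map $\mu$ records $(\theta_1,\theta_2)=(\arccos(\sin\alpha\cos\beta),\arccos(\sin\alpha\cos\beta\cos\theta+\sin\alpha\sin\beta\sin\theta))$, whose differential I would compute to have rank $2$ onto the $(\theta_1,\theta_2)$-plane (the $\gamma_1,\gamma_2$ directions being identically zero since they are pinned). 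Crucially $\nu=(p_1\times p_2)$ here must be transverse to the edge loci $\{\gamma_i\in\{0,\pi\},\theta_i\in(0,\pi)\}$ by condition \ref{con:edges}, so $dp_i$ has a nonzero $\gamma_i$-component — precisely supplying the two missing $\gamma_1,\gamma_2$ directions that $d\mu$ cannot reach. Combining the rank-$2$ image of $d\mu$ in the $\theta$-plane with the two transverse-to-edge directions of $d\nu$ gives all of $T(P_1\times P_2)$.

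\textbf{Main obstacle.} The routine part is setting up coordinates and differentiating the $\arccos$ expressions; the real work is organizing the case analysis by stabilizer type and showing that in each case the \emph{specific} good-pair condition (\ref{con:generic} for pinning $\theta_i$ away from $\{0,\pi\}$ on the $\H$-side, \ref{con:edges} for transversality to the edges, \ref{con:samevert} for the degenerate $\gamma(\rho_1)=\gamma(\rho_2)$ configurations on the $\A$-side) is exactly what prevents the two tangent images from being contained in a common hyperplane. I expect the $\A$-component with $\gamma(\rho_1)=\gamma(\rho_2)$ to be the delicate case, where one must carefully invoke \ref{con:samevert}; everywhere else the dimension count plus a generic image of $d\mu$ makes transversality nearly automatic. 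I would also need to double-check the boundary behavior of $\RR_{\pi_i}(T_i)$ (points where $\rho_i\in\partial\RR_{\pi_i}(T_i)$ map to corners, but those are excluded from $\Sols^*$ anyway by \ref{con:generic}), so no separate boundary-transversality argument is needed.
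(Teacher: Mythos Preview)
Your proposal is correct and follows essentially the same approach as the paper: split $\Sols^*$ into its $\A$- and $\H^\dag$-components, compute $d\mu$ explicitly in each case (image equal to the hyperplane $\{d\gamma_1=d\gamma_2\}$, respectively the $(\theta_1,\theta_2)$-plane), and then invoke the good-pair conditions to show that $d\nu$ supplies the missing direction(s). Your assignment of which condition handles which case---\ref{con:samevert} for the $\A$-component and \ref{con:edges} for the $\H$-component---is in fact the correct pairing; the paper's write-up cites them the other way around, which appears to be a minor slip (for $\A$ one has $\gamma\in(0,\pi)$ so \ref{con:edges} is vacuous there, while for $\H^\dag$ one needs \emph{both} $dp_i$ to have nonzero $\gamma$-component, which \ref{con:samevert} alone does not give).
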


\begin{proof}

Parameterize $\POne\times \PTwo$ by $\{(\gamma_1,\theta_1,\gamma_2,\theta_2)\mid (\gamma_1,\theta_1)\in \POne, (\gamma_2, \theta_2)\in \PTwo\}$. At any point $p\in \POne\times \PTwo$, the tangent space $T_p(\POne\times \PTwo)$ has basis $\{\frac{\partial}{\partial\gamma_1}, \frac{\partial}{\partial\theta_1}, \frac{\partial}{\partial\gamma_2}, \frac{\partial}{\partial\theta_2}\}$. Because $\Sols^* = \H^\dag\sqcup \A$,
$\mu\pitchfork \nu$ can be shown by showing that $\mu|_\A\pitchfork\nu$ and $\mu|_{\H^\dag}\pitchfork\nu$.

First, we show $\mu|_\A\pitchfork\nu$. Every point in $\A$, is of the form $\widetilde{\Gamma}(\gamma,\theta,\frac{\pi}{2},\beta)$. Then the map $\mu|_\A:\A\to \POne\times \PTwo$ is given by
$\mu|_\A:(\gamma, \theta,\frac{\pi}{2},\beta)\mapsto (\gamma,\beta,\gamma,\theta-\beta)$.
For any $\mathbf{x}\in \A$, $T_\mathbf{x} \A$ has the basis $\{\frac{\partial}{\partial\gamma}, \frac{\partial}{\partial\theta}, \frac{\partial}{\partial\beta}\}$. This gives
\[d\mu|_\A = \bmat{
1&0&0\\
0&0&1\\
1&0&0\\
0&1&-1}\]
Thus $d_\mathbf{x}\mu|_\A(T_\mathbf{x}\A)=\langle(1,0,1,0), (0,1,0,0), (0,0,0,1)\rangle$.
The Conditions \ref{con:immersion} and \ref{con:edges} guarantee that the range of $d\nu$ includes a vector of the form $(1,x,0,0)$ or $(0,0,1,x)$ for $x\in\R$. Therefore $\mu|_\A\pitchfork \nu$ because
\begin{align*}
T_{(p_1\times p_2)(h(\mathbf{x}))}(\POne\times \PTwo)&=\langle(1,0,1,0), (0,1,0,0), (0,0,0,1)\rangle\oplus\langle(1,x,0,0)\rangle\\
&=\langle(1,0,1,0), (0,1,0,0), (0,0,0,1)\rangle\oplus\langle(0,0,1,x)\rangle.
\end{align*}

\bigskip

Now we show that $\mu|_{\H^\dag}\pitchfork\nu$. For any $\mathbf{x}\in \H^\dag$, $T_\mathbf{x} \H^\dag$ has the basis $\{\frac{\partial}{\partial\theta}, \frac{\partial}{\partial\alpha}, \frac{\partial}{\partial\beta}\}$. If $\gamma_0\in \{0,\pi\}$, then 
\[\mu|_{\H^\dag}:(\gamma_0, \theta,\alpha,\beta)\mapsto (\gamma_0,\arccos(\sin\alpha\cos\beta),\gamma_0,\arccos(\sin\alpha\cos\beta\sin\theta+\sin\alpha\sin\beta\cos\theta))\]

whose derivative is

\[d\mu|_{\H^\dag} = \bmat{0&0&0\\
0&-\frac{\cos\alpha\cos\beta}{\sqrt{1-\sin^2\alpha\cos^2\beta}} & \frac{\sin\alpha\sin\beta}{\sqrt{1-\sin^2\alpha\cos^2\beta}}\\
0&0&0\\
-\frac{\sin\alpha\cos(\beta+\theta)}{\sqrt{1-\sin^2\alpha\sin^2(\beta+\theta)}}&-\frac{\cos\alpha\sin(\beta+\theta)}{\sqrt{1-\sin^2\alpha\sin^2(\beta+\theta)}}&-\frac{\sin\alpha\cos(\beta+\theta)}{\sqrt{1-\sin^2\alpha\sin^2(\beta+\theta)}}
}\]
Since $\sin\alpha\ne1$ in $\H^\dag$, the denominators are never 0. It is not hard to check that \[d_\mathbf{x}\mu|_{\H^\dag}(T_\mathbf{x} \H^\dag) = \langle (0,1,0,0),(0,0,0,1) \rangle.\] Conditions \ref{con:immersion} and \ref{con:samevert} guarantee that vectors of the form $(1,x,0,0)$ and $(0,0,1,y)$ will be in the range of $d\nu$ for $x,y\in\R$. Then since \[T_{(p_1\times p_2)(h(\mathbf{x}))}\mu|_{\H^\dag}(\rho)(P_1 \times P_2)=\langle (0,1,0,0),(0,0,0,1) \rangle \oplus \langle (1,x,0,0),(0,0,1,y) \rangle,\] we know that $\mu|_{\H^\dag} \pitchfork \nu$.

\end{proof}

\begin{corollary}
\label{cor:mman}
$M$ is a 1-manifold and the map $p_3:M\to P_3$ is an immersion.
\end{corollary}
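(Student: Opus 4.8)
The plan is to deduce the corollary directly from Theorem \ref{thm:lagcomp} applied to the two Lagrangian correspondences that were assembled in the discussion preceding the statement. Concretely, I would take $M_0 = \pt$, $M_1 = \POne\times\PTwo$, and $M_2 = \PThree$. The space $\RR_{\pi_1\cup\pi_2}(T_1\twedge T_2) = \RR_{\pi_1}(T_1)\times\RR_{\pi_2}(T_2)$ is a smooth $2$-manifold by Condition \ref{con:immersion}, and since each $p_i$ is a Lagrangian immersion by \cite{CHK}*{Theorem A}, the map $\nu$ realizes it as a Lagrangian correspondence $L_{01}$ from $M_0$ to $M_1$. The smooth $3$-manifold $\RR(C_3)^*$ (Definition \ref{def:rcstar}), together with its Lagrangian immersion into $\POne\times\PTwo\times\PThree$, serves as $L_{12}$: its projection to the $M_1$-factor is $\widehat\mu$, and its projection to the $M_2$-factor records the $\PThree$-coordinate.

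Second, I would check the hypothesis $\pi_1(L_{01})\pitchfork\pi_1(L_{12})$ of Theorem \ref{thm:lagcomp}. Unwinding the identifications, this is exactly the transversality of $\widehat\mu:\RR(C_3)^*\to\POne\times\PTwo$ and $\nu:\RR_{\pi_1}(T_1)\times\RR_{\pi_2}(T_2)\to\POne\times\PTwo$. Proposition \ref{prop:transverse} establishes $\mu\pitchfork\nu$ for the lift $\mu:\Sols^*\to\POne\times\PTwo$ through the double cover $\Sols^*\to\RR(C_3)^*$; since this double cover is a local diffeomorphism, $d\mu$ and $d\widehat\mu$ have the same images along corresponding points, so $\widehat\mu\pitchfork\nu$ as well.

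Third, Theorem \ref{thm:lagcomp} then gives that $L_{01}\times_{M_1}L_{12}$ is a manifold — and this fiber product is precisely $M$ by Equation \ref{eq:decompM} together with Lemma \ref{prop:nicecomp} — and that the geometric composition $L_{01}\circ L_{12}$ is an immersed Lagrangian submanifold of $M_0^-\times M_2 = \PThree$. The immersion realizing this geometric composition is the map $M\to\PThree$, which under the identification of Lemma \ref{prop:nicecomp} is exactly $p_3|_M$; hence $p_3|_M$ is an immersion. Finally, the dimension count $\dim M = \dim L_{01} + \dim L_{12} - \dim M_1 = 2 + 3 - 4 = 1$ shows $M$ is a $1$-manifold.

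The only genuinely delicate point is the bookkeeping in the second step: confirming that the transversality condition $\pi_1(L_{01})\pitchfork\pi_1(L_{12})$ in Theorem \ref{thm:lagcomp} really translates to $\widehat\mu\pitchfork\nu$ inside $\POne\times\PTwo$ (rather than to a transversality condition in a larger product), and that passing between the double cover $\Sols^*$ and $\RR(C_3)^*$ does not affect it. Everything else is a formal invocation of the preceding results, so I expect the write-up to be short.
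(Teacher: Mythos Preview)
Your proposal is correct and mirrors the paper's own argument: the discussion preceding Proposition~\ref{prop:transverse} already explains that Theorem~\ref{thm:lagcomp} yields the corollary once $\widehat\mu\pitchfork\nu$, reduces this to $\mu\pitchfork\nu$ via the double cover $\Sols^*\to\RR(C_3)^*$, and then Proposition~\ref{prop:transverse} supplies exactly that transversality. The only cosmetic difference is that the paper justifies passing from $\mu$ to $\widehat\mu$ by surjectivity of the quotient, while you phrase it via the local diffeomorphism property; these amount to the same observation.
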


\begin{lemma}
\label{lem:isolates}
$N$ is homeomorphic to a finite set of points. The neighborhood of each such point in $\RR_{\pi_1\cup \pi_2}(T_1 + T_2)$ is either a cone on one point, two points, or four points.
\end{lemma}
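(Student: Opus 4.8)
The statement has two parts: that $N$ is finite, and that $\RR_{\pi_1\cup\pi_2}(T_1+T_2)$ looks like a cone on $1$, $2$, or $4$ points near each point of $N$. I would prove them in that order.

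\emph{Step 1: $N$ is finite.} By definition $N=\RR_{\pi_1}(T_1)\times\RR_{\pi_2}(T_2)\times_{P_1\times P_2}h(\SV)$, and since $\SV=\{(\i,b,c,x)\in\Sols\mid b=\pm\i,\ c,x\in\Sk\}$, Equations \ref{eq:pccoords1} and \ref{eq:pccoords3} show that every $\eta\in h(\SV)$ has $\gamma_1(\eta)=\gamma_2(\eta)\in\{0,\pi\}$; hence any $(\rho_1,\rho_2,\eta)\in N$ has $\gamma(\rho_1)=\gamma(\rho_2)\in\{0,\pi\}$. So I would first show that $A_i:=\{\rho\in\RR_{\pi_i}(T_i)\mid\gamma(\rho)\in\{0,\pi\}\}$ is finite. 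The points of $A_i$ whose image in $P_i$ is a corner have $\U$ boundary stabilizer, hence by Condition \ref{con:generic} have $\wStab=(\U,\U)$; these are the abelian traceless representations of the $2$-tangle $T_i$ in a homology ball, of which there are finitely many. The remaining points of $A_i$ lie in $p_i^{-1}(\{\gamma\in\{0,\pi\},\ \theta\in(0,\pi)\})$, which by Conditions \ref{con:immersion} and \ref{con:edges} is a properly embedded $0$-submanifold of the compact manifold $\RR_{\pi_i}(T_i)$, hence finite. Thus $A_1\times A_2$ is finite, and by Lemmas \ref{lem:revengA} and \ref{lem:revengH} at most two elements of $h(\SV)$ lie over each matched pair (the endpoints $\rho_0,\rho_\pi$ of the arc in Lemma \ref{lem:revengH}, or fewer in degenerate configurations), so $N$ is finite.

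\emph{Step 2: reduction of the local statement.} Since $\RR_{\pi_1\cup\pi_2}(T_1+T_2)=M\sqcup N$ with $M$ a $1$-manifold (Corollary \ref{cor:mman}) and $N$ finite, for $x\in N$ a sufficiently small neighborhood $U$ satisfies $U\cap N=\{x\}$ and $U\cap M$ is a disjoint union of arcs; such a $U$ is the cone on $k$ points, where $k$ is the number of those arcs whose closure contains $x$. It therefore suffices to show $k\in\{1,2,4\}$. Using the good-pair identification $\RR_{\pi_1\cup\pi_2}(T_1+T_2)\cong\RR_{\pi_1}(T_1)\times\RR_{\pi_2}(T_2)\times_{P_1\times P_2}\RR(C_3)$ of Lemma \ref{prop:nicecomp}, I would write $x=(\rho_1,\rho_2,\eta)$ and organize the computation according to the three possibilities for the fiber of $h$ over $\eta$ in Proposition \ref{prop:hfibers}: the generic case where it is $S^0$ (i.e. $\eta\in\RR(C_3)^\dag$), the case where it is $S^1$, and the case where it is a point (i.e. $\eta\in h(\Sols^0)$).

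\emph{Step 3: the local model of $\RR(C_3)$ and the strand count.} In the generic case Proposition \ref{prop:restdcov} makes $h$ a covering $\Sols^\dag\to\RR(C_3)^\dag$, so near the preimage point $p_0$ the space $\RR(C_3)$ is locally identified with $\Sols$, and near $p_0$ the decomposition $\Sols=\ol{\H}\cup_\SV\ol{\A}$ presents $\Sols$ as four $3$-dimensional sheets glued along the $2$-dimensional $\SV$ — the two components of $\H$ ($\cos\alpha\gtrless0$) and the two of $\A$ ($\sin\gamma\gtrless0$) — with the map to $P_1\times P_2$ equal to the immersion $\mu$ on each $\ol{\A}$-sheet and to $\mu$ on each $\ol{\H}$-sheet, both transverse to $\nu=p_1\times p_2$ by Proposition \ref{prop:transverse} (the derivative computations there being valid onto the closures). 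Forming the fiber product with the immersed $1$-manifold $\RR_{\pi_1}(T_1)\times\RR_{\pi_2}(T_2)$ sheet by sheet would produce, from each of the four sheets, one arc of $M$ with $x$ in its closure, and these four arc-ends are pairwise distinct (for instance $\gamma$ is locally constant equal to $\gamma(\eta)\in\{0,\pi\}$ along the two coming from $\ol{\H}$ and moves off $\{0,\pi\}$ along the two coming from $\ol{\A}$), so $k=4$. When the $h$-fiber over $\eta$ is $S^1$, collapsing that circle identifies the four sheets in pairs, giving $k=2$; and when $\eta\in h(\Sols^0)$ the involution $\iota$ fixes the corresponding point of $\Sols^0$ and, together with the collapse, forces $k=1$. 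In all cases $k\in\{1,2,4\}$.

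\emph{Main obstacle.} The delicate part is Step 3: pinning down the precise local model of $\RR(C_3)$ along the singular stratum $h(\SV)$ — especially at $h(\Sols^0)$, where $\iota$ acts nontrivially and the $h$-fiber collapses — and checking that the transversality of Proposition \ref{prop:transverse} genuinely persists up to and including $\SV$, so that the sheet-by-sheet fiber product really yields the asserted numbers of arc-ends rather than some other count. A useful consistency check is to compare the outcome with the fibered description of $\RR_{\pi_1\cup\pi_2}(T_1+T_2)$ over $\RR_{\pi_1}(T_1)\times_{[0,\pi]}\RR_{\pi_2}(T_2)$ from Theorem \ref{thm:notnice}, whose fiber over a matched pair is a circle meeting $N$ exactly in its two endpoints $\rho_0$ and $\rho_\pi$.
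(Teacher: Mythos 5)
Your Steps 1 and 2 are sound and essentially parallel to what the paper does: finiteness of $N$ via Conditions \ref{con:generic}--\ref{con:edges} together with Lemmas \ref{lem:revengA} and \ref{lem:revengH}, and the reduction to counting arc-ends of $M$ limiting to a point of $N$. The gap is in Step 3, and it is precisely the gap you flagged as the ``delicate part.'' Two problems.

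First, the parenthetical ``(the derivative computations there being valid onto the closures)'' is false, and it matters. At a point of $\SV$ with $\theta_1(\eta)\in\{0,\pi\}$ (equivalently $\sin\beta = 0$), the function $\theta_1 = \arccos(\sin\alpha\cos\beta)$ has a conical singularity: writing $\alpha = \tfrac{\pi}{2}+u$, $\beta = v$, one finds $\theta_1\approx\sqrt{u^2+v^2}$, which is not differentiable at $(0,0)$, and the denominators $\sqrt{1-\sin^2\alpha\cos^2\beta}$ appearing in $d\mu|_{\H^\dag}$ vanish there. So transversality of $\mu|_{\ol{\H}}$ with $\nu$ does not persist to these boundary points, and the sheet-by-sheet count of arcs from $\ol{\H}$ breaks down exactly where one of $\rho_1,\rho_2$ is abelian.

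Second, organizing by the $h$-fiber over $\eta$ is not the right partition and leads to wrong counts. The $h$-fiber over $\eta\in h(\SV)$ is $S^0$ both when exactly one of $\rho_1,\rho_2$ is abelian (say $\theta(\rho_1)\in\{0,\pi\}$, $\theta(\rho_2)\in(0,\pi)$: then $x = \pm\i$, $c\ne\pm\i$, so $S^0$ fiber) and when neither is abelian with $\theta_3(\eta)\notin\{0,\pi\}$. You assert $k=4$ for the whole $S^0$-fiber case, but when exactly one $\rho_i$ is abelian the correct answer is $k=2$: by Theorem \ref{thm:nice}, $\phi^{-1}(\rho_1,\rho_2)$ has a \emph{point} fiber (not a circle), so $\rho$ is \emph{isolated} in $\RR^{\ol{\H}}_{\pi_1\cup\pi_2}(T_1+T_2)$ and the $\ol{\H}$-sheets contribute no arcs; only the two $\ol{\A}$-sheets do, and Condition \ref{con:samevert} rules out $\rho$ being isolated in $\RR^{\ol{\A}}$, giving a cone on two points. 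Conversely, your $S^1$-fiber case ($c = \pm\i$, $x\ne\pm\i$) forces both $\theta_1(\eta),\theta_2(\eta)\in(0,\pi)$, i.e.\ both $\rho_1,\rho_2$ non-abelian, so by Theorem \ref{thm:nice} $\rho$ does lie on a circle in $\RR^{\ol{\H}}$ and the correct count is $k=4$, not $k=2$. The paper therefore cases on $\Stab(\rho|_{T_1})$ and $\Stab(\rho|_{T_2})$ rather than on the $h$-fiber, and uses the fiber description of Theorem \ref{thm:nice} to decide whether $\rho$ is isolated or non-isolated in each of $\RR^{\ol{\H}}$ and $\RR^{\ol{\A}}$, which sidesteps any transversality claim at $\SV$.
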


\begin{proof}
Recall that $N = \RR^{\ol{\H}}_{\pi_1\cup\pi_2}(T_1+T_2)\cap \RR^{\ol{\A}}_{\pi_1\cup\pi_2}(T_1+T_2)$. By Condition \ref{con:edges}, $\RR_{\pi_1}(T_1)\times_{\{0,\pi\}}\RR_{\pi_2}(T_2)$ is a finite set of points.
Thus, by Theorem \ref{thm:nice}, $\RR^{\ol{\H}}_{\pi_1\cup\pi_2}(T_1+T_2)$ has a finite number of components, $n_p$ points and $n_c$ circles. Each circle component has precisely two points which are also in $\RR^{\ol{\A}}_{\pi_1\cup\pi_2}(T_1+T_2)$ and thus $N$ has $n_p+2n_c$ points in it. Let $\rho\in N$. Depending on $\rho$, it will have a neighborhood which is one of three types.

\noindent\textbf{Case 1:} $\Stab(\rho|_{T_1}) = \Stab(\rho|_{T_2}) = \U$.

In this case $\Stab(\rho) = \U$ and it is thus a boundary of $\RR_{\pi_1\cup \pi_2}(T_1+T_2)$ and so $\rho$ has a neighborhood in $\RR_{\pi_1\cup \pi_2}(T_1 + T_2)$ which is a cone on one point.

\noindent\textbf{Case 2:} $\Stab(\rho|_{T_1}) = \U$ and $\Stab(\rho|_{T_2}) = \Z/2$ or vice versa.

In this case, $\rho$ is isolated in $\RR^{\ol{\H}}_{\pi_1\cup\pi_2}(T_1+T_2)$, so the neighborhood of $\rho$ in $\RR_{\pi_1\cup\pi_2}(T_1+T_2)$ is homeomorphic to its neighborhood in $\RR^{\ol{\A}}_{\pi_1\cup\pi_2}(T_1+T_2)$. Condition \ref{con:samevert} guarantees that $\RR^{\ol{\A}}_{\pi_1\cup\pi_2}(T_1+T_2)$ has no isolated points. Since Proposition \ref{prop:transverse} tells us that $\RR^{\A}_{\pi_1\cup\pi_2}(T_1+T_2)$ is a 1-manifold, this tells us that the neighborhood of $\rho$ is an interval (since it is not a point in the boundary) and thus a cone on two points. %

\noindent\textbf{Case 3:} $\Stab(\rho|_{T_1}) = \Stab(\rho|_{T_2}) = \Z/2$.

In this case, Theorem \ref{thm:nice} tells us that $\rho$ lies on a circle in $\RR^{\ol{\H}}_{\pi_1\cup\pi_2}(T_1+T_2)$. In this case $\rho$ must lie on exactly one such circle.
Suppose instead that $\rho$ lies on two circles in $\RR^{\ol{\H}}_{\pi_1\cup\pi_2}(T_1+T_2)$, $[\rho_1+\rho_2]_\psi$ and $[\rho_1'+\rho_2']_\psi$. But since the restrictions to $T_1$ and $T_2$ are constant along each circle, we would get that $\rho|_{T_1} = \rho_1=\rho_1'$ and $\rho|_{T_2} = \rho_2 = \rho_2'$, which means that the two circles actually must have been the same. Thus, the neighborhood of $\rho$ in $\RR^{\ol{\H}}_{\pi_1\cup\pi_2}(T_1+T_2)$ is a cone on two points.

For the same reason as the previous case, the neighborhood of $\rho$ in $\RR^{\ol{\A}}_{\pi_1\cup\pi_2}(T_1+T_2)$ is a cone on two points. The neighborhood of $\rho$ in $\RR_{\pi_1\cup\pi_2}(T_1+T_2)$ is the union of these two neighborhoods, which only have $\rho$ in common. Thus, the neighborhood of $\rho$ in $\RR_{\pi_1\cup\pi_2}(T_1+T_2)$ is a cone on four points.
\end{proof}

\begin{corollary}
\label{cor:singular}
Let $N'$ be the singular points of $\RR_{\pi_1\cup\pi_2}(T_1+T_2)$. Then $N'\subset N$ and as a result $\RR_{\pi_1\cup\pi_2}(T_1+T_2)$ has a finite number of singular points.
\end{corollary}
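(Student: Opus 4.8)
The plan is to read the statement off from the decomposition $\RR_{\pi_1\cup\pi_2}(T_1+T_2)=M\sqcup N$ of Equations \eqref{eq:decompM} and \eqref{eq:decompN}, combined with what has already been proved about the two pieces: $M$ is a $1$-manifold by Corollary \ref{cor:mman}, and $N$ is a finite set of points by Lemma \ref{lem:isolates}. The only thing that needs to be checked is that no point of $M$ is a singular point of the ambient space; granting that, $N'\subseteq N$ is immediate, and then $N'$ is finite because $N$ is.

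The one mild subtlety — and the closest thing to an obstacle here — is to see that $M$ is an \emph{open} subset of $\RR_{\pi_1\cup\pi_2}(T_1+T_2)$. I would get this from the fact that $\RR_{\pi_1\cup\pi_2}(T_1+T_2)$ is Hausdorff: since $(T_1,\pi_1)$ and $(T_2,\pi_2)$ are a good pair, Condition \ref{con:generic} holds, so by Lemma \ref{prop:nicecomp} the space $\RR_{\pi_1\cup\pi_2}(T_1+T_2)$ is homeomorphic to the compact Hausdorff fiber product $\RR_{\pi_1\cup\pi_2}(T_1\twedge T_2)\times_{P_1\times P_2}\RR(C_3)$. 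A finite subset of a Hausdorff space is closed, so $N$ is closed and therefore $M=\RR_{\pi_1\cup\pi_2}(T_1+T_2)\setminus N$ is open. One could instead argue openness of $M$ directly: $\SV$ is cut out in the compact space $\Sols$ by the closed conditions $b\in\{\pm\i\}$ and $x\in\Sk$, so $h(\SV)$ is compact, hence $\RR(C_3)^*=\RR(C_3)\setminus h(\SV)$ is open in $\RR(C_3)$; and $M$ is, up to the homeomorphism of Lemma \ref{prop:nicecomp}, the fiber product of $\RR_{\pi_1\cup\pi_2}(T_1\twedge T_2)$ with this open set.

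With $M$ open, every $x\in M$ has a neighborhood in $\RR_{\pi_1\cup\pi_2}(T_1+T_2)$ that lies inside $M$, and since $M$ is a $1$-manifold (Corollary \ref{cor:mman}) this neighborhood can be taken homeomorphic to $\R$ or to $[0,\infty)$. In particular $x$ is not a singular point, so $N'\subseteq N$. As $N$ is a finite set of points by Lemma \ref{lem:isolates}, $N'$ is finite, completing the proof. No genuine obstacle arises: all of the content is packaged in Corollary \ref{cor:mman} and Lemma \ref{lem:isolates}, and what remains is the point-set observation above.
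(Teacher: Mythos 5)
Your argument is correct and follows the same route as the paper: decompose the character variety as $M\sqcup N$, invoke Corollary~\ref{cor:mman} for smoothness of $M$ and Lemma~\ref{lem:isolates} for finiteness of $N$, and observe that points of $M$ cannot be singular. Your explicit justification that $M$ is open (via Hausdorffness of the fiber product, or directly via compactness of $h(\SV)$) simply spells out a point-set step that the paper leaves implicit when it remarks that a point of $M$ cannot be a limit point of the finite set $N$.
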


\begin{proof}
Recall that $\RR_{\pi_1\cup\pi_2}(T_1+T_2) = M \sqcup N$. Corollary \ref{cor:mman} proved that $M$ is a manifold, and so a point in $M$ could only be singular in $\RR_{\pi_1\cup\pi_2}(T_1+T_2)$ if it is the limit point of elements of $N$. However, this cannot happen since $N$ is a finite set of points and $M$ and $N$ are disjoint. Thus, $N'\subset N$. It is clear that $N'$ is precisely the points which are cones on four points as described in Lemma \ref{lem:isolates}. 
\end{proof}

\begin{definition}
\label{def:circles}
When $\RR^{\ol{\H}}_{\pi_1\cup\pi_2}(T_1+T_2)$ has a finite number of point and circle components, there are two types of circles. For $\rho_i\in \RR_{\pi_i}(T_i)$, with $\gamma(\rho_1)=\gamma(\rho_2)\in\{0,\pi\}$ call the circle $[\rho_1+\rho_2]_\psi$ a \emph{corner circle} if there exists a $\psi$ for which $\theta([\rho_1+\rho_2]_\psi)\in\{0,\pi\}$ (in other words, the circle intersects a corner of the pillowcase) and an \emph{internal circle} otherwise.
\end{definition}

By Theorem \ref{thm:nice}, $[\rho_1+\rho_2]_\psi$ is a corner circle if and only if $\theta(\rho_1) = \theta(\rho_2)$ or $\theta(\rho_1) = \pi-\theta(\rho_2)$. The following lemma shows that by using arbitrarily small perturbations, it is possible to ensure that all circles are internal.

\begin{lemma}
\label{lem:internal}
If $(T_1, \pi_1)$ and $(T_2, \pi_2)$ are a good pair, then there is perturbation data, $\pi'$, for $T_1$, with perturbation parameter $t'$, such that for sufficiently small $t'$, $(T_1, \pi_1+\pi')$ and $(T_2, \pi_2)$ are a good pair and $\RR_{\pi_1\cup\pi'\cup\pi_2}(T_1+T_2)$ has no corner circles.
\end{lemma}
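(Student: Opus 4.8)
The plan is to first turn ``no corner circle'' into a statement about finitely many coincidences of pillowcase coordinates, and then to break those coincidences using shearing perturbations of $T_1$ near its boundary $S_1$.

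By Theorem~\ref{thm:nice} and Definition~\ref{def:circles}, a corner circle of $\RR_{\pi_1\cup\pi_2}(T_1+T_2)$ corresponds exactly to a pair $(\rho_1,\rho_2)\in\RR_{\pi_1}(T_1)\times\RR_{\pi_2}(T_2)$ with $\gamma(\rho_1)=\gamma(\rho_2)\in\{0,\pi\}$, with $\theta(\rho_1),\theta(\rho_2)\in(0,\pi)$, and with $\theta(\rho_1)=\theta(\rho_2)$ or $\theta(\rho_1)+\theta(\rho_2)=\pi$. Write $E_i\subset\RR_{\pi_i}(T_i)$ for the set of $\rho$ with $\gamma(\rho)\in\{0,\pi\}$ and $\theta(\rho)\in(0,\pi)$. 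Conditions~\ref{con:immersion} and~\ref{con:edges} say that each $\RR_{\pi_i}(T_i)$ is a compact $1$-manifold whose image crosses the edges transversally, so $E_i$ is finite, $\gamma$ has nonzero derivative along $\RR_{\pi_i}(T_i)$ at every point of $E_i$, and $\gamma\circ p_i$ stays bounded away from $\{0,\pi\}$ outside any neighbourhood of the finite set $\{\rho\mid\gamma(\rho)\in\{0,\pi\}\}$. Hence there are only finitely many potential corner circles. Since being a good pair is preserved under sufficiently small further perturbations (the stability remark following Proposition~\ref{prop:goodpair}), it therefore suffices to find arbitrarily small perturbation data $\pi'$ on $T_1$ with parameter $t'$ so that, for all small $t'>0$, no point of the perturbed set $E_1$ has $\theta$-coordinate equal to $\theta(\rho_2)$ or to $\pi-\theta(\rho_2)$ for any $\rho_2\in E_2$ with the same value of $\gamma$.

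I would take $\pi'$ to be the composition of two shearing perturbations of $T_1$ (Section~\ref{sec:shearing}), each with parameter $t'$. Shear first in the $\theta$-direction: by~\eqref{eq:sheartheta} this post-composes $p_1$ with $(\gamma,\theta)\mapsto(\gamma,\theta-2t'\sin\gamma)$, which fixes the edges $\{\gamma\in\{0,\pi\}\}$ pointwise — so it leaves $E_1$ and the $\theta$-coordinates of its points unchanged — but, because $\partial_\gamma\sin\gamma\neq0$ at $\gamma\in\{0,\pi\}$, for all small $t'\neq0$ it makes $\theta\circ p_1$ non-constant along $\RR_{\pi_1}(T_1)$ at every point of $E_1$; in particular it removes the degenerate possibility that a component of $\RR_{\pi_1}(T_1)$ is carried into a horizontal line through a point of $E_1$. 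Then shear in the $\gamma$-direction, post-composing with $(\gamma,\theta)\mapsto(\gamma-2t'\sin\theta,\theta)$. Using that $\gamma$ is a local coordinate near each point of $E_1$, that $\sin\theta\neq0$ there (as $\theta\in(0,\pi)$), and the non-flatness just arranged, the implicit function theorem shows that for small $t'$ the perturbed $E_1$ is in bijection with $E_1$ — each point still having $\gamma\in\{0,\pi\}$ and $\theta\in(0,\pi)$ — and the $\theta$-coordinate of the point near $\rho_1$ becomes a function of $t'$ that is non-constant near $t'=0$, hence differs from $\theta(\rho_1)$ for all sufficiently small $t'>0$. For a generic choice of shearing functions this non-constancy can be arranged simultaneously at all of the finitely many points of $E_1$; this is where a little care is needed.

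Assembling: for $t'$ small the pair $(T_1,\pi_1+\pi')$, $(T_2,\pi_2)$ is still a good pair, so Theorem~\ref{thm:nice} and Definition~\ref{def:circles} apply to $\RR_{\pi_1\cup\pi'\cup\pi_2}(T_1+T_2)$, and the only points producing $[\rho_1+\rho_2]_\psi$-circles are the finitely many just described. For each such point and each $\rho_2\in E_2$ with matching $\gamma$: if the equalities $\theta(\rho_1)=\theta(\rho_2)$ and $\theta(\rho_1)+\theta(\rho_2)=\pi$ both fail at $t'=0$ they fail for small $t'$ by continuity; if one holds at $t'=0$ it fails for all $t'$ in a punctured neighbourhood of $0$, since the perturbed $\theta$-coordinate of the point near $\rho_1$ moves while $\theta(\rho_2)$ stays fixed. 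As there are finitely many pairs, some $\epsilon>0$ works for all of them. I expect the main obstacle to be exactly the reason two shears are needed rather than one: a $\gamma$-shear alone preserves every horizontal line of $P_1$, so it cannot move the $\theta$-coordinate of an edge point at which the image of $\RR_{\pi_1}(T_1)$ is tangent to horizontal; breaking that degeneracy with the $\theta$-shear without disturbing $E_1$ or the good-pair conditions, together with checking that no new edge points are created (which uses the boundedness above and the fact that shears fix the corners of the pillowcase), is the crux.
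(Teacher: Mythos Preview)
Your proposal is correct and follows essentially the same two-shear strategy as the paper: first a $\theta$-shear to make the tangent direction at every edge point of $\RR_{\pi_1}(T_1)$ non-horizontal, then a $\gamma$-shear to move the $\theta$-coordinates of those edge points away from the finitely many forbidden values coming from $E_2$. The paper uses separate parameters $t_\theta$ and $t_\gamma$ (fixing $t_\theta$ first so that each tangent slope $x_\rho-2t_\theta\cos\gamma(\rho)$ is nonzero, and then computing $\partial_{t_\gamma}\theta|_{t_\gamma=0}=\sin(\theta(\rho))(x_\rho-2t_\theta\cos\gamma(\rho))\neq0$), whereas you tie both shears to a single parameter $t'$; this still works, but when some $x_\rho=0$ your $\theta$-coordinate is constant to first order in $t'$ and only moves at second order, so your appeal to ``non-constant near $t'=0$'' needs the higher-order term rather than the implicit function theorem alone---this is exactly the ``little care'' you flag. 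You are also slightly more careful than the paper in explicitly tracking both the $\theta(\rho_1)=\theta(\rho_2)$ and the $\theta(\rho_1)+\theta(\rho_2)=\pi$ conditions for corner circles, though the argument handles them identically.
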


\begin{proof}
By Theorem \ref{thm:nice}, a corner circle occurs in $\RR_{\pi_1\cup\pi_2}(T_1+T_2)$ if and only if there exist representations $\rho_i\in \RR_{\pi_i}(T_i)$ such that $\sin(\gamma(\rho_i))=0$, $\sin(\theta(\rho_i))\ne 0$, and $p(\rho_1) = p(\rho_2)$ where $p$ is the map from each character variety $\RR_{\pi_i}(T_i)$ to its respective pillowcase.

Consider the set \[W^i_{\pi} = \{\rho\in\RR_{\pi_i\cup\pi}(T_i)\mid \sin(\gamma(\rho))=0,~ \sin(\theta(\rho))\ne 0\}.\]
In this language, $\RR_{\pis}(T_1+T_2)$ contains a corner circle if and only if $p(W^1_\emptyset)\cap (W^2_\emptyset) \ne \emptyset$. Condition \ref{con:edges} guarantees that $W^i_\emptyset$ is finite. Furthermore, Condition \ref{con:edges} implies that for each $\rho\in W_\emptyset$, $d_\rho p(T_\rho \RR_{\pi_1}(T_1)) = \langle (1,x_\rho)\rangle$ for some $x_\rho\in\R$.

Consider applying shearing perturbation in the $\theta$ direction to $T_1$ as described in Section \ref{sec:shearing} with perturbation data $(S_\theta,\sin,t_\theta)$. 
Since this perturbation acts by a homeomorphism of the pillowcase which fixes the edges $\{(\gamma,\theta)\in P\mid \sin\gamma=0\}$, there is a bijection $g:W^1_\emptyset\to W^1_{S_\theta}$ such that $p(\rho) = p(g(\rho))$. Then by Equation \ref{eq:sheartheta}, 
\begin{align*}
d_{g(\rho)} p(T_{g(\rho)} \RR_{\pi_1\cup S_\theta}(T_1)) &= \langle (1,x_\rho-2t_\theta\cos(\gamma(\rho)))\rangle\\
&= \langle (1,x_\rho\pm 2t_\theta)\rangle
\end{align*}
Fix $t_\theta\in\R$ such that $x_\rho-2t_\theta\cos(\gamma(\rho))\ne 0$ for each $\rho\in W^1_\emptyset$.

Next, add in the perturbation $(S_\gamma,\sin,t_\gamma)$. For small $t_\gamma$, there is a further bijection $f:W^1_{S_\theta}\to W^1_{S_\theta\cup S_\gamma}$. Then one can calculate that 
\[\frac{\partial}{\partial t_\gamma}\theta(f\circ g(\rho))|_{t_\gamma=0} = \sin(\theta(\rho))(x_\rho-2t_\theta\cos(\gamma(\rho))) \ne 0.\]

Thus as one increases the perturbation parameter $t_\gamma$ from 0, all points in $W^1_{S_\theta\cup S_\gamma}$ move in the $\theta$-coordinate. Thus, a sufficiently small $t_\gamma$ can be found such that no representation in $W^1_{S_\theta\cup S_\gamma}$ has the same $\theta$-coordinate as any representation in $W^2_\emptyset$, $p(W^1_{S_\theta\cup S_\gamma})\cap p(W^2_\emptyset) = \emptyset$, and thus there are no corner circles for such $\pi' = S_\theta\cup S_\gamma$. Because being a good pair is a stable condition and perturbation parameters can be arbitrarily small, $(T_1, \pi_1\cup S_\theta\cup S_\gamma)$ and $(T_2,\pi_2)$ are a good pair.
\end{proof}

\begin{figure}
\label{fig:Ex}
    \centering
    \begin{subfigure}[b]{0.25\textwidth}
        \centering
        \includegraphics[width=\textwidth]{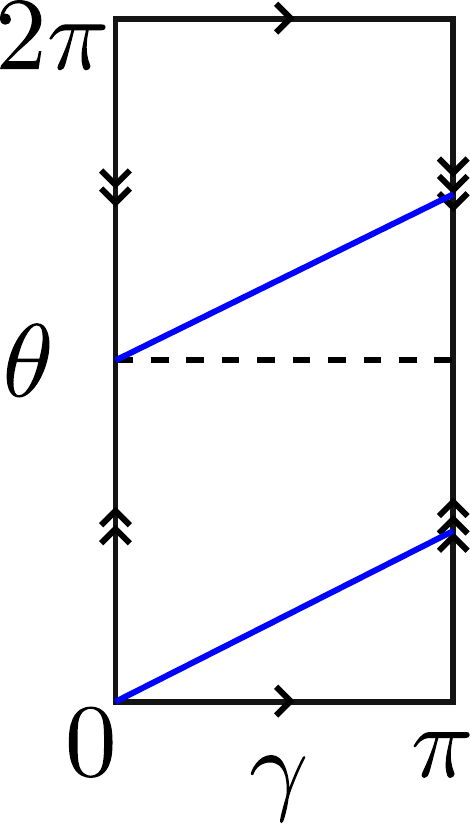}
        \caption{}
        \label{fig:Ex1}
    \end{subfigure}
    \hspace{1cm}
    \begin{subfigure}[b]{0.25\textwidth}
        \centering
        \includegraphics[width=\textwidth]{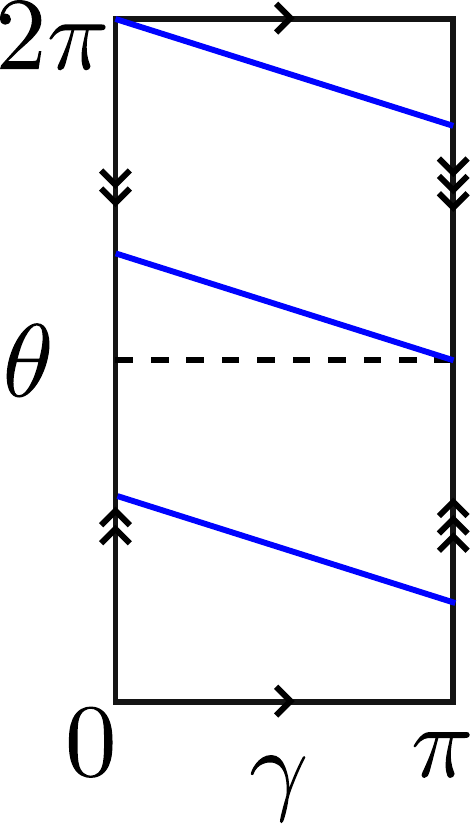}
        \caption{}
        \label{fig:Ex2}
    \end{subfigure}
    \hspace{1cm}
    \begin{subfigure}[b]{0.25\textwidth}
        \centering
        \includegraphics[width=\textwidth]{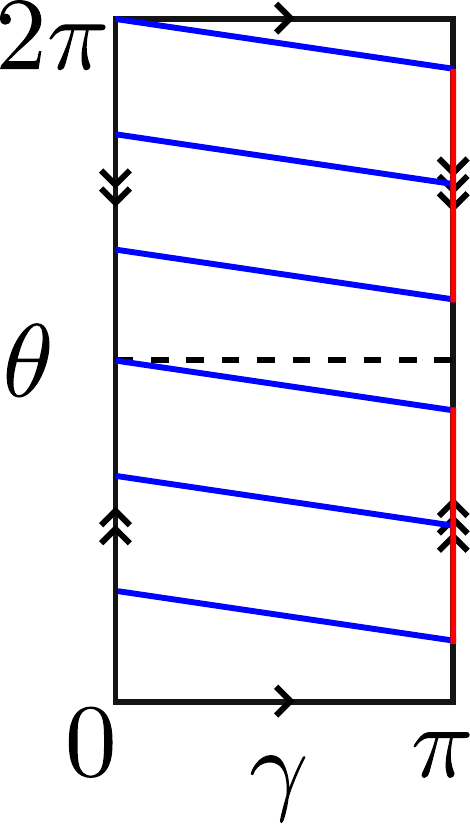}
        \caption{}
        \label{fig:Ex3}
    \end{subfigure}
    \caption{}
\end{figure}

\begin{figure}
\label{fig:Exa}
    \centering
    \begin{subfigure}[b]{0.25\textwidth}
        \centering
        \includegraphics[width=\textwidth]{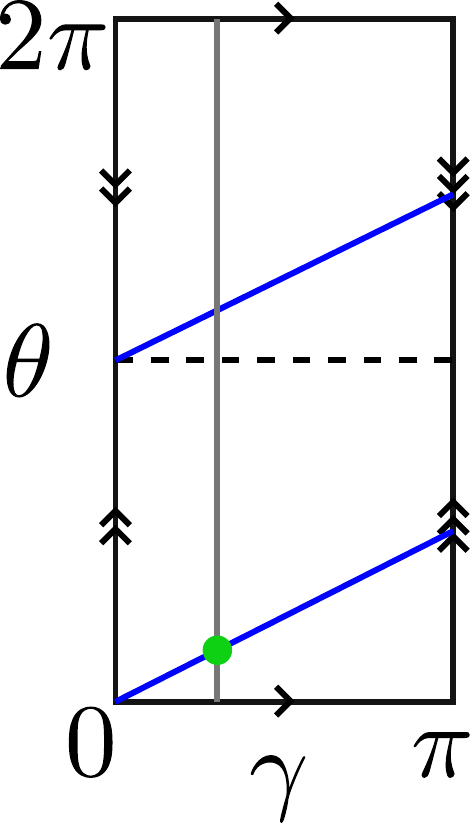}
        \caption{}
        \label{fig:Ex1Add}
    \end{subfigure}
    \hspace{1cm}
    \begin{subfigure}[b]{0.25\textwidth}
        \centering
        \includegraphics[width=\textwidth]{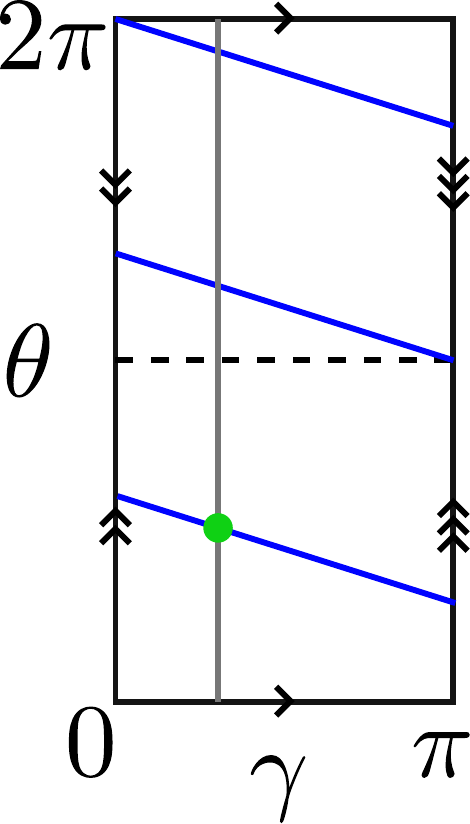}
        \caption{}
        \label{fig:Ex2Add}
    \end{subfigure}
    \hspace{1cm}
    \begin{subfigure}[b]{0.25\textwidth}
        \centering
        \includegraphics[width=\textwidth]{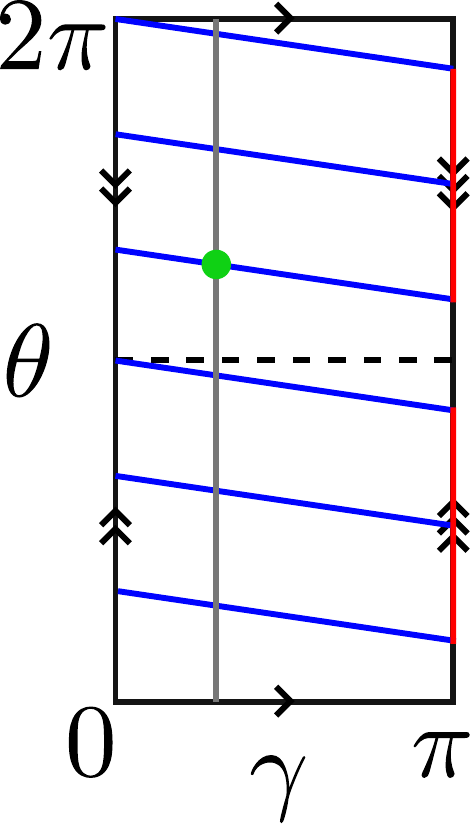}
        \caption{}
        \label{fig:Ex3Add}
    \end{subfigure}
    \caption{}
\end{figure}

\begin{figure}
\label{fig:Exb}
    \centering
    \begin{subfigure}[b]{0.25\textwidth}
        \centering
        \includegraphics[width=\textwidth]{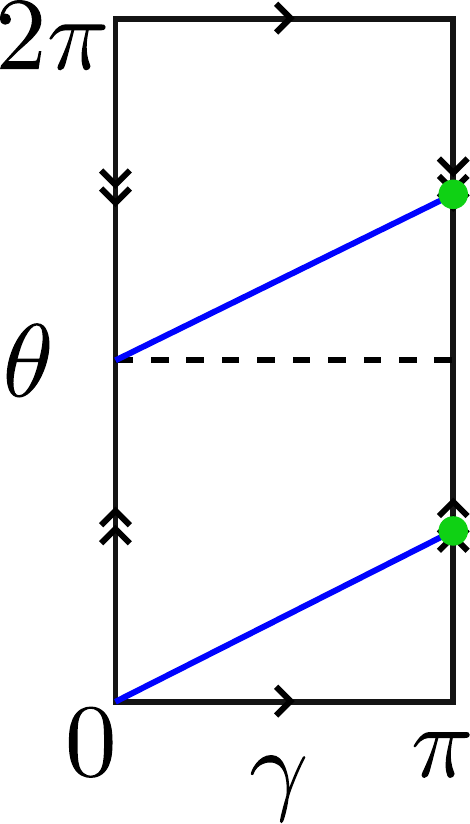}
        \caption{}
        \label{fig:Ex1Circ}
    \end{subfigure}
    \hspace{1cm}
    \begin{subfigure}[b]{0.25\textwidth}
        \centering
        \includegraphics[width=\textwidth]{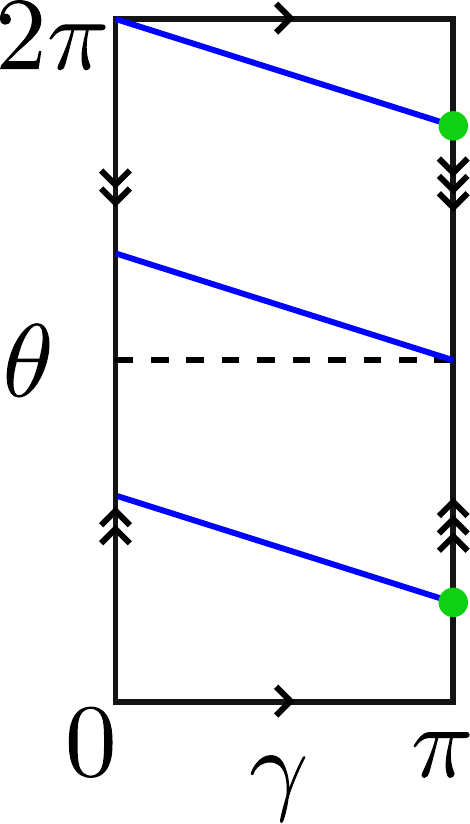}
        \caption{}
        \label{fig:Ex2Circ}
    \end{subfigure}
    \hspace{1cm}
    \begin{subfigure}[b]{0.25\textwidth}
        \centering
        \includegraphics[width=\textwidth]{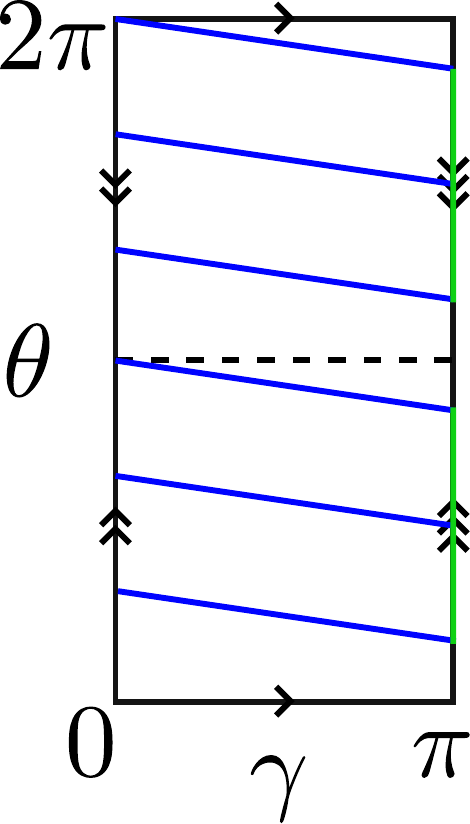}
        \caption{}
        \label{fig:Ex3Circ}
    \end{subfigure}
    \caption{}
\end{figure}

The following theorem combines the results of this subsection to summarize the structure of character varieties $\RR_{\pi_1\cup\pi_2}(T+S)$ where $\pi_1$ and $\pi_2$ are specially chosen, arbitrarily small perturbations for the tangles $T$ and $S$, respectively.

\begin{theorem}
    \label{thm:gpsum}
    Let $T$ and $S$ be tangles. Then there exist arbitrarily small perturbations $\pi_1$ in $T$ and $\pi_2$ in $S$ such that $\RR_{\pi_1\cup \pi_2}(T+S)$ has the following structure:

    There exists $N'\subset \RR_{\pi_1\cup \pi_2}(T+S)$ which is a finite set of points such that $\RR_{\pi_1\cup \pi_2}\setminus N'$ is a 1-manifold with boundary.%
    For every point $x\in N'$, there is a neighborhood of $x$ in $\RR_{\pi_1\cup \pi_2}(T+S)$ that is homeomorphic to a cone on four points. The points in $N'$ come in pairs such that if $x$ and $y$ are points in $N'$ which form a pair, then there exists a circle embedded in $\RR_{\pi_1\cup \pi_2}$ containing $x$ and $y$ and no other points from $N'$. The image of this circle in the pillowcase lies entirely within the edges $\{(\gamma, \theta)\in P \mid \gamma \in \{0,\pi\}, \theta \in(0,\pi)\}$. Note that this image does not intersect any corner of the pillowcase. One of the points $x$ or $y$ will attain the maximum $\theta$-coordinate of this image while the other will attain the minimum $\theta$-coordinate.
\end{theorem}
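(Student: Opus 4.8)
The plan is to assemble the structural results established earlier in this subsection; almost all of the work is already done, and what remains is to fix the perturbations correctly and organize the pieces. First I would apply Proposition \ref{prop:goodpair} to obtain arbitrarily small perturbation data making $(T,\pi_1')$ and $(S,\pi_2)$ a good pair, and then apply Lemma \ref{lem:internal} to replace $\pi_1'$ by $\pi_1:=\pi_1'\cup\pi'$ --- still arbitrarily small and still a good pair --- so that $\RR_{\pi_1\cup\pi_2}(T+S)$ has no corner circles. All of the claimed structure is then proved for this $\pi_1,\pi_2$.

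Since $(T,\pi_1)$ and $(S,\pi_2)$ form a good pair, Lemma \ref{prop:nicecomp} applies, so we have the decomposition $\RR_{\pi_1\cup\pi_2}(T+S)=M\sqcup N$ of Equations \ref{eq:decompM} and \ref{eq:decompN}. Corollary \ref{cor:mman} gives that $M$ is a $1$-manifold, and Lemma \ref{lem:isolates} gives that $N$ is a finite set of points, each with a neighborhood in $\RR_{\pi_1\cup\pi_2}(T+S)$ homeomorphic to a cone on one, two, or four points. I would take $N'$ to be the cone-on-four-point points; by Corollary \ref{cor:singular} this is exactly the singular locus of $\RR_{\pi_1\cup\pi_2}(T+S)$, and it is finite. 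Because $N$ is a finite, hence closed and discrete, subset of a Hausdorff space, every point of $N\setminus N'$ has an arbitrarily small neighborhood meeting neither $N'$ nor the rest of $N$, which is therefore a cone on one or two points, that is, a half-open or open interval; together with the manifold $M$ this exhibits $\RR_{\pi_1\cup\pi_2}(T+S)\setminus N'$ as a $1$-manifold with boundary, and the cone-on-four-point description of $N'$ is immediate.

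For the pairing, I would revisit the proof of Lemma \ref{lem:isolates}: a point $x\in N'$ falls into its Case~3, so $\Stab(x|_T)=\Stab(x|_S)=\Z/2$ and $x$ lies on exactly one circle component $\mathcal{O}$ of $\RR_{\pi_1\cup\pi_2}^{\ol{\H}}(T+S)$. As $x\in N\subset\RR_{\pi_1\cup\pi_2}^{\ol{\H}}(T+S)$ we have $\gamma(x)\in\{0,\pi\}$, so writing $\rho_1:=x|_T$ and $\rho_2:=x|_S$ the conditions $\gamma(\rho_i)\in\{0,\pi\}$ and $\Stab(\rho_i)=\Z/2$ force $\wStab(\rho_i)=(\Z/2,\Z/2)$; hence by Theorem \ref{thm:nice} the circle $\mathcal{O}$ is the fiber $\{[\rho_1+\rho_2]_\psi\}_{\psi\in S^1}$ of $\phi$, which is embedded. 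By Lemma \ref{lem:revengH}, equivalently the remark after Proposition \ref{thm:notniceH}, the only points of $\mathcal{O}$ whose restriction to $C_3$ is binary dihedral are $[\rho_1+\rho_2]_0$ and $[\rho_1+\rho_2]_\pi$, so $\mathcal{O}\cap\RR_{\pi_1\cup\pi_2}^{\ol{\A}}(T+S)=\{[\rho_1+\rho_2]_0,[\rho_1+\rho_2]_\pi\}$; since $N'\subset N\subset\RR_{\pi_1\cup\pi_2}^{\ol{\A}}(T+S)$ and both of these are again Case~3 points, $\mathcal{O}\cap N'=\{[\rho_1+\rho_2]_0,[\rho_1+\rho_2]_\pi\}$, with the two points distinct because $\theta(\rho_1),\theta(\rho_2)\in(0,\pi)$. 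This defines the pairing, with $\mathcal{O}$ the desired embedded circle containing them and no other point of $N'$. Finally, by Theorem \ref{thm:nice} the image $p_3(\mathcal{O})$ is the segment $\{(\gamma,\theta)\in P\mid\gamma=\gamma(\rho_1),\ \theta\in[\arccos(\cos(\theta(\rho_1)-\theta(\rho_2))),\arccos(\cos(\theta(\rho_1)+\theta(\rho_2)))]\}$, whose minimum $\theta$-coordinate is attained at $[\rho_1+\rho_2]_0$ and whose maximum is attained at $[\rho_1+\rho_2]_\pi$; since $\gamma(\rho_1)\in\{0,\pi\}$ and $\mathcal{O}$ is an internal circle --- all corner circles were removed above, so $\theta$ never equals $0$ or $\pi$ along $\mathcal{O}$ --- this segment lies in $\{(\gamma,\theta)\in P\mid\gamma\in\{0,\pi\},\ \theta\in(0,\pi)\}$ and misses every corner.

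I expect no deep difficulty: the proof is the assembly of Proposition \ref{prop:goodpair}, Lemma \ref{lem:internal}, Corollary \ref{cor:mman}, Lemma \ref{lem:isolates}, Corollary \ref{cor:singular}, Theorem \ref{thm:nice}, and Lemma \ref{lem:revengH}. The one place needing genuine care is the pairing: one must check that it is well defined and exhaustive --- that every point of $N'$ is a Case~3 point of Lemma \ref{lem:isolates} lying on a unique internal circle, and that the two distinguished representations $[\rho_1+\rho_2]_0$, $[\rho_1+\rho_2]_\pi$ of that circle are themselves in $N'$ --- and that deleting the finite set $N'$ from the stratified space really does leave a $1$-manifold with boundary. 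These are routine once the logical dependencies are spelled out, but they should be spelled out.
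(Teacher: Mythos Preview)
Your proposal is correct and matches the paper's approach exactly: the paper presents this theorem as a summary statement that ``combines the results of this subsection,'' with no separate proof given, and your assembly of Proposition~\ref{prop:goodpair}, Lemma~\ref{lem:internal}, Corollary~\ref{cor:mman}, Lemma~\ref{lem:isolates}, Corollary~\ref{cor:singular}, Theorem~\ref{thm:nice}, and Lemma~\ref{lem:revengH} is precisely the intended synthesis. Your careful verification of the pairing and of the $1$-manifold structure of the complement of $N'$ fills in details the paper leaves implicit.
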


\begin{exmp}
If $T$ and $S$ are rational tangles whose slopes are not $\infty$, then $(T,\emptyset)$ and $(S,\emptyset)$ are a good pair.
Let $T = Q_{\frac{1}{2}}$ and $S = Q_{-\frac{1}{3}}$.
Figure \ref{fig:Ex1} shows the image of $\RR(T)$ in the pillowcase. Figure \ref{fig:Ex2} shows the image of $\RR(S)$ in the pillowcase. Then Figure \ref{fig:Ex3} shows the image of $\RR(T+S)$ in the pillowcase. $\RR^{\ol{\A}}(T+S)$ is shown in blue and $\RR^{\ol{\H}}(T+S)$ is in red. %

Figures \ref{fig:Ex1Add} and \ref{fig:Ex2Add} show representations $\rho_1 \in \RR(T)$ and $\rho_2 \in \RR(S)$ with the same $\gamma$-coordinate. Figure \ref{fig:Ex3Add} shows the unique representation $[\rho_1+\rho_2]\in \RR(T+S)$ described by Proposition \ref{thm:notniceA}.

Figures \ref{fig:Ex1Circ} and \ref{fig:Ex2Circ} show representations $\rho_1 \in \RR(T)$ and $\rho_2 \in \RR(S)$ with $\gamma(\rho_i) = \pi$. As shown by Proposition \ref{thm:notniceH}, there is a circle's worth of representations $[\rho_1+\rho_2]_\psi\in \RR(T+S)$ which restrict to $\rho_1$ and $\rho_2$ on $T$ and $S$, respectively. Figure \ref{fig:Ex3Add} shows the image of this circle in the pillowcase, which is the interval described in Proposition \ref{thm:notniceH}. Note that in this example, the circle is an internal circle.

\end{exmp}

\begin{remark}
The tangle sum is an associative operation, so arbitrary sums of tangles $\sum_{i=1}^n T_i$ are obtained by iteratively applying pairwise sums.
Then the character variety $\RR(\sum_{i=1}^n T_i)$ can be computed by repeated applications of Theorem \ref{thm:notnice}.
\end{remark}

\subsection{A Decomposition of \texorpdfstring{$\Pr$}{P}}
\label{sec:decomp}

In this section, we define a certain decomposition of $\Pr$, which, in turn, provides a decomposition of $\Sols$. These decompositions will be useful in understanding the perturbed character variety of $C_3$ in Section \ref{sec:Perturbing}.

\begin{figure}
    \centering
    \includegraphics[height=2.5in]{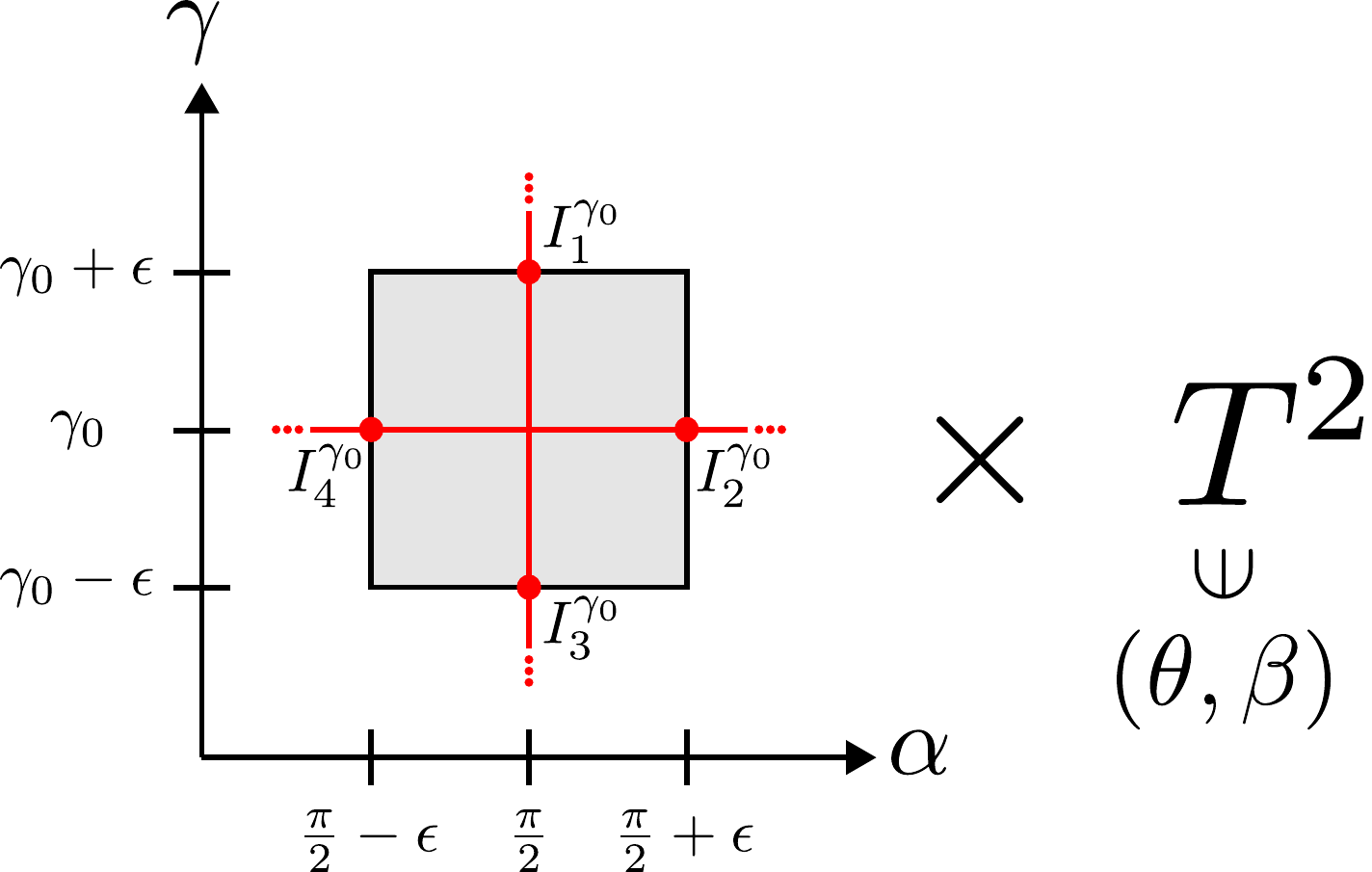}
    \caption{The gray box represents $\NS_\epsilon^{\gamma_0}$ and the red represents $\Sols$. The intersection of red lines in the middle of the square gives $\SV^{\gamma_0}$.}
    \label{fig:Nslice}
\end{figure}

Let $\SV^0:= \{(\widetilde{\Gamma}(0,\theta, \frac{\pi}{2}, \beta))\in\Pr\mid \theta,\beta\in S^1\}$ and $\SV^\pi:= \{(\widetilde{\Gamma}(\pi,\theta, \frac{\pi}{2}, \beta))\in\Pr\mid \theta,\beta\in S^1\}$. Thus $\SV = \SV^0 \sqcup \SV^\pi$ For $\gamma_0\in\{0,\pi\}$ define 
\[\NS_\epsilon^{\gamma_0} := \{\widetilde{\Gamma}(\gamma,\theta,\alpha,\beta) \in \Pr \mid \abs{\gamma-\gamma_0}\le \epsilon,~ \abs{\alpha-\frac{\pi}{2}}\le \epsilon\}\]
which is a closed neighborhood of $\SV^{\gamma_0}$. Let $\Psi^{\gamma_0}_\epsilon: \NS^{\gamma_0}_\epsilon \to \SV^{\gamma_0}$ be the projection given by $\widetilde{\Gamma}(\gamma,\theta,\alpha,\beta)\mapsto \widetilde{\Gamma}(\gamma_0,\theta,\frac{\pi}{2},\beta)$. Note that if $\mathbf{x}=\widetilde{\Gamma}(\gamma_0, \theta_0, \frac{\pi}{2}, \beta_0)\in\SV$, then 
\[(\Psi_\epsilon^{\gamma_0})\inv(\mathbf{x})=\{\widetilde{\Gamma}(\gamma,\theta,\alpha,\beta)\in\Pr\mid \abs{\gamma-\gamma_0}\le \epsilon,\theta=\theta_0, \abs{\alpha-\frac{\pi}{2}}\le \epsilon, \beta=\beta_0\}\cong I^2.\] So $\NS_\epsilon^{\gamma_0} \cong T^2 \times I^2 \cong T^2 \times D^2$ and $\partial \NS_\epsilon^{\gamma_0} \cong T^2\times \partial (I^2) \cong T^3$. $\partial\NS^{\gamma_0}_\epsilon$ naturally decomposes into four pieces (corresponding to the edges of the square in Figure \ref{fig:Nslice}):
\[(\partial\NS^{\gamma_0}_\epsilon)_1 := \{\widetilde{\Gamma}(\gamma,\theta,\alpha,\beta) \in \Pr \mid \gamma = \gamma_0 + \epsilon,~ \abs{\alpha-\frac{\pi}{2}}\le \epsilon\}\]
\[(\partial\NS^{\gamma_0}_\epsilon)_2 := \{\widetilde{\Gamma}(\gamma,\theta,\alpha,\beta) \in \Pr \mid \abs{\gamma-\gamma_0}\le \epsilon,~ \alpha = \frac{\pi}{2} + \epsilon\}\]
\[(\partial\NS^{\gamma_0}_\epsilon)_3 := \{\widetilde{\Gamma}(\gamma,\theta,\alpha,\beta) \in \Pr \mid \gamma = \gamma_0 - \epsilon,~ \abs{\alpha-\frac{\pi}{2}}\le \epsilon\}\]
\[(\partial\NS^{\gamma_0}_\epsilon)_4 := \{\widetilde{\Gamma}(\gamma,\theta,\alpha,\beta) \in \Pr \mid \abs{\gamma-\gamma_0}\le \epsilon,~ \alpha = \frac{\pi}{2} - \epsilon\}\]

$\Sols$ intersect each of these four pieces in a torus:

\[I^{\gamma_0}_1 := \Sols\cap(\partial\NS^{\gamma_0}_\epsilon)_1 = \{\widetilde{\Gamma}(\gamma,\theta,\alpha,\beta) \in \Pr \mid \gamma = \gamma_0 + \epsilon,~ \alpha=\frac{\pi}{2}\}\]
\[I^{\gamma_0}_2 := \Sols\cap(\partial\NS^{\gamma_0}_\epsilon)_2 = \{\widetilde{\Gamma}(\gamma,\theta,\alpha,\beta) \in \Pr \mid \gamma=\gamma_0,~ \alpha = \frac{\pi}{2} + \epsilon\}\]
\[I^{\gamma_0}_3 := \Sols\cap(\partial\NS^{\gamma_0}_\epsilon)_3 = \{\widetilde{\Gamma}(\gamma,\theta,\alpha,\beta) \in \Pr \mid \gamma = \gamma_0 - \epsilon,~ \alpha=\frac{\pi}{2}\}\]
\[I^{\gamma_0}_4 := \Sols\cap(\partial\NS^{\gamma_0}_\epsilon)_4 = \{\widetilde{\Gamma}(\gamma,\theta,\alpha,\beta) \in \Pr \mid \gamma=\gamma_0,~ \alpha = \frac{\pi}{2} - \epsilon\}\]

The following two propositions establish that this decomposition intersects the $\Sols$ nicely. This will be helpful when considering the perturbed character variety.%

\begin{proposition}
\label{prop:transversedecomp}
$\Sols$ intersects each $(\partial\NS^{\gamma_0}_\epsilon)_i$ transversely.
\end{proposition}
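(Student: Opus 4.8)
The plan is to reduce the statement to an elementary transversality computation carried out in the coordinate chart $\widetilde{\Pr}=T^4$ with coordinates $(\gamma,\theta,\alpha,\beta)$. I begin with two observations that make the reduction legitimate. First, by the discussion preceding the proposition, $\Sols\cap(\partial\NS^{\gamma_0}_\epsilon)_i=I^{\gamma_0}_i$, and this torus is disjoint from $\SV$: the points of $I^{\gamma_0}_1$ and $I^{\gamma_0}_3$ have $\gamma=\gamma_0\pm\epsilon\neq\gamma_0$, while those of $I^{\gamma_0}_2$ and $I^{\gamma_0}_4$ have $\alpha=\pi/2\pm\epsilon\neq\pi/2$. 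Hence in a neighborhood of $I^{\gamma_0}_i$ the set $\Sols$ is an honest smooth hypersurface of $\Pr$ — it lies in the single stratum $\A$ when $i\in\{1,3\}$ and in $\H$ when $i\in\{2,4\}$ — so ``transverse intersection'' has its ordinary meaning. Second, every point of $I^{\gamma_0}_i$ has $\alpha$ equal to $\pi/2$ or $\pi/2\pm\epsilon$, hence bounded away from the two poles $\alpha\in\{0,\pi\}$ of $\ol{\Gamma}$ where $\widetilde{\Gamma}$ ceases to be a local diffeomorphism, and it lies in the interior of the manifold-with-boundary $(\partial\NS^{\gamma_0}_\epsilon)_i$. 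Therefore a sufficiently small neighborhood $U$ of a point in $\widetilde{\Gamma}\inv(I^{\gamma_0}_i)$ is carried diffeomorphically by $\widetilde{\Gamma}$ onto an open subset of $\Pr$, and transversality of $\Sols$ and $(\partial\NS^{\gamma_0}_\epsilon)_i$ downstairs is equivalent to transversality of their $\widetilde{\Gamma}$-preimages upstairs.

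Next I carry out the computation in $\widetilde{\Pr}$. Treat $i=1$; the case $i=3$ is identical after replacing $\gamma_0+\epsilon$ by $\gamma_0-\epsilon$. Near a point of $\widetilde{\Gamma}\inv(I^{\gamma_0}_1)$ one has $\gamma$ close to $\gamma_0+\epsilon$ (so $\sin\gamma\neq 0$) and $\alpha$ close to $\pi/2$, so $\widetilde{\Gamma}\inv(\Sols)=\{\sin\gamma\cos\alpha=0\}$ is locally the smooth hypersurface $\{\cos\alpha=0\}$, with tangent space $\langle\partial_\gamma,\partial_\theta,\partial_\beta\rangle$; and $\widetilde{\Gamma}\inv\big((\partial\NS^{\gamma_0}_\epsilon)_1\big)$ is locally $\{\gamma=\gamma_0+\epsilon\}$, with tangent space $\langle\partial_\theta,\partial_\alpha,\partial_\beta\rangle$. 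The sum of these two subspaces contains $\partial_\gamma,\partial_\theta,\partial_\alpha,\partial_\beta$ and hence equals $T\widetilde{\Pr}$, giving transversality. For $i=2$ (and likewise $i=4$), near a point of $\widetilde{\Gamma}\inv(I^{\gamma_0}_2)$ one has $\alpha$ close to $\pi/2+\epsilon$ (so $\cos\alpha\neq 0$) and $\gamma$ close to $\gamma_0$, so $\widetilde{\Gamma}\inv(\Sols)$ is locally $\{\sin\gamma=0\}$ with tangent space $\langle\partial_\theta,\partial_\alpha,\partial_\beta\rangle$, whereas $\widetilde{\Gamma}\inv\big((\partial\NS^{\gamma_0}_\epsilon)_2\big)=\{\alpha=\pi/2+\epsilon\}$ has tangent space $\langle\partial_\gamma,\partial_\theta,\partial_\beta\rangle$; again the sum is all of $T\widetilde{\Pr}$.

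There is no genuine difficulty here: the only point requiring care is the bookkeeping forced by $\widetilde{\Gamma}$ not being globally injective — which is why one localizes near a single preimage and stays away from the poles $\alpha\in\{0,\pi\}$ — together with the verification that $\Sols\cap(\partial\NS^{\gamma_0}_\epsilon)_i$ avoids $\SV$, so that the assertion is really about an ordinary submanifold intersection rather than a stratified one. Both are dispatched by the two observations in the first paragraph, and the remainder is the short linear-algebra check above.
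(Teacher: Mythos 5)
Your argument is correct, and it is essentially the rigorous version of what the paper leaves at ``Clear. See Figure \ref{fig:Nslice}.'' — the figure there is drawn precisely in the $(\gamma,\alpha)$-slice of the $(\gamma,\theta,\alpha,\beta)$ coordinates, and what you have done is spell out the two observations (that $I^{\gamma_0}_i$ avoids $\SV$ and the poles of $\widetilde{\Gamma}$, so everything localizes to a chart) and then do the visible linear-algebra check. No gap, and no genuinely different route.
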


\begin{proof}
Clear. See Figure \ref{fig:Nslice}.
\end{proof}

Let $\NS_\epsilon:= \NS^0_\epsilon \sqcup \NS^\pi_\epsilon$ and $\Psi_\epsilon: \NS_\epsilon \to \SV$ be the corresponding projection.

\begin{proposition}
\label{prop:sqinttrans}
For each $\rho\in \SV$, $I^{\gamma(\rho)}_t$ intersects $\partial\Psi_\epsilon\inv(\rho)$ transversely in $(\partial\NS^{\gamma_0}_\epsilon)_i$.
\end{proposition}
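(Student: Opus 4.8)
The plan is to verify transversality by a direct computation in the coordinates $(\gamma,\theta,\alpha,\beta)$ on $\widetilde{\Pr}=T^4$. First I would note that $\widetilde{\Gamma}$ is injective with injective differential on the region of $\widetilde{\Pr}$ where $\gamma$ is near $\gamma_0$ and $\alpha$ is near $\frac{\pi}{2}$: the first three coordinates give a diffeomorphism onto $\{\i\}\times\Sk\times\Sk$, and $\ol{\Gamma}$ is nonsingular away from the poles $\alpha\in\{0,\pi\}$, hence injective on a small neighborhood of $\alpha=\frac{\pi}{2}$. Since everything in sight --- $\Sols$, the pieces $(\partial\NS^{\gamma_0}_\epsilon)_i$, the tori $I^{\gamma_0}_i$, and $\Psi_\epsilon\inv(\rho)$ --- lies in this region, all of these may be treated as genuine submanifolds of $\R^4$ cut out by the coordinate functions, and transversality can be read off from coordinate tangent spaces.

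Next I would fix $\rho=\widetilde{\Gamma}(\gamma_0,\theta_0,\frac{\pi}{2},\beta_0)\in\SV^{\gamma_0}$ and an index $i$. Recall that $\Psi_\epsilon\inv(\rho)$ is the square $\{\abs{\gamma-\gamma_0}\le\epsilon,\ \theta=\theta_0,\ \abs{\alpha-\frac{\pi}{2}}\le\epsilon,\ \beta=\beta_0\}\cong I^2$, whose boundary is a union of four arcs, exactly one of which lies in the piece $(\partial\NS^{\gamma_0}_\epsilon)_i$; only that arc meets $(\partial\NS^{\gamma_0}_\epsilon)_i$, so the asserted intersection takes place inside $(\partial\NS^{\gamma_0}_\epsilon)_i$ between that arc and $I^{\gamma_0}_i$. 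For $i\in\{1,3\}$ the piece $(\partial\NS^{\gamma_0}_\epsilon)_i=\{\gamma=\gamma_0\pm\epsilon\}$ carries the coordinates $(\theta,\alpha,\beta)$; there $I^{\gamma_0}_i=\{\alpha=\frac{\pi}{2}\}$ has tangent space $\langle\partial_\theta,\partial_\beta\rangle$, while the relevant arc of $\partial\Psi_\epsilon\inv(\rho)$ is $\{\theta=\theta_0,\ \beta=\beta_0\}$ with tangent space $\langle\partial_\alpha\rangle$. They meet only at $\widetilde{\Gamma}(\gamma_0\pm\epsilon,\theta_0,\frac{\pi}{2},\beta_0)$, which has $\alpha=\frac{\pi}{2}$ and so lies in the interior of $(\partial\NS^{\gamma_0}_\epsilon)_i$, and the two tangent spaces together span $\langle\partial_\theta,\partial_\alpha,\partial_\beta\rangle$, the full tangent space of $(\partial\NS^{\gamma_0}_\epsilon)_i$; hence the intersection is transverse. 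For $i\in\{2,4\}$ the computation is identical with the roles of $\gamma$ and $\alpha$ interchanged: $(\partial\NS^{\gamma_0}_\epsilon)_i=\{\alpha=\frac{\pi}{2}\pm\epsilon\}$ carries coordinates $(\theta,\gamma,\beta)$, $I^{\gamma_0}_i=\{\gamma=\gamma_0\}$ has tangent space $\langle\partial_\theta,\partial_\beta\rangle$, the relevant arc $\{\theta=\theta_0,\ \beta=\beta_0\}$ has tangent space $\langle\partial_\gamma\rangle$, and these again span.

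There is no real difficulty here; the content is purely bookkeeping. The one point I would be careful about is confirming that the unique intersection point lands in the interior of $(\partial\NS^{\gamma_0}_\epsilon)_i$ rather than on one of the tori along which the four pieces are glued --- which is exactly why it matters that $I^{\gamma_0}_i$ is a \emph{mid-level} set, $\alpha=\frac{\pi}{2}$ or $\gamma=\gamma_0$ --- and that we are working where $\widetilde{\Gamma}$ is a local diffeomorphism, so the coordinate computation is valid; both hold automatically once $\epsilon$ is small. Alternatively one could simply point to Figure \ref{fig:Nslice}, which depicts precisely this configuration.
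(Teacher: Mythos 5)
Your coordinate computation is correct and is precisely the content the paper declares ``Clear'' with a pointer to Figure \ref{fig:Nslice}: in the $(\gamma,\theta,\alpha,\beta)$ chart, $I^{\gamma_0}_i$ and the relevant arc of $\partial\Psi_\epsilon\inv(\rho)$ are complementary coordinate slices inside $(\partial\NS^{\gamma_0}_\epsilon)_i$, so transversality is immediate. You have simply written out in full the picture-level argument the paper omits; the approach is the same.
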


\begin{proof}
Clear. See Figure \ref{fig:Nslice}.
\end{proof}

Let $\MS_\epsilon := \
\Pr\setminus \NS_\epsilon$. For $\epsilon>0$, $\Sols\cap\ol{\MS_\epsilon}$ has six disjoint components:

\[H_0^+ := \{\widetilde{\Gamma}(\gamma,\theta,\alpha,\beta) \in \Sols \mid \gamma = 0,~ \alpha \le \frac{\pi}{2}-\epsilon\}\subset\H\]
\[H_0^- := \{\widetilde{\Gamma}(\gamma,\theta,\alpha,\beta) \in \Sols \mid \gamma = 0,~ \alpha \ge \frac{\pi}{2}+\epsilon\}\subset\H\]
\[H_\pi^+ := \{\widetilde{\Gamma}(\gamma,\theta,\alpha,\beta) \in \Sols \mid \gamma = \pi,~ \alpha \le \frac{\pi}{2}-\epsilon\}\subset\H\]
\[H_\pi^- := \{\widetilde{\Gamma}(\gamma,\theta,\alpha,\beta) \in \Sols \mid \gamma = \pi,~ \alpha \ge \frac{\pi}{2}+\epsilon\}\subset\H\]

\[A^+ := \{\widetilde{\Gamma}(\gamma,\theta,\alpha,\beta) \in \Sols \mid \gamma \in[\epsilon, \pi-\epsilon], \alpha = \frac{\pi}{2} \}\subset\A\]
\[A^- := \{\widetilde{\Gamma}(\gamma,\theta,\alpha,\beta) \in \Sols \mid \gamma \in[\pi+\epsilon, 2\pi-\epsilon], \alpha = \frac{\pi}{2} \}\subset\A\]

Note that $\iota(H^\pm_0) = H^\mp_0$, $\iota(H^\pm_\pi) = H^\mp_\pi$, and $\iota(A^\pm) = A^\mp$.

\section{Perturbed Character Varieties}
\label{sec:Perturbing}

\subsection{Perturbed Traceless \texorpdfstring{$\SU$}{SU(2)} Character Variety of \texorpdfstring{$C_3$}{C3}}
\label{sec:pertC3}

Let $\Pert$ be the curve shown in red in Figure \ref{fig:C3Pert}. Let $\Pert_t$ be an abbreviation for the perturbation data $(\Pert,\sin, t)$. Note that $\RR_{\Pert_0}(C_3) = \RR(C_3)$, which was studied in Section \ref{sec:unpert}. The first part of this section is dedicated to studying the space $\RR_{\Pert_t}(C_3)$ for small, but nonzero, $t$. Then these results are used to understand the perturbed character variety of tangle sums. For a good pair $(T_1,\pi_1)$ and $(T_2, \pi_2)$, the character variety $\RR_{\t\cup \pi_1\cup \pi_2}(T_1+T_2)$ is studied.

\begin{figure}
    \centering
    \includegraphics[height=2.5in]{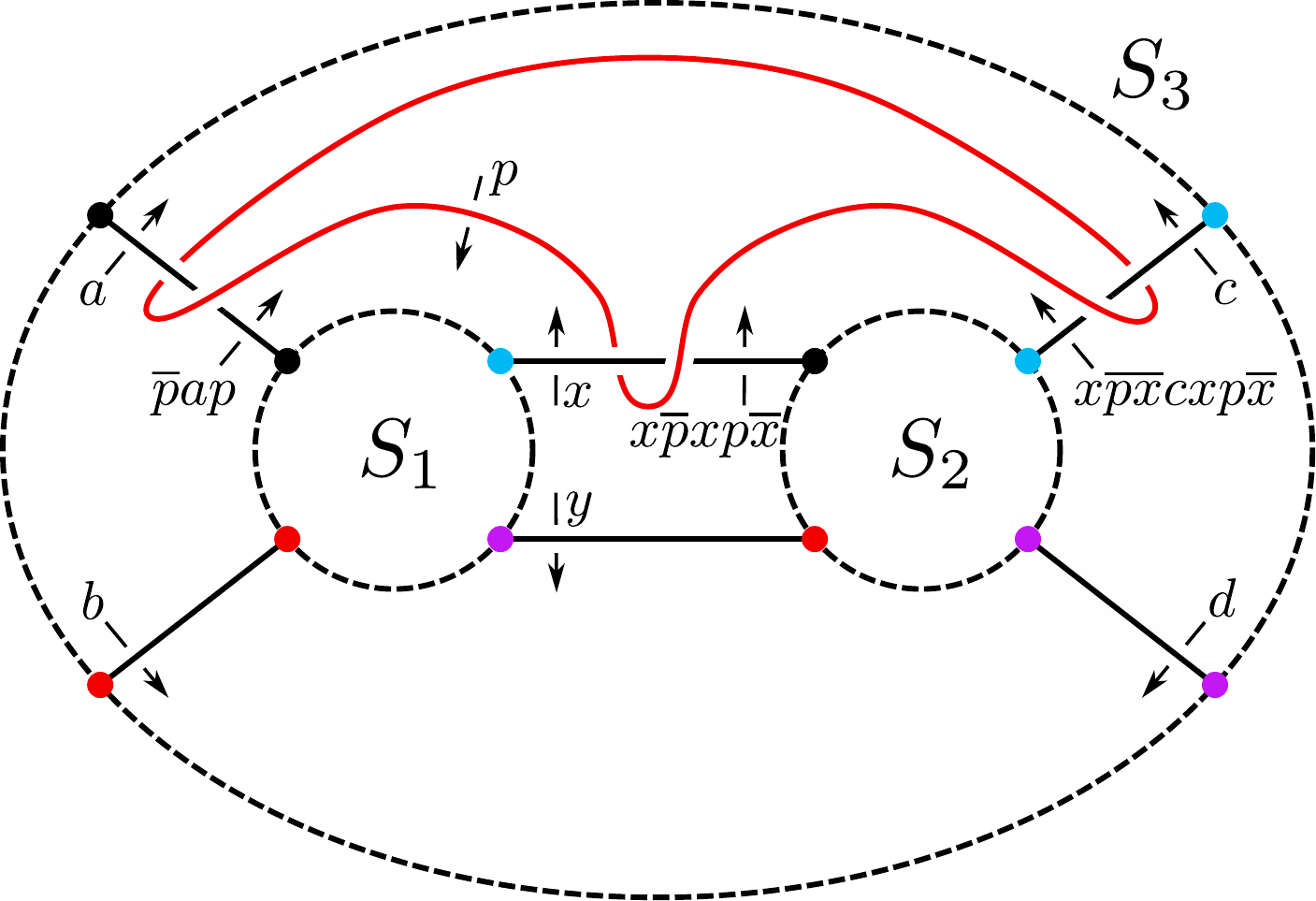}
    \caption{$C_3$ with perturbation curve $\Pert$ in red.}
    \label{fig:C3Pert}
\end{figure}

\subsubsection{Fundamental Group Calculation}

The fundamental group of $C_3$ with the perturbation curve $\Pert$ is calculated using the Wirtinger presentation of the diagram in Figure \ref{fig:C3Pert}. 

\begin{proposition}
With the notation $\lambda = a\ol{c}x$, a presentation of the fundamental group of the tangle complement

\[X_\Pert:=(S^3\setminus 3D^3)\setminus (T\cup \Pert)\]
is given as follows.

\begin{description}

\item[Generators] The set $\{a,b,c,d,x,y,p\}$ generates the group. %

\item[Relations] The following relations are complete.
\begin{enumerate}[{\upshape(I)}]
    \item $[p,\lambda] = 1$ (from $\Pert$)
    \item $\ol{a}b = \ol{c}d$ (from $\SThree$)
    \item $y = x\ol{p}\ol{a}pb$ (from $\SOne$)
    \item $x\ol{pxc}xp\ol{x}d = x\ol{px}p\ol{x}y$ (From $\STwo$)
\end{enumerate}
\end{description}
\end{proposition}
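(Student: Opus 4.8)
The statement is a presentation of $\pi_1(X_\Pert)$ where $X_\Pert = (S^3\setminus 3D^3)\setminus(T\cup\Pert)$, and the natural tool is the Wirtinger presentation applied to the diagram in Figure \ref{fig:C3Pert}. The plan is to proceed exactly as in the unperturbed case (the proposition presenting $\pi_1(X_0)$), but now carrying along the extra strand coming from the perturbation curve $\Pert$. First I would set up the Wirtinger presentation: each arc of the diagram (including the arcs of $\Pert$) gives a generator, and each crossing gives a relation. The generators $a,b,c,d,x,y$ are the meridians of $T$ already labeled in Figure \ref{fig:C3Pert}, and $p$ is a meridian of the perturbation curve $\Pert$; one must check that every other arc's meridian is conjugate (by a word in the chosen generators) to one of these, which is the usual Tietze-reduction step that lets us drop redundant Wirtinger generators.

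\textbf{Key steps.} (1) Read off the relations coming from the three boundary spheres $S_1,S_2,S_3$. On $S_3$ the perturbation curve does not interfere, so the relation is unchanged from the unperturbed computation: $\ol a b = \ol c d$, which is Relation (II). On $S_1$ and $S_2$ the strand of $T$ passes through (or near) the perturbation curve, so the meridian relations pick up conjugation by $p$ or $\ol p$ depending on how $\Pert$ is threaded; tracking the over/under information in Figure \ref{fig:C3Pert} gives $y = x\,\ol p\,\ol a\,p\,b$ (Relation (III)) and the longer word $x\,\ol{pxc}\,xp\,\ol x\,d = x\,\ol{px}\,p\,\ol x\,y$ (Relation (IV)). (2) Record the relation at the perturbation curve itself: a meridian $p$ and the longitude $\lambda$ of $\Pert$ commute, and reading the longitude off the diagram with the blackboard framing gives $\lambda = a\,\ol c\,x$, hence $[p,\lambda]=1$, which is Relation (I). (3) Verify completeness: count arcs and crossings, apply the standard fact that one Wirtinger relation at each crossing suffices and one relation is redundant, and eliminate $d$ and $y$ (and any auxiliary arc-meridians) using Relations (II) and (III) to confirm the listed generating set and relations generate the whole group with no further relations needed.

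\textbf{Main obstacle.} The genuine difficulty is purely bookkeeping: correctly reading the Wirtinger relations for the crossings where the perturbation curve $\Pert$ interacts with the strands near $S_1$ and $S_2$, and in particular getting the conjugating words (the $\ol p$'s, $\ol{pxc}$, etc.) in the right places and with the right orientation. One has to fix conventions — orientations of all arcs, the blackboard framing on $\Pert$, and the basepoint — consistently with Figure \ref{fig:4PS} and the earlier presentation of $\pi_1(X_0)$, then laboriously trace each arc through each crossing. I expect the verification that Relation (IV) has exactly the stated form, and the check that no crossing contributes an independent extra relation, to be the most error-prone part; the rest (the commutator relation at $\Pert$, the $S_3$ relation, and the Tietze reductions) is routine and parallels the unperturbed proposition almost verbatim.
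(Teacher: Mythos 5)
Your proposal takes essentially the same route as the paper: the paper's proof consists of the single remark that the presentation is read off from the Wirtinger presentation of the diagram in Figure \ref{fig:C3Pert}, together with the observation that Wirtinger presentations always have one redundant relation (which accounts for Relation (IV) being derivable from the others). Your outline — Wirtinger generators from arcs, crossing relations and Tietze reduction to eliminate auxiliary arc-meridians, commutation of meridian and longitude at the perturbation curve, and the redundancy check — is the same computation, with the correct identification that the only nontrivial work is the diagrammatic bookkeeping.
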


Wirtinger presentations always result in a redundant relation, and so we know that Relation (IV) can be recovered from (I), (II), and (III). %

Plugging in our specific perturbation curve $\Pert$ into Equation \ref{eq:pert}, we get the following:

\begin{equation}
    \label{eq:peq}
    \rho(p) = e^{t\Im(\rho(a\ol{c}x))}
\end{equation}

Whenever this condition is satisfied, so is Relation (I). For a fixed $t$, if the images of $a$, $b$, $c$, and $x$ under a representation $\rho\in\Ro_\t(X_\Pert)$ are fixed, then $\rho(d)$ is determined by (II), $\rho(p)$ is determined by Equation \ref{eq:peq}, and then $\rho(y)$ is determined by Relation (III). Thus, a representation $\rho\in\Ro_\t(X_\Pert)$ is determined by where it sends $a$, $b$, $c$, and $x$. Since there are no outstanding relations, $a$, $b$, $c$, and $x$ can be sent to any elements of $\SU$ and so $\Ro_\t(X_\Pert)\cong \SU^4$.

Thus, it is possible to define the maps
\[\Lambda:\Pr\to \Ro_\t(X_\Pert) \text{ and } \Gamma:\widetilde{\Pr}\to \Ro_\t(X_\Pert)\]
analogous to those defined in Section \ref{sec:C3Unpert}. Define \[\Sols_t := \Lambda\inv(\Ro_\t(C_3)).\]

By construction, any representation in the image of $\Lambda$ already sends $a$, $b$, $c$, $d$, and $x$ to traceless elements of $\SU$. Thus, a representation $\rho\in\Im(\Lambda)$ is in $\Ro_\t(C_3)$ iff $\rho(y)$ is traceless. Then we can define a function $\F_t: \Pr \to [-1,1]$ by
\begin{equation}
\label{eq:Fbp}
\F_t(\i,b,c,x) = \frac{1}{2}\tr(\rho(y)) = \Re(\rho(y)) \stackrel{\text{(III)}}{=} \Re(\rho(x\ol{pa}pb)) \text{ where } \rho = \Lambda(\i,b,c,x)
\end{equation}

\begin{remark}
$\F_t(\gamma,\theta,\alpha,\beta)$ will be used as shorthand for $\F_t(\widetilde{\Gamma}(\gamma,\theta,\alpha,\beta))$.
\end{remark}

With this function, we can identify $\Sols_t = \F\inv(0)$. Next, we expand out the expression for $\F_t(\gamma,\theta,\alpha,\beta)$.

\begin{remark}
In order to condense already lengthy calculations and reduce symbolic clutter, at times we will abuse notation and use the name of elements in $\pi_1(X_\Pert)$ to refer to their image under a given representation $\rho$.
\end{remark}

In order to calculate the image of $p$ under a particular representation $\rho = \Gamma(\gamma, \theta,\alpha,\beta)$ using Equation \ref{eq:peq}, we first need to calculate $\Im(\lambda)$:
\begin{align*}
\Im(\lambda) &= \Im(a\ol{c}x)\\
& = \Im(-e^{-\theta \k}x)\\
& =\Im(-\cos\theta x + \sin\theta \k x)\\
& =-\cos\theta x + \sin\theta \Im(\k x) \\
&=-\cos\theta(\sin\alpha\cos\beta\i+\sin\alpha\sin\beta\j+\cos\alpha\k) + \sin \theta(-\sin\alpha\sin\beta\i+\sin\alpha\cos\beta\j) \\
&=(-\sin\alpha\cos\beta\cos\theta-\sin\alpha\sin\beta\sin\theta)\i +(\sin\alpha\cos\beta\sin\theta - \sin\alpha\sin\beta\cos\theta)\j\\ &\phantom{==} - \cos\alpha\cos\theta \k \\
&=-\sin\alpha\cos(\theta-\beta)\i+\sin\alpha\sin(\theta-\beta)\j -\cos\alpha\cos\theta\k
\end{align*}

Recall that for $Q\in C(\i)$ and $\nu\in\R$, $e^{\nu Q} =\cos\nu+Q\sin\nu$. Thus, in order to calculate $p = e^{t\Im(\lambda)}$, we must rewrite it so the exponent is a scalar times a unit traceless quaternion. Letting \[n:=\abs{t\Im(\lambda)}\], we can rewrite our equation as $p = e^{n\frac{t\Im(\lambda)}{n}}$ where $\frac{t\Im(\lambda)}{n}\in C(\i)$ and $n\in\R$ as desired. Then we can calculate $n$ for our representation $\rho = \Gamma(\gamma, \theta,\alpha,\beta)$:

\begin{align}
\label{eq:n}
   n:=  \abs{t\Im(\lambda)}
&=t\sqrt{\sin^2\alpha\cos^2(\theta-\beta)+\sin^2\alpha\sin^2(\theta-\beta)+\cos^2\alpha\cos^2\theta} \\
&= t\sqrt{(1-\cos^2\alpha)+\cos^2\alpha\cos^2\theta} \nonumber\\
&= t\sqrt{1+\cos^2\alpha(\cos^2\theta-1)} \nonumber\\
&= t\sqrt{1-\cos^2\alpha\sin^2\theta} \nonumber
\end{align}

\noindent
Note that if $t>0$, then $0\le n\le t$. Putting these calculations together, we can find the image of $p$ under $\rho = \Gamma(\gamma, \theta,\alpha,\beta)$.

\[p = \cos n+\sinc(n)t[-\sin\alpha\cos(\theta-\beta)\i+\sin\alpha\sin(\theta-\beta)\j -\cos\alpha\cos\theta\k]\]

\noindent where \[\sinc(n) = \begin{cases}\frac{\sin n}{n}&n\ne 0\\ 1&n=0\end{cases}\]
From this, we can calculate
\begin{align}
\label{eq:pap}
\ol{pa}p =& [-\sinc(n)t\sin\alpha\cos(\beta-\theta)-\cos n\i+\sinc(n)t\cos\alpha\cos\theta\j-\sinc(n)t\sin\alpha\sin(\theta-\beta)\k]p \\\nonumber
=& [\sinc^2(n)t^2\sin^2\alpha\cos^2(\beta-\theta)-\cos^2 n]\i+\\\nonumber
&2[\cos n\sinc(n)t\cos\alpha\cos\theta-\sinc^2(n)t^2\sin^2\alpha\sin(\theta-\beta)\cos(\beta-\theta)]\j+\\\nonumber
&2[-\cos n\sinc(n)t\sin\alpha\sin(\theta-\beta)+\sinc^2(n)t^2\sin\alpha\cos\alpha\cos\theta\cos(\beta-\theta)]\k.
\end{align}
Plugging this into Equation \ref{eq:Fbp} gives
\begin{align}
\label{eq:F}
\F_t(\gamma,\theta,\alpha,\beta) =& \cos\gamma\left[2\cos n\sinc(n)t\left[\sin^2\alpha\sin\beta\sin(\theta-\beta) - \cos^2\alpha\cos\theta\right] + \right. \\\nonumber
&\phantom{\cos\gamma[[}\left. 2\sinc^2(n)t^2\sin^2\alpha\cos\beta\cos\alpha\sin\theta\cos(\beta-\theta)\right]+\\\nonumber
&\sin\gamma\left[\cos\alpha\cos^2 n-2\cos n\sinc(n)t\sin^2\alpha\cos\beta\sin(\theta-\beta)+ \right. \\\nonumber 
&\phantom{\sin\gamma[[}\left. \sinc^2(n)t^2\cos\alpha\sin^2\alpha\cos(\beta-\theta)\cos(\beta+\theta)\right]
\end{align}
We can break this complicated expression up into more manageable pieces by defining:
\begin{align}
F(t,\theta,\alpha,\beta) =& 2\cos n\sinc(n)t\left[\sin^2\alpha\sin\beta\sin(\theta-\beta) - \cos^2\alpha\cos\theta\right] + \\\nonumber
& 2\sinc^2(n)t^2\sin^2\alpha\cos\beta\cos\alpha\sin\theta\cos(\beta-\theta)\nonumber
\end{align}
\begin{align}
G(t,\theta,\alpha,\beta) =&\cos\alpha\cos^2 n-2\cos n\sinc(n)t\sin^2\alpha\cos\beta\sin(\theta-\beta)+\\\nonumber 
&\sinc^2(n)t^2\cos\alpha\sin^2\alpha\cos(\beta-\theta)\cos(\beta+\theta)\nonumber
\end{align}
so that now \[\F_t(\gamma,\theta,\alpha,\beta) = \cos\gamma F(t,\theta,\alpha,\beta)+ \sin\gamma G(t,\theta,\alpha,\beta).\]

\begin{lemma}
    \label{lem:function}
    For fixed choices of $t_0$, $\theta_0$, $\alpha_0$, and $\beta_0$, either:
    
    \begin{itemize}
        \item There are two antipodal values of $\gamma\in S^1$ such that $\F_{t_0}(\gamma,\theta_0,\alpha_0, \beta_0)=0$. In this case, the values of $\gamma$ depend smoothly on the values of $\theta$, $\alpha$, and $\beta$.
        \item For any $\gamma\in S^1$, $\F_{t_0}(\gamma,\theta_0,\alpha_0, \beta_0)=0$.
    \end{itemize}
\end{lemma}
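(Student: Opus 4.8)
The plan is to exploit the structure exhibited in the displayed formula immediately preceding the statement, namely
\[\F_{t}(\gamma,\theta,\alpha,\beta)=\cos\gamma\,F(t,\theta,\alpha,\beta)+\sin\gamma\,G(t,\theta,\alpha,\beta).\]
For the fixed data $t_0,\theta_0,\alpha_0,\beta_0$ the quantities $A:=F(t_0,\theta_0,\alpha_0,\beta_0)$ and $B:=G(t_0,\theta_0,\alpha_0,\beta_0)$ are constants, so $\gamma\mapsto\F_{t_0}(\gamma,\theta_0,\alpha_0,\beta_0)$ is just the function $\gamma\mapsto A\cos\gamma+B\sin\gamma$ on $S^1$. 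First I would dispose of the degenerate alternative: if $(A,B)=(0,0)$, then $\F_{t_0}(\gamma,\theta_0,\alpha_0,\beta_0)=0$ for every $\gamma\in S^1$, which is precisely the second bullet.

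Otherwise $(A,B)\neq(0,0)$. Writing $(A,B)=R(\cos\phi,\sin\phi)$ with $R=\sqrt{A^2+B^2}>0$ and $\phi$ the polar angle of $(A,B)$, one has $A\cos\gamma+B\sin\gamma=R\cos(\gamma-\phi)$, which vanishes exactly when $\gamma-\phi\equiv\pm\tfrac{\pi}{2}\pmod{2\pi}$. Thus the zero set in $S^1$ consists of the two points $\gamma=\phi+\tfrac{\pi}{2}$ and $\gamma=\phi-\tfrac{\pi}{2}$, which are antipodal, giving the first bullet apart from the smoothness assertion.

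For the smooth dependence I would argue that $F$ and $G$, viewed as functions of $(\theta,\alpha,\beta)$ with $t=t_0$ fixed, are smooth. The only point needing care is the quantity $n$ of Equation \ref{eq:n}, since $n=|t_0|\sqrt{1-\cos^2\alpha\sin^2\theta}$ involves a square root that is not smooth where $n=0$. However, inspection of the formulas for $F$ and $G$ shows that $n$ enters only through the combinations $\cos n\,\sinc(n)=\sinc(2n)$, $\cos^2 n$, and $\sinc^2(n)$, each of which is a real-analytic function of $n^2=t_0^2\,(1-\cos^2\alpha\sin^2\theta)$ (for instance $\cos(t_0\sqrt u)=\sum_{k\ge 0}\tfrac{(-1)^k t_0^{2k}}{(2k)!}u^k$ and $\sinc(t_0\sqrt u)=\sum_{k\ge 0}\tfrac{(-1)^k t_0^{2k}}{(2k+1)!}u^k$), and $1-\cos^2\alpha\sin^2\theta$ is smooth in $(\theta,\alpha,\beta)$. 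Hence $(\theta,\alpha,\beta)\mapsto(A,B)=(F(t_0,\theta,\alpha,\beta),G(t_0,\theta,\alpha,\beta))$ is smooth, and on the open set where this map avoids the origin of $\mathbb{R}^2$ the polar angle $\phi$ is a smooth function (the angular coordinate on $\mathbb{R}^2\setminus\{0\}$, defined locally up to an integer multiple of $2\pi$), so the two branches $\gamma=\phi\pm\tfrac{\pi}{2}$ vary smoothly with $(\theta,\alpha,\beta)$ near any point where the first alternative holds.

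I expect no serious obstacle: once the linear-in-$(\cos\gamma,\sin\gamma)$ form is recognized, the dichotomy is elementary trigonometry. The only mildly delicate step is the smoothness of $F$ and $G$ across the locus $n=0$, and I would handle it simply by making explicit that $n$ appears only through even combinations that are analytic in $n^2$.
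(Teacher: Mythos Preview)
Your proposal is correct and follows essentially the same approach as the paper: both exploit the form $\F_t=\cos\gamma\,F+\sin\gamma\,G$ to reduce the problem to finding zeros of $A\cos\gamma+B\sin\gamma$, with the dichotomy depending on whether $(A,B)=(0,0)$. The paper's proof is terser (it simply invokes $\arctan(-F/G)$ without discussing smoothness of $F$ and $G$), whereas you add a careful justification that $n$ enters only through functions analytic in $n^2$; this extra care is not in the paper but is a welcome addition.
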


\begin{proof}
If $F(t_0,\theta_0,\alpha_0,\beta_0)=G(t_0,\theta_0,\alpha_0,\beta_0)=0$, then $\F_{t_0}(\gamma,\theta_0,\alpha_0,\beta_0)=0$ for any value of $\gamma$. Otherwise, exactly two antipodal values of $\gamma$ can be found by $\arctan(-\frac{F(t_0,\theta_0,\alpha_0,\beta_0)}{G(t_0,\theta_0,\alpha_0,\beta_0)})$  which satisfy $\F(\gamma,\theta_0,\alpha_0,\beta_0)=0$.
\end{proof}

\subsubsection{Image in the Pillowcase}

Define the map $h_t:\Sols_t\to \RR_\t(C_3)$ by $x\mapsto [\Lambda(x)]$. By the same proof as in Proposition \ref{prop:hfibers}, $h_t$ is a surjective map. %
Let $S_1$, $S_2$, and $S_3$ be the components of $\partial(C_3)$ as in Figure \ref{fig:C3Pert}. Let $P_i=\RR(S_i)$ be the respective pillowcases. $\RR_\t(C_3)$ maps into $\P:=P_1 \times P_2 \times P_3$ by restricting the representation to the boundary $S_1 \cup S_2\cup S_3$. Let $\widetilde{p}:\RR_\t(C_3)\to \P$ be this map and let $p_i:\RR_\t(C_3)\to P_i$ be $\widetilde{p}$ composed with the projection to $P_i$. 

Each pillowcase has a 0 dimensional stratum (the corners) and a 2 dimensional stratum. Let $P_{i,0}$ and $P_{i,2}$ denote the 0 and 2 dimensional strata of $P_i$ respectively. Then let $\P_{ijk} := P_{1,i}\times P_{2,j}\times P_{3,k}$.
So \[\P = \P_{222}\sqcup \P_{220}\sqcup \P_{202}\sqcup \P_{022}\sqcup \P_{200}\sqcup \P_{020}\sqcup \P_{002}\sqcup \P_{000}.\]

\begin{proposition}
\label{prop:strat}
For $t\in (0,\frac{\pi}{2})$, the image of $\widetilde{p}: \RR_\t(C_3) \to \P$ lies entirely in $\P_{222}$ and $\P_{000}$
\end{proposition}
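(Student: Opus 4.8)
The plan is to show that for $t\in(0,\tfrac\pi2)$, a representation $\rho\in\RR_\t(C_3)$ cannot have any of its boundary restrictions land in a corner of a pillowcase $P_i$, and simultaneously that if one boundary restriction lands in a corner then all three must. Recall that by Equations \eqref{eq:pccoords1}–\eqref{eq:pccoords6} all three $\gamma$-coordinates of $\widetilde p(\rho)$ equal $\gamma$, so a necessary condition for $\widetilde p(\rho)$ to land in a corner of \emph{any} $P_i$ is $\sin\gamma=0$, i.e. $\gamma\in\{0,\pi\}$. Conversely, if $\gamma\in\{0,\pi\}$ then $\rho$ is binary dihedral (its boundary restrictions are coequatorial), and by the perturbation analysis the holonomy $\rho(p)=e^{t\,\Im(\lambda)}$ only rotates within the coequatorial plane when $\gamma\in\{0,\pi\}$; one should check that in this case $\rho(a),\rho(b),\rho(x),\rho(y)\in\{\pm\i\}$ forces $\rho(c),\rho(d)$ also to be of the form $\pm e^{\theta\k}\i$, so that all of $\gamma_i,\theta_i$ lie in $\{0,\pi\}$ — i.e. $\widetilde p(\rho)\in\P_{000}$. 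Thus the content of the proposition is: (a) a $\gamma\notin\{0,\pi\}$ representation cannot have a corner coordinate — this is automatic since $\gamma_i=\gamma$; and (b) the only way to hit a corner is with $\gamma\in\{0,\pi\}$, and then one is forced into $\P_{000}$.

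First I would dispose of the "mixed" strata $\P_{220},\P_{202},\P_{022},\P_{200},\P_{020},\P_{002}$. Suppose $\widetilde p(\rho)$ has its $S_i$-coordinate in a corner, so $\gamma_i(\rho)\in\{0,\pi\}$; since $\gamma_1=\gamma_2=\gamma_3=\gamma$, we get $\gamma\in\{0,\pi\}$, hence \emph{every} $\gamma_j$ is in $\{0,\pi\}$. It remains to show all three $\theta_j$ are then in $\{0,\pi\}$ too, which rules out every mixed stratum and leaves only $\P_{222}$ and $\P_{000}$. For this, use $\gamma\in\{0,\pi\}$: then $\rho(b)=\pm\rho(a)$, $\rho(d)=\pm\rho(c)$, $\rho(y)=\pm\rho(x)$, and one examines the perturbed relations. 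The key point is that when $\gamma\in\{0,\pi\}$, $\Im(\lambda)=\Im(a\ol c x)$ and $\rho(p)=e^{t\Im(\lambda)}$, and $\rho(y)$ traceless (the defining condition of $\Sols_t$) combined with $\rho(a),\rho(b)\in C(\i)$ pins down that $\rho(x)$ must be coequatorial with $\rho(a),\rho(c)$ — equivalently, $\alpha\in\{0,\pi\}$ or the representation degenerates — so all images lie in a single great circle; restricting to standard gauge then gives $\rho(x)=e^{\theta_1\k}\i$ with $\theta_1\in\{0,\pi\}$ only if... here is where the argument needs care.

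Actually the cleanest route, and the one I would pursue, is: it suffices to show $\RR_\t(C_3)^{\Z/2,\U}$-type behavior is excluded for $t\in(0,\tfrac\pi2)$ at the level of each individual boundary — i.e. that no $\rho\in\RR_\t(C_3)$ with $\gamma\notin\{0,\pi\}$ exists whose restriction is abelian, which is immediate, and that the $\gamma\in\{0,\pi\}$ locus of $\Sols_t$ maps into the corners only through genuinely central (abelian) representations on all three boundaries. Concretely, set $\gamma\in\{0,\pi\}$ and analyze $\F_t$: from the formula $\F_t(\gamma,\theta,\alpha,\beta)=\cos\gamma\,F(t,\theta,\alpha,\beta)+\sin\gamma\,G(t,\theta,\alpha,\beta)=\pm F(t,\theta,\alpha,\beta)$, so $\rho\in\Sols_t$ forces $F(t,\theta,\alpha,\beta)=0$. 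Expanding $F$ and using $0<t<\tfrac\pi2$ (so $\cos n>0$ and $\sinc(n)>0$, since $0\le n\le t<\tfrac\pi2$), one shows $F=0$ together with the requirement that some $\theta_i\in\{0,\pi\}$ (from Equations \eqref{eq:pccoords2}, \eqref{eq:pccoords4} with $\sin\gamma=0$, i.e. $\arccos(\sin\alpha\cos\beta)\in\{0,\pi\}$, forcing $\sin\alpha\cos\beta=\pm1$, hence $\sin\alpha=1,\beta\in\{0,\pi\}$) drives $\alpha$ and $\beta$ to values making all of $\theta_1,\theta_2,\theta_3\in\{0,\pi\}$. The positivity $\cos n>0$, $\sinc(n)>0$ for $t\in(0,\tfrac\pi2)$ is exactly what prevents spurious cancellations in $F$, and this positivity is the reason for the hypothesis $t<\tfrac\pi2$.

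\textbf{Main obstacle.} The genuinely delicate step is the trigonometric bookkeeping in the second-to-last paragraph: showing that on the locus $\gamma\in\{0,\pi\}$, $F(t,\theta,\alpha,\beta)=0$, \emph{and} at least one $\theta_i\in\{0,\pi\}$ together force \emph{all} $\theta_i\in\{0,\pi\}$ — i.e. ruling out the mixed strata $\P_{220}$, $\P_{202}$, $\P_{022}$ and the one-corner strata. The case analysis on which of $\theta_1$ (from $\sin\alpha\cos\beta=\pm1$), $\theta_2$ (from $\sin\alpha\cos(\beta-\theta)=\pm1$ after the $\sin\gamma=0$ simplification of \eqref{eq:pccoords4}), or $\theta_3=\theta$ is extremal will need to be organized carefully, and the positivity of $\cos n$ and $\sinc(n)$ on $(0,\tfrac\pi2)$ must be invoked at precisely the right moments to conclude $\sin\alpha=1$ and $\beta,\theta\in\{0,\pi\}$ in every branch. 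I expect this to be a finite but somewhat intricate verification rather than a conceptual difficulty.
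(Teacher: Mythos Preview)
There is a genuine gap. Your entire reduction rests on the claim that ``all three $\gamma$-coordinates of $\widetilde p(\rho)$ equal $\gamma$,'' citing Equations \eqref{eq:pccoords1}--\eqref{eq:pccoords6}. But those equations are derived in Section~\ref{sec:C3Unpert} for the \emph{unperturbed} character variety $\RR(C_3)$. Once the perturbation curve $\Pert$ is added, the meridians of $S_1$ and $S_2$ (viewed from the boundary) change: crossing $\Pert$ conjugates by $p$. Concretely, for $S_1$ the role of the meridian $a$ is now played by $\ol{p}ap$ (this is visible in Relation~(III), $y = x\ol{p}\ol{a}pb$, and in the corner condition the paper uses in its Case~2, namely $x=\pm\ol{p}ap=\pm b=\pm y$). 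Hence $\gamma_1 = \ang{\ol{p}ap}{b}$, which is not $\ang{a}{b}=\gamma$ in general. The same issue, in more complicated form, affects $S_2$, whose perturbed meridians involve expressions like $x\ol{p}xp\ol{x}$ and $x\ol{px}cxp\ol{x}$. So your opening move---``if any boundary is at a corner then $\gamma\in\{0,\pi\}$''---is simply not available, and everything downstream (reducing to $F=0$ on the $\sin\gamma=0$ locus, then the $\theta_i$ case analysis using the unperturbed formulas for $\theta_1,\theta_2$) collapses with it.

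The paper's proof does not attempt any such coordinate reduction. Instead it isolates a key geometric fact as Lemma~\ref{lem:bdh}: if $\{a,b,c,x,\ol{p}ap\}$ is coequatorial and $t\in(0,\tfrac{\pi}{2})$, then $x=\pm c$ (proved by computing the $\k$-component of $\ol{p}ap$ directly). It then does three separate cases, one for each boundary component $S_i$ being at a corner, working directly with the actual perturbed meridians of that $S_i$. In each case the argument uses either Lemma~\ref{lem:bdh} or a direct evaluation of $\F_t$ to force $x=\pm c$ (hence $\lambda=\pm a$, $p=e^{t\i}$, $\ol{p}ap=\i$), which then cascades to put all meridians in $\{\pm\i\}$ and lands the representation in $\P_{000}$. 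The cases are genuinely different from one another because the perturbed meridians on $S_1$, $S_2$, $S_3$ are different; there is no symmetry to exploit. Your ``main obstacle'' paragraph correctly senses that the trigonometry is where the work lies, but the specific computations you outline are aimed at the wrong target because they use the wrong boundary formulas.
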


\begin{lemma}
\label{lem:bdh}
If $\rho\in\RR_\t(C_3)$ for $t\in(0,\frac{\pi}{2})$ for which $\{a,b,c,x,\ol{pa}p\}$ is a coequatorial set (see Definition \ref{def:coeq}), then $x = \pm c$.
\end{lemma}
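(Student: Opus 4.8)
The plan is to fix an explicit representative and then reduce everything to reading off a single coordinate of $\rho(\ol{pa}p)$. Since coequatoriality is conjugation invariant and $h_t$ is surjective, every class in $\RR_\t(C_3)$ has a representative $\rho=\Gamma(\gamma,\theta,\alpha,\beta)$, so I work with such a $\rho$. Then $\rho(a)=\i$, $\rho(b)=e^{\gamma\k}\i$, $\rho(c)=e^{\theta\k}\i$, $\rho(x)=\ol\Gamma(\alpha,\beta)$, and $\rho(p)=e^{t\Im(\lambda)}$ by Equation \ref{eq:peq}. The three points of leverage are: (a) $\rho(b),\rho(c)\in\Sk$ automatically; (b) $\rho(\ol{pa}p)=\rho(p)\inv\rho(a)^{\pm1}\rho(p)$ is the conjugate of $\pm\i$ by $\rho(p)$, which --- provided $\Im(\lambda)\ne 0$ --- is the rotation of $\pm\i$ about the axis $\Im(\lambda)/\abs{\Im(\lambda)}$ through an angle of absolute value $2n$, where $n=t\sqrt{1-\cos^2\alpha\sin^2\theta}$ by Equation \ref{eq:n}; and (c) since $t\in(0,\tfrac\pi2)$ we have $0<2n\le 2t<\pi$ whenever $n>0$, so the rotation angle lies strictly between $0$ and $\pi$ in absolute value. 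I would first dispose of $\Im(\lambda)=0$: by Equation \ref{eq:n} it forces $\cos^2\alpha\sin^2\theta=1$, hence $\rho(x)=\pm\k$ and $\rho(c)=\pm\j$, so $\{\rho(a),\rho(c),\rho(x)\}$ spans $\su$ and cannot be coequatorial --- contradicting the hypothesis. So $n>0$ from now on. Note that the desired conclusion ``$x=\pm c$'' is exactly the statement that $\cos\alpha=0$ and $\theta\equiv\beta\pmod\pi$.

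The argument then splits on whether $\rho(a),\rho(b),\rho(c)$ pin down a great circle. In the main case, where these three are not all contained in $\{\pm\i\}$, two of them are distinct and non-antipodal points of $\Sk$, so $\Sk$ is the unique great circle through $\{\rho(a),\rho(b),\rho(c)\}$, and coequatoriality of $\{a,b,c,x,\ol{pa}p\}$ forces $\rho(x)\in\Sk$ and $\rho(\ol{pa}p)\in\Sk$. The first gives $\cos\alpha=0$, hence $n=t$ and $\sinc(n)t=\sin t$. Substituting $\cos\alpha=0$ into the $\k$-component of $\rho(\ol{pa}p)$ as computed in Equation \ref{eq:pap} kills the second summand (which carries a factor of $\cos\alpha$) and leaves $\mp\sin(2t)\sin(\theta-\beta)$. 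Since $\rho(\ol{pa}p)\in\Sk$ forces this $\k$-component to vanish and $\sin(2t)\ne 0$ because $t\in(0,\tfrac\pi2)$, we get $\sin(\theta-\beta)=0$, so $\rho(x)=\pm(\cos\beta\,\i+\sin\beta\,\j)=\pm\rho(c)$, i.e.\ $x=\pm c$.

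The remaining --- and in my view the delicate --- case is the degenerate one, $\rho(a),\rho(b),\rho(c)\in\{\pm\i\}$ (so $\rho(c)=\pm\i$), where these points no longer single out a great circle, so Equation \ref{eq:pap} cannot simply be ``read off''. If $\rho(x)=\pm\i$ the conclusion is immediate; otherwise the great circle through the coequatorial set must be the unique one through $\i$ and $\rho(x)$, with $2$-plane $V$ spanned by $\i$ and $\rho(x)$, and one needs $\rho(\ol{pa}p)\in V$. Here $\lambda=a\ol c x=\pm x$, so $\rho(p)=e^{\pm t\rho(x)}$ and conjugation by $\rho(p)$ is rotation about the axis $\rho(x)$ by $\pm 2t$; writing $\pm\i=u_\parallel+u_\perp$ with $u_\parallel\parallel\rho(x)$ and $0\ne u_\perp\perp\rho(x)$, the rotated vector is $u_\parallel+\cos(2t)\,u_\perp\pm\sin(2t)\,(\rho(x)\times u_\perp)$, whose last term is a nonzero multiple of $\rho(x)\times\i$ and hence orthogonal to $V$. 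Since $\sin(2t)\ne 0$, this contradicts $\rho(\ol{pa}p)\in V$, so $\rho(x)=\pm\i=\pm\rho(c)$ and $x=\pm c$. Thus the main obstacle is precisely this degeneracy: when $\rho(a),\rho(b),\rho(c)$ fail to determine a great circle, one must argue directly that a rotation through an angle in $(0,\pi)$ can never carry $\pm\i$ onto the great circle spanned by $\i$ and $\rho(x)$ unless its axis is $\pm\i$ --- which is exactly the collapse $x=\pm c$. Some bookkeeping is also needed to confirm $\Im(\lambda)\ne 0$ under the hypothesis and to check that the sign ambiguities (the direction of the conjugation rotation and $\sin\alpha=\pm1$) never change which coordinate is being forced to zero.
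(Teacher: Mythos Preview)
Your argument is correct. The main case matches the paper's proof essentially verbatim: once $x\in\Sk$, the $\k$-component of $\ol{pa}p$ from Equation~\eqref{eq:pap} collapses to a nonzero multiple of $\sin(\theta-\beta)$, and vanishing forces $x=\pm c$. Your handling of the degenerate case $\rho(a),\rho(b),\rho(c)\in\{\pm\i\}$ via the rotation-about-$x$ picture is sound, and your disposal of $\Im(\lambda)=0$ is fine.

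The difference from the paper is one of gauge choice. You work in the $\Gamma$-parametrization, which pins $b,c\in\Sk$ but leaves $x$ free in $C(\i)$; this is why you need a separate geometric argument when $b,c\in\{\pm\i\}$ fail to determine the great circle. The paper instead puts $\rho$ into standard gauge with respect to the ordered tuple $(a,b,c,x)$. In that gauge, the first generator among $b,c,x$ not sent to $\pm\i$ is automatically placed in $\Sk$, and since $\{a,b,c,d\}$ are already coequatorial (Lemma~\ref{lem:boundcond}) one has $b,c\in\Sk$ regardless; combined with the coequatoriality hypothesis this forces $x\in\Sk$ in \emph{every} case without a split. The paper's proof is then a single application of the $\k$-component computation. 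Your route is more explicit about the geometry and makes the role of the hypothesis $t\in(0,\tfrac{\pi}{2})$ very transparent (via $\sin(2t)\ne 0$), but the paper's gauge trick is what buys the brevity: by putting $x$ last in the standard-gauge ordering, the degenerate case folds into the main one and the $\Im(\lambda)=0$ check becomes unnecessary.
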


\begin{proof}
Put $\rho$ into standard gauge with respect to $\bmu = (a,b,c,x)$. Thus, if $\{a,b,c,x,\ol{pa}p\}$ is a coequatorial set, $x$ must be in $\Sk$, so $x=\ol{\Gamma}(\frac{\pi}{2},\beta) = e^{\beta\k}\i$ for some $\beta$. Plugging this into Equation \ref{eq:pap}, we get that the dot product $(\ol{pa}p)\cdot\k = -\cos(t)\sinc(t)t\sin(\theta-\beta)$. Since $\ol{pa}p$ must also lie in $\Sk$, this expression must equal 0, which occurs if and only if $\beta\in\{\theta, \theta+\pi\}$ and thus $x = \pm c$.
\end{proof}

\begin{proof}[Proof of Proposition \ref{prop:strat}]
Let $\rho\in \RR_\t(C_3)$. Because $h_t$ is surjective, there exist $\gamma$, $\theta$, $\alpha$, $\beta \in S^1$ such that $\rho = h_t(\gamma,\theta,\alpha,\beta)$. We will show that if $\rho$ is in $\P_{0**}$, $\P_{*0*}$, or $\P_{**0}$, then it is actually in $\P_{000}$.

\noindent\textbf{Case 1:} Assume $\widetilde{p}(\rho)\in \P_{**0}$. Thus 
\begin{equation}
\label{eq:cornercase1}
a=\pm b=\pm c=\pm d.
\end{equation}
In standard gauge with respect to $\bmu = (a,b,c,x)$, $a=\i$, and so $b,c,d\in\{\pm \i\}$. Thus $\theta, \gamma\in\{0,\pi\}$. Note that since $\sin\theta=0$, by Equation \ref{eq:n}, $n=t$ and so 
\[\F_t(\gamma,\theta,\alpha,\beta) = \pm 2\cos(t)\sin(t)(\sin^2\alpha\sin^2\beta+\cos^2\alpha).\] 
This can only be 0 if $\cos\alpha=\sin\beta=0$ and thus $x= \ol{\Gamma}(\alpha,\beta) = \pm \i$. Thus $\lambda = a\ol{c}x = \pm \i$ and so $p=e^{t\i}$ and $\ol{p}ap = \i$. By Relation (II), $y = x\ol{pa}pc = \pm \i$. Thus $b=\pm x = \pm y = \pm \ol{p}ap$ and so $\widetilde{p}(\rho)\in \P_{0*0}$. Then it is easy to check that 
\[x\ol{p}xp\ol{x} = \pm\i \text{ and } x\ol{px}cxp\ol{x} = \pm\i\] 
and thus $\widetilde{p}(\rho) \in \P_{000}$.

\noindent\textbf{Case 2:} Assume $\widetilde{p}(\rho)\in \P_{0**}$. Thus 
\begin{equation}
\label{eq:cornercase2}
x = \pm\ol{p}ap = \pm b = \pm y.
\end{equation}
 In standard gauge with respect to $\bmu = (a,b,c,x)$, $b\in\Sk$, so $x$ and $\ol{p}ap$ are in $\Sk$. Then by Lemma \ref{lem:bdh}, $x=\pm c$. Thus $\lambda = a\ol{c}x = \pm a$ and so $p = e^{t\i}$. It follows that $\ol{p}ap = \i$ and so furthermore $a,b,c,d,x,y\in \{\pm \i\}$ and so $\widetilde{p}(\rho)\in \P_{0*0}$. By Case 1, $\widetilde{p}(\rho)\in \P_{000}$.

\noindent\textbf{Case 3:} Assume $\widetilde{p}(\rho)\in\P_{*0*}$. Thus 
\begin{equation}
\label{eq:cornercase3}
y = \pm d = \pm  x\ol{p}xp\ol{x} = \pm x\ol{px}cxp\ol{x}.
\end{equation}
In standard gauge with respect to $\bmu = (a,b,c,x)$, $d\in \Sk$ and so $y\in \Sk$. Recall that by Lemma \ref{lem:boundcond} $y$ must lie on the same great circle as $x$ and $b$. First assume that $x\notin\Sk$. This means that $y=\pm b$ and therefore $x\ol{pa}p=\pm 1$ by Relation (III). By the invariance of trace under cyclic permutations, $px\ol{pa} = \pm1$ and so $px\ol{p} = \pm \i$. By Equation \ref{eq:cornercase3}, $\pm x\ol{p}xp\ol{x} = \pm y\in \Sk$, and so $\pm x\i\ol{x}\in\Sk$. Since we are assuming that $x\notin \Sk$, this must mean that $x = \pm \k$ and therefore $x\ol{p}xp\ol{x} = \pm \i$.
This in turn implies that $y,b,d\in \{\pm \i\}$. Relation (II) then gives us that $c =\pm \i$ as well, thus we have $\widetilde{p}(\rho)\in \P_{0*0}$ and so by Case 1, we see that $x=\pm \i$ which contradicts our assumption that $x\notin\Sk$.

So $x$ must lie in $\Sk$. By applying Lemma \ref{lem:boundcond} to $S_1$ we know that $y$ lies on the same great circle as $x$ and $b$, and so $y\in\Sk$. Then by Relation (III), $\ol{p}ap\in \Sk$. Then by Lemma \ref{lem:bdh}, $x = \pm c$ and so $p = e^{t\i}$ and $\ol{p}ap = \i$. A calculation shows that for $d=\pm x\ol{p}xp\ol{x} = \pm c\ol{p}cp\ol{c}$ to lie in $\Sk$ and a direct computation shows that for this to be true for $t\in(0,\frac{\pi}{2})$, we must have $\theta\in\{0,\pi\}$ and so $c = x=\pm \i$. 
Then from Equation \ref{eq:cornercase3}, $y = d = \pm x\ol{p}xp\ol{x} = \pm \i$. By Relation (III) $b = \i$. Thus we have $\widetilde{p}(\rho)\in \P_{000}$.
\end{proof}

\begin{definition}
Let $\Sols_t^0 := h_t\inv(\widetilde{p}\inv(\P_{000})) = \{(\i,b,c,x)\in\Pr\mid b,c,x\in\{\pm\i\}\}$ be the \emph{corner points} of $\Sols_t$. This set does not depend on $t$ (including $t=0$), so we will instead simply denote if by $\Sols^0$.
Note that $h_t(\Sols^0) = \RR_\t(C_3)^\U$.
\end{definition}

\begin{proposition}
For $t\in(0,\frac{\pi}{2})$, the fiber of $h_t$ over $[\rho]\in \RR_\t(C_3)$ is homeomorphic to
\[\begin{cases}
\pt &\text{if } [\rho]\in\RR_\t(C_3)^\U\\
S^0 &\text{if } [\rho]\in\RR_\t(C_3)^{\Z/2}
\end{cases}\]
\end{proposition}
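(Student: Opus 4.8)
The plan is to mirror the proof of Proposition~\ref{prop:hfibers} almost verbatim; the only genuinely new input is that for $t\in(0,\frac{\pi}{2})$ the perturbation destroys the circle fibers. First I would record, exactly as in that proof, that for $w\in\SU$ and $(\i,b,c,x)\in\Pr$ one has $w(\i,b,c,x)\ol{w}\in\Pr$ only when $w=e^{\psi\i}$ for some $\psi\in S^1$, since otherwise $w\i\ol{w}\neq\i$. Because conjugation preserves the condition of being a traceless $D_t$-representation, this gives
\[h_t\inv(h_t(\i,b,c,x)) = \{\,e^{\psi\i}(\i,b,c,x)e^{-\psi\i} : \psi\in S^1\,\}\cap\Pr.\]
Since conjugation by $e^{\psi\i}$ acts on $C(\i)$ as the rotation of angle $2\psi$ about the $\i$-axis, it carries any element of $\Sk\setminus\{\pm\i\}$ out of $\Sk$ unless $e^{\psi\i}\in\{\pm 1,\pm\i\}$.

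The second step is the one new computation. I claim that no point $(\i,b,c,x)\in\Sols_t$ with $t\in(0,\frac{\pi}{2})$ can satisfy $b,c\in\{\pm\i\}$ and $x\notin\{\pm\i\}$. Writing the point as $\widetilde{\Gamma}(\gamma,\theta,\alpha,\beta)$, the hypothesis $b,c\in\{\pm\i\}$ means $\sin\gamma=\sin\theta=0$, so $n=t$ by Equation~\ref{eq:n} and the expression~\ref{eq:F} collapses to $\F_t(\gamma,\theta,\alpha,\beta)=\pm 2\cos(t)\sin(t)(\sin^2\alpha\sin^2\beta+\cos^2\alpha)$ — precisely the computation carried out in Case~1 of the proof of Proposition~\ref{prop:strat}. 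As $\cos(t)\sin(t)\neq 0$ on $(0,\frac{\pi}{2})$, vanishing of $\F_t$ forces $\cos\alpha=\sin\beta=0$, hence $x=\pm\i$, proving the claim. On the other hand, a representative $(\i,b,c,x)$ of a class in $\RR_\t(C_3)^\U$ has abelian image, so $b,c,x$ commute with $\i$ and, being traceless, lie in $\{\pm\i\}$; that is, the representative lies in $\Sols^0$. With the preceding Definition recording $h_t(\Sols^0)=\RR_\t(C_3)^\U$ and the decomposition $\RR_\t(C_3)=\RR_\t(C_3)^\U\sqcup\RR_\t(C_3)^{\Z/2}$, this yields $h_t\inv(\RR_\t(C_3)^\U)=\Sols^0$ and, using the claim, $h_t\inv(\RR_\t(C_3)^{\Z/2})=\{(\i,b,c,x)\in\Sols_t : b\notin\{\pm\i\}\text{ or }c\notin\{\pm\i\}\}$.

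It then remains to read off the fiber in the two cases. If $[\rho]\in\RR_\t(C_3)^\U$, take a representative $(\i,b,c,x)\in\Sols^0$, so $b,c,x\in\{\pm\i\}$; every $e^{\psi\i}$ commutes with $\pm\i$, so the first step shows the fiber is a single point. If $[\rho]\in\RR_\t(C_3)^{\Z/2}$, take a representative $(\i,b,c,x)$ with $b\notin\{\pm\i\}$ or $c\notin\{\pm\i\}$; by the first step the only conjugates remaining in $\Pr$ come from $e^{\psi\i}\in\{\pm 1,\pm\i\}$, which produce $\{(\i,b,c,x),\iota(\i,b,c,x)\}$, where $\iota$ denotes conjugation by $\i$. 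These two points are distinct, since $\iota$ fixing $(\i,b,c,x)$ would force $b,c,x\in\{\pm\i\}$, contrary to the choice; hence the fiber is $S^0$.

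I do not expect a real obstacle: the single computation in the second step is already done inside the proof of Proposition~\ref{prop:strat}, and the rest simply repeats the proof of Proposition~\ref{prop:hfibers} with the $S^1$ case excised. The only point that needs care is the bookkeeping in the first step — one must keep in mind that a conjugate of $(\i,b,c,x)$ is a point of $\Sols_t$ only if its $b$- and $c$-coordinates still lie in $\Sk$, and it is exactly this constraint, rather than triviality of the conjugation, that pins $e^{\psi\i}$ to the finite set $\{\pm1,\pm\i\}$ in the non-abelian case and so replaces the circle fiber of Proposition~\ref{prop:hfibers} by $S^0$.
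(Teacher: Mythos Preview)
Your proof is correct and follows essentially the same approach as the paper: reuse the fiber analysis from Proposition~\ref{prop:hfibers} and show the $S^1$ case is impossible because $b,c\in\{\pm\i\}$ with $x\notin\{\pm\i\}$ cannot occur in $\Sols_t$ for $t\in(0,\frac{\pi}{2})$. The only cosmetic difference is that the paper rules out this case by invoking Proposition~\ref{prop:strat} as a black box (such a point would land in $\P_{2*0}$, contradicting that the image lies in $\P_{222}\sqcup\P_{000}$), whereas you reproduce the specific computation from Case~1 of that proposition's proof directly.
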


\begin{proof}
The same argument as Proposition \ref{prop:hfibers} applies, however we will see that the case in Proposition \ref{prop:hfibers} where the fiber is $S^1$ never occurs for $t\in(0,\frac{\pi}{2})$. The fiber of $h_t$ over $[\rho]$ is a circle if $\rho(b),\rho(c)\in\{\pm\rho(a)\}$ but $\rho(x)\not\in\{\pm\rho(a)\}$. Because $h_t$ is surjective, if there exists $[\rho]\in\RR_\t(C_3)$ satisfying this condition, there must exist $(\i,b,c,x)\in\Sols_t$ such that $b,c\in\{\pm\i\}$ but $x\not\in\{\pm\i\}$. But then $\widetilde{p}$ would map such a point to $\P_{2*0}$ and Proposition \ref{prop:strat} shows that such a point does not exist in $\Sols_t$. Thus, the only cases that occur from Proposition \ref{prop:hfibers} are those with fibers are $\pt$ or $S^0$.
\end{proof}

\begin{proposition}
\label{prop:dbranched}
For $t\in(0,\pi)$ the map $h_t$ is a double cover branched over $\RR_\t(C_3)^\U$.
\end{proposition}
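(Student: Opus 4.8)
The plan is to repeat the argument behind Proposition~\ref{prop:restdcov}, now with the fiber computation for $h_t$ playing the role of Proposition~\ref{prop:hfibers}. First I would note that, exactly as in Section~\ref{sec:C3Unpert}, the map $\Lambda:\Pr\to\Ro_\t(X_\Pert)$ is an embedding: a representation of $X_\Pert$ is determined by its values on $a,b,c,x$, and $\Pr$ is compact, so $\Lambda$ restricts to a homeomorphism from $\Sols_t$ onto the compact set $\Lambda(\Sols_t)\subset\Ro_\t(C_3)$. Under this identification $h_t$ is the restriction to $\Lambda(\Sols_t)$ of the quotient map $\Ro_\t(C_3)\to\RR_\t(C_3)$ by the conjugation action of $\SU$.

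Next I would bring in the involution $\iota$ given by conjugation by $\i$. Since conjugation preserves traces and the perturbation condition~\eqref{eq:peq}, $\iota$ preserves $\Lambda(\Sols_t)$, and a direct check shows its fixed-point set is exactly $\Sols^0=\{(\i,b,c,x)\in\Pr\mid b,c,x\in\{\pm\i\}\}$, which equals $h_t\inv(\RR_\t(C_3)^\U)$ because $h_t(\Sols^0)=\RR_\t(C_3)^\U$ and the fiber over every point of $\RR_\t(C_3)^\U$ is a single point. The crucial point is that the fibers of $h_t$ are precisely the $\iota$-orbits: over a point of $\RR_\t(C_3)^{\Z/2}$ the fiber is the pair $S^0$, interchanged by $\iota$ (by the argument of Proposition~\ref{prop:hfibers}: a standard-gauge representation with $\rho(b)$ or $\rho(c)$ outside $\{\pm\i\}$ is stabilized within $\Pr$-gauge only by $\{\pm1,\pm\i\}$), while over a point of $\RR_\t(C_3)^\U$ the fiber is a single, $\iota$-fixed point. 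Consequently $\iota$ acts freely --- hence properly discontinuously, being an involution of a Hausdorff space --- on $\Lambda(\Sols_t)\setminus\Sols^0$, so $h_t$ restricts there to an honest $2$-fold covering onto $\RR_\t(C_3)^{\Z/2}$, whereas $h_t$ restricts to a bijection $\Sols^0\to\RR_\t(C_3)^\U$. The induced continuous bijection $\Lambda(\Sols_t)/\iota\to\RR_\t(C_3)$ from a compact space to a Hausdorff space is then a homeomorphism, and this is exactly the statement that $h_t$ is a double cover branched over $\RR_\t(C_3)^\U$.

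The step I expect to be the real work is justifying the fiber description over $\RR_\t(C_3)^{\Z/2}$ for the full range $t\in(0,\pi)$ --- equivalently, ruling out the $S^1$ fibers of Proposition~\ref{prop:hfibers}, i.e.\ showing that a representation in $\Ro_\t(C_3)$ which in standard gauge sends $b$ and $c$ into $\{\pm\i\}$ must also send $x$ into $\{\pm\i\}$ and hence be central. For such a representation one has $\rho(\lambda)=\pm\rho(x)$, so $\rho(p)=e^{\pm t\Im(\rho(x))}$, and tracelessness of $\rho(y)=\rho(x)\rho(p)\inv\rho(a)\inv\rho(p)\rho(b)$ reduces to a single scalar equation in $t$ and in $\rho(x)\in C(\i)$ whose only solutions, for $t$ in the stated range, must be shown to be $\rho(x)=\pm\i$. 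This is the perturbed counterpart of the corner analysis in Proposition~\ref{prop:strat} and the preceding proposition, and the explicit formulas \eqref{eq:pap}--\eqref{eq:F} are precisely the tools I would use; the remaining ingredients --- the embedding, the compact--Hausdorff argument, and the covering-space bookkeeping --- are routine.
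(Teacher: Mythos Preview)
Your proposal is correct and follows the same approach as the paper: reuse the argument of Proposition~\ref{prop:restdcov} with the $\iota$-action, once one knows the fibers of $h_t$ are $S^0$ over $\RR_\t(C_3)^{\Z/2}$ and a point over $\RR_\t(C_3)^\U$. The paper's proof is in fact even terser than yours---it simply cites Proposition~\ref{prop:restdcov} and notes that the branch locus $\RR_\t(C_3)^\U$ consists of eight points---because the ``real work'' you flag (ruling out the $S^1$ fibers) is exactly the content of the proposition immediately preceding this one, which in turn rests on Proposition~\ref{prop:strat}. One small point worth noting: those supporting results are stated for $t\in(0,\frac{\pi}{2})$, so the range $t\in(0,\pi)$ in the present statement is not fully covered by them as written; your instinct to verify the scalar equation directly for the wider range is therefore well placed.
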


\begin{proof}
The same proof as for Proposition \ref{prop:restdcov} shows that $h_t$ is a double cover over $\RR_\t(C_3)^{\Z/2}$. Because $\RR_\t(C_3)^\U$ only has eight points, the map is a branched cover. 
\end{proof}

Whenever the fiber is $S^0$, $\iota$ swaps the two points and whenever the fiber is a point, it is fixed by $\iota$. Thus, \[\Sols_t/\iota\cong \RR_\t(C_3).\]

\subsubsection{Smoothness}

At $\Sols^0$, all the partial derivatives of $\F_t$ vanish for $t>0$. Theorem \ref{thm:Cone} will show that neighborhoods of these points in $\Sols_t$ are cones on tori. However, away from these corner points, the perturbation smooths out the character variety:

\begin{theorem}
\label{thm:smooth}
$\Sols_t\setminus \Sols^0$ is a smooth manifold for sufficiently small $t\ne 0$. %
\end{theorem}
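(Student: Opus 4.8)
The plan is to show that $\Sols_t = \F_t^{-1}(0)$ is a submanifold away from $\Sols^0$ by verifying that $0$ is a regular value of $\F_t$ there, for $t$ small. Recall $\F_t(\gamma,\theta,\alpha,\beta) = \cos\gamma\, F(t,\theta,\alpha,\beta) + \sin\gamma\, G(t,\theta,\alpha,\beta)$, so $\partial\F_t/\partial\gamma = -\sin\gamma\, F + \cos\gamma\, G$. At a point of $\Sols_t$ we have $\cos\gamma\, F + \sin\gamma\, G = 0$; together with $\partial\F_t/\partial\gamma=0$ this forces $F = G = 0$ at that point. So the set of points of $\Sols_t$ where $d\F_t$ could vanish is contained in $\{F(t,\cdot)=G(t,\cdot)=0\}$, which by Lemma \ref{lem:function} is exactly the locus where the whole $\gamma$-circle lies in $\Sols_t$. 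Thus the first step is to reduce to understanding the zero set of the pair $(F,G)$ and, on its complement, to conclude immediately that $\Sols_t$ is a smooth hypersurface of $\Pr$ (hence a smooth $3$-manifold) near such points, using the implicit function theorem with $\gamma$ as the solved variable.

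The second and main step is to analyze $Z_t := \{(\theta,\alpha,\beta) \mid F(t,\theta,\alpha,\beta) = G(t,\theta,\alpha,\beta) = 0\}$ for small $t>0$ and show that the corresponding points of $\Sols_t$ either lie in $\Sols^0$ or are still smooth points because $d\F_t$ has rank one there coming from the $\theta,\alpha,\beta$ directions. I would expand $F$ and $G$ to leading order in $t$: since $n = t\sqrt{1-\cos^2\alpha\sin^2\theta} = t + O(t^2)$ and $\sinc(n) = 1 + O(t^2)$, we get $F = 2t\big[\sin^2\alpha\sin\beta\sin(\theta-\beta) - \cos^2\alpha\cos\theta\big] + O(t^2)$ and $G = \cos\alpha - 2t\,\sin^2\alpha\cos\beta\sin(\theta-\beta) + O(t^2)$. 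From $G=0$ we read off $\cos\alpha = O(t)$, i.e. $\alpha$ is within $O(t)$ of $\pi/2$; substituting into $F=0$ gives, to leading order, $\sin\beta\sin(\theta-\beta) = O(t)$, so $\beta \in \{\theta,\theta+\pi\}$ up to $O(t)$ or $\beta \in \{0,\pi\}$ up to $O(t)$. The claim is that each such branch of $Z_t$ is a smooth $1$-manifold (parametrized by $\theta$), that the corresponding subset of $\Sols_t$ is cut out transversally — one checks that along these branches at least one of $\partial\F_t/\partial\alpha$, $\partial\F_t/\partial\beta$ is nonzero, again by a leading-order computation — and that the only points where every partial derivative of $\F_t$ vanishes are the eight corner points $\Sols^0$, where $b,c,x \in \{\pm\i\}$ forces $\theta,\gamma \in \{0,\pi\}$ and $\cos\alpha = \sin\beta = 0$. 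Combining: at every point of $\Sols_t \setminus \Sols^0$, $d\F_t \neq 0$, so $\Sols_t\setminus\Sols^0$ is a smooth manifold.

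The hard part is the bookkeeping in the second step: $F$ and $G$ are genuinely messy, and one must control them uniformly for all small $t>0$, not just infinitesimally. The key technical point is that the $O(t^2)$ error terms are smooth and bounded with bounded derivatives on the compact space $\widetilde{\Pr}$, so the leading-order analysis (which is essentially an $\epsilon$-perturbation of the $t=0$ equations $\cos\alpha=0$, $\sin\beta\sin(\theta-\beta)=0$) persists for $t$ below some threshold; a compactness argument then upgrades the pointwise nonvanishing of $d\F_t$ to the statement that $\Sols_t\setminus\Sols^0$ is a manifold. I would also note that one should double-check the branch $\beta\in\{0,\pi\}$ carefully, since there $\sin^2\alpha\sin\beta\sin(\theta-\beta)$ vanishes identically and the transversality has to come from a higher-order term in $t$ or from the $\theta$-dependence of the $\cos^2\alpha\cos\theta$ term in $F$; this is the one place where a naive leading-order computation is insufficient and where the argument is most delicate.
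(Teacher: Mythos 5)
Your Step~1 is exactly the paper's: from $\F_t=\cos\gamma\,F+\sin\gamma\,G=0$ together with $\partial_\gamma\F_t=-\sin\gamma\,F+\cos\gamma\,G=0$ one gets $F=G=0$, equivalently $x\ol{pa}p\perp\Sk$, i.e.\ $x\ol{pa}p=e^{\nu\k}$. Where you diverge is in Step~2, and there is a genuine gap there.

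The paper does \emph{not} carry out a leading-order-in-$t$ analysis of the locus $\{F=G=0\}$. Instead it observes that $\Re(\ol{pa}p)=0$ forces an exact dichotomy: either $\cos\alpha=0$ (i.e.\ $x\in\Sk$, Case~1), or $x=\pm\ol{p}ap$ with $x\notin\Sk$ (Case~2). In Case~1 the restriction $\alpha=\pi/2$ makes $\F_t$, $\partial_\beta\F_t$, and $\partial_\alpha\F_t$ collapse to clean closed forms (e.g.\ $\F_t|_{\alpha=\pi/2}=2\cos t\sin t\,\sin(\theta-\beta)\sin(\beta-\gamma)$ and Equation~\ref{eq:da}), from which $\sin(\theta-\beta)=\sin(\gamma-\beta)=\sin\gamma=0$ follows and the point lands in $\Sols^0$. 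Your ``branch~1'' is actually this exact slice $\cos\alpha=0$ (not merely $\cos\alpha=O(t)$), and the explicit formulas there obviate any expansion in $t$.

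Your ``branch~2'' ($\beta$ near $\{0,\pi\}$, $\cos\alpha=O(t)$ but typically nonzero) is where your proposal does not close. You correctly flag this as the delicate case, but the resolution you gesture at --- ``go to higher order in $t$ or use the $\theta$-dependence of $\cos^2\alpha\cos\theta$'' --- is not enough. At leading order one finds $\partial_\alpha\F_t\approx-\sin\gamma$ and $\partial_\beta\F_t\approx 2t\sin(\gamma+\theta)$, so near the eight corners (where $\sin\gamma$ and $\sin\theta$ are also small) every surviving partial derivative is $O(t)$ or smaller, and the leading-order bookkeeping simply cannot distinguish the corner points themselves (where Theorem~\ref{thm:Cone} shows $d\F_t$ \emph{does} vanish) from nearby points of $Z_t$ where it must not. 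A compactness argument upgrades pointwise estimates to uniform ones, but it cannot manufacture a sign when the leading term cancels. The paper sidesteps this entirely with a structural, non-perturbative argument: for small $t$ the map $x\mapsto pxp^{-1}$ is a diffeomorphism $G$ of $S^2$ (with inverse $H$), and differentiating $\F_t$ along $x_s=H(e^{s\k}\i)$ and $x_r=H(e^{r\j}\i)$ yields $\Re(\ol{p}\k p\,c)$ and $\Re(\ol{p}\j p\,c)$, which vanish simultaneously only if $c=\pm\ol{p}\i p=\pm x$, forcing $x\in\Sk$ and contradicting the Case~2 hypothesis. That reparametrization is the key idea your proposal is missing; without it the argument near the corners does not close.

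One smaller point: describing the picture as an ``$\epsilon$-perturbation of the $t=0$ equations $\cos\alpha=0$, $\sin\beta\sin(\theta-\beta)=0$'' understates the degeneracy. At $t=0$, $F\equiv 0$, so the system $\{F=G=0\}$ collapses to the single equation $\cos\alpha=0$; the constraint $\sin\beta\sin(\theta-\beta)=0$ only appears after dividing $F$ by $t$. This is a singular, not regular, perturbation, which is precisely why the naive leading-order analysis fails near $\Sols^0$ and why a different mechanism is needed there.
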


\begin{proof}

Because $\Sols_t\setminus \Sols^0$ is a subspace of the zero set of $\Phi_t$, to show that $\Sols_t\setminus \Sols^0$ is a smooth manifold, we need to show that at every point in $\Sols_t\setminus \Sols^0$, there is some partial derivative of $\F_t$ that does not vanish. That is, all derivatives vanish at $\rho$ if and only if $\rho$ is one of the eight corner point.

The easiest derivative to start with is $\gamma$. \[\frac{\partial\F_t}{\partial\gamma}=\Re(x\ol{pa}pb(-\k))\] Since $b\in\Sk$, $b\k\in \Sk$ and further $b\ne b\k$. In order for $\Re(x\ol{pa}pb)=\Re(x\ol{pa}pb(-\k))=0$, $x\ol{pa}p$ must be perpendicular to $\Sk$, meaning $x\ol{pa}p = e^{\nu \k}$. Then \[\Re(\ol{pa}p) = \Re(\ol{x}e^{\nu \k}) = -\sin(\nu) \Re(x\k).\] However, $\ol{pa}p\in C(\i)$ so $\Re(\ol{pa}p) = 0$. Thus any solution which has vanishing $\gamma$ derivative satisfies $\sin(\nu) \Re(x\k)=0$ and thus $x \in \Sk$ or $\sin(\nu)=0$, meaning that $x\ol{pa}p=\pm 1$.

\bigskip

\noindent\textbf{Case 1}: $x\in \Sk$.

In this case $x = \ol{\Gamma}(\frac{\pi}{2},\beta)$ for some $\beta$. Since $\alpha = \frac{\pi}{2}$, $n=t$ and so $\sinc(n)t = \sin t$.

\[%
\F_t(\gamma,\theta,\frac{\pi}{2},\beta)
=2\cos t\sin t\sin(\theta-\beta)\sin(\beta-\gamma)
\]%

So any zero of $\F_t$ with $\alpha = \pi/2$ satisfies either $\sin(\theta-\beta)=0$ or $\sin(\beta-\gamma)=0$. However,

\[%
\frac{\partial\F_t}{\partial \beta}|_{\alpha=\frac{\pi}{2}} = 2\cos t\sin t\sin(\theta+\gamma-2\beta) = 2\cos t\sin t[\sin(\theta-\beta)\cos(\gamma-\beta)+\sin(\gamma-\beta)\cos(\theta-\beta)]
\]%

From this, it can be seen that if one of $\sin(\theta-\beta)$ or $\sin(\gamma-\beta)$ is 0, then in order for $\frac{\partial\F_t}{\partial\beta}|_{\alpha=\frac{\pi}{2}}$ to be 0, in fact both $\sin(\theta-\beta) = \sin(\gamma-\beta) = 0$. This implies $b=\pm c=\pm x$. Finally,

\begin{equation}
\label{eq:da}
\frac{\partial\F_t}{\partial\alpha}|_{\alpha=\frac{\pi}{2}} =-\sin\gamma\left[\cos^2t+\sin^2t[2\sin\beta\cos\beta\cos\theta\sin\theta-\cos^2\beta-\sin^2\beta\cos(2\theta)]\right]
\end{equation}

Note that $\cos^2t+\sin^2t[2\sin\beta\cos\beta\cos\theta\sin\theta-\cos^2\beta-\sin^2\beta\cos(2\theta)] > \cos^2t-2\sin^2t$ which is positive if $t<2\arctan(\sqrt{5-2\sqrt{6}})$. So for such a $t$, $\frac{\partial\F_t}{\partial\alpha}|_{\alpha=\frac{\pi}{2}}=0$ if and only if $\sin\gamma = 0$ and thus $\gamma, \theta, \beta \in \{0,\pi\}$ and so any such representation must lie in $\Sols^0$.

\bigskip

\noindent\textbf{Case 2:} $x\ol{pa}p=\pm1$. Furthermore assume that $x\notin \Sk$, since this is covered by Case 1.

First note that $x = \pm \overline{pa}p$. For a fixed $\gamma$ and $t$, define the map $G_{\gamma, t}:S^2 \to S^2$ given by \[G_{\gamma,t}(x) = px\overline{p},\] recalling that $p$ is a function of $\gamma$, $x$, and $t$. Note that $G_{\gamma,0}$ is the identity map and hence a diffeomorphism. Since being a diffeomorphism is an open condition, $G_{\gamma,t}$ is also a diffeomorphism for sufficiently small $t$.

Thus for a fixed $t$ and $\gamma$, we can define $H_{\gamma, t} := G_{\gamma, t}\inv$. So if $x = \pm H_{\gamma, t}(\i)$ then $x = \pm\ol{pa}p$. Now take a path $[-1,1]\to S^2$ given by $s\mapsto e^{s\k}\i$ and let $x_s:= H_{\gamma,t}(e^{s\k}\i)$ and $p_s := e^{t\pi(abx_s)}$. Note that by definition we have $p_sx_s\overline{p_s} = e^{s\k}\i$ and so $x_s = \overline{p_s}e^{s\k}\i p_s$.
Now we can substitute this in to get 
\begin{align}
\Re(x_s\overline{p_sa}p_sc) &= -\Re(\overline{p_s}e^{s\k}\i ap_sc) = \Re(\overline{p_s}e^{s\k}p_sc) \nonumber\\
&=\Re(\overline{p_s}(\cos(s)+\sin(s)\k)p_sc)\nonumber\\
&= 
\cos(s)\Re(\overline{p_s}p_sc)+\sin(s)\Re(\overline{p_s}\k p_sc)\nonumber\\
&=\sin(s)\Re(\overline{p_s}\k p_sc).\nonumber
\end{align}
Taking the derivative in the $s$ direction when $s=0$ gives 
\begin{align}
\frac{\partial}{\partial s}\mid_{s=0}\F(\gamma, \theta, x_s) &= \frac{\partial}{\partial s}|_{s=0}[\sin(s)\Re(\overline{p_s}\k p_sc)] \nonumber\\
&= \cos(0)\Re(\overline{p}\k pc) + \sin(0)\frac{\partial}{\partial s}|_{s=0}[\Re(\overline{p_s}\k p_sc)] \nonumber
\\&= \Re(\overline{p}\k pc).\nonumber
\end{align}

Now consider the path given by $r\mapsto e^{r\j}\i$. In the same way, our equation $\Re(x_r\overline{p_ra}p_r)$ simplifies to $\sin(r)\Re(\overline{p_r}\j p_rc)$ and so \[\frac{\partial}{\partial r}\mid_{r=0}\F(\gamma, \theta, x_r) = \Re(\overline{p}\j pc).\] Both the $s$ and $r$ derivatives vanish if and only if $c$ is perpendicular to both $\overline{p}\k p$ and $\overline{p}\j p$, which only happens when $c = \pm \overline{p}\i p = \pm x$. This means that $x\in\Sk$, which contradicts our assumption.

\end{proof}

\begin{definition}
Based on the previous theorem, define $\Sols^*_t:= \Sols_t\setminus \Sols^0$ to be the smooth stratum of $\Sols$. Define $\RR_\t(C_3)^*:= h_t(\Sols^*_t)$. 
\end{definition}

As a consequence of Proposition \ref{prop:dbranched}, $\Sols^*_t$ is a double cover of $\RR_\t(C_3)^*$ with \[\Sols^*_t/\iota\cong \RR_\t(C_3)^*.\] The cover is no longer branched, since the branch points are precisely the points ignored when considering the smooth strata.

\begin{proposition}
$\Sols^*_t$ is an orientable manifold and $\iota$ is orientation preserving.
\end{proposition}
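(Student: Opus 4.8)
The plan is to exhibit $\Sols^*_t$ as a regular level set inside an orientable ambient manifold and to recognize $\iota$ as the restriction of an orientation-preserving diffeomorphism of that ambient manifold which preserves the defining function $\F_t$; the statement then follows from the standard ``normal-first'' recipe for orienting a level set.

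First I would record that $\Pr = \{\i\}\times\Sk\times\Sk\times C(\i)\cong T^2\times S^2$ is an orientable $4$-manifold, being a product of orientable manifolds, and fix the product orientation. The function $\F_t\colon\Pr\to[-1,1]$ is smooth (smoothness at points with $n=0$ is clear, since $\cos n$ and $\sinc(n)$ are smooth functions of $n^2$, and $n^2 = t^2(1-\cos^2\alpha\sin^2\theta)$ is smooth) and $\Sols_t=\F_t\inv(0)$. By the observation preceding Theorem \ref{thm:smooth} together with its proof, for sufficiently small $t\ne0$ the critical points of $\F_t$ lying in $\F_t\inv(0)$ are exactly the eight points of $\Sols^0$; hence $\Sols^*_t=\Sols_t\setminus\Sols^0$ is precisely the set of regular points of $\F_t$, an embedded hypersurface of $\Pr$ whose conormal bundle is trivialized by the nowhere-vanishing $1$-form $d\F_t$. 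Consequently $\Sols^*_t$ is orientable: orient $T_{\mathbf x}\Sols^*_t$ by declaring a basis $(v_1,v_2,v_3)$ positive exactly when $(w,v_1,v_2,v_3)$ is a positive basis of $T_{\mathbf x}\Pr$ for some (equivalently any) $w$ with $d\F_t(w)>0$. Since $\Pr$ is oriented and $d\F_t$ is a globally defined nonzero conormal along $\Sols^*_t$, this prescription is consistent and defines an orientation.

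Next I would check that $\iota$ is an orientation-preserving diffeomorphism of $\Pr$ with $\F_t\circ\iota=\F_t$. On the $\Sk\times\Sk=T^2$ factor, $\iota$ is $(\gamma,\theta)\mapsto(-\gamma,-\theta)$, whose differential is $-\mathrm{Id}$, of determinant $+1$; on the $C(\i)=S^2$ factor, $\iota$ is $x_1\i+x_2\j+x_3\k\mapsto x_1\i-x_2\j-x_3\k$, the restriction to $S^2$ of the rotation $\mathrm{diag}(1,-1,-1)$ of $\R^3$ about the $\i$-axis, hence orientation-preserving on $S^2$. Therefore $\iota$ preserves the product orientation of $\Pr$. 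Moreover $\iota$ corresponds under $\Lambda$ to conjugating a representation of $\pi_1(X_\Pert)$ by $\i$, and since $\F_t(\mathbf x)=\tfrac12\tr(\Lambda(\mathbf x)(y))$ is conjugation-invariant we get $\F_t\circ\iota=\F_t$, so $d\F_t\circ d\iota=d\F_t$. Thus $d\iota$ carries positive conormal directions to positive conormal directions, and the normal-first description of the induced orientation immediately yields that $\iota$ restricts to an orientation-preserving diffeomorphism of $\Sols^*_t$: if $(v_1,v_2,v_3)$ is positive in $T_{\mathbf x}\Sols^*_t$ and $d\F_t(w)>0$, then $d\F_t(d\iota\,w)>0$ and $(d\iota\,w,d\iota\,v_1,d\iota\,v_2,d\iota\,v_3)$ is positive in $T_{\iota(\mathbf x)}\Pr$, so $(d\iota\,v_1,d\iota\,v_2,d\iota\,v_3)$ is positive in $T_{\iota(\mathbf x)}\Sols^*_t$.

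I do not expect a genuine obstacle; the argument is essentially bookkeeping. The only inputs drawn from the earlier development are the smoothness of $\F_t$ and the identification of $\Sols^*_t$ with the regular locus of $\F_t$, which is exactly what the proof of Theorem \ref{thm:smooth} provides. The points to handle with care are that the induced orientation on $\Sols^*_t$ be globally well defined --- for which one uses both orientability of $\Pr$ and the fact that $d\F_t$ is a nowhere-zero conormal along $\Sols^*_t$ --- and that the two determinant computations for $\iota$ on the $T^2$ and $S^2$ factors be carried out with the same convention used to fix the product orientation of $\Pr$.
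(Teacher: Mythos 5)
Your proof is correct and follows the same basic scheme as the paper's (realize $\Sols^*_t$ as a regular level set of $\F_t$ in $\Pr$, and use the normal-first convention to induce an orientation and to test $\iota$), but you supply two ingredients the paper's proof leaves implicit and arguably needs. The paper orients $\Sols^*_t$ by pulling back a nowhere-zero section of the normal bundle of $\{0\}\subset\R$; this produces a co-orientation, and to convert a co-orientation into an orientation one needs $\Pr$ to be orientable. Likewise, the paper's curve argument shows only that $\iota$ preserves the co-orientation (it carries the positive normal direction at $w$ to the positive normal direction at $\iota(w)$), which yields that $\iota$ is orientation-preserving on $\Sols^*_t$ only once one also knows $\iota$ preserves the orientation of $\Pr$. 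You check both facts explicitly: $\Pr\cong T^2\times S^2$ is orientable, $\iota$ acts as $-\mathrm{Id}$ on the $T^2$ factor (determinant $+1$) and as the restriction of $\mathrm{diag}(1,-1,-1)\in SO(3)$ on the $S^2$ factor (orientation-preserving), and $\F_t\circ\iota=\F_t$ by conjugation-invariance of the trace. Your version is therefore a cleaner and more complete instance of the same argument; you also take care to confirm smoothness of $\F_t$ at the locus $n=0$, a point the paper glosses over.
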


\begin{proof}

The proof of Theorem \ref{thm:smooth} shows that 0 is a regular value of the map $\F_t|_{\Sols_t^*}$. Thus, the normal bundle of $\{0\}$ in $\R$ can be pulled back, as well as a nowhere zero section of the normal bundle, showing that $\Sols_t^*$ is orientable. Orient $\Sols_t^*$ by pulling back the section $0\mapsto 1$.

Recall that the involution $\iota$ acts by conjugating by $\i$. Therefore, for $w\in \Pr$, 
\begin{equation}
\label{eq:iotainv}
    \F_t(w) = \tr(\Lambda(w)(y)) = \tr(\iota(\Lambda(w)(y))) = \tr(\Lambda(\iota(w))(y)) = \F_t(\iota(w))
\end{equation}
because traces are conjugation invariant.
Pick a point $w\in\Sols_t^*$ and consider a curve $r(s):[-1,1]\to \Pr$ such that $r(0) = w$ and $r'(0)$ agrees with the chosen orientation at $w$. In particular, this means that $\frac{d}{ds}[\F_t(r(s))]_{s=0} > 0$. Then consider the curve $\iota(r)$. By Equation \ref{eq:iotainv}, $\frac{d}{ds}[\F_t(\iota(r(s)))]_{s=0} > 0$ and so $\frac{d}{ds}[\iota(r(s))]_{s=0}$ is a normal vector at $\iota(w)$ which agrees with the chosen orientation. Therefore, $\iota$ is orientation-preserving.
\end{proof}

\begin{corollary}
\label{cor:smoothds}
$\RR_\t(C_3)^*$ is a smooth, orientable 3-manifold for sufficiently small perturbation $t\ne 0$. Furthermore, the map $\widetilde{p}:\RR_\t(C_3)^* \to \P_{222} = P_1^*\times P_2^*\times P_3^*$ is a Lagrangian immersion.
\end{corollary}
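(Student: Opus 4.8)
The plan is to deduce the corollary from the three results immediately preceding it --- Theorem~\ref{thm:smooth}, the preceding proposition on orientability of $\Sols^*_t$, and Proposition~\ref{prop:dbranched} --- together with Proposition~\ref{prop:strat} and \cite{CHK}*{Theorem A}. Throughout I take $t$ positive and small, say $t\in(0,\tfrac{\pi}{2})$ and below the threshold in Theorem~\ref{thm:smooth}, so that all of these apply.

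First I would establish that $\RR_\t(C_3)^*$ is a smooth orientable $3$-manifold. The proof of Theorem~\ref{thm:smooth} shows that $0$ is a regular value of $\F_t$ restricted to $\Sols^*_t = \Sols_t\setminus\Sols^0$; since $\Pr\cong T^2\times S^2$ is $4$-dimensional, $\Sols^*_t$ is the transverse zero set of a real-valued function on it, hence a smooth $3$-manifold, which the preceding proposition records is orientable and on which $\iota$ acts by an orientation-preserving diffeomorphism. By Proposition~\ref{prop:dbranched}, $h_t$ restricts to an unbranched double cover $\Sols^*_t\to\RR_\t(C_3)^*$ (the branch locus $\RR_\t(C_3)^\U = h_t(\Sols^0)$ being disjoint from $\RR_\t(C_3)^* = h_t(\Sols^*_t)$), so $\iota$ acts freely on $\Sols^*_t$; a free action of $\Z/2$ on a Hausdorff space is properly discontinuous, so $\RR_\t(C_3)^* = \Sols^*_t/\iota$ is a smooth $3$-manifold, and it is orientable because the deck involution preserves orientation.

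Next I would pin down the target of $\widetilde p$. Proposition~\ref{prop:strat} gives $\widetilde p(\RR_\t(C_3))\subseteq\P_{222}\sqcup\P_{000}$, and its proof shows that any representation sent into $\P_{000}$ maps $b$, $c$, and $x$ to $\pm\i$ and hence lies in $h_t(\Sols^0)=\RR_\t(C_3)^\U$, which is disjoint from $\RR_\t(C_3)^*$. Therefore $\widetilde p$ carries $\RR_\t(C_3)^*$ into $\P_{222}=P_1^*\times P_2^*\times P_3^*$, the product of the smooth symplectic strata of the three pillowcases. Since $\RR_\t(C_3)^*$ is a smooth manifold by the previous step and its $\widetilde p$-image lies in this smooth stratum, the same argument used for the unperturbed variety $\RR(C_3)^*$ goes through: \cite{CHK}*{Theorem A} identifies the image of the restriction map $\widetilde p\colon\RR_\t(C_3)^*\to\P_{222}$ as an isotropic immersed submanifold, and since $\dim\RR_\t(C_3)^* = 3 = \tfrac{1}{2}\dim\P_{222}$, this immersion is Lagrangian.

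The step I expect to demand the most care is the last, because \cite{CHK}*{Theorem A} is stated for unperturbed traceless character varieties, whereas here the ambient space is the perturbation-curve complement $X_\Pert$ and the variety is $\RR_\t(C_3)$. I would justify the application using the presentation of $\pi_1(X_\Pert)$ and the identification $\Sols_t/\iota\cong\RR_\t(C_3)$ established in this section, observing that the cup-product (half-lives-half-dies) argument behind Theorem~A only uses the boundary $(S^2,4)$ structure and the smoothness of the variety --- both already in hand --- and is unaffected by the interior perturbation curve $\Pert$; formally this amounts to running the same cohomological computation with $X_\Pert$ in place of $X_0$. A minor additional point is to confirm that $\widetilde p$ lands in $\P_{222}$ at \emph{every} point of $\RR_\t(C_3)^*$, which is exactly what the argument above via Proposition~\ref{prop:strat} supplies.
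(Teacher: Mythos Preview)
Your proposal is correct and follows essentially the same approach as the paper: deduce smoothness and orientability of $\RR_\t(C_3)^*$ from the double cover $\Sols^*_t$ together with the fact that $\iota$ is orientation-preserving, and then invoke \cite{CHK} for the Lagrangian immersion. Your argument is actually more careful than the paper's, which simply writes ``follows from \cite{CHK}'' without separately verifying (as you do via Proposition~\ref{prop:strat}) that $\widetilde p$ lands in $\P_{222}$ or commenting on why the perturbed setting poses no obstacle.
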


\begin{proof}
Because $\RR_\t(C_3)^*$ has a smooth, orientable double cover, $\Sols^*_t$, we know that $\RR_\t(C_3)^*$ must be a smooth manifold. Because $\iota$ is orientation-preserving, $\RR_\t(C_3)^*$ is orientable. The fact that $\widetilde{p}$ is a Lagrangian immersion follows from \cite{CHK}.
\end{proof}

\subsubsection{The Structure of $\Sols_t$}

Recall the decomposition of $\Pr$ given in Section \ref{sec:decomp}. In this section, we will see how the pieces of this decomposition intersect $\Sols_t$.
By Proposition \ref{prop:transversedecomp}, $\Sols_0$ intersects each $(\partial \NS_\epsilon^{\gamma_0})_i$ transversely in the torus $I^{\gamma_0}_i$ and thus there is a $t_\epsilon>0$ such that $\Sols_t$ intersects each $(\partial\NS^{\gamma_0}_\epsilon)_i$ transversely in a torus $(I^{\gamma_0}_i)_t$ for each $t\in[0,t_\epsilon)$. For each $\rho\in\SV$, each $I^{\gamma_0}_i$ intersects $\partial (\Psi_\epsilon\inv(\rho))$ transversely by Proposition \ref{prop:sqinttrans} and so for some $t_{\rho}\in(0,t_\epsilon)$, $(I^{\gamma_0}_i)_t$ intersects each $\partial (\Psi_\epsilon\inv(\rho))$ transversely for each $t\in[0,t_\rho)$. Because $\SV\cong S^0\times T^2$ is compact, there exists $t_0 = \min\{t_\rho\mid \rho\in\SV\}$ such that for every $t\in[0,t_0)$, $\Sols_t\cap (\partial \NS_\epsilon^{\gamma_0})$ intersects each $\partial(\Psi\inv_\epsilon(\rho))$ transversely in four points. For a fixed $\rho\in\SV$, parameterize $\Psi\inv_\epsilon(\rho)$ by $(\alpha, \gamma)$ as in Figure \ref{fig:Nslice} and define these four intersection points as follows:
\begin{align*}
p_t^1 &:= (\alpha^+_t,\epsilon) \in (I^{\gamma_0}_1)_t\\
p_t^2 &:= (\frac{\pi}{2}+\epsilon,\gamma^+_t) \in (I^{\gamma_0}_2)_t\\
p_t^3 &:= (\alpha^-_t,-\epsilon) \in (I^{\gamma_0}_3)_t\\
p_t^4 &:= (\frac{\pi}{2}-\epsilon,\gamma^-_t) \in (I^{\gamma_0}_4)_t
\end{align*}

\noindent with $\alpha^+_0 = \alpha^-_0 = \frac{\pi}{2}$ and $\gamma^+_0 = \gamma^-_0 = \gamma_0$. Note $p_t^i\in (\partial\NS_\epsilon^{\gamma_0})_i$.

Recall that $\NS_\epsilon$ was defined as a particular neighborhood of $\SV\in \Pr$ and $\MS_\epsilon = \Pr\setminus \NS_\epsilon$. The goal of this next lemma is to show that away from $\SV$, as $t$ increases from 0, $\Sols_t$ changes in as nice of a way as one could hope, by a smooth isotopy.

\begin{lemma}
\label{lem:isotopy}
For small $t$ and $\epsilon>0$, $\Sols_t\cap\ol{\MS_\epsilon}$ is smoothly isotopic to $\Sols_0\cap\ol{\MS_\epsilon}$.
\end{lemma}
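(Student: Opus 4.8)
The plan is to exhibit the isotopy as the flow of a compactly supported vector field on $\MS_\epsilon$ obtained from the implicit function theorem, applied to the defining function $\F_t$. First I would recall what $\Sols_t\cap\ol{\MS_\epsilon}$ is: by the definitions in Section~\ref{sec:decomp}, on $\ol{\MS_\epsilon}$ every point has either $\sin\gamma$ bounded away from $0$ or $\cos\alpha$ bounded away from $0$ (more precisely $|\alpha-\tfrac{\pi}{2}|\ge\epsilon$ on the $H$-pieces and $|\gamma-\gamma_0|\ge\epsilon$ on the $A$-pieces). The six components $H_0^\pm$, $H_\pi^\pm$, $A^\pm$ of $\Sols_0\cap\ol{\MS_\epsilon}$ listed at the end of Section~\ref{sec:decomp} are compact manifolds with boundary, and I want to show each deforms by a smooth ambient isotopy as $t$ grows from $0$.

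The key analytic input is that $0$ is a regular value of $\F_t$ on a neighborhood of $\Sols_0\cap\ol{\MS_\epsilon}$, uniformly in $t$ for $t$ small. On the $H$-pieces, where $|\alpha-\tfrac{\pi}{2}|\ge\epsilon$, the computation $\frac{\partial\F_0}{\partial\alpha}=\pm\sin\gamma+(\text{terms vanishing on }\Sols_0)$ — or, more directly, the structure of the proof of Theorem~\ref{thm:smooth} — shows $\nabla\F_0\ne 0$ there; on the $A$-pieces, where $\sin\gamma$ is bounded below, Lemma~\ref{lem:function} shows that $\gamma$ is locally a smooth graph over $(\theta,\alpha,\beta)$, so again $\nabla\F_0\ne0$. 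Since $\F_t\to\F_0$ in $C^\infty$ on the compact set $\ol{\MS_\epsilon}$ as $t\to0$, there is $t_0>0$ and a fixed compact neighborhood $U\supset\Sols_0\cap\ol{\MS_\epsilon}$ in $\ol{\MS_\epsilon}$ on which $|\nabla\F_t|$ is bounded below for all $t\in[0,t_0)$; in particular $\Sols_t\cap\ol{\MS_\epsilon}\subset U$ for $t$ small (no new components appear near the boundary of $\ol{\MS_\epsilon}$ because $\F_t$ is bounded away from $0$ on $\ol{\MS_\epsilon}\setminus U$ for $t$ small). Then I define the vector field $V_t := -\big(\partial_t\F_t\big)\,\frac{\nabla\F_t}{|\nabla\F_t|^2}$ on $U$, cut off by a bump function supported in $U$ and equal to $1$ near $\Sols_t\cap\ol{\MS_\epsilon}$, so that along its flow $\frac{d}{dt}\F_t(\text{flow line})=0$; the time-$t$ flow carries $\Sols_0\cap\ol{\MS_\epsilon}$ diffeomorphically onto $\Sols_t\cap\ol{\MS_\epsilon}$, which is the desired smooth isotopy. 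One must also check that this isotopy can be taken to fix $\partial\ol{\MS_\epsilon}$, which follows from Proposition~\ref{prop:transversedecomp}: the intersection of $\Sols_t$ with each face $(\partial\NS_\epsilon^{\gamma_0})_i$ is transverse and hence varies by isotopy within that face, so one first arranges the isotopy on a collar of the boundary and then extends inward.

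The main obstacle I anticipate is making the uniform-in-$t$ lower bound on $|\nabla\F_t|$ genuinely rigorous near the boundary tori $(I^{\gamma_0}_i)_t$, where the $H$-stratum and $A$-stratum meet and one is closest to the degenerate locus $\SV$: here $\sin\gamma$ and $\cos\alpha$ are both only of size $\epsilon$, and one must verify that, for $t\ll\epsilon$, the relevant partials (the $\beta$-derivative on the $H$-side, the graph property from Lemma~\ref{lem:function} on the $A$-side) still dominate. This is exactly the content of the transversality statements in Propositions~\ref{prop:transversedecomp} and~\ref{prop:sqinttrans} together with the openness of transversality, so the argument is: fix $\epsilon$, invoke those propositions at $t=0$, and use compactness of $\SV\cong S^0\times T^2$ to extract a uniform $t_0=t_0(\epsilon)$ — precisely the quantity already named in the paragraph preceding the lemma. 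Modulo that bookkeeping, the rest is the standard Thom/Moser-type isotopy-from-a-vector-field argument, so I would keep that part brief.
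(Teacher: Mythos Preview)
Your overall strategy is correct and is essentially the paper's own: show that $\nabla\F_0\neq 0$ everywhere on $\Sols_0\cap\ol{\MS_\epsilon}$, then use openness of this condition and a standard regular-value/Moser argument to get the isotopy. The paper packages this as Morse theory on the total space $\F^{-1}(0)\subset\R\times\ol{\MS_\epsilon}$ (showing $0$ is a regular value of the $t$-projection), which is equivalent to your vector-field flow.

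However, you have the two cases swapped. Since $\F_0(\gamma,\theta,\alpha,\beta)=\sin\gamma\cos\alpha$ (from Theorem~\ref{thm:C3}), one has
\[
\frac{\partial\F_0}{\partial\gamma}=\cos\gamma\cos\alpha,\qquad \frac{\partial\F_0}{\partial\alpha}=-\sin\gamma\sin\alpha.
\]
On the $H$-pieces ($\gamma\in\{0,\pi\}$, $|\alpha-\tfrac{\pi}{2}|\ge\epsilon$) it is $\partial_\gamma\F_0=\pm\cos\alpha\neq 0$ that does the work, while $\partial_\alpha\F_0=0$ there since $\sin\gamma=0$; your cited expression $\partial_\alpha\F_0=\pm\sin\gamma+\ldots$ vanishes identically on $H$. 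On the $A$-pieces ($\alpha=\tfrac{\pi}{2}$, $\sin\gamma$ bounded below) it is $\partial_\alpha\F_0=-\sin\gamma\neq 0$; your appeal to Lemma~\ref{lem:function} fails here because at $t=0$ one has $F(0,\theta,\tfrac{\pi}{2},\beta)=G(0,\theta,\tfrac{\pi}{2},\beta)=0$, landing in the \emph{degenerate} case of that lemma, so $\gamma$ is not a graph. Once you swap the two partials, your argument goes through and matches the paper's. The boundary bookkeeping you worry about in the last paragraph is not needed for the lemma as stated; the paper simply works on the compact manifold-with-boundary $\ol{\MS_\epsilon}$ and the regular-value argument handles it.
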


\begin{proof}

Consider the map $\F:\R\times \ol{\MS_\epsilon}\to \R$ so that $\F(t,\mathbf{x}) = \F_t(\mathbf{x})$. Let $g:\R\times\ol{\MS_\epsilon}\to \R$ be the projection and let $f:\F\inv(0)\to \R$ be the restriction of $g$. Note that $\Sols_t\cap \ol{\MS_\epsilon} = f\inv(t)$. First, we will show that 0 is not a critical value of $f$.

\textit{Claim}: For $t_0\in\R$ and $\mathbf{x}\in\Sols_{t_0}\cap\ol{\MS_\epsilon}$, $\mathbf{x}$ is not a critical point of $f$ if $\nabla \F_{t_0}(\mathbf{x}) \ne 0$. There are two cases, depending on the value of $\frac{\partial \F}{\partial t}(t_0,\mathbf{x})$. 

\noindent\textbf{Case 1:} Suppose $\frac{\partial \F}{\partial t}(t_0,\mathbf{x}) = 0$.

Take a vector $v$ orthogonal to $\nabla \F_{t_0}(\mathbf{x})$ in $T_\mathbf{x}\Pr$. $v$ a tangent vector of $f\inv(t_0) = \Sols_{t_0}\cap \ol{\MS_\epsilon}$. Then because $\frac{\partial \F}{\partial t}(t_0,\mathbf{x}) = 0$, $(1,v)$ is a tangent vector to $\F\inv(0)$. Thus $D_\mathbf{x} f(1, v) = 1$ and so $D_\mathbf{x} f:T_\mathbf{x}\Pr\to \R$ is nonzero and thus $\mathbf{x}$ is not a critical point of $f$.

\noindent\textbf{Case 2:} Suppose $\frac{\partial \F}{\partial t}(t_0,\mathbf{x}) \ne 0$.

Let $n:=D_{(t_0, \mathbf{x})}\F(0,\nabla\F_{t_0}(\mathbf{x})) = \lVert\nabla\F_{t_0}(\mathbf{x})\rVert \ne 0$ and $m:=D_{(t_0, \mathbf{x})}\F(1,\mathbf{0}) \ne 0$. Then \[D_{(t_0, \mathbf{x})}(-\frac{n}{m}, \nabla\F_{t_0}(\mathbf{x})) = 0\] and so $(-\frac{n}{m}, \nabla\F_{t_0}(\mathbf{x}))$ is a tangent vector of $\F\inv(0)$ at $\mathbf{x}$. And clearly $D_{\mathbf{x}}f(-\frac{n}{m}, \nabla\F_{t_0}(\mathbf{x})) = -\frac{n}{m}\ne 0$ and so $\mathbf{x}$ is not a critical point.
This concludes the claim.

\bigskip

The proof of Theorem \ref{thm:smooth} shows that for any $\mathbf{x}\in \H\cap\ol{\MS}_\epsilon$, $\frac{\partial \F_0}{\partial \gamma}(\mathbf{x}) \ne 0$ and so $\mathbf{x}$ is not a critical point of $f$.
Now assume $\mathbf{x}\in \A\cap\ol{\MS}_\epsilon$. Thus, $\mathbf{x} = \widetilde{\Gamma}(\gamma, \theta, \frac{\pi}{2}, \beta)$ for $\sin\gamma\ne 0$.  Plugging $t=0$ into Equation \ref{eq:da} gives $\frac{\partial\F_0}{\partial\alpha}|_{\alpha = \frac{\pi}{2}} = -\sin\gamma \ne 0$. Thus, $\mathbf{x}$ is not a critical point of $f$.
Since $\Sols_0\cap \ol{\MS}_\epsilon = (\H\cap\ol{\MS}_\epsilon)\cup(\A\cap\ol{\MS}_\epsilon)$, no point in $\Sols_0\cap \ol{\MS}_\epsilon$ is a critical point of $f$.
So 0 is not a critical value of $f$.

Since 0 is not a critical value of $f$, by Sard's Theorem, there must be a $\delta>0$ such that there are no critical values in $(-\delta, \delta)$. Thus, the preimage of $(-\delta,\delta)$ under $f$ is homeomorphic to a cylinder $\Sols_0\times I$. Since $f$ is continuous in $t$, $f\inv(t)$ is an isotopy from $\Sols_0\cap\ol{\MS_\epsilon}$ to $\Sols_t\cap \ol{\MS_\epsilon}$ for any $0<t<\delta$.
\end{proof}

Thus, for sufficiently small $t$ we can define $(A^\pm)_t$ and $(H_{\gamma_0}^\pm)_t$ to be the relevant components of $\Sols_t\cap \ol{\MS_\epsilon}$ such that $(A^\pm)_0 = A^\pm$ and $(H_{\gamma_0}^\pm)_0 = H_{\gamma_0}^\pm$. Furthermore, note that if $p_0^i\in A^\pm$, then $p_t^i\in (A^\pm)_t$ and similarly if $p_0^i\in H_{\gamma_0}^\pm$, then $p_t^i\in (H_{\gamma_0}^\pm)_t$.

Recall the set $\SV\subset \Sols$. Define $s:\SV\to [-1,1]$ by
\begin{equation}
\label{eq:s}
s:\Gamma(\gamma_0,\theta,\frac{\pi}{2},\beta)\mapsto \sin\beta\sin(\theta-\beta).
\end{equation}
Note that if $\mathbf{x}\in\SV$, $s(\mathbf{x}) = s(\iota(\mathbf{x}))$ and so $s$ descends to a well-defined function $h(\SV)\to [-1,1]$. Abusing notation, this map will also be called $s$.

For each point $\mathbf{x}\in\SV$, the sign of $s(\mathbf{x})$ determines how the neighborhood of $\rho$ will be perturbed.

\begin{figure}
    \centering
    \begin{subfigure}[b]{0.3\textwidth}
        \centering
        \includegraphics[width=\textwidth]{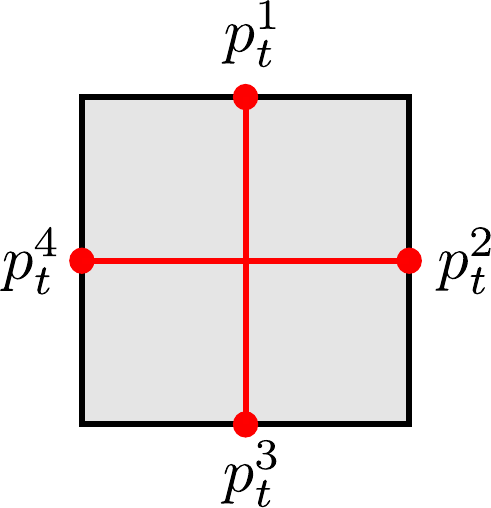}
        \caption{}
        \label{fig:Res1}
    \end{subfigure}
    \hspace{.5cm}
    \begin{subfigure}[b]{0.3\textwidth}
        \centering
        \includegraphics[width=\textwidth]{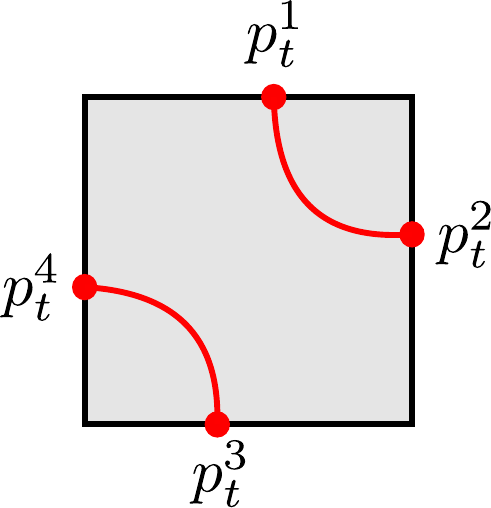}
        \caption{}
        \label{fig:Res2}
    \end{subfigure}
    \hspace{.5cm}
    \begin{subfigure}[b]{0.3\textwidth}
        \centering
        \includegraphics[width=\textwidth]{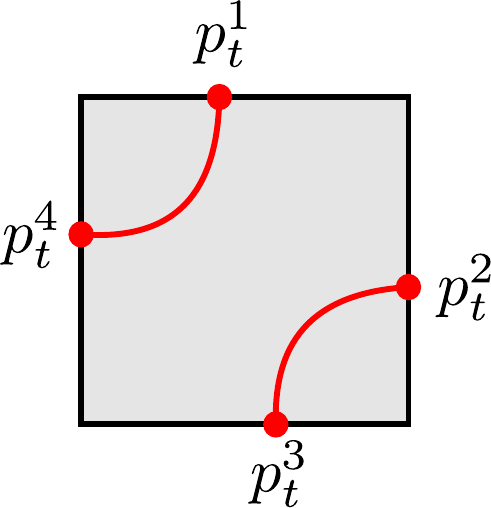}
        \caption{}
        \label{fig:Res3}
    \end{subfigure}
    \caption{For $\mathbf{x}\in\SV$, these are three possible ways that $\Psi\inv_\epsilon(\mathbf{x})$ can intersect $\Sols_t$.}
\end{figure}

\begin{theorem}
\label{thm:smoothing}
For $\mathbf{x}\in \SV$, there is a sufficiently small $t>0$ such that $\Psi_\epsilon\inv(\mathbf{x})$ intersects $\Sols_t$ in one of three ways, determined by $s(\mathbf{x})$:

\begin{itemize}
    \item If $s(\mathbf{x})>0$, the intersection is two intervals. The boundary of one interval is $\{p^1_t, p^2_t\}$ while the other has boundary $\{p^3_t, p^4_t\}$.
    
    \item If $s(\mathbf{x})<0$, the intersection is two intervals. The boundary of one interval is $\{p^1_t, p^4_t\}$ while the other has boundary $\{p^3_t, p^2_t\}$.
    
    \item If $s(\mathbf{x})=0$, the intersection is a cone on four points, like in the unperturbed case.
\end{itemize}

\end{theorem}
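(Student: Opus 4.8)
The plan is to reduce the statement to elementary planar Morse theory for the restriction of $\F_t$ to the square $\Psi_\epsilon\inv(\mathbf{x})$. Write $\mathbf{x}=\widetilde{\Gamma}(\gamma_0,\theta_0,\tfrac\pi2,\beta_0)$ with $\gamma_0\in\{0,\pi\}$ and put $\sigma:=\cos\gamma_0\in\{\pm1\}$. Parametrise $\Psi_\epsilon\inv(\mathbf{x})$ by $(\alpha,\gamma)$ with $|\alpha-\tfrac\pi2|\le\epsilon$ and $|\gamma-\gamma_0|\le\epsilon$ as in Figure \ref{fig:Nslice}, and set $g_t(\alpha,\gamma):=\F_t(\gamma,\theta_0,\alpha,\beta_0)$, so that $\Psi_\epsilon\inv(\mathbf{x})\cap\Sols_t=g_t\inv(0)$. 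Two inputs drive the argument. First, the identity $\F_t(\gamma,\theta,\tfrac\pi2,\beta)=2\cos t\sin t\,\sin(\theta-\beta)\sin(\beta-\gamma)$ from the proof of Theorem \ref{thm:smooth} gives $g_t(\tfrac\pi2,\gamma_0)=2\sigma\cos t\sin t\,s(\mathbf{x})$, which for $t\in(0,\tfrac\pi2)$ has the sign of $\sigma\cdot\sign(s(\mathbf{x}))$; in particular $\mathbf{x}\in\Sols_t$ if and only if $s(\mathbf{x})=0$. Second, $g_0(\alpha,\gamma)=\sin\gamma\cos\alpha$ has inside the square a single critical point, a nondegenerate saddle at the centre $(\tfrac\pi2,\gamma_0)$ with critical value $0$, and $g_0\inv(0)$ is the cross $\{\alpha=\tfrac\pi2\}\cup\{\gamma=\gamma_0\}$ whose four endpoints on the boundary of the square are $p_0^1,\dots,p_0^4$. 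By stability of nondegenerate critical points, for fixed small $\epsilon$ and all sufficiently small $t$ the function $g_t$ has exactly one critical point $q_t$ in the square, still a nondegenerate saddle close to the centre, while $g_t$ restricted to the boundary vanishes at exactly the four transverse points $p_t^1,\dots,p_t^4$ (the transversality of Propositions \ref{prop:transversedecomp} and \ref{prop:sqinttrans}), which converge to $p_0^1,\dots,p_0^4$.

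First I would treat the case $s(\mathbf{x})\ne 0$. Then $g_t(q_t)=g_t(\tfrac\pi2,\gamma_0)+O(t^2)=2\sigma\cos t\sin t\,s(\mathbf{x})+O(t^2)\ne 0$ once $t$ is small (depending on $\mathbf{x}$), so $0$ is a regular value of $g_t$; together with the boundary transversality, $g_t\inv(0)$ is a compact $1$-manifold with boundary $\{p_t^1,\dots,p_t^4\}$. There are no circle components: an innermost one would bound a disc on whose interior $g_t$ has constant sign, forcing the saddle $q_t$ — the only critical point of $g_t$ — into that interior, which contradicts the fact that $g_t$ changes sign in every neighbourhood of a saddle. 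Hence $g_t\inv(0)$ is a disjoint union of two arcs pairing the four points $p_t^i$, and these arcs cut the square into three regions, the middle one containing the centre and carrying the sign of $g_t(\tfrac\pi2,\gamma_0)$. Since $g_t\approx g_0$ near the corners of the square, $g_t$ takes there values close to $\mp\sigma\sin^2\epsilon$, alternating in sign around the boundary; this pins down the sign of $g_t$ on each of the four boundary arcs cut off by $p_t^1,\dots,p_t^4$, hence determines which of the two non-crossing matchings of the cyclically ordered points $(p_t^1,p_t^4,p_t^3,p_t^2)$ occurs. One checks that the sign of the middle region forces the pairing $\{p_t^1,p_t^2\},\{p_t^3,p_t^4\}$ when $s(\mathbf{x})>0$ and $\{p_t^1,p_t^4\},\{p_t^2,p_t^3\}$ when $s(\mathbf{x})<0$, which are exactly the first two cases of the theorem.

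Next I would handle $s(\mathbf{x})=0$, equivalently $\mathbf{x}\in\Sols_t$. I claim first that $\sin(\theta_0-\beta_0)=0$: if $\sin\beta_0\ne0$ this is immediate, and if $\sin\beta_0=0$ then $\Lambda(\mathbf{x})$ sends $x$ to $\pm\i=\pm\rho(a)$, so $p_1(h_t(\mathbf{x}))$ is a corner of $P_1$, whence Proposition \ref{prop:strat} forces $\widetilde{p}(h_t(\mathbf{x}))\in\P_{000}$ and so $\theta_0=\theta_3(h_t(\mathbf{x}))\in\{0,\pi\}$. With $\sin(\theta_0-\beta_0)=0$ the displayed identity gives $g_t\equiv 0$ on the segment $\{\alpha=\tfrac\pi2\}$, which is therefore one branch of the cross. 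Writing $g_t(\alpha,\gamma)=\cos\gamma\,F(\alpha)+\sin\gamma\,G(\alpha)$ with $F(\alpha):=F(t,\theta_0,\alpha,\beta_0)$ and $G(\alpha):=G(t,\theta_0,\alpha,\beta_0)$, one has $F(\tfrac\pi2)=G(\tfrac\pi2)=0$ and, from Equation \ref{eq:da}, $F'(\tfrac\pi2)=0$ while $G'(\tfrac\pi2)=-\big(\cos^2 t+\sin^2 t\,(\cdots)\big)\ne 0$ for small $t$; solving $g_t=0$ for $\gamma$ near $\gamma_0$ then produces a single smooth arc through the centre which meets $\{\alpha=\tfrac\pi2\}$ only there, so $g_t\inv(0)$ is again a cone on four points. (When $\mathbf{x}\in\Sols^0$ this is also the $2$-dimensional slice, through the cone point, of the cone-on-a-torus structure of the forthcoming Theorem \ref{thm:Cone}.)

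The hard part will be the bookkeeping that converts the four corner values of $g_t$ into the correct non-crossing matching, and keeping the order of quantifiers straight — fix $\epsilon$, then choose $t$ small depending on $\mathbf{x}$ and $\epsilon$, exactly as in Lemmas \ref{lem:isotopy} and \ref{lem:internal}. A secondary technical point is the estimate $g_t(q_t)=g_t(\tfrac\pi2,\gamma_0)+O(t^2)$, which requires controlling $\nabla g_t$ and $q_t-(\tfrac\pi2,\gamma_0)$ to first order in $t$. Everything else is either cited (the formulas and $F'(\tfrac\pi2)=0$ from the proof of Theorem \ref{thm:smooth}, the two transversality propositions, Proposition \ref{prop:strat}) or routine planar topology.
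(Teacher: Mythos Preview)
Your treatment of the case $s(\mathbf{x})\ne 0$ is correct and is a genuinely different, more conceptual route than the paper's. The paper computes $\frac{d}{dt}\alpha_t^\pm\big|_{t=0}$ directly and then invokes Lemma~\ref{lem:function} to rule out extra components and read off the matching; you instead use planar Morse theory (unique nondegenerate saddle, nonzero critical value, hence $0$ regular, hence two arcs, hence a sign argument at the corners). Both give the same pairing, and your version has the advantage of making the topological reason transparent, at the cost of the somewhat delicate estimate $g_t(q_t)=g_t(\tfrac\pi2,\gamma_0)+O(t^2)$.

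There is, however, a real gap in your $s(\mathbf{x})=0$ argument. Your reduction ``$s(\mathbf{x})=0\Rightarrow\sin(\theta_0-\beta_0)=0$'' via Proposition~\ref{prop:strat} is invalid. In the perturbed tangle the meridians of $S_1$ are $\ol p a p,\,b,\,x,\,y$ (see the relations and the proof of Proposition~\ref{prop:strat}, Case~2), so the corner condition for $P_1$ is $\rho(x)=\pm\rho(\ol p a p)$, not $\rho(x)=\pm\rho(a)$. When $\sin\beta_0=0$ one has $\rho(x)=\pm\i$, but for $t>0$ and $\sin\theta_0\ne 0$ the axis $\Im(a\ol c x)$ is not $\pm\i$, so $\rho(\ol p a p)\ne\pm\i$ and $p_1(h_t(\mathbf{x}))$ is \emph{not} a corner. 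A concrete counterexample is $\gamma_0=0$, $\beta_0=0$, $\theta_0=\tfrac\pi2$: here $s(\mathbf{x})=0$ but $\sin(\theta_0-\beta_0)=1$. In this situation your subsequent claim $F(\tfrac\pi2)=G(\tfrac\pi2)=0$ also fails: from $\F_t(\gamma,\theta_0,\tfrac\pi2,\beta_0)=2\cos t\sin t\,\sin(\theta_0-\beta_0)\sin(\beta_0-\gamma)$ with $\beta_0\in\{0,\pi\}$ one gets $G(\tfrac\pi2)=\mp2\cos t\sin t\,\sin\theta_0\ne 0$, so $\{\alpha=\tfrac\pi2\}$ is not a branch of the zero set at all.

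This is exactly the paper's Case~2, and it requires a different idea: the vertical-line branch sits at some $\alpha_0$ close to but distinct from $\tfrac\pi2$, namely where $\ol p a p=\pm x$ (equivalently where $F(\alpha_0)=G(\alpha_0)=0$), and one must exhibit a factorisation $g_t=g(\alpha)h(\gamma,\alpha)$ with $g(\alpha_0)=0$. The paper does this by an explicit (and somewhat lengthy) computation splitting further into $\cos\theta_0=0$ and $\cos n\cos\alpha=\sinc(n)t\sin^2\alpha\sin\theta_0$. Your Morse-theoretic framework does not supply this factorisation, so the $\sin\beta_0=0$, $\sin(\theta_0-\beta_0)\ne 0$ subcase is genuinely missing from your proof.
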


\begin{proof}
For $\gamma_0\in\{0,\pi\}$ and fix $\theta,\beta\in S^1$, consider $\mathbf{x} = \widetilde{\Gamma}(\gamma_0,\theta,\frac{\pi}{2},\beta)\in \SV$. Recall that we can parameterize $\Psi\inv_\epsilon(\mathbf{x})$ by $(\alpha,\gamma)$. $H^\pm_{\gamma_0}$ intersects $\partial\Psi\inv_\epsilon(\mathbf{x})$ at $(\frac{\pi}{2},\pm\epsilon)$. By Propositions \ref{prop:sqinttrans}, we know that for $t<t_\epsilon$, $(H^\pm_{\gamma_0})_t$ intersects $\partial\Psi\inv_\epsilon(\mathbf{x})$ at the coordinate $(\alpha^\pm_t,\pm\epsilon)$ where $\alpha^\pm_0=\frac{\pi}{2}$ and $\alpha^\pm_t(\theta,\beta) \in[\frac{\pi}{2}-\epsilon,\frac{\pi}{2}+\epsilon]$. For perturbation $t$, we want to know whether $\alpha^+_t$ or $\alpha^-_t$ is bigger. To do this, we calculate $\frac{d}{dt}\alpha^\pm_t|_{t=0}$.

With $\theta$, $\beta$, and $\gamma_0$ fixed, consider $\F$ as a function of $\alpha$ and $t$. 
\[\nabla\F(\alpha,t)|_{t=0,\alpha = \pi/2,\gamma=\pm \epsilon} = (\mp\sin\epsilon, 2\sin(\theta-\beta)\sin(\beta\pm\epsilon))\] An orthogonal vector to this is \[(2\sin(\theta-\beta)\sin(\beta\pm\epsilon), \pm\sin\epsilon).\]
So \[\frac{d}{dt}\alpha^\pm_t|_{t=0} = 2\sin(\theta-\beta)\sin(\beta\pm\epsilon)/\sin(\pm\epsilon).\] 

Now we can compare $\frac{d}{dt}\alpha^+_t|_{t=0}$ and $\frac{d}{dt}\alpha^-_t|_{t=0}$. There are a couple of cases to consider depending on $\epsilon$ and $\beta$:

\begin{enumerate}
    \item $\sign(\sin(\beta+\epsilon))=\sign(\sin(\beta))=\sign(\sin(\beta-\epsilon))$.
    \item $\sign(\sin(\beta+\epsilon))=\sign(\sin(\beta))=-\sign(\sin(\beta-\epsilon))$. In this case, $\abs{\sin(\beta-\epsilon)}<\abs{\sin(\beta+\epsilon)}$.
    \item $-\sign(\sin(\beta+\epsilon))=\sign(\sin(\beta))=\sign(\sin(\beta-\epsilon))$. In this case, $\abs{\sin(\beta+\epsilon)}<\abs{\sin(\beta-\epsilon)}$.
\end{enumerate}

If $s(\mathbf{x})>0$ and $\epsilon$ and $\beta$ satisfy case 1, then $\frac{d}{dt}\alpha^+_t|_{t=0}>0$ and $\frac{d}{dt}\alpha^-_t|_{t=0}<0$%
In case 2, then $\frac{d}{dt}\alpha^\pm_t|_{t=0}>0$ and $\abs{\frac{d}{dt}\alpha^+_t|_{t=0}}>\abs{\frac{d}{dt}\alpha^-_t|_{t=0}}$ and so $\frac{d}{dt}\alpha^+_t|_{t=0}>\frac{d}{dt}\alpha^-_t|_{t=0}>0$. 
In case 3, then $\frac{d}{dt}\alpha^\pm_t|_{t=0}<0$ and $\abs{\frac{d}{dt}\alpha^-_t|_{t=0}}>\abs{\frac{d}{dt}\alpha^+_t|_{t=0}}$ and so $\frac{d}{dt}\alpha^-_t|_{t=0}<\frac{d}{dt}\alpha^+_t|_{t=0}<0$. 
Thus whenever $s(\mathbf{x})>0$, for $t$ smaller than some $t_{\theta,\beta}$, $\alpha^+_t>\alpha^-_t$. In the same way, it can be shown that if $s(\mathbf{x})<0$ then for $t$ smaller than some $t_{\theta,\beta}$, $\alpha^-_t>\alpha^+_t$.

In particular, because $\alpha_t^+\ne \alpha_t^-$, there is no value of $\alpha_0\in [\frac{\pi}{2}-\epsilon, \frac{\pi}{2}+\epsilon]$ such that $\Phi_t(\gamma,\theta,\alpha_0,\beta) = 0$ for every $\gamma$. Thus by Lemma \ref{lem:function}, the values of $\gamma$ satisfying $\F_t(\gamma,\theta,\alpha,\beta)$ depend smoothly on $\alpha$ and thus $\Psi\inv(\mathbf{x})\cap\Sols_t$ is a smooth 1-manifold. %

Furthermore, the intersections at $p_t^i$ are transverse, so they must be the boundary of $\Psi\inv(\mathbf{x})\cap\Sols_t$. A smooth 1-manifold with four boundary components must contain at least two $I$ components. By Lemma \ref{lem:function}, the interval which has one boundary point at $p_t^4$ must have its other boundary point at $p^1_t$ if $\alpha_t^+<\alpha_t^-$ which occurs when $s(\mathbf{x})<0$. Likewise, its other boundary point is $p^3_t$ if $\alpha_t^-<\alpha_t^+$, which occurs when $s(\mathbf{x})>0$.

There cannot be any other components of $\Psi\inv(\mathbf{x} )\cap\Sols_t$. If there were, there would either have to be interval components with boundaries on the boundary of the square, interval components with boundaries in the interior of $\Psi\inv(\mathbf{x})$, or circle components. We have already seen that there are only four points where the intersection hits the boundary and so the first case cannot happen. Lemma \ref{lem:function} rules out the other cases.

\bigskip

Now we turn our attention to the $s(\mathbf{x})=0$ case. Let $\mathbf{x} = \widetilde{\Gamma}(\gamma_0,\theta_0,\frac{\pi}{2},\beta)$. What we want to show is that the function \[f_t(\gamma,\alpha) = \F_t|_{\sin(\beta_0)\sin(\theta_0-\beta_0)=0}(\gamma,\theta_0,\alpha, \beta_0)\] whose zero-set defines $\Psi\inv(\mathbf{x})\cap \Sols_t$ is equivalent to a product $g(\alpha)h(\gamma,\alpha)$ such that $g(\alpha)$ has a unique root in the range $(\frac{\pi}{2}-\epsilon, \frac{\pi}{2}+\epsilon)$. In this case, the solutions $g(\alpha)=0$ gives an interval with endpoints $p^1_t$ and $p^3_t$ (see the vertical line in Figure \ref{fig:Res1}). We know there can only be one such vertical line since $\Psi\inv(\mathbf{x})$ intersects $\Sols_t$ one time on each face of its boundary. Then by Lemma \ref{lem:function}, solutions of $h(\gamma,\alpha) = 0$ must cut out a smooth 1-dimensional submanifold of $\Psi\inv(\mathbf{x})$ which gives an interval with endpoint $p^4_t$ and $p^2_t$ (see the horizontal component of Figure \ref{fig:Res1}).

The representations for which $s(\mathbf{x})=0$ fall into two cases, either $\sin(\theta-\beta)=0$ or $\sin(\beta)=0$. %

\noindent\textbf{Case 1:} $\sin(\theta-\beta) = 0$

\noindent In this case, we can simplify Equation \ref{eq:F} to:
\begin{align}
\F_t(\gamma,\theta,\alpha,\beta) =& \cos\gamma\left[-2\cos n\sinc(n)t\cos^2\alpha\cos\theta + 2\sinc^2(n)t^2\sin^2\alpha\cos\beta\cos\alpha\sin\theta\right]+\\\nonumber
&\sin\gamma\left[\cos\alpha\cos^2 n+\sinc^2(n)t^2\cos\alpha\sin^2\alpha\cos(\beta+\theta)\right]\\\nonumber
=& \cos\alpha\Big[\cos\gamma\left[-2\cos n\sinc(n)t\cos\alpha\cos\theta + 2\sinc^2(n)t^2\sin^2\alpha\cos\beta\sin\theta\right]+\\\nonumber
&\quad\quad\quad\sin\gamma\left[\cos^2 n+\sinc^2(n)t^2\sin^2\alpha\cos(\beta+\theta)\right]\Big]\\\nonumber
\end{align}

Because this has a factor of $\cos\alpha$, the previous argument shows that the intersection must be a cone on four points.

\noindent\textbf{Case 2:} $\sin\beta = 0$. Further assume $\sin(\theta-\beta)\ne 0$, since this is covered by the previous case.

\begin{align}
\label{eq:beta0}
\F_t(\gamma,\theta,\alpha,\beta)\mid_{\sin\beta=0} =& \cos\gamma\left[-2\cos n\sinc(n)t\cos^2\alpha\cos\theta + \right.\\\nonumber
&\phantom{\cos\gamma[[}\left. 2\sinc^2(n)t^2\sin^2\alpha\cos\alpha\sin\theta\cos\theta\right]+\\\nonumber
&\sin\gamma\left[\cos\alpha\cos^2 n- 2\cos n\sinc(n)t\sin^2\alpha\sin\theta+ \right.\\\nonumber 
&\phantom{\sin\gamma[[}\left. \sinc^2(n)t^2\cos\alpha\sin^2\alpha\cos^2\theta\right]
\end{align}

To show that there is a value of $\alpha_0\in[\frac{\pi}{2}-\epsilon, \frac{\pi}{2}+\epsilon]$ such that \[\F_t(\gamma,\theta,\alpha_0,\beta) = 0\] for each value of $\gamma$, we just need to find a value of $\alpha_0$ for which $\frac{\partial\F}{\partial \gamma}=0$ for every point $(\alpha_0,\gamma)$. As shown in Theorem \ref{thm:smooth}, $\frac{\partial\F}{\partial \gamma}=0$ if either $x\in\Sk$ or $\ol{pa}p=\pm x$. 

If $x\in \Sk$, that means we can set $\alpha=\frac{\pi}{2}$.
\begin{align}
\F_t(\gamma,\theta,\frac{\pi}{2},\beta)\mid_{\sin\beta=0} =& -2\cos t\sin t\sin\gamma\sin\theta
\end{align}

Since we know that $\sin(\theta-\beta)\ne 0$ and $\sin\beta=0$, we know $\sin\theta\ne 0$ and so the only such solution is when $\sin\gamma=0$. Thus we must examine the other case where $\ol{pa}p=\pm x$. This condition only depends on $\alpha_0$, $\beta_0$, and $\theta_0$. Thus whenever this is satisfied there is a solution for each value of $\gamma$, proving the theorem. Since $\sin\beta = 0$, we know that $x = e^{-\alpha\j}\i$. In particular, $\ol{pa}p$ should have no $\j$ component. So Equation \ref{eq:pap} gives \[0=\cos n\cos\alpha\cos\theta-\sinc(n)t\sin^2\alpha\sin\theta\cos\theta.\] So either $\cos\theta = 0$ or $\cos n\cos\alpha = \sinc(n)t\sin^2\alpha\sin\theta$.

\noindent\textbf{Case 2a:} $\cos\theta=0$.

In this case, $\F_t$ simplifies to
\begin{align}
\F_t(\gamma,\theta,\alpha,\beta)\mid_{\sin\beta=0,\cos\theta=0} =& \sin\gamma\left[\cos\alpha\cos^2 n\pm 2\cos n\sinc(n)t\sin^2\alpha\right]
\end{align}
So we can set \[h(\gamma, \alpha) = \sin\gamma, \quad g(\alpha) =\cos\alpha\cos^2 n\pm 2\cos n\sinc(n)t\sin^2\alpha.\]
We must show that there exists a value of $t$ small enough there is some $\alpha\in[\frac{\pi}{2}-\epsilon, \frac{\pi}{2}+\epsilon]$ such that $g(\alpha)=0$. We can rearrange $g(\alpha)=0$ to be 
\[\tan\alpha = \pm\frac{\cos n}{2\sinc(n)t\sin\alpha}.\]

Since $0\le n\le t$, for small $t$, $\cos t\le \cos n\le 1$ and $0\le 2\sinc(n)t\sin\alpha \le 2\sin t$ and thus \[\frac{\cos n}{2\sinc(n)t\sin\alpha}> \frac{\cos t}{2\sin t}.\]
Thus as $t \to 0$, $\arctan(\pm\frac{\cos n}{2\sinc(n)t\sin\alpha})\to \frac{\pi}{2}$, giving a solution for $g(\alpha)=0$ arbitrarily close to $\frac{\pi}{2}$.

\noindent\textbf{Case 2b:} $\cos n\cos\alpha = \sinc(n)t\sin^2\alpha\sin\theta$.

Using this assumption to make substitutions in Equation \ref{eq:beta0}

\begin{align*}
\F_t(\gamma,\theta,\alpha,\beta)\mid_{\sin\beta=0} =& 2\cos\gamma\sinc(n)t\cos\alpha\cos\theta\left[-\cos n\cos\alpha + \sinc(n)t\sin^2\alpha\sin\theta\right]+\\\nonumber
&\sin\gamma\left[-\cos\alpha\cos^2 n+
\sinc^2(n)t^2\cos\alpha\sin^2\alpha\cos^2(\theta)\right]
\end{align*}

which further simplifies to 

\begin{align*}
\F_t(\gamma,\theta,\alpha,\beta)\mid_{\sin\beta=0} =& \sin\gamma\cos\alpha\left[-\cos^2 n+
\sinc^2(n)t^2\sin^2\alpha\cos^2(\theta)\right]
\end{align*}

Thus we have $h(\gamma,\alpha) = \sin\gamma$ and $g(\alpha) = \cos\alpha\left[-\cos^2 n+
\sinc^2(n)t^2\sin^2\alpha\cos^2(\theta)\right]$, finishing the proof for the $s(\mathbf{x}) = 0$ case.

\bigskip

Thus the theorem holds for each $\mathbf{x}\in\SV$ for some value of $t_{\theta_0,\beta_0}$. By compactness of $\SV$, there is a value of $t$ such that the theorem holds for all points $\mathbf{x}\in\SV$ simultaneously.

\end{proof}
\begin{corollary}
\label{cor:slices}
For $\mathbf{x} = \widetilde{\Gamma}(\gamma_0, \theta,\frac{\pi}{2},\beta)\in \SV$, there is a sufficiently small $t$ such that $\Psi_\epsilon\inv(\mathbf{x})$ intersects $\Sols_t$ in one of three ways, determined by $s(\mathbf{x})$ and $\gamma_0$:

\begin{itemize}
    \item If $s(\mathbf{x})>0$ and $\gamma_0=0$ or $s(\mathbf{x})<0$ and $\gamma_0=\pi$, the intersection is two intervals. One interval has boundary on $(H^+_{\gamma_0})_t$ and $(A^-)_t$ while the other has boundary on $(H^-_{\gamma_0})_t$ and $(A^+)_t$.
    
    \item If $s(\mathbf{x})<0$ and $\gamma_0=0$ or $s(\mathbf{x})>0$ and $\gamma_0=\pi$, the intersection is two intervals. One interval has boundary on $(H^+_{\gamma_0})_t$ and $(A^+)_t$ while the other has boundary on $(H^-_{\gamma_0})_t$ and $(A^-)_t$.
    
    \item If $s(\mathbf{x})=0$, the intersection is a cone on four points, like the unperturbed case.
\end{itemize}
\end{corollary}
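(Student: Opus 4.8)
The plan is to read off Corollary \ref{cor:slices} from Theorem \ref{thm:smoothing} by a bookkeeping argument: Theorem \ref{thm:smoothing} already records the homeomorphism type of $\Psi_\epsilon\inv(\mathbf{x})\cap\Sols_t$ and which pairs among $p^1_t,\dots,p^4_t$ bound the two intervals (according to $\sign s(\mathbf{x})$), so the only thing left is to name, for each $i$, the component of $\Sols_t\cap\ol{\MS_\epsilon}$ containing $p^i_t$ --- and that name is where the dependence on $\gamma_0$ enters.

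First I would recall the coordinates: $\Psi_\epsilon\inv(\mathbf{x})\cong I^2$ is parameterized by $(\alpha,\gamma)$ with $\abs{\alpha-\tfrac\pi2}\le\epsilon$, $\abs{\gamma-\gamma_0}\le\epsilon$, and its four edges lie in $(\partial\NS^{\gamma_0}_\epsilon)_1=\{\gamma=\gamma_0+\epsilon\}$, $(\partial\NS^{\gamma_0}_\epsilon)_2=\{\alpha=\tfrac\pi2+\epsilon\}$, $(\partial\NS^{\gamma_0}_\epsilon)_3=\{\gamma=\gamma_0-\epsilon\}$, $(\partial\NS^{\gamma_0}_\epsilon)_4=\{\alpha=\tfrac\pi2-\epsilon\}$, with $p^i_t$ on the $i$th edge and converging as $t\to 0$ to $p^1_0=(\tfrac\pi2,\gamma_0+\epsilon)$, $p^2_0=(\tfrac\pi2+\epsilon,\gamma_0)$, $p^3_0=(\tfrac\pi2,\gamma_0-\epsilon)$, $p^4_0=(\tfrac\pi2-\epsilon,\gamma_0)$. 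By Lemma \ref{lem:isotopy}, for small $t$ the pair $(\ol{\MS_\epsilon},\Sols_t\cap\ol{\MS_\epsilon})$ is smoothly isotopic to $(\ol{\MS_\epsilon},\Sols_0\cap\ol{\MS_\epsilon})$, so it suffices to decide which of $H^\pm_0,H^\pm_\pi,A^\pm$ abuts each face $(\partial\NS^{\gamma_0}_\epsilon)_i$ near $p^i_0$ and then carry the labels along the isotopy, which is exactly the definition of $(H^\pm_{\gamma_0})_t$ and $(A^\pm)_t$.

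Then I would do the matching directly from the defining equations of the six components. Since $H^+_{\gamma_0}$ is cut out by $\gamma=\gamma_0,\ \alpha\le\tfrac\pi2-\epsilon$, it meets the face $\alpha=\tfrac\pi2-\epsilon$ exactly at $(\tfrac\pi2-\epsilon,\gamma_0)=p^4_0$; similarly $H^-_{\gamma_0}$ ($\gamma=\gamma_0,\ \alpha\ge\tfrac\pi2+\epsilon$) meets $\alpha=\tfrac\pi2+\epsilon$ at $p^2_0$. The components $A^+$ and $A^-$ are cut out by $\alpha=\tfrac\pi2$ together with $\gamma\in[\epsilon,\pi-\epsilon]$ and $\gamma\in[\pi+\epsilon,2\pi-\epsilon]$ respectively; hence the face $\gamma=\gamma_0+\epsilon$ (containing $p^1_0$) abuts $A^+$ if $\gamma_0=0$ and $A^-$ if $\gamma_0=\pi$, while the face $\gamma=\gamma_0-\epsilon$ (containing $p^3_0$, where $\gamma_0-\epsilon\equiv 2\pi-\epsilon$ when $\gamma_0=0$) abuts $A^-$ if $\gamma_0=0$ and $A^+$ if $\gamma_0=\pi$. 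This yields the dictionary $p^2_t\in(H^-_{\gamma_0})_t$ and $p^4_t\in(H^+_{\gamma_0})_t$ in all cases, with $p^1_t$ and $p^3_t$ landing on $(A^{\pm})_t$ and $(A^{\mp})_t$, the sign being determined by $\gamma_0$.

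Finally I would feed this dictionary into the three cases of Theorem \ref{thm:smoothing}: for $s(\mathbf{x})>0$ the intervals bound $\{p^1_t,p^2_t\}$ and $\{p^3_t,p^4_t\}$, which under the dictionary becomes the pair $\{(A^+)_t,(H^-_0)_t\},\{(A^-)_t,(H^+_0)_t\}$ when $\gamma_0=0$ and $\{(A^-)_t,(H^-_\pi)_t\},\{(A^+)_t,(H^+_\pi)_t\}$ when $\gamma_0=\pi$; for $s(\mathbf{x})<0$ the intervals bound $\{p^1_t,p^4_t\}$ and $\{p^3_t,p^2_t\}$, giving the complementary pairings; and for $s(\mathbf{x})=0$ the cone-on-four-points statement is quoted verbatim from Theorem \ref{thm:smoothing}. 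The compactness argument upgrading the pointwise $t$ to a uniform $t$ over $\SV$ is inherited directly from Theorem \ref{thm:smoothing}. I do not expect a genuine obstacle here; the only delicate point is the cyclic identification $\gamma\sim\gamma+2\pi$ that flips the $A^\pm$-assignment of faces $1$ and $3$ between $\gamma_0=0$ and $\gamma_0=\pi$ --- this is precisely why the conclusion depends on $\gamma_0$ and not merely on $\sign s(\mathbf{x})$ --- so I would verify the orientation bookkeeping carefully once in the $\gamma_0=0$ case and let the remaining cases follow mechanically.
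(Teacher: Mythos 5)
Your proposal is correct and takes essentially the same approach as the paper, which simply states that the corollary follows from Theorem \ref{thm:smoothing} by checking which of the submanifolds $(H^\pm_{\gamma_0})_t$, $(A^\pm)_t$ contain each $p^i_t$; you have spelled out that check explicitly, and your dictionary ($p^2_t\in(H^-_{\gamma_0})_t$, $p^4_t\in(H^+_{\gamma_0})_t$, with $p^1_t,p^3_t$ on $(A^\pm)_t$ swapping according to $\gamma_0$) together with the resulting case analysis is correct.
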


\begin{proof}
This follows immediately from Theorem \ref{thm:smoothing} and checking for the two values of $\gamma(\mathbf{x})$ which points $p_t^i$ lie in which submanifolds.
\end{proof}

\begin{figure}
    \centering
    \includegraphics[height=2in]{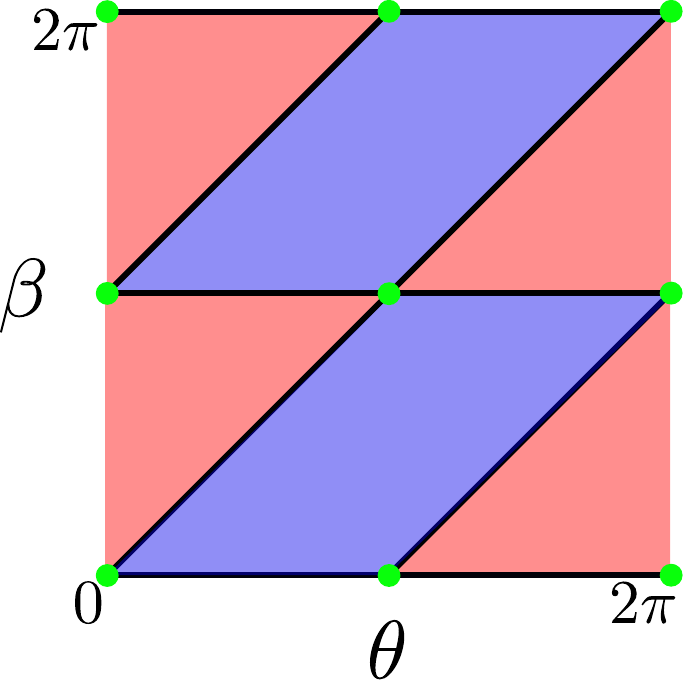}
    \caption{The torus $\SV^{\gamma_0}$, color coded by the sign of $s$. The blue regions have $s>0$, the red regions have $s<0$, the black lines have $s=0$. The green points denote the points in $\Sols^0$ (and so $s=0$ at those points).}
    \label{fig:sTorus}
\end{figure}

\begin{corollary}
\label{cor:nbhdsmooth}

For $\mathbf{x}=\widetilde{\Gamma}(\gamma_0,\theta,\frac{\pi}{2},\beta)\in \SV\setminus \Sols^0$, there exists a neighborhood $U\subset \Pr$ of $\mathbf{x}$ which intersects $\Sols_t$ in one of the following ways:

\begin{itemize}
    \item If $s(\mathbf{x})>0$ and $\gamma_0=0$ or $s(\mathbf{x})<0$ and $\gamma_0=\pi$, the intersection has two components. The boundary of one component intersects $(H^+_{\gamma_0})_t$ and $(A^-)_t$ while the boundary of the other intersects $(H^-_{\gamma_0})_t$ and $(A^+)_t$.
    
    \item If $s(\mathbf{x})<0$ and $\gamma_0=0$ or $s(\mathbf{x})>0$ and $\gamma_0=\pi$, the intersection has two components. The boundary of one component intersects $(H^+_{\gamma_0})_t$ and $(A^+)_t$ while the boundary of the other intersects $(H^-_{\gamma_0})_t$ and $(A^-)_t$.
    
    \item If $s(\mathbf{x})=0$, the intersection only has one component, whose boundary intersects $(H^+_{\gamma_0})_t$, $(H^-_{\gamma_0})_t$, $(A^+)_t$, and $(A^-)_t$.
\end{itemize}

\end{corollary}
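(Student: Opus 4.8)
The plan is to produce a neighborhood $U$ of $\mathbf{x}$ that is a union of the slices $\Psi_\epsilon\inv(\mathbf{y})$ and to read off $U\cap\Sols_t$ one slice at a time from Corollary \ref{cor:slices}, gluing the slices together by a continuity argument. Concretely, I would fix a small neighborhood $V\subset\SV^{\gamma_0}$ of $\mathbf{x}$ and set $U:=\Psi_\epsilon\inv(V)$, a union of the slices $\Psi_\epsilon\inv(\mathbf{y})\cong I^2$ for $\mathbf{y}\in V$, each of which meets $\Sols_t$ exactly as described in Corollary \ref{cor:slices} (using the value of $t$ that works uniformly over $\SV$, obtained by compactness at the end of the proof of Theorem \ref{thm:smoothing}). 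Since $\mathbf{x}\notin\Sols^0$ and $\Sols^0$ is closed, after shrinking $V$ we have $U\cap\Sols^0=\emptyset$, so by Theorem \ref{thm:smooth} $U\cap\Sols_t$ is a smooth manifold (with corners coming from $\partial\NS^{\gamma_0}_\epsilon$), its frontier lies on $\partial U$, and the intersection points $p_t^i(\mathbf{y})$ depend continuously on $\mathbf{y}$ and lie in the pieces $(H^\pm_{\gamma_0})_t$, $(A^\pm)_t$ exactly as in Corollary \ref{cor:slices}. The number of components of $U\cap\Sols_t$ and the pieces its frontier meets will then be dictated by the behavior of $s$ on $V$.

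If $s(\mathbf{x})\ne 0$, then by continuity of $s\colon\SV^{\gamma_0}\to[-1,1]$ I may shrink $V$ so that $s$ has constant sign. Every slice over $V$ then meets $\Sols_t$ in the same two-interval pattern of Corollary \ref{cor:slices}, and as $\mathbf{y}$ sweeps $V$ the two families of intervals trace out two disjoint connected subsets of $U\cap\Sols_t$ (disjoint because $U\cap\Sols_t$ is a manifold, connected because $V$ is); their frontiers are $\bigcup_{\mathbf{y}\in V}\{p_t^i(\mathbf{y})\}$ and hence meet precisely the pairs of pieces named in the statement. This gives the first two bullets.

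If $s(\mathbf{x})=0$ (and $\mathbf{x}\notin\Sols^0$), I would first observe that $\{s=0\}\subset\SV^{\gamma_0}$ is the union of the two families of embedded circles $\{\sin\beta=0\}$ and $\{\sin(\theta-\beta)=0\}$, and that these families meet each other exactly at the points with $\beta,\theta\in\{0,\pi\}$, i.e.\ at $\Sols^0$; hence near $\mathbf{x}$ the set $\{s=0\}$ is a single embedded arc cutting $V$ into an $s>0$ half and an $s<0$ half. Over the arc, each slice meets $\Sols_t$ in a cone on four points (Corollary \ref{cor:slices}), while over the two halves it meets $\Sols_t$ in the two \emph{complementary} two-interval resolutions; the central cones join the intervals coming from the two halves, so $U\cap\Sols_t$ is connected, and between the two resolutions and the cones all four of $(H^+_{\gamma_0})_t$, $(H^-_{\gamma_0})_t$, $(A^+)_t$, $(A^-)_t$ appear in its frontier. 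This is the third bullet; taking $t$ uniform over $\SV$ by compactness, as in Theorem \ref{thm:smoothing}, then finishes the proof.

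The main obstacle is the $s(\mathbf{x})=0$ case: one must verify that the object assembled from the two resolved sides and the singular central slices is genuinely a single smooth component — rather than, say, two sheets meeting in a transverse double point, which is ruled out precisely because $\mathbf{x}\notin\Sols^0$ — and one must track, uniformly in $\mathbf{y}\in V$, which of $(H^\pm_{\gamma_0})_t$, $(A^\pm)_t$ each endpoint $p_t^i(\mathbf{y})$ lies in as $\mathbf{y}$ crosses the arc $\{s=0\}$. Both follow from the local factorization of $\F_t$ as $g(\alpha)h(\gamma,\alpha)$ established in the proof of Theorem \ref{thm:smoothing}, together with the identification of the $p_0^i$ with points of $A^\pm$ or $H^\pm_{\gamma_0}$; everything else is a routine continuity-and-gluing argument layered on top of Corollary \ref{cor:slices}.
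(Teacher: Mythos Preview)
Your proposal is correct and follows essentially the same approach as the paper: take $U=\Psi_\epsilon\inv(V)$ for a small $V\subset\SV^{\gamma_0}$, read off the slices from Corollary~\ref{cor:slices}, and in the $s(\mathbf{x})=0$ case decompose $V$ by the sign of $s$ and use smoothness of $\Sols_t$ (Theorem~\ref{thm:smooth}) to conclude connectedness. Your treatment is in fact a bit more explicit than the paper's, for instance in describing $\{s=0\}\setminus\Sols^0$ as a single embedded arc near $\mathbf{x}$.
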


\begin{proof}
In the first two cases, when $s(\mathbf{x})\ne 0$, then there is a neighborhood $U_\SV\subset \SV_{\gamma_0}$ of $\mathbf{x}$ such that $\sign(s(U_\SV)) = \sign(s(\mathbf{x}))$. %
Taking $U = \Psi\inv(U_\SV)$, Corollary \ref{cor:slices} finishes the claim.

If $s(\mathbf{x})=0$, again pick a neighborhood $U_\SV\subset \SV_{\gamma_0}$ in $\SV$ about $\mathbf{x}$. We can decompose $U_\SV = U_+\sqcup U_0 \sqcup U_-$ depending on the sign of $s$. Each piece is necessarily non-empty and $U_\SV$ can be picked so that each piece is connected. By the previous paragraph, $\Psi\inv(U_+\sqcup U_-)$ is made up of four components. By Corollary \ref{cor:slices}, $\Psi\inv(U_0)$ is a single component and by the smoothness of $\Sols_t$, we know that the four components of $\Psi\inv(U_+\sqcup U_-)$ limit to the single component of $\Psi\inv(U_0)$. Thus $U = \Psi\inv(U_\SV)$ is connected.
\end{proof}

With this, we understand what happens in a neighborhood of each point in $\SV\setminus \Sols^0 \subset \Sols_{0}$ as $t$ is increased from 0. Together with Lemma \ref{lem:isotopy} which explains the behavior of $\Sols_t$ away from $\SV$, we have a good understanding of $\Sols^*_t$. Finally, we examine the neighborhoods of the singular points $\Sols^0 \subset \Sols_t$:

\begin{theorem}
\label{thm:Cone}
Fix $t\in(0,\frac{\pi}{4})$. In $\Sols_t$, each point in $\Sols^0$ has a neighborhood which is a cone on a torus. Similarly, in $\RR_\t(C_3)$, each point in $\RR_\t(C_3)^\U$ has a neighborhood which is a cone on a torus.
\end{theorem}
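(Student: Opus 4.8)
The plan is to analyze the zero set $\Sols_t = \F_t\inv(0)$ near a point $\mathbf{x}_0 \in \Sols^0$, where all first partials of $\F_t$ vanish, by a blow-up (polar-coordinate) argument centered at $\mathbf{x}_0$. Working in the $(\gamma,\theta,\alpha,\beta)$-chart of $\widetilde{\Pr}$, at each of the eight corner points we have $\gamma \in \{0,\pi\}$, $\theta\in\{0,\pi\}$, $\beta\in\{0,\pi\}$ (equivalently $\cos\alpha = \sin\beta = 0$ and $\sin\gamma=\sin\theta=0$). Since $\sin\theta = 0$ at such a point, Equation \ref{eq:n} gives $n = t$ there and $n$ stays close to $t$ in a neighborhood, so $\sinc(n)t = \sin n$ is a smooth, non-vanishing-coefficient function and the formulas \ref{eq:F}, for $F$ and $G$ are real-analytic. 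First I would expand $\F_t$ to second order in the local coordinates $(u_1,u_2,u_3,u_4) := (\gamma-\gamma_0, \theta-\theta_0, \alpha-\tfrac{\pi}{2}, \beta-\beta_0)$ around $\mathbf{x}_0$, using $t\in(0,\tfrac\pi4)$ to control the signs of the coefficients $\cos t, \sin t, \cos 2t$, exactly as in Case 1 of the proof of Theorem \ref{thm:smooth} and in Case 1 of Theorem \ref{thm:smoothing} (where the $\cos\alpha$ factor already appears). The claim to establish is that the quadratic part $Q(u)$ is a non-degenerate quadratic form of signature making $Q\inv(0)$ a cone on a torus $T^2$, i.e. $Q$ has rank $4$ and signature $(2,2)$ (up to sign); then the cone $Q\inv(0) \cap S^3 \cong T^2$ and the standard local-model argument finishes the statement.

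The key steps, in order, would be: (1) compute the Hessian $H = \mathrm{Hess}_{\mathbf{x}_0}\F_t$ explicitly from \ref{eq:F}, — this is a routine but careful calculation, simplified greatly by $\sin\theta_0 = 0$, $\cos\alpha_0 = 0$, $\sin\beta_0=0$, which kill most terms; (2) verify $\det H \ne 0$ for $t\in(0,\tfrac\pi4)$, using the same kind of explicit positivity estimates (e.g. $\cos^2 t - 2\sin^2 t > 0$ type inequalities) used in the proof of Theorem \ref{thm:smooth}; (3) compute the signature of $H$ and confirm it is $(2,2)$ — I expect the $\gamma$-$\beta$ block and the $\theta$-$\alpha$ block to decouple or nearly decouple, since $\frac{\partial \F_t}{\partial\gamma} = \Re(x\ol{pa}pb(-\k))$ and the defining data pair $\gamma$ with the "$b$-direction" and $\alpha,\theta$ with the "$x$-direction"; (4) invoke the Morse Lemma with parameters / the analytic version of Morse's lemma (or directly a diffeomorphism of a neighborhood of $\mathbf{x}_0$ to a neighborhood of $0$ in the quadric cone) to conclude that a neighborhood of $\mathbf{x}_0$ in $\Sols_t$ is homeomorphic to the cone on $Q\inv(0)\cap S^3 \cong T^2$; (5) push this down through $h_t$: by Proposition \ref{prop:dbranched}, $h_t$ is a double branched cover branched exactly over $\RR_\t(C_3)^\U = h_t(\Sols^0)$, with deck transformation $\iota$, so the neighborhood of $h_t(\mathbf{x}_0)$ in $\RR_\t(C_3)$ is the quotient of a cone on $T^2$ by an involution fixing the cone point; I would identify this involution (it acts on the $(\gamma,\theta,\alpha,\beta)$-chart by $(\gamma,\theta,\alpha,\beta)\mapsto(-\gamma,-\theta,\alpha+\pi,\pi-\beta)$, i.e. $u\mapsto -u$ on the quadric, since $\gamma_0,\theta_0,\beta_0\in\{0,\pi\}$ are $\iota$-fixed mod $2\pi$) and check that the quotient of the cone on $T^2$ by the fiberwise antipodal-type map is again a cone on $T^2$ (the antipodal involution on $T^2 = Q\inv(0)\cap S^3$ is free, so the quotient of $T^2$ is again $T^2$).

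The main obstacle I anticipate is step (3)–(4): confirming the Hessian is non-degenerate \emph{and} has the right signature at all eight corner points simultaneously, for the whole range $t\in(0,\tfrac\pi4)$, and ruling out the degenerate possibility that $Q\inv(0)$ is a cone on $S^2 \sqcup S^2$ (signature $(1,3)$ or $(3,1)$) or on $S^1\times S^1$ split as two annuli glued badly — the topology of the link of the singularity is entirely controlled by the signature, so getting the arithmetic of the quadratic form exactly right, including the role of the cross-terms coming from the $\sin\gamma\cdot G$ piece of \ref{eq:F}, is where the real work lies. A secondary subtlety is that strictly speaking $\F_t$ is only smooth, not obviously analytic, near $\mathbf{x}_0$ because of the $\sinc$ function and the implicit dependence of $n$ on the coordinates; but since $n$ is bounded away from $0$ near a corner point (as $n\to t > 0$), $\sinc(n)$ and all the trigonometric expressions are genuinely smooth there, so the smooth Morse Lemma with a non-degenerate Hessian suffices and no analyticity is needed — I would remark on this explicitly to justify applying the local-model argument.
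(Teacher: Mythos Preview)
Your proposal is correct and follows essentially the same approach as the paper: compute the Hessian of $\F_t$ at a corner point, verify it is non-degenerate with signature $(2,2)$ for $t\in(0,\tfrac{\pi}{4})$, apply the Morse Lemma to get a cone on $T^2$, and then pass to the quotient by $\iota$. The paper carries out the Hessian computation explicitly and diagonalizes it (your expectation of near-decoupling into $\gamma$-$\beta$ and $\theta$-$\alpha$ blocks turns out to be wrong, but you correctly flagged this as where the work lies), and for step~(5) the paper argues slightly differently---using that $\iota$ is orientation-preserving on $\Sols_t^*$ to conclude the induced involution on the link torus is free and orientation-preserving, hence has quotient $T^2$---whereas your explicit identification of $\iota$ as $u\mapsto -u$ in good local coordinates on $\Pr$ (not $\widetilde{\Pr}$, where the spherical chart is not centered at the corner) gives the same conclusion more directly.
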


\begin{proof}
For a fixed perturbation parameter $t$, the Hessian of $\F_t(\gamma,\theta,\alpha,\beta)$ at $\gamma =\theta=\beta=0$ and $\alpha=\frac{\pi}{2}$ is 

\[\bmat{0 & -2 \, \cos\left(t\right) \sin\left(t\right) & -1 & 2 \, \cos\left(t\right) \sin\left(t\right) \\
-2 \, \cos\left(t\right) \sin\left(t\right) & 0 & -2 \, \sin\left(t\right)^{2} & 2 \, \cos\left(t\right) \sin\left(t\right) \\
-1 & -2 \, \sin\left(t\right)^{2} & -4 \, \cos\left(t\right) \sin\left(t\right) & 0 \\
2 \, \cos\left(t\right) \sin\left(t\right) & 2 \, \cos\left(t\right) \sin\left(t\right) & 0 & -4 \, \cos\left(t\right) \sin\left(t\right)
}\]

This can be diagonalized to give: 

\[\bmat{-4 \, \cos\left(t\right) \sin\left(t\right) & 0 & 0 & 0 \\
0 & \cos\left(t\right) \sin\left(t\right) & 0 & 0 \\
0 & 0 & -\frac{2 \, {\left(2 \, \cos\left(t\right)^{2} - 1\right)} \sin\left(t\right)}{\cos\left(t\right)} & 0 \\
0 & 0 & 0 & \frac{\left(2 \, \sin\left(t\right)^{2} + 1\right)^{2} \cos\left(t\right)}{2 \, {\left(2 \, \cos\left(t\right)^{2} - 1\right)} \sin\left(t\right)}
}\]

The Hessian is non-singular for small, nonzero $t$ and the signature is easily seen to be 0 for any such value of $t$ (the first two entries in the diagonal clearly have opposite sign, as do the last two). Similar calculations can be performed for the other corner points yielding the same result.

By the Morse lemma, a neighborhood of a corner point in $\Sols_t$ is therefore homeomorphic to a neighborhood of $0$ of the zero set of $f(w,x,y,z) = w^2+x^2-(y^2+z^2)$, which can be seen to be a cone on a torus.%

Let $\mathbf{x}\in \Sols^0$ and let $U\subset\Pr$ be a small, $\iota$-equivariant neighborhood of $\mathbf{x}$. Then by the preceding argument $\partial U$ intersects $\Sols_t$ in a torus $T$. $\iota$ must act on $T$ by a free involution. $T$ in the boundary of $\Sols_t\setminus U$. Since $\iota$ is an orientation preserving involution on $\Sols_t\setminus U$, it must preserve the orientation of its boundary, $T$. Thus, $T/\iota\cong T^2$. Therefore, the corner points of $\Sols_t/\iota\cong \RR_\t(C_3)$ have neighborhoods which are cones on tori.

\end{proof}

\subsection{Perturbed Character Varieties of Tangle Sums}
\label{sec:pertsum}

For this section, we will assume that $(T_1, \pi_1)$ and $(T_2, \pi_2)$ are a good pair and, furthermore, that $\RR_{\pi_1\cup \pi_2}(T_1+T_2)$ has no corner circles. Recall that by Proposition \ref{prop:goodpair} and Lemma \ref{lem:internal}, such perturbations always exist and can be arbitrarily close to any given perturbations.

For a representation $\rho\in \RR_{\pi_1\cup\pi_2}(T_1+T_2)$, let \[c(\rho) := \rho|_{C_3} \in \RR(C_3).\] 
By Corollary \ref{cor:singular}, all the singular points of $\RR_{\pi_1\cup \pi_2}(T_1+T_2)$ have $c(\rho)\in\SV$. In the same way that the sign of $s:\SV\to [-1,1]$ given in Equation \ref{eq:s} controlled how a neighborhood of a point in $\SV$ perturbs (Corollary \ref{cor:nbhdsmooth}), the sign of $s(c(\rho))$ will control how the neighborhood of $\rho\in \RR_{\pi_1\cup \pi_2}(T_1+T_2)$ perturbs.

\begin{lemma}
\label{lem:sint}
Let $(\rho_1, \rho_2) \in \RR_{\pi_1}(T_1)\times_{\{0,\pi\}}\RR_{\pi_2}(T_2)$. If $\theta(\rho_1)$ or $\theta(\rho_2)$ is in $\{0,\pi\}$, by Theorem \ref{thm:nice} there is a unique representation $[\rho_1+\rho_2]\in\RR_{\pi_1\cup \pi_2}(T_1+T_2)$. For this representation, $s(c([\rho_1+\rho_2])) = 0$.
\end{lemma}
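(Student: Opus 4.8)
The plan is to unwind the definitions and reduce the claim to a computation about the representation $\eta := c([\rho_1+\rho_2])\in\RR(C_3)$, which by Lemma \ref{lem:sint}'s hypotheses and Theorem \ref{thm:nice} is the unique element of $p_{h(\ol{\A})}\inv(\gamma(\rho_1),\theta(\rho_1),\gamma(\rho_2),\theta(\rho_2))$ falling into Case 3 of Lemma \ref{lem:revengA}. Since the fiber is a single point, $\eta$ lies in $h(\ol{\A})$, so $\eta = h_0(\widetilde{\Gamma}(\gamma_0,\theta,\frac{\pi}{2},\beta))$ for $\gamma_0 = \gamma(\rho_1)\in\{0,\pi\}$ and suitable $\theta,\beta$; moreover $\eta\in\SV$ exactly when $\sin\gamma_0 = 0$, which holds here, so $s(\eta) = \sin\beta\sin(\theta-\beta)$ is defined via Equation \ref{eq:s}. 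The goal is then to show this quantity vanishes under the stated hypotheses.

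First I would extract the values of $\theta$ and $\beta$ from the proof of Lemma \ref{lem:revengA}, Case 3. There, with $z_1 = \gamma_0\in\{0,\pi\}$ and $z_2 = \theta(\rho_1)$, $z_3 = \theta(\rho_2)$, one has $\beta = z_2 = \theta(\rho_1)$ and $\theta = z_2 + z_3 = \theta(\rho_1)+\theta(\rho_2)$ (in $S^1$), with $\rho_1 = \rho_2$ in that computation precisely because one of $z_2, z_3$ lies in $\{0,\pi\}$. Substituting into $s(\eta) = \sin\beta\sin(\theta-\beta)$ gives $s(\eta) = \sin(\theta(\rho_1))\sin(\theta(\rho_2))$. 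Now split into cases: if $\theta(\rho_1)\in\{0,\pi\}$ then $\sin(\theta(\rho_1)) = 0$; if $\theta(\rho_2)\in\{0,\pi\}$ then $\sin(\theta(\rho_2)) = 0$. Either way $s(c([\rho_1+\rho_2])) = 0$, as claimed.

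I also need to double-check the edge case where $\gamma(\rho_1) = \gamma(\rho_2)$ but possibly $\theta(\rho_i)$ itself is not in $(0,\pi)$ yet the stabilizer bookkeeping of Theorem \ref{thm:nice} still routes us to the ``$\pt$ otherwise'' branch — but this is automatic, since the hypothesis "$\theta(\rho_1)$ or $\theta(\rho_2)$ is in $\{0,\pi\}$" is exactly the complement of the $S^1$-fiber condition "$\gamma(\rho_1)\in\{0,\pi\}$ and $\theta(\rho_1),\theta(\rho_2)\in(0,\pi)$" in the relevant subcase, so the unique representation exists and restricts to $C_3$ as the $\eta$ above. The only mild subtlety, and the step I expect to require the most care, is confirming that the representative $\widetilde{\Gamma}(\gamma_0,\theta,\frac{\pi}{2},\beta)$ used to define $s(\eta)$ is indeed the one produced by Case 3 of Lemma \ref{lem:revengA} (rather than its $\iota$-image), and that $s$ is $\iota$-invariant so the choice does not matter — but this invariance is already recorded in the remark following Equation \ref{eq:s}, so it suffices to cite it. Everything else is direct substitution.
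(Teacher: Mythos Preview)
Your proposal is correct and takes essentially the same approach as the paper: both identify the restriction $\eta = c([\rho_1+\rho_2])$ as a binary dihedral representation $\Gamma(\gamma_0,\theta,\frac{\pi}{2},\beta)$ with $\beta = \theta(\rho_1)$ and $\theta-\beta = \theta(\rho_2)$, and then observe that $s(\eta) = \sin\beta\sin(\theta-\beta) = \sin(\theta(\rho_1))\sin(\theta(\rho_2))$ vanishes under the hypothesis. The only cosmetic difference is that you extract the parameters from Case~3 of Lemma~\ref{lem:revengA}, whereas the paper cites Lemma~\ref{lem:revengH} together with the coordinate formulas~\eqref{eq:pccoords2} and~\eqref{eq:pccoords4}; your added remark on $\iota$-invariance of $s$ is a nice touch that makes the choice of lift explicit.
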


\begin{proof}
Let $\eta = c([\rho_1+\rho_2]) \in\RR(C_3)$. By Lemma \ref{lem:revengH},  $\eta$ is binary dihedral and so $\eta = \Gamma(\gamma,\theta,\frac{\pi}{2},\beta)$. In this case, $\theta_1(\eta) = \beta$ and $\theta_2(\eta) = \theta-\beta$ by Equations \ref{eq:pccoords2} and \ref{eq:pccoords4}.

If $\theta_1(\eta) \in \{0,\pi\}$, then $\sin\beta = \sin(\theta_1(\eta)) = 0$ and so $s(\eta) = 0$. If instead $\theta_2(\eta) \in \{0,\pi\}$, then $\sin(\theta-\beta) = \sin(\theta_2(\eta)) = 0$ and so $s(\eta) = 0$.
\end{proof}

\begin{lemma}
\label{lem:endop}
Let $(\rho_1, \rho_2) \in \RR_{\pi_1}(T_1)\times_{\{0,\pi\}}\RR_{\pi_2}(T_2)$ such that $\theta(\rho_1), \theta(\rho_2)\in (0,\pi)$. By Theorem \ref{thm:nice}, there is an associated circle's worth of representations $[\rho_1+\rho_2]_\psi \in \RR_{\pi_1\cup\pi_2}(T_1+T_2)$. Then
\[s(c([\rho_1+\rho_2]_0)) = -s(c([\rho_1+\rho_2]_\pi))\]
\end{lemma}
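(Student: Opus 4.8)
The plan is to reduce the statement to an explicit computation involving the function $s$ from Equation \ref{eq:s} applied to the two binary dihedral representations $c([\rho_1+\rho_2]_0)$ and $c([\rho_1+\rho_2]_\pi)$, which are identified in Lemma \ref{lem:revengH} (and repeated in Theorem \ref{thm:notnice}) as the endpoints $\rho_0, \rho_\pi \in \ol{\A}$ of the arc of characters in $\RR(C_3)$. First I would write down these two representations in standard gauge. From the discussion preceding the four cases in the proof of Lemma \ref{lem:revengH}, we know that when $z_2 = \theta(\rho_1)$ and $z_3 = \theta(\rho_2)$ lie in $(0,\pi)$, the representation $\rho_0$ sends $x\mapsto e^{\pm z_2\k}\i$ and $c\mapsto e^{\pm(z_2-z_3)\k}\i$ or $e^{\pm(z_3-z_2)\k}\i$ (depending on the case), while $\rho_\pi$ sends $x\mapsto e^{\pm z_2\k}\i$ and $c\mapsto e^{\pm(z_2+z_3)\k}\i$ or $e^{\pm(2\pi - z_2 - z_3)\k}\i$. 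Comparing with the coordinates $(\gamma,\theta,\alpha,\beta)$ of a binary dihedral point $\widetilde{\Gamma}(\gamma,\theta,\tfrac{\pi}{2},\beta)$, for which $x = \ol{\Gamma}(\tfrac{\pi}{2},\beta) = e^{\beta\k}\i$ and $c = e^{\theta\k}\i$, I would read off the values of $\beta$ and $\theta$ for each of $\rho_0$ and $\rho_\pi$.

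Next I would plug these into $s$. Recall $s(\widetilde{\Gamma}(\gamma_0,\theta,\tfrac{\pi}{2},\beta)) = \sin\beta\sin(\theta-\beta)$. Using $\theta_1(\eta) = \beta$ and $\theta_2(\eta) = \theta - \beta$ (Equations \ref{eq:pccoords2} and \ref{eq:pccoords4}, as invoked in the proof of Lemma \ref{lem:sint}), we have $s(\eta) = \sin(\theta_1(\eta))\sin(\theta_2(\eta))$. For $\rho_0$ the pillowcase coordinate is $\theta_3(\rho_0) = \arccos(\cos(z_2-z_3))$, i.e.\ $|z_2 - z_3|$, and one checks $\theta_1(\rho_0) = z_2$ (or $-z_2$ depending on the sign case) while $\theta_2(\rho_0) = \pm z_3$; similarly for $\rho_\pi$ one gets $\theta_1(\rho_\pi) = \mp z_2$, $\theta_2(\rho_\pi) = \mp z_3$ with the signs arranged so that $\theta_1 + \theta_2$ (mod $2\pi$, reduced to $[0,\pi]$) equals $z_2 + z_3$ or $2\pi - z_2 - z_3$ as appropriate. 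The claim $s(\eta_0) = -s(\eta_\pi)$ then becomes the trigonometric identity $\sin(z_2)\sin(z_3) = -\sin(\text{sign-flipped } z_2)\sin(\text{sign-flipped } z_3)$, which holds precisely because going from $\psi = 0$ to $\psi = \pi$ flips exactly one of the two relevant angles by a reflection across $0$ or $\pi$ in the sphere (the vertex $x$ or the vertex $c$ moves to its mirror image, as one sees directly from Figure \ref{fig:spheretriangle} and the explicit formulas in the four cases of the proof of Lemma \ref{lem:revengH}).

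The cleanest way to organize this, and the step I expect to be the main obstacle, is bookkeeping the signs across the four cases (Case 1--4 in the proof of Lemma \ref{lem:revengH}: $z_2 \gtrless z_3$ and $z_2 + z_3 \lessgtr \pi$). In each case the images of $x$ under $\rho_0$ and $\rho_\pi$ are given explicitly, and the images of $c$ differ; I would tabulate $(\theta_1(\rho_0), \theta_2(\rho_0))$ and $(\theta_1(\rho_\pi), \theta_2(\rho_\pi))$ directly from those formulas, compute $s = \sin\theta_1\sin\theta_2$ for each, and verify the sign flip case by case. Rather than grinding all four, I would note that the only data that enters $s$ is $(\sin\theta_1, \sin\theta_2)$, and that $\sin$ is invariant under $\theta \mapsto \pi - \theta$ and odd under $\theta \mapsto -\theta$ and $\theta \mapsto 2\pi - \theta$; tracking only these parities reduces all four cases to the single observation that passing from $\rho_0$ to $\rho_\pi$ reverses the relative orientation of exactly one side of the spherical triangle, hence flips the sign of exactly one of $\sin\theta_1$, $\sin\theta_2$, giving $s(c([\rho_1+\rho_2]_0)) = -s(c([\rho_1+\rho_2]_\pi))$.
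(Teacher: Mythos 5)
Your proposal is correct and takes essentially the same route as the paper: it substitutes the explicit gauge representatives for $\rho_0$ and $\rho_\pi$ from the four cases of Lemma~\ref{lem:revengH} into the formula for $s$ from Equation~\ref{eq:s} and verifies (or, as you streamline it, observes by parity) that exactly one of the two sine factors changes sign. One caution worth flagging: the identity $s(\eta)=\sin\theta_1(\eta)\sin\theta_2(\eta)$ cannot be read literally --- the pillowcase coordinates $\theta_i(\eta)\in[0,\pi]$ make the right side nonnegative, which would contradict the conclusion --- so what you (and the paper, in its invocation of Equations~\ref{eq:pccoords2} and~\ref{eq:pccoords4} inside the proof of Lemma~\ref{lem:sint}) actually use are the \emph{signed} lift coordinates $\beta$ and $\theta-\beta$, which coincide with $\theta_1,\theta_2$ only up to the reflections $\theta\mapsto-\theta$ and $\theta\mapsto 2\pi-\theta$; your parenthetical ``(or $-z_2$ depending on the sign case)'' and the parity bookkeeping that follows show you are tracking exactly this distinction, so there is no gap.
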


\begin{proof}
The proof of Theorem \ref{lem:revengH} includes explicit calculations for $[\rho_1 + \rho_2]_0$ and $[\rho_1 + \rho_2]_\pi$. Plugging these in for $s$ proves the theorem:

\noindent\textbf{Case 1}: $\theta(\rho_1)\ge\theta(\rho_2)$ and $\theta(\rho_1)+\theta(\rho_2)\le \pi$

$[\rho_1 + \rho_2]_0$ sends $a\mapsto \i$, $b\mapsto \pm \i$, $x\mapsto e^{\theta(\rho_1)\k}\i$, $c\mapsto e^{(\theta(\rho_1)-\theta(\rho_2))\k}\i$

$[\rho_1 + \rho_2]_\pi$ sends $a\mapsto \i$, $b\mapsto \pm \i$, $x\mapsto  e^{\theta(\rho_1)\k}\i$, $c\mapsto e^{(\theta(\rho_1)+\theta(\rho_2))\k}\i$
\begin{align*}
s([\rho_1 + \rho_2]_0) &= -2\sin(\theta(\rho_1))\sin(\theta(\rho_1)-(\theta(\rho_1)-\theta(\rho_2)))\\
&= -2\sin(\theta(\rho_1))\sin(\theta(\rho_2))\\
&= 2\sin(\theta(\rho_1))\sin(-\theta(\rho_2))\\
&= -s([\rho_1 + \rho_2]_\pi)
\end{align*}

\noindent\textbf{Case 2}: $\theta(\rho_1)\ge\theta(\rho_2)$ and $\theta(\rho_1)+\theta(\rho_2)> \pi$

$[\rho_1 + \rho_2]_0$ sends $a\mapsto \i$, $b\mapsto \pm \i$, $x\mapsto e^{-\theta(\rho_1)\k}\i$, $c\mapsto e^{(\theta(\rho_1)-\theta(\rho_2))\k}\i$

$[\rho_1 + \rho_2]_\pi$ sends $a\mapsto \i$, $b\mapsto \pm \i$, $x\mapsto  e^{-\theta(\rho_1)\k}\i$, $c\mapsto e^{(2\pi-\theta(\rho_1)-\theta(\rho_2))\k}\i$
\begin{align*}
s([\rho_1 + \rho_2]_0) &= -2\sin(-\theta(\rho_1))\sin(-\theta(\rho_1)-(\theta(\rho_1)-\theta(\rho_2)))\\
&= 2\sin(-\theta(\rho_1))\sin(2\theta(\rho_1)-\theta(\rho_2))\\
&= -s([\rho_1 + \rho_2]_\pi)
\end{align*}

\noindent\textbf{Case 3}: $\theta(\rho_1)\le \theta(\rho_2)$ and $\theta(\rho_1)+\theta(\rho_2)\le \pi$

$[\rho_1 + \rho_2]_0$ sends $a\mapsto \i$, $b\mapsto \pm \i$, $x\mapsto e^{-\theta(\rho_1)\k}\i$, $c\mapsto e^{(\theta(\rho_2)-\theta(\rho_1))\k}\i$

$[\rho_1 + \rho_2]_\pi$ sends $a\mapsto \i$, $b\mapsto \pm \i$, $x\mapsto e^{\theta(\rho_1)\k}\i$, $c\mapsto e^{(\theta(\rho_1)+\theta(\rho_2))\k}\i$
\begin{align*}
s([\rho_1 + \rho_2]_0) &= -2\sin(-\theta(\rho_1))\sin(-\theta(\rho_1)-(\theta(\rho_2)-\theta(\rho_1)))\\
&= 2\sin(\theta(\rho_1))\sin(-\theta(\rho_2)) \\
&= -s([\rho_1 + \rho_2]_\pi)
\end{align*}

\noindent\textbf{Case 4}: $\theta(\rho_1)\le \theta(\rho_2)$ and $\theta(\rho_1)+\theta(\rho_2)> \pi$

$[\rho_1 + \rho_2]_0$ sends $a\mapsto \i$, $b\mapsto \pm \i$, $x\mapsto e^{-\theta(\rho_1)\k}\i$, $c\mapsto e^{(\theta(\rho_2)-\theta(\rho_1))\k}\i$

$[\rho_1 + \rho_2]_\pi$ sends $a\mapsto \i$, $b\mapsto \pm \i$, $x\mapsto e^{\theta(\rho_1)\k}\i$, $c\mapsto e^{(2\pi-\theta(\rho_1)-\theta(\rho_2))\k}\i$

\begin{align*}
s([\rho_1 + \rho_2]_0) &= -2\sin(-\theta(\rho_1))\sin(-\theta(\rho_1)-(\theta(\rho_1)-\theta(\rho_2)))\\
&= 2\sin(-\theta(\rho_1))\sin(2\theta(\rho_1)-\theta(\rho_2))\\
&= -s([\rho_1 + \rho_2]_\pi)
\end{align*}

\end{proof}

\subsubsection{A Decomposition of $\RR_{\t\cup \pi_1\cup\pi_2}(T_1+T_2)$}

Because $(T_1,\pi_1)$ and $(T_2,\pi_2)$ are a good pair, Proposition \ref{prop:nicecomp} extends to the perturbed case to give

\[\RR_{\t\cup\pi_1\cup\pi_2}(T_1+T_2) \cong \RR_{\pis}(T_1\twedge T_2)\times_{P_1\times P_2} \RR_\t(C_3) = \RR_{\pi_1\cup\pi_2}(T_1\twedge T_2)\times_{P_1\times P_2} h_t(\Sols_t)\]

We will use the decomposition $\Pr =\MS_\epsilon \sqcup\NS_\epsilon$ introduced in Section \ref{sec:decomp} to induce a decomposition on $\RR_{\t\cup\pi_1\cup \pi_2}(T_1+T_2)$.
\[\J_t^\epsilon := \RR_{\pi_1\cup\pi_2}(T_1\twedge T_2)\times_{P_1\times P_2} h_t(\Sols_t\cap\NS_\epsilon)\]
\[\K_t^\epsilon := \RR_{\pi_1\cup\pi_2}(T_1\twedge T_2)\times_{P_1\times P_2} h_t(\Sols_t\cap \ol{\MS_\epsilon})\]

It is worth comparing this decomposition of $\RR_{\t\cup\pi_1\cup \pi_2}(T_1+T_2)$ when $t=0$ to the decomposition $\RR_{\pi_1\cup \pi_2}(T_1+T_2) = M\sqcup N$ defined in (\ref{eq:decompM}) and (\ref{eq:decompN}). Because $\NS_0 = \SV$, we get that $\J_0^0 = N$ and $\K_0^0 = M$. Then for $\epsilon>0$, the fact that $\Sols\cap \NS_\epsilon$ is a neighborhood of $\SV$ implies that $\J_0^\epsilon$ is a neighborhood of $N$. On the other hand, taking the complements gives us that $M$ is a neighborhood of $\K_0^\epsilon$.

Now, in an analogous result to Lemma \ref{lem:isotopy}, we want to observe that away from the singular points, the perturbation acts nicely:

\begin{lemma}
\label{lem:Kgood}
Fix $\epsilon>0$. There exists a $t_\epsilon>0$, such that there is a regular homotopy from $\K_0^\epsilon$ to $\K_{t_\epsilon}^\epsilon$.
\end{lemma}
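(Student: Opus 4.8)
The plan is to transport the isotopy of $\Sols_t \cap \ol{\MS_\epsilon}$ produced by Lemma~\ref{lem:isotopy} through the fiber-product construction that defines $\K_t^\epsilon$. First I would recall that, since $(T_1,\pi_1)$ and $(T_2,\pi_2)$ form a good pair, Condition~\ref{con:generic} holds, so Proposition~\ref{prop:nicecomp} (in its perturbed form, as invoked at the start of this subsection) gives $\K_t^\epsilon \cong \RR_{\pi_1\cup\pi_2}(T_1\twedge T_2)\times_{P_1\times P_2} h_t(\Sols_t\cap\ol{\MS_\epsilon})$. Next, note that $\Sols_t\cap\ol{\MS_\epsilon}$ is disjoint from $\Sols^0$ (the corner points lie in $\SV\subset\NS_\epsilon$), so by Theorem~\ref{thm:smooth} it is a smooth manifold; in fact, by the definition of $\Sols_t^*$, $\Sols_t\cap\ol{\MS_\epsilon}\subset\Sols_t^*$ and $h_t$ restricts there to the double cover $\Sols_t^*\to\RR_\t(C_3)^*$. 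Since the involution $\iota$ preserves $\ol{\MS_\epsilon}$ (it is $\iota$-equivariant because $\iota$ fixes $\gamma_0$-sets setwise and the defining inequalities of $\NS_\epsilon$ are $\iota$-invariant), the isotopy from Lemma~\ref{lem:isotopy} can be chosen $\iota$-equivariantly — one simply averages, or more cleanly observes that the level-set isotopy is canonically built from the gradient of $\F$, which is $\iota$-equivariant by Equation~\eqref{eq:iotainv} — and hence descends to an isotopy of $h_t(\Sols_t\cap\ol{\MS_\epsilon})$ in $\RR_\t(C_3)^*$ which, by Corollary~\ref{cor:smoothds}, moves via Lagrangian immersions into $\P_{222}$.

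Having an ambient isotopy $\Phi_s$ ($s\in[0,t_\epsilon]$) of the Lagrangian immersion $h_0(\Sols_0\cap\ol{\MS_\epsilon})\looparrowright P_1^*\times P_2^*\times P_3^*$ onto $h_{t_\epsilon}(\Sols_{t_\epsilon}\cap\ol{\MS_\epsilon})$, I would then compose with the fixed Lagrangian immersion $\nu:\RR_{\pi_1\cup\pi_2}(T_1\twedge T_2)\to P_1\times P_2$. The key point is transversality: the projection $\widehat\mu_s:h_s(\Sols_s\cap\ol{\MS_\epsilon})\to P_1\times P_2$ stays transverse to $\nu$ for all $s\in[0,t_\epsilon]$ after shrinking $t_\epsilon$. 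This follows from Proposition~\ref{prop:transverse}, which establishes transversality of $\mu$ and $\nu$ at $t=0$ on $\Sols^*$ (hence on $\Sols_0\cap\ol{\MS_\epsilon}$), together with the openness of transversality on the compact set $\Sols_0\cap\ol{\MS_\epsilon}$ and continuity of $\widehat\mu_s$ in $s$; so for small enough $t_\epsilon$ the whole family is transverse. Then Theorem~\ref{thm:lagcomp} applies fiberwise-in-$s$ to guarantee that $\K_s^\epsilon = \nu\times_{P_1\times P_2}\widehat\mu_s$ is, for each $s$, a manifold mapping to $P_3$ by a Lagrangian immersion, and that the family $\{\K_s^\epsilon\}_{s\in[0,t_\epsilon]}$ fits together into a regular homotopy from $\K_0^\epsilon$ to $\K_{t_\epsilon}^\epsilon$. (Formally one runs the geometric composition for the product isotopy $\widehat\mu_s\times\mathrm{id}$ on $M_1\times M_1$, exactly as in the proof of Corollary~\ref{cor:mman}.)

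The main obstacle I anticipate is the uniformity of the choice of $t_\epsilon$: Lemma~\ref{lem:isotopy} gives, for each $\epsilon$, \emph{some} $\delta>0$ with no critical values of $f$ in $(-\delta,\delta)$, and Proposition~\ref{prop:transverse} gives transversality on the nose at $t=0$; I must check that both conditions persist on the \emph{same} interval $[0,t_\epsilon]$, which is a routine compactness argument since $\Sols_0\cap\ol{\MS_\epsilon}$ is compact and both "no critical point of $f$" and "$\widehat\mu_s\pitchfork\nu$" are open conditions stable under small perturbation of $s$. A secondary subtlety is ensuring the isotopy descends through the double cover to $\RR_\t(C_3)^*$; this is handled by the $\iota$-equivariance discussed above, exactly as in the orientability argument preceding Corollary~\ref{cor:smoothds}. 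Once these uniformity issues are dispatched, the regular homotopy of $\K_t^\epsilon$ is a formal consequence of Theorem~\ref{thm:lagcomp}, and no new computation beyond those already in Proposition~\ref{prop:transverse} and Lemma~\ref{lem:isotopy} is required.
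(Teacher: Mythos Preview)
Your overall strategy---transport the isotopy of Lemma~\ref{lem:isotopy} through the fiber product, using Proposition~\ref{prop:transverse} at $t=0$ and stability of transversality---is exactly the paper's approach. However, there is a genuine gap in your transversality step.

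You assert that $\Sols_0\cap\ol{\MS_\epsilon}\subset\Sols^*$, and hence that Proposition~\ref{prop:transverse} covers all of $\Sols_0\cap\ol{\MS_\epsilon}$. This inclusion is false. Recall that the \emph{unperturbed} $\Sols^*$ (Definition~\ref{def:rcstar} and the bullet list preceding it) is $\Sols^\dag\setminus\SV$, where $\Sols^\dag$ excludes all points with $b,c\in\{\pm\i\}$. But $\ol{\MS_\epsilon}$ contains points of $\H$ with $\gamma,\theta\in\{0,\pi\}$ and $|\alpha-\tfrac{\pi}{2}|\ge\epsilon$; these have $b,c\in\{\pm\i\}$ yet $x\notin\{\pm\i\}$, so they lie in $\Sols_0\cap\ol{\MS_\epsilon}$ but \emph{not} in $\Sols^*$. (You may be conflating the unperturbed $\Sols^*$ with the perturbed $\Sols_t^*:=\Sols_t\setminus\Sols^0$ defined later; the latter notion, applied at $t=0$, is strictly larger, and Proposition~\ref{prop:transverse} says nothing about the extra points.) On this complementary locus, $\mu$ need not be transverse to $\nu$ for purely differential reasons.

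The paper closes this gap by invoking the standing assumption---made at the start of Section~\ref{sec:pertsum}---that $\RR_{\pi_1\cup\pi_2}(T_1+T_2)$ has no corner circles. Using Equations~\eqref{eq:pccoords2} and~\eqref{eq:pccoords4} one checks that $(\Sols\setminus\Sols^*)\cap\ol{\MS_\epsilon}$ maps under $\mu$ into the set $\CSS$ of points $(\gamma_1,\theta_1,\gamma_2,\theta_2)$ with $\gamma_1=\gamma_2\in\{0,\pi\}$ and $\theta_1\in\{\theta_2,\pi-\theta_2\}\subset(0,\pi)$; the no-corner-circle hypothesis says precisely that the image of $\nu$ misses $\CSS$, so transversality holds there vacuously. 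You need to add this argument; without it, the openness/compactness reasoning in your final paragraph does not get off the ground at $s=0$.
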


\begin{proof}
Recall from Proposition \ref{prop:transverse} that the maps $\mu:\Sols^*\to P_1\times P_2$ and $\nu: \RR_{\pi_1}(T_1)\times \RR_{\pi_2}(T_2)\to P_1\times P_2$ are transverse. Then clearly the map $\mu|_{\ol{\MS}_\epsilon}:\Sols^*\cap \ol{\MS}_\epsilon\to P_1\times P_2$ is transverse with $\nu$. 
Let \begin{align*}
\CSS :=& \{(\gamma_1,\theta_1,\gamma_2,\theta_2)\in P_1\times P_2 \mid \gamma_1 = \gamma_2 \in\{0,\pi\}, \theta_1=\theta_2 \in (0,\pi)\}\cup\\
&\{(\gamma_1,\theta_1,\gamma_2,\theta_2)\in P_1\times P_2 \mid \gamma_1 = \gamma_2 \in\{0,\pi\}, \theta_1=\pi-\theta_2 \in (0,\pi)\}.
\end{align*}
Due to the assumption that $\RR_{\pis}(T_1+T_2)$ has no corner circles, Theorem \ref{thm:notnice} guarantees that the image of the map $\nu$ does not intersect $\CSS$ in $P_1\times P_2$.
Next, consider the space 
\begin{align*}
(\Sols\setminus\Sols^*)\cap \ol{\MS}_\epsilon 
&= ((\Sols\setminus \Sols^\dag)\cup \SV)\cap \ol{\MS}_\epsilon\\ 
&= (\Sols\setminus \Sols^\dag)\cap \ol{\MS}_\epsilon \subset (\Sols\setminus \Sols^\dag)\setminus \Sols^0\\ 
& = \{(\i,b,c,x) \in \Pr\mid b,c\in\{\pm \i\}, x\notin\{\pm \i\}\}.
\end{align*}
Equations \ref{eq:pccoords1}, \ref{eq:pccoords2}, \ref{eq:pccoords3}, \ref{eq:pccoords4} tell us that the map from the set $\{(\i,b,c,x) \in \Pr\mid b,c\in\{\pm \i\}, x\notin\{\pm \i\}\}$ to $P_1\times P_2$ has image precisely $\CSS$. Therefore, the image of the map $\mu':(\Sols\setminus\Sols^*)\cap \ol{\MS}_\epsilon\to P_1\times P_2$ is a subset of $\CSS$. Therefore, the images of $\mu'$ and $\nu$ are disjoint and so $\mu'\pitchfork\nu$ trivially. Thus, the map $\mu_0:\Sols\cap \ol{\MS}_\epsilon\to P_1\times P_2$ is transverse to $\nu$.

Now we define the family of maps $\mu_t:\Sols_t\cap \ol{\MS}_\epsilon\to P_1\times P_2$.
By Corollary \ref{cor:smoothds} and Theorem \ref{thm:lagcomp}, the map $\K_t^\epsilon \to P_3$ is an immersion if $\mu_t \pitchfork \nu$. As a consequence of Lemma \ref{lem:isotopy}, $\mu_t$ is an isotopy for small $t$. Because transversality is a stable property and $\mu_0 \pitchfork \nu$, there is a value of $t_\epsilon$ such that for $t\in[0,t_\epsilon)$, $\mu_t \pitchfork \nu$ and thus the map $\K_t^\epsilon\to P_3$ is an immersion. Because $\mu_t$ is continuous in $t$, so is $\K_t^\epsilon$, finishing the claim.  

\end{proof}

We define a fundamental domain of $\Sols^*_t$ with respect to the action of $\iota$.
\begin{align}
\label{eq:funddom}
\widetilde{\Sols}^*_t := &\{\widetilde{\Gamma}(\gamma,\theta,\alpha,\beta)\in\Sols_t \mid \theta \in (0,\pi)\} \cup\\ 
&\{\widetilde{\Gamma}(\gamma,\theta,\alpha,\beta)\in\Sols_t \mid \theta \in \{0,\pi\}, \gamma\in(0,\pi)\} \cup\nonumber\\
&\{\widetilde{\Gamma}(\gamma,\theta,\alpha,\beta)\in\Sols_t \mid \theta,\gamma \in \{0,\pi\}, \alpha = \frac{\pi}{2}, \beta\in (0,\pi)\}\nonumber
\end{align}
Because the map $h_t:\Sols^*_t\to \RR_\t(C_3)^*$ is the quotient by the action of $\iota$, every point in $\RR_\t(C_3)^*$ has a unique lift to $\widetilde{\Sols}^*_t$. Define  
\[\ol{c}_t:\RR_\t(T_1+T_2)^*\to \widetilde{\Sols}^*_t\subset \Pr\]
so that $\ol{c}_t(\rho)$ is the lift of $c_t(\rho)$ into $\widetilde{\Sols}^*_t$.

Corollary \ref{cor:singular} showed that $\J_0^0=N$ contains all of the singular points of $\RR_{\pi_1\cup\pi_2}(T_1+T_2)$. For each $\rho\in \J_0^0$, $\ol{c}(\rho)\in\SV$, so pick a neighborhood $U_\rho\subset \Pr$ of $\ol{c}(\rho)$ which satisfies Corollary \ref{cor:nbhdsmooth}. Because the points of $\J_0^0$ are isolated by Lemma \ref{lem:isolates}, there is an $\epsilon_0$ small enough so that each component of $\J_0^{\epsilon_0}$ contains at most one point of $\J_0^0$ and $\ol{c}$ maps the component containing $\rho$ into $U_\rho$. Let $V^\rho_0$ be the component of $\J_0^{\epsilon_0}$ that contains $\rho$. 

Note that $V^\rho_0\subset \RR_{\pi_1\cup\pi_2}(T_1\twedge T_2)\times_{P_1\times P_2} (\RR_{\Pert_0}(C_3) \cap U_\rho)$, but $\RR_{\pi_1\cup\pi_2}(T_1\twedge T_2)\times_{P_1\times P_2} (\RR_{\Pert_0}(C_3) \cap U_\rho)$ may also contain other disjoint components as well. Let $W_\rho$ be an open neighborhood of $\rho\mid_{\RR_{\pi_1\cup\pi_2}(T_1\twedge T_2)}$ in $\RR_{\pi_1\cup\pi_2}(T_1\twedge T_2)$ such that $V^\rho_0 = W_\rho \times_{P_1\times P_2} (\RR_{\Pert_0}(C_3) \cap U_\rho)$. Then define \[V^\rho_t := W_\rho \times_{P_1\times P_2} (\RR_\t(C_3) \cap U_\rho).\] Then for sufficiently small $t$, $\RR_{\t\cup\pi_1\cup\pi_2}(T_1+T_2) = \K^{\epsilon_0}_t\cup (\bigcup_{\rho\in \J^0_0} V^\rho_t)$. Since Lemma \ref{lem:Kgood} explains the behavior of $\K^{\epsilon_0}_t$, if $V^\rho_t$ can be understood for each $\rho\in\J^0_0$, these can be glued back together to construct $\RR_{\t\cup\pi_1\cup\pi_2}(T_1+T_2)$.

\begin{remark}
If $\mu_t|_{\RR_\t(C_3) \cap \NS_\epsilon}$ is not transverse to $\nu$ for any small value of $t$, it is possible that some $V^\rho_t$ are singular. This would mean that $\RR_{\t\cup\pi_1\cup\pi_2}(T_1+T_2) = \K^{\epsilon_0}_t\cup (\bigcup_{\rho\in \J^0_0} V^\rho_t)$ is not a manifold. In that case, Theorem \ref{thm:reg} guarantees that there is an arbitrarily small perturbation $\pi$ such that $\RR_{\t\cup\pi_1\cup\pi_2\cup \pi}(T_1+T_2)$ is a manifold and in turn that each $V^\rho_t$ is a manifold. If $\pi$ is chosen to be small enough, then all of the results of Section \ref{sec:Perturbing} so far would still hold with $\t+\pi$ in place of $\t$. Thus, we will assume that for sufficiently small $t$, $V^\rho_t$ is a 1-manifold with boundary.
\end{remark}

Each $V^\rho_0$ is homeomorphic to either a cone on one point, an interval, or a cone on four points by Lemma \ref{lem:isolates}. If $V^\rho_0$ is a cone on one point then $\rho$ is an abelian representation of $\RR_{\t\cup\pi_1\cup\pi_2}(T_1+T_2)$ which maps to the corner of the pillowcase. Thus, we are interested in the cases where $V^\rho_0$ is homeomorphic to either an interval or a cone on four points. The nature of $V^\rho_t$ will depend on which of these cases $V^\rho_0$ falls into so we will treat these two cases separately.

\begin{theorem}
\label{thm:intpert}
Suppose $\rho\in \J^0_0$ such that $V^\rho_0 \cong I$. Then $V^\rho_t$ is homeomorphic to an interval and a union of circles.
\end{theorem}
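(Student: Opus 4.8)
First I would observe that the hypothesis $V^\rho_0\cong I$ pins down exactly which local model of $\RR_\t(C_3)$ near $\ol{c}(\rho)$ is in play. By Lemma~\ref{lem:isolates}, $V^\rho_0\cong I$ is its Case~2, so exactly one of the restrictions—say $\rho|_{T_1}$—has $\Stab(\rho|_{T_1})=\U$ while $\Stab(\rho|_{T_2})=\Z/2$. Thus $\rho|_{T_1}$ is a non-central abelian representation, an isolated boundary point of the $1$-manifold-with-boundary $\RR_{\pi_1}(T_1)$, whose image in $P_1$ is a corner. Writing $\ol{c}(\rho)=\widetilde{\Gamma}(\gamma_0,\theta_0,\tfrac{\pi}{2},\beta_0)\in\SV$ (so $\gamma_0\in\{0,\pi\}$ and $\sin\gamma=0$), the corner condition forces $\theta_1(\ol{c}(\rho))=\theta(\rho|_{T_1})\in\{0,\pi\}$, whence by Equation~\ref{eq:pccoords2} (in the $\sin\gamma=0$ branch, with $\alpha=\tfrac{\pi}{2}$) we get $\cos\beta_0=\pm1$, so $\sin\beta_0=0$ and therefore $s(\ol{c}(\rho))=\sin\beta_0\sin(\theta_0-\beta_0)=0$. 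Moreover $\ol{c}(\rho)\notin\Sols^0$, since otherwise $V^\rho_0$ would be a cone on one point by Lemma~\ref{lem:isolates}; so $\ol{c}(\rho)$ lies on, but is not a singular point of, the zero locus of $s$ in $\SV$.

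Next I would set up the local model for small $t\neq0$. By Theorem~\ref{thm:smooth} and Corollary~\ref{cor:smoothds}, $\Sols_t\setminus\Sols^0$ and $\RR_\t(C_3)^*$ are smooth $3$-manifolds; since $\ol{c}(\rho)\notin\Sols^0$ and $\iota$ acts freely there, after shrinking $U_\rho$ we may assume $U_\rho\cap\Sols^0=\emptyset$ and $U_\rho\cap\iota(U_\rho)=\emptyset$, so that $\RR_\t(C_3)\cap U_\rho\cong\Sols_t\cap U_\rho$ is a connected smooth $3$-manifold with boundary (connected by the $s=0$ case of Corollary~\ref{cor:nbhdsmooth}, with boundary edges on $(H^\pm_{\gamma_0})_t$ and $(A^\pm)_t$ by Corollary~\ref{cor:slices}). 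By Proposition~\ref{prop:strat}, for $t\in(0,\tfrac{\pi}{2})$ the restriction map sends all of $\RR_\t(C_3)$ into $\P_{222}\sqcup\P_{000}$, so (using $U_\rho\cap\Sols^0=\emptyset$) the map $\mu_t\colon\Sols_t\cap U_\rho\to P_1\times P_2$ has image in $P_1^*\times P_2^*$ and hence misses the corner of $P_1$ to which $\rho|_{T_1}$ and the nearby part of $\partial\RR_{\pi_1}(T_1)$ map. Taking $W_\rho$ small enough that its $T_2$-factor stays in the $\Z/2$-stabilizer interior of $\RR_{\pi_2}(T_2)$, the only boundary of the $2$-manifold-with-boundary $W_\rho$ sits over that same corner. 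Consequently the fiber product $V^\rho_t=W_\rho\times_{P_1\times P_2}(\RR_\t(C_3)\cap U_\rho)$ never meets $\partial W_\rho$ for $t\neq0$, and every representation it contains has stabilizer $\Z/2$. By the Remark preceding the theorem we may assume $V^\rho_t$ is a compact $1$-manifold with boundary; by the classification of compact $1$-manifolds it is a disjoint union of finitely many arcs and circles, so it remains only to show $\#\,\partial V^\rho_t=2$.

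Then I would count the boundary points. By the previous step, $\partial V^\rho_t$ contains no point over $\partial W_\rho$ and no $\U$-stabilizer (hence honest character-variety boundary) point, so every point of $\partial V^\rho_t$ lies on the interface with the adjacent piece $\K^{\epsilon_0}_t$, i.e.\ over $\Sols_t\cap\partial\NS_{\epsilon_0}\subset\ol{\MS_{\epsilon_0}}$. On $\ol{\MS_{\epsilon_0}}$ the family $\Sols_t$ is a smooth isotopy for small $t$ by Lemma~\ref{lem:isotopy}, and the fiber product with $W_\rho$ transports along it; thus the interface of $V^\rho_t$ with $\K^{\epsilon_0}_t$ is identified with that of $V^\rho_0$ with $\K^{\epsilon_0}_0=M$. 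Since $V^\rho_0\cong I$, with its unique point over $\partial W_\rho$ (namely $\rho$) in its interior and $V^\rho_0\setminus\{\rho\}\subset M$, that interface is exactly the two endpoints of $V^\rho_0$. Hence $\#\,\partial V^\rho_t=2$, so $V^\rho_t$ has exactly one arc component and is homeomorphic to an interval together with a (possibly empty) disjoint union of circles.

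The hard part will be the transport in the last step: making precise that, for small $t$, the piece $V^\rho_t$ is glued to $\K^{\epsilon_0}_t$ along the same two interface points as $V^\rho_0$ is glued to $M$, and that no component of $V^\rho_t$ exits $U_\rho$ except through those two points. This is where one must combine the isotopy of $\Sols_t\cap\ol{\MS_{\epsilon_0}}$ (Lemma~\ref{lem:isotopy}), the regular homotopy of $\K^{\epsilon_0}_t$ (Lemma~\ref{lem:Kgood}), and the transversality of $\mu_t$ with $\nu$. The circle components are genuinely allowed: they arise because the cone-on-four-points slices of Corollary~\ref{cor:slices} resolve in opposite directions on the two sides of the zero locus of $s$ through $\ol{c}(\rho)$, and as $t\to0$ each such circle must shrink back to $\rho$.
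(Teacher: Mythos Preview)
Your proposal is correct and follows essentially the same route as the paper: track the two endpoints of $V^\rho_0$ via the regular homotopy of $\K^{\epsilon_0}_t$ (Lemma~\ref{lem:Kgood}), argue that no new boundary points can appear, and finish with the classification of compact $1$-manifolds. The paper's proof is considerably terser: it simply names $\partial(V^\rho_0)=\{p^1_0,p^2_0\}\subset\K^{\epsilon_0}_0$, transports them to $\{p^1_t,p^2_t\}$, observes that these are non-abelian and hence interior to the full perturbed character variety, concludes $\partial(V^\rho_t)=\{p^1_t,p^2_t\}$, and invokes $1$-manifold classification.

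Your first paragraph---identifying Case~2 of Lemma~\ref{lem:isolates} and deducing $s(\ol{c}(\rho))=0$---is correct but not needed here; you have essentially rederived Lemma~\ref{lem:sint}, which the paper states separately and does not invoke in this proof. Likewise, your appeal to the $s=0$ case of Corollary~\ref{cor:nbhdsmooth} for connectedness of $\RR_\t(C_3)\cap U_\rho$ is correct but not used in the paper's argument. Where you add genuine value is in making explicit why $\partial V^\rho_t$ acquires no new points: you argue via Proposition~\ref{prop:strat} that $\mu_t$ misses the corner over which $\partial W_\rho$ sits, so the fiber product has no $\U$-stabilizer points and no points over $\partial W_\rho$. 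The paper compresses this into the single sentence ``Since these are non-abelian representations, they cannot be the boundary of the perturbed character variety,'' which your version justifies more carefully.
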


\begin{proof}
Let $\partial(V^\rho_0) = \{p^1_0, p^2_0\} \subset \K^{\epsilon_0}_0$. Let $p^1_t$ and $p^2_t$ be the corresponding boundary points of $\K^{\epsilon_0}_t$ which are well defined by Lemma \ref{lem:Kgood}. Since these are non-abelian representations, they cannot be the boundary of the perturbed character variety. Thus, for sufficiently small $t$, $\partial(V^\rho_t) = \{p^1_t, p^2_t\}$. Since $V^\rho_t$ is a 1-manifold, it must contain an interval component with boundary $p^1_t$ and $p^2_t$. Any other components of $V^\rho_t$ must be circles. As $t\to 0$, the image of these circles in $P_3$ must approach the point $p_3(\rho)$.
\end{proof}

Now take $\rho\in\J^0_0$ such that $V^\rho_0$ is a cone on four points. $\partial(V^{\rho}_0)$ consists of four points. The fact that $(T_1, \pi_1)$ and $(T_2, \pi_2)$ satisfy Condition \ref{con:edges} guarantees that $\ol{c}$ maps one of the boundary points to $A^-$ and another to $A^+$. Lemma \ref{lem:isolates} guarantees that the other two are mapped to $H^+_{\gamma(\rho)}$ and $H^-_{\gamma(\rho)}$ (henceforth abbreviated as $H^+$ and $H^-$ for simplicity). Label the four boundary points $\{p_{H^+\grho}, p_{H^-\grho}, p_{A^+}, p_{A^-}\}$ appropriately. These points also lie in $\K^{\epsilon_0}_t$. Let $\{(p_{H^+\grho})_t, (p_{H^-\grho})_t, (p_{A^+})_t, (p_{A^-})_t\}$ be the corresponding boundary points of $\K^{\epsilon_0}_t$. It is clear that $\ol{c}((p_{H^+\grho})_t)\in (H^+\grho)_t$ and similarly for the other points. 

For the same reason as the previous theorem, we know that \[\partial(V^\rho_t) = \{(p_{H^+\grho})_t, (p_{H^-\grho})_t, (p_{A^+})_t, (p_{A^-})_t\}.\] Recall that $V^\rho_t = W_\rho \times_{P_1\times P_2} (\RR_0(C_3) \cap U_\rho)$. By Corollary \ref{cor:nbhdsmooth}, we know that $\RR_0(C_3) \cap U_\rho$ has two components, $U_1$ and $U_2$, so \[V^\rho_t = W_\rho \times_{P_1\times P_2} (\RR_0(C_3) \cap U_1) \cup W_\rho \times_{P_1\times P_2} (\RR_0(C_3) \cap U_2).\] We know that $s(\rho)\ne0$, and further suppose that $s(\rho)>0$. In this case, Corollary \ref{cor:nbhdsmooth} asserts that the boundary of $U_1$ intersects $(H^-\grho)_t$ and $(A^+)_t$ while the boundary of $U_2$ intersects $(H^-\grho)_t$ and $(A^-)_t$. That means the boundary of $W_\rho \times_{P_1\times P_2} (\RR_0(C_3) \cap U_1)$ is $\{(p_{H^-\grho})_t, (p_{A^+})_t\}$ and the boundary of $W_\rho \times_{P_1\times P_2} (\RR_0(C_3) \cap U_2)$ is $\{(p_{H^+\grho})_t, (p_{A^-})_t\}$. Since $V^\rho_t$ is smooth, it must contain at least two intervals, one with boundary $\{(p_{H^-\grho})_t, (p_{A^+})_t\}$ and the other with boundary $\{(p_{H^+\grho})_t, (p_{A^-})_t\}$. As in Theorem \ref{thm:intpert}, there may be additional circle components whose image on $P_3$ approaches $p_3(\rho)$ as $t\to 0$.

The same logic shows that if instead $s(\rho)<0$, $V^\rho_t$ must contain at least two intervals with boundaries $\{(p_{H^+\grho})_t, (p_{A^+})_t\}$ and $\{(p_{H^-\grho})_t, (p_{A^-})_t\}$. The following Theorem summarizes the above results:

\begin{theorem}
\label{thm:coneres}
For $\rho\in\J^0_0$, if $V^\rho_0$ is a cone on four points, then $V^\rho_t$ is homeomorphic to the union of $S^0\times I$ and some number of circle components. The boundaries of the interval components are:

\begin{itemize}
    \item  $\{(p_{H^+})_t, (p_{A^-})_t\}$ and $\{(p_{H^-})_t, (p_{A^+})_t\}$ if $s(\rho)>0$.

    \item $\{(p_{H^+})_t, (p_{A^+})_t\}$ and $\{(p_{H^-})_t, (p_{A^-})_t\}$ if $s(\rho)<0$.
\end{itemize}
\end{theorem}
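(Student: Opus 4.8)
The statement is, in essence, the conclusion of the discussion immediately preceding it, so the plan is to assemble that discussion into a self-contained argument. First I would pin down $\partial(V^\rho_t)$. By hypothesis $V^\rho_0$ is a cone on four points, and by Condition \ref{con:edges} together with Lemma \ref{lem:isolates} its four endpoints are carried by $\ol{c}$ into $A^+$, $A^-$, $H^+_{\gamma(\rho)}$, $H^-_{\gamma(\rho)}$. All four are non-abelian representations (they have $\Z/2$ stabilizer), hence they cannot lie on $\partial\RR_{\pi_1\cup\pi_2}(T_1+T_2)$; so by the same persistence-of-boundary argument used in Theorem \ref{thm:intpert}, for $t$ small the corresponding perturbed points $(p_{H^+})_t$, $(p_{H^-})_t$, $(p_{A^+})_t$, $(p_{A^-})_t$ of $\K^{\epsilon_0}_t$ constitute exactly $\partial(V^\rho_t)$.

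Next I would transfer the local picture from $C_3$ to the tangle sum. Writing $V^\rho_t = W_\rho \times_{P_1\times P_2}(\RR_\t(C_3)\cap U_\rho)$ with $U_\rho = \Psi_\epsilon\inv(U_\SV)$ chosen as in Corollary \ref{cor:nbhdsmooth}, that corollary describes $\RR_\t(C_3)\cap U_\rho$ as two arcs whose endpoints sit on $(H^\pm_{\gamma_0})_t$ and $(A^\pm)_t$, with the pairing of an $H$-end to an $A$-end governed by $\sign(s(\rho))$ (and $\gamma_0$). Since $(T_1,\pi_1)$ and $(T_2,\pi_2)$ form a good pair, all representations occurring in this neighborhood have $\Z/2$ stabilizer on the gluing spheres, so by Lemma \ref{lem:generic} the fiber product with $W_\rho$ introduces only trivial gluing parameters and hence neither merges nor splits components; each arc of $\RR_\t(C_3)\cap U_\rho$ therefore contributes exactly one arc of $V^\rho_t$ joining the matching pair of boundary points. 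As $V^\rho_t$ is, by assumption, a compact $1$-manifold with precisely the four boundary points identified above, it must be a disjoint union of exactly two arcs realizing this pairing together with some number of closed components, i.e.\ $V^\rho_t \cong (S^0\times I)\sqcup(\text{circles})$; the arcs join $\{(p_{H^+})_t,(p_{A^-})_t\}$ and $\{(p_{H^-})_t,(p_{A^+})_t\}$ when $s(\rho)>0$, and $\{(p_{H^+})_t,(p_{A^+})_t\}$ and $\{(p_{H^-})_t,(p_{A^-})_t\}$ when $s(\rho)<0$. Finally, just as in Theorem \ref{thm:intpert}, the image in $P_3$ of any closed component must collapse to $p_3(\rho)$ as $t\to 0$.

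The only step that requires genuine care — and thus the main obstacle — is the passage from the two components of $\RR_\t(C_3)\cap U_\rho$ to the component structure of the fiber product $V^\rho_t$: one must check that the $W_\rho$ factor does not connect the two arcs or disconnect either of them, and that the endpoints are exactly $(p_{H^\pm})_t$ and $(p_{A^\pm})_t$ rather than other points of $\partial\K^{\epsilon_0}_t$. This is where the good-pair hypothesis (triviality of the gluing parameters, Lemma \ref{lem:generic}) and the transversality of $\mu_t$ with $\nu$ from Lemma \ref{lem:Kgood} are essential. A secondary point, worth only a sentence, is reconciling the two cases stated here with the four cases of Corollary \ref{cor:nbhdsmooth}: the labels $H^\pm$ are adapted to $\gamma(\rho)$, and the $\gamma_0=\pi$ cases are handled exactly as the $\gamma_0=0$ cases (the reflection of Lemma \ref{lem:flip} exchanges them while flipping the sign relevant to $s$), so only the dichotomy $s(\rho)>0$ versus $s(\rho)<0$ survives in the final labeling.
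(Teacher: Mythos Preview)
Your proposal is correct and follows essentially the same approach as the paper: the theorem is explicitly stated there as a summary of the preceding discussion, and you have faithfully reconstructed that discussion---identifying $\partial V^\rho_t$ by the non-abelian/persistence argument, then splitting $V^\rho_t = W_\rho \times_{P_1\times P_2}(\RR_\t(C_3)\cap U_\rho)$ via the two components of Corollary~\ref{cor:nbhdsmooth}, and reading off the boundary pairings. One small caveat: your appeal to Lemma~\ref{lem:flip} to collapse the four cases of Corollary~\ref{cor:nbhdsmooth} into the two cases of the theorem is not the right reference (that lemma concerns a tangle cobordism, not the internal symmetry of $\Sols_t$); the paper itself simply does not spell out how the $\gamma_0=\pi$ case folds into the stated dichotomy, so you are right to flag it, but the mechanism is not Lemma~\ref{lem:flip}.
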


Suppose $\rho\in\J^0_0$ such that $V^\rho_0$ is a cone on four points. By Theorem \ref{thm:nice}, such representations come in pairs in the form $\rho_0:=[\rho_1+\rho_2]_0$ and $\rho_\pi:=[\rho_1+\rho_2]_\pi$ as a part of a circle of representations $\{[\rho_1+\rho_2]_\psi\in \RR_{\pi_1\cup\pi_2)(T_1+T_2)}\mid \psi\in S^1\}$. %
Let $\partial(V^{\rho_0}_0) = \{p^{\rho_0}_{H^+\grho}, p^{\rho_0}_{H^-\grho}, p^{\rho_0}_{A^+}, p^{\rho_0}_{A^-}\}$ and $\partial(V^{\rho_\pi}_0) = \{p^{\rho_\pi}_{H^+\grho}, p^{\rho_\pi}_{H^-\grho}, p^{\rho_\pi}_{A^+}, p^{\rho_\pi}_{A^-}\}$.
Let \[h_+:=\{[\rho_1+\rho_2]_\psi\in \K^{\epsilon_0}_0\mid \sin\psi>0\}\] and \[h_-:=\{[\rho_1+\rho_2]_\psi\in \K^{\epsilon_0}_0\mid \sin\psi<0\}\] be the two intervals connecting $V_0^{[\rho_1 + \rho_2]_0}$ and $V_0^{[\rho_1 + \rho_2]_\pi}$ which are a subset of the aforementioned circle of representations. Note that $\ol{c}(h_\pm) \subset H^\pm_{\gamma(\rho_0)}$ and so $\partial(h_+) = \{p^{\rho_0}_{H^+}, p^{\rho_\pi}_{H^+}\}$ and $\partial(h_-) = \{p^{\rho_0}_{H^-}, p^{\rho_\pi}_{H^-}\}$. Because $\K^{\epsilon_0}_t$ is regularly homotopic to $\K^{\epsilon_0}_0$, it makes sense to consider $(h_+)_t$ and $(h_-)_t$ whose boundaries are $\{(p^{\rho_0}_{H^+})_t, (p^{\rho_\pi}_{H^+})_t\}$ and $\{(p^{\rho_0}_{H^-})_t, (p^{\rho_\pi}_{H^-})_t\}$, respectively.

Lemma \ref{lem:endop} tells us that $s(\rho_0) = -s(\rho_\pi)$. Assume that $s(\rho_0)>0$ and $s(\rho_\pi)<0$. By Theorem \ref{thm:coneres} we know that there is an interval $I_0$ in $V^{\rho_0}_t$ with boundary $\{(p^{\rho_0}_{A^-})_t, (p^{\rho_0}_{H^+})_t\}$ and an interval $I_\pi$ in $\{(p^{\rho_\pi}_{H^+})_t, (p^{\rho_\pi}_{A^+})_t\}$. Then gluing together $I_0\cup (h_+)_t\cup I_\pi$ gives one interval connecting $(p^{\rho_0}_{A^-})_t$ to $(p^{\rho_\pi}_{A^+})_t$. Connecting the other intervals gives a similar interval connecting $(p^{\rho_0}_{A^+})_t$ to $(p^{\rho_\pi}_{A^-})_t$. Note that if the signs of $s(\rho_0)$ and $s(\rho_\pi)$ are reversed, the roles of $(h_+)_t$ and $(h_-)_t$ swap, but the end result is still two intervals with boundaries $\{(p^{\rho_0}_{A^-})_t, (p^{\rho_\pi}_{A^+})_t\}$ and $\{(p^{\rho_0}_{A^+})_t, (p^{\rho_\pi}_{A^-})_t\}$. %

\begin{remark}

Given $\rho\in\RR_{\pi}(T_1+T_2)$, if $p_3(\rho)\in \{(\gamma,\theta)\in P\mid \theta\in(0,\pi)\}$, then $\ol{c}(\rho)\in A^+$. If $p_3(\rho)\in \{(\gamma,\theta)\in P\mid \theta\in[\pi,2\pi]\}$, then $\ol{c}(\rho)\in A^-$.

\end{remark}

The following theorem summarizes the results of this section.

\begin{theorem}
\label{thm:cross}

Let $(T_1, \pi_1)$ and $(T_2, \pi_2)$ be a good pair. Furthermore, assume $\pi_1$ and $\pi_2$ are picked so that all circles components of $\RR^{\ol{\H}}_{\pi_1\cup\pi_2}(T_1+T_2)$ are internal. Then for sufficiently small $t>0$, there is a subspace of $\RR_{\t\cup\pi_1\cup\pi_2}(T_1+T_2)$ homeomorphic to the following construction:

Let $\{C_i\}$ be the circle components of $\RR^{\ol{\H}}_{\pi_1\cup\pi_2}(T_1+T_2)$. Let $\rho_1^i$ and $\rho_2^i$ be the intersection of $C_i$ and $\RR^{\ol{\A}}_{\pi_1\cup\pi_2}(T_1+T_2)$. Let $N(C_i)$ be neighborhoods of $C_i$ in $\RR_{\pi_1\cup\pi_2}(T_1+T_2)$ such that $N(C_i)$ is disjoint from $N(C_j)$ for $i\ne j$. Let $X = \RR_{\pi_1\cup\pi_2}(T+S)\setminus (\bigcup_i N(C_i))$. The endpoints (except for the abelian points) of $X$ are of the form $p^{\rho^i_j}_{A^{\pm}}$. Let $I^{i}_1$ be an interval whose endpoints are identified with $p^{\rho^i_1}_{A^{+}}$ and $p^{\rho^i_2}_{A^{-}}$. Similarly, let $I^{i}_2$ be an interval whose endpoints are identified with $p^{\rho^i_1}_{A^{-}}$ and $p^{\rho^i_2}_{A^{+}}$. Each $I^i_*$ maps into the pillowcase such that as $t\to 0$, the image of $I^i_*$ approaches the image of $C_i$.

Call the previously described subspace of $\RR_{\t\cup\pi_1\cup\pi_2}(T_1+T_2)$ the \emph{main components} of $\RR_{\t\cup\pi_1\cup\pi_2}(T_1+T_2)$. Any other components of $\RR_{\t\cup\pi_1\cup\pi_2}(T_1+T_2)$ are homeomorphic to circles which map to the pillowcase in a way so that as $t\to 0$, they approach $p_3(\J^0_0)$. Call any such components \emph{auxiliary components}.
\end{theorem}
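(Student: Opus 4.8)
The statement collects the results established in this section, so the plan is essentially to assemble them along the decomposition $\RR_{\t\cup\pi_1\cup\pi_2}(T_1+T_2) = \K^{\epsilon_0}_t \cup \big(\bigcup_{\rho\in\J^0_0} V^\rho_t\big)$ set up above. First I would invoke Lemma \ref{lem:Kgood} to record that, for sufficiently small $t$, there is a regular homotopy from $\K^{\epsilon_0}_0$ to $\K^{\epsilon_0}_t$. In particular the boundary points $(p^\bullet_\bullet)_t$ of $\K^{\epsilon_0}_t$ are canonically identified with those of $\K^{\epsilon_0}_0$, the arcs $(h_+)_t$ and $(h_-)_t$ persist, and the space $X = \RR_{\pi_1\cup\pi_2}(T_1+T_2)\setminus\bigcup_i N(C_i)$ --- the part of the unperturbed character variety lying outside the chosen circle neighborhoods --- survives the perturbation up to regular homotopy.

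Next I would run through the points $\rho\in\J^0_0$ and split into the three cases of Lemma \ref{lem:isolates} for $V^\rho_0$. If $V^\rho_0$ is a cone on one point, then $\rho$ is an abelian representation mapping to a corner of the pillowcase and contributes nothing beyond a boundary point. If $V^\rho_0\cong I$, then Theorem \ref{thm:intpert} identifies $V^\rho_t$ as an interval sharing the same two (non-abelian, hence boundary) endpoints as before, together with circle components whose $p_3$-image collapses onto $p_3(\rho)$ as $t\to 0$. If $V^\rho_0$ is a cone on four points, then $\rho$ equals $[\rho_1+\rho_2]_0$ or $[\rho_1+\rho_2]_\pi$ for a pair furnished by Theorem \ref{thm:nice}, and Theorem \ref{thm:coneres} writes $V^\rho_t$ as $(S^0\times I)\sqcup(\text{circles})$ with the interval endpoints dictated by $\sign(s(\rho))$.

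The key step is the gluing. For each circle $C_i$ with cone points $\rho^i_1=[\rho_1+\rho_2]_0$ and $\rho^i_2=[\rho_1+\rho_2]_\pi$, Lemma \ref{lem:endop} gives $s(\rho^i_1)=-s(\rho^i_2)$, so exactly one of the two cone points has positive $s$ and the other negative. Feeding this into Theorem \ref{thm:coneres} and concatenating the resulting interval pieces of $V^{\rho^i_1}_t$ and $V^{\rho^i_2}_t$ with the arcs $(h_+)_t$ and $(h_-)_t$ produces two arcs $I^i_1$ and $I^i_2$ with endpoints $\{(p^{\rho^i_1}_{A^+})_t,(p^{\rho^i_2}_{A^-})_t\}$ and $\{(p^{\rho^i_1}_{A^-})_t,(p^{\rho^i_2}_{A^+})_t\}$, exactly as in the discussion preceding the statement; reversing the signs of $s$ merely interchanges the roles of $(h_+)_t$ and $(h_-)_t$ and yields the same two arcs. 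Attaching $I^i_1$ and $I^i_2$ to $X$ along these endpoints realizes the asserted subspace. The claim that the image of $I^i_\ast$ in the pillowcase approaches that of $C_i$ as $t\to 0$ then follows because the arcs $(h_\pm)_t$ track $C_i$ while the pieces coming from the $V^\rho_t$ shrink onto $p_3(\J^0_0)$; everything left over is a circle of the type produced in Theorems \ref{thm:intpert} and \ref{thm:coneres}, i.e.\ an auxiliary component.

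The hard part will be the bookkeeping: checking that the auxiliary circles arising from the various $V^\rho_t$ are pairwise disjoint and disjoint from the main components, that each $V^\rho_t$ meets $\K^{\epsilon_0}_t$ only along its prescribed boundary points (using the isolation of $\J^0_0$ from Lemma \ref{lem:isolates} and the smallness of $\epsilon_0$), and that the two identifications of the points $(p^\bullet_\bullet)_t$, one coming from $\K^{\epsilon_0}_t$ and one from $V^\rho_t$, agree. One also has to address the caveat in the remark preceding the theorem: if $\mu_t$ restricted to the relevant neighborhoods fails to be transverse to $\nu$, some $V^\rho_t$ might not be a manifold, which is repaired by a further arbitrarily small perturbation via Theorem \ref{thm:reg} chosen small enough that all of the earlier conclusions of this section still hold.
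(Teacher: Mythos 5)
Your proposal reconstructs exactly the argument the paper gives for this theorem: decompose $\RR_{\t\cup\pi_1\cup\pi_2}(T_1+T_2)$ as $\K^{\epsilon_0}_t\cup\bigcup_{\rho\in\J^0_0} V^\rho_t$, use Lemma \ref{lem:Kgood} for the regular homotopy of $\K^{\epsilon_0}_t$, split $\rho\in\J^0_0$ by the three cases of Lemma \ref{lem:isolates} and apply Theorems \ref{thm:intpert} and \ref{thm:coneres}, then invoke Lemma \ref{lem:endop} to fix the signs of $s$ at the two cone points of each circle $C_i$ and concatenate with $(h_\pm)_t$ to form $I^i_1$ and $I^i_2$. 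The bookkeeping concerns you flag, and the transversality caveat you note at the end, are exactly what the paper addresses in the remark just before the theorem statement; your argument is correct and follows the same route.
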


\begin{figure}
    \centering
    \begin{subfigure}[b]{0.22\textwidth}
        \centering
        \includegraphics[width=\textwidth]{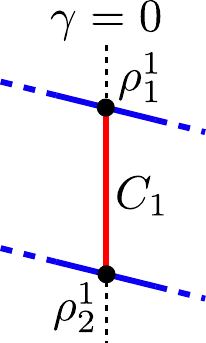}
        \caption{}
        \label{fig:Cross1}
    \end{subfigure}
    \hspace{1cm}
    \begin{subfigure}[b]{0.22\textwidth}
        \centering
        \includegraphics[width=\textwidth]{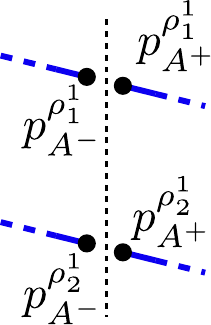}
        \caption{}
        \label{fig:Cross2}
    \end{subfigure}
    \hspace{1cm}
    \begin{subfigure}[b]{0.22\textwidth}
        \centering
        \includegraphics[width=\textwidth]{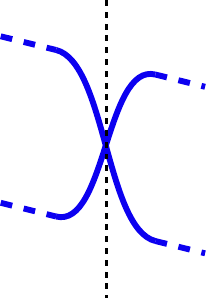}
        \caption{}
        \label{fig:Cross3}
    \end{subfigure}
    \caption{One step of the process detailed in Theorem \ref{thm:cross}.}
    \label{fig:Cross}
\end{figure}

Figure \ref{fig:Cross} shows what the process of constructing the main component looks like for one circle, $C_1$ that is mapped to the edge $\{(\gamma,\theta)\in P\mid \gamma=0\}$. In terms of the image on the pillowcase, a neighborhood of an interior circle looks like Figure \ref{fig:Cross1}. Let $\rho_1$ represent the top singularity and $\rho_2$ represent the bottom. Assume that the dotted line represents the edge $\{(\gamma,\theta)\in P\mid \gamma=0\}$. After removing a neighborhood of the circle, the endpoints on the left of the line are sent to $A_-$ and so represent the points $p^{\rho_1}_{A_-}$ and $p^{\rho_2}_{A_-}$. The points to the right of the line get sent to $A_+$ and so the points are $p^{\rho_1}_{A_+}$ and $p^{\rho_2}_{A_+}$ as in Figure \ref{fig:Cross2}. Then add an interval connecting $p^{\rho_1}_{A_+}$ and $p^{\rho_2}_{A_-}$ and another connecting $p^{\rho_2}_{A_+}$ and $p^{\rho_1}_{A_-}$, as in Figure \ref{fig:Cross3}. If instead the dotted line is the interval $\gamma=\pi$, the $A_+$'s and $A_-$'s are swapped and the final effect is the same.

\begin{remark}
As a consequence of \cite{Herald}, if there are any auxiliary components, they must be mapped into the pillowcase so that they enclose a signed area of 0 such as a figure eight.
\end{remark}

\begin{figure}
    \centering
    \begin{subfigure}[b]{0.25\textwidth}
        \centering
        \includegraphics[width=\textwidth]{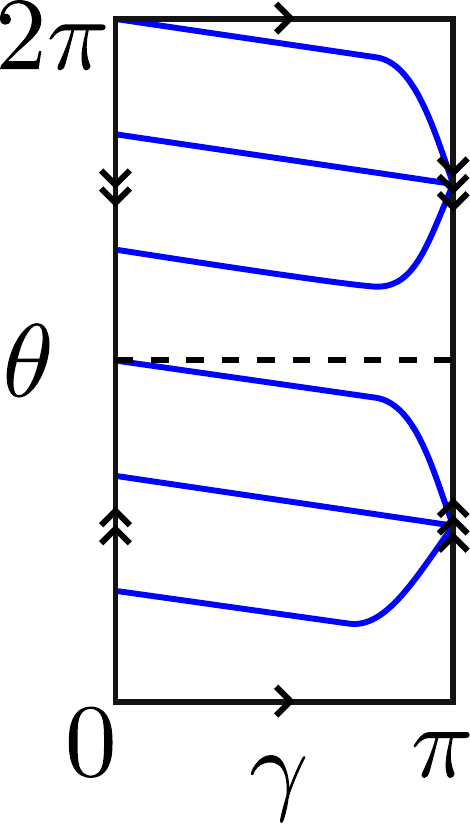}
        \caption{}
        \label{fig:Ex4}
    \end{subfigure}
    \hspace{1cm}
    \begin{subfigure}[b]{0.25\textwidth}
        \centering
        \includegraphics[width=\textwidth]{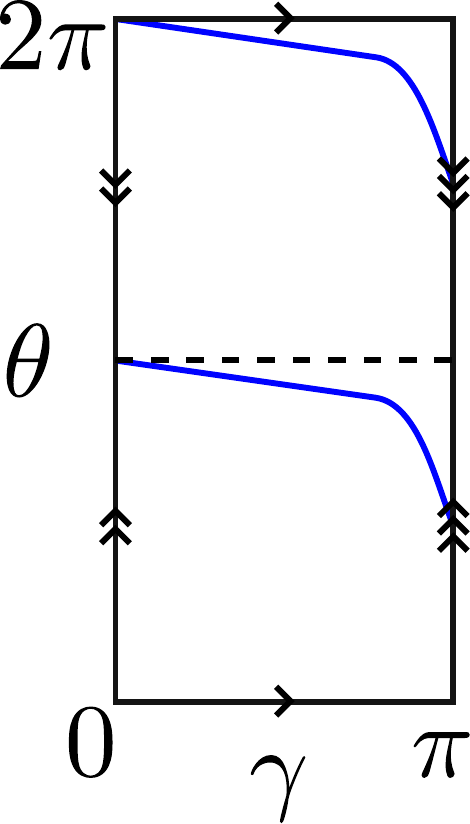}
        \caption{}
        \label{fig:ExResA}
    \end{subfigure}
    \hspace{1cm}
    \begin{subfigure}[b]{0.25\textwidth}
        \centering
        \includegraphics[width=\textwidth]{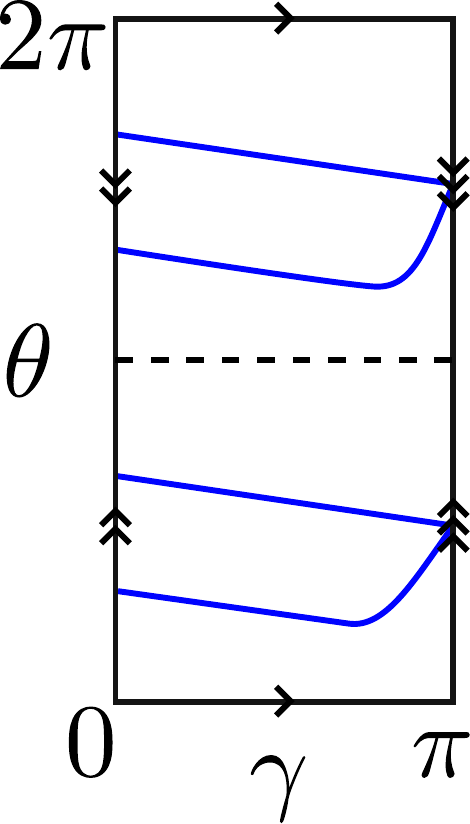}
        \caption{}
        \label{fig:ExResB}
    \end{subfigure}
    \caption{(A) shows the application of Theorem \ref{thm:cross} to $\RR(Q_\frac{1}{2}+Q_{-\frac{1}{3}})$ (whose unperturbed character variety is shown in Figure \ref{fig:Ex3}). The character variety splits into two components whose images in the pillowcase are shown in (B) and (C).}
    \label{fig:resolve}
\end{figure}

\begin{exmp}
The unperturbed character variety $\RR(Q_\frac{1}{2}+Q_{-\frac{1}{3}})$ is shown in Figure \ref{fig:Ex3}.
Figure \ref{fig:resolve} shows the application of Theorem \ref{thm:cross} to this character variety to get the image of the main components of $\RR_\t(Q_\frac{1}{2}+Q_{-\frac{1}{3}})$ in the pillowcase for small $t>0$.
\end{exmp}

\begin{figure}
    \centering
    \begin{subfigure}[b]{0.25\textwidth}
        \centering
        \includegraphics[width=\textwidth]{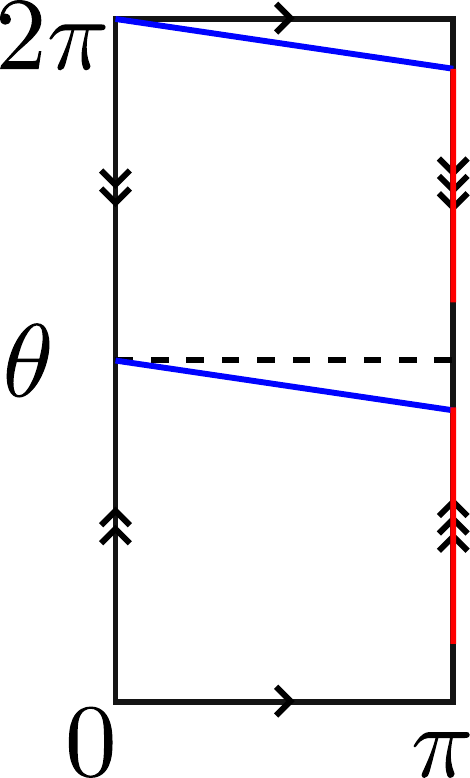}
        \caption{}
        \label{fig:ExResLimitA}
    \end{subfigure}
    \hspace{2cm}
    \begin{subfigure}[b]{0.25\textwidth}
        \centering
        \includegraphics[width=\textwidth]{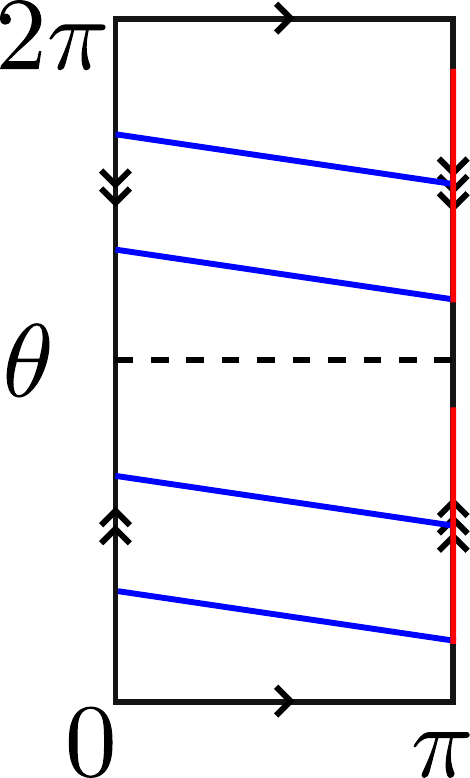}
        \caption{}
        \label{fig:ExResLimitB}
    \end{subfigure}
    \caption{The limits of the components of $\RR_t(Q_\frac{1}{2}+Q_{-\frac{1}{3}})$ as $t\to 0$. Note that here the red components are images of intervals in the character variety, compared to the unperturbed case in Figure \ref{fig:Ex3} where they were the images of circles.}
    \label{fig:limit}
\end{figure}

\begin{remark}
For computations, it is often useful to consider the limit of $\RR_{\t\cup\pis}(T_1+T_2)$ as $t\to 0$. The result is a character variety which is a manifold homeomorphic to the one described in Theorem \ref{thm:cross} but whose image in the pillowcase is the same as the unperturbed case. Figure \ref{fig:limit} shows what happens for $\RR_\t(Q_\frac{1}{2}+Q_{-\frac{1}{3}})$.
\end{remark}

\begin{remark}
The perturbed character variety of a tangle $\sum_{i=1}^n T_i$ can be identified by iterating the process described in Theorem \ref{thm:cross}. 
\end{remark}

\subsection{Related Perturbations}

\begin{figure}
    \centering
    \begin{subfigure}[b]{0.3\textwidth}
        \centering
        \includegraphics[width=\textwidth]{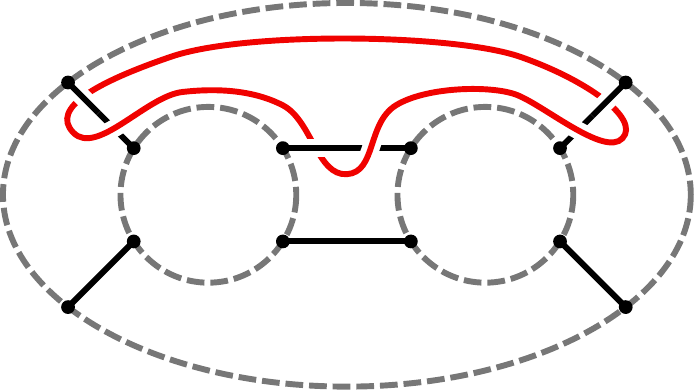}
        \caption{}
    \end{subfigure}
    \hspace{.5cm}
    \begin{subfigure}[b]{0.3\textwidth}
        \centering
        \includegraphics[width=\textwidth]{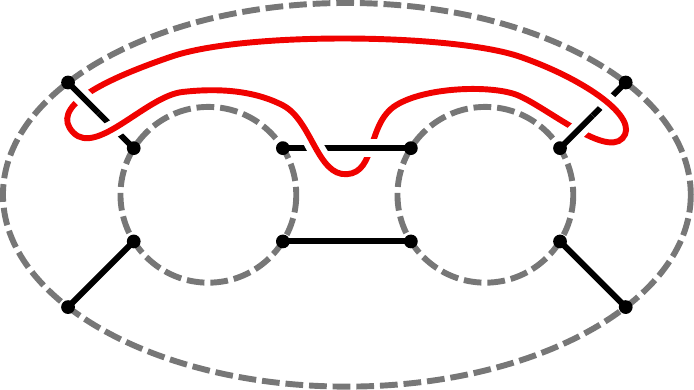}
        \caption{}
    \end{subfigure}
    \hspace{.5cm}
    \begin{subfigure}[b]{0.3\textwidth}
        \centering
        \includegraphics[width=\textwidth]{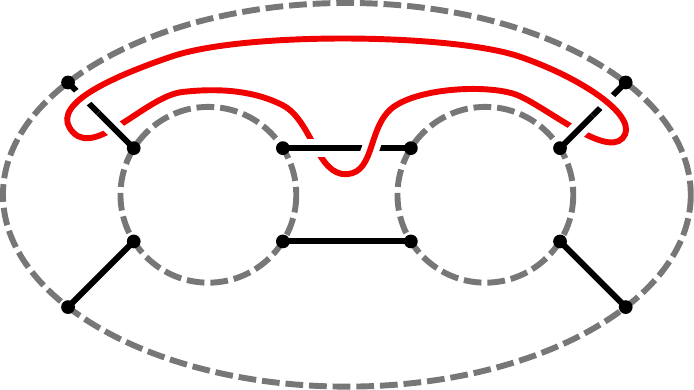}
        \caption{}
    \end{subfigure}
    \caption{The perturbation curves $D$, $D'$, and $D''$ respectively.}
    \label{fig:rotations}
\end{figure}

Because the perturbation curve is not rotationally symmetric, there are two other related perturbation curves for $C_3$ by rotating $D$, $D'$, and $D''$ as shown in Figure \ref{fig:rotations}. This section shows that $\RR_{\t\cup\pis}(T_1+T_2)$, $\RR_{D'_t\cup\pis}(T_1+T_2)$, and $\RR_{D''_t\cup\pis}(T_1+T_2)$ and their images in the pillowcase are very closely related.

Showing this is equivalent to showing that all of the results hold when $T_1+T_2$ is obtained by gluing $T_1 \sqcup T_2$ to $S_2 \sqcup S_3$ or $S_1 \sqcup S_3$ instead of $S_1 \sqcup S_2$ as previously considered. The analogous results from Section \ref{sec:unpert} hold by symmetry since there is no perturbation curve yet. All of the results from Section \ref{sec:pertC3} still hold because they only concern the character variety $\RR_\t(C_3)$ and not the tangles $T_i$. In Section \ref{sec:pertsum}, the analogous versions of Lemmas \ref{lem:sint} and  \ref{lem:endop} can be easily checked. The rest of the results in the section follow immediately from these lemmas and the results from the previous sections.

\begin{proposition}
    For a good pair with no corner circles $(T_1,\pi_1)$ and $(T_2, \pi_2)$, for sufficiently small $t>0$, the main components (defined within Theorem \ref{thm:cross}) of perturbed character varieties $\RR_{\t\cup\pis}(T_1+T_2)$, $\RR_{D'_t\cup\pis}(T_1+T_2)$, and $\RR_{D''_t\cup\pis}(T_1+T_2)$ are homeomorphic. Furthermore, the images of the respective main components are regularly homotopic in the pillowcase.
\end{proposition}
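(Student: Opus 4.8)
The plan is to exploit the observation recorded just above the statement: replacing the curve $D$ by $D'$ or $D''$ (see Figure \ref{fig:rotations}) is the same as keeping $D$ fixed but building $T_1+T_2$ by gluing $T_1\sqcup T_2$ along $S_1\sqcup S_3$ or $S_2\sqcup S_3$ rather than along $S_1\sqcup S_2$. Concretely, $C_3$ carries an orientation-preserving self-homeomorphism $\sigma$ realizing the cyclic permutation of $S_1,S_2,S_3$ (visible already in the presentation, where $S_1,S_2,S_3$ correspond symmetrically to the three identifications among the pairs $(a,b)$, $(c,d)$, $(x,y)$) and compatible with the parametrizations $\psi_i$, and $\sigma$ carries $D$ to $D'$ and $D'$ to $D''$. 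Transporting a representation through $\sigma^{\pm1}$ converts the $D'$- or $D''$-problem for $T_1+T_2$ into the $D$-problem with $T_1,T_2$ glued along a different pair of boundary spheres; the output sphere is correspondingly relabeled, but $\sigma$ identifies its pillowcase with $P_3$ compatibly with the $(\gamma,\theta)$-coordinates. So it suffices to show that every result of Sections \ref{sec:unpert}--\ref{sec:pertsum} survives these two alternative gluings.

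The parts that require no new work come first. Everything in Section \ref{sec:unpert} --- Theorems \ref{thm:C3}, \ref{thm:notnice}, \ref{thm:gpsum} and the surrounding lemmas --- is manifestly symmetric under permuting $S_1,S_2,S_3$, since no perturbation curve has yet been introduced; in particular the unperturbed character variety $\RR_{\pi_1\cup\pi_2}(T_1+T_2)$, its splitting into $\overline{\H}$- and $\overline{\A}$-parts, its circle components, and their images in the output pillowcase are literally unchanged by the relabeling. Likewise every statement of Section \ref{sec:pertC3} --- Proposition \ref{prop:strat}, Theorems \ref{thm:smooth}, \ref{thm:smoothing}, \ref{thm:Cone}, the function $s\colon\SV\to[-1,1]$ and its sign analysis --- concerns only the tangle $C_3$ together with the single curve $D$ and never mentions $T_1$ or $T_2$, hence holds verbatim.

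The only place where the gluing pattern actually enters the argument is Section \ref{sec:pertsum}, and there only through Lemmas \ref{lem:sint} and \ref{lem:endop}, which express $s(c(\rho))$ in terms of the tangle data $\theta(\rho_1),\theta(\rho_2)$. I would re-derive these using Equations \ref{eq:pccoords1}--\ref{eq:pccoords6} together with the formula $s(\widetilde{\Gamma}(\gamma_0,\theta,\tfrac{\pi}{2},\beta)) = \sin\beta\,\sin(\theta-\beta)$ of Equation \ref{eq:s}: for the $S_2\sqcup S_3$ gluing the relevant boundary coordinates of $\eta=c(\rho)$ are $\theta_2(\eta),\theta_3(\eta)$, and for the $S_1\sqcup S_3$ gluing they are $\theta_1(\eta),\theta_3(\eta)$. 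For the analogue of Lemma \ref{lem:sint}: if $\theta(\rho_i)\in\{0,\pi\}$ then Condition \ref{con:generic} forces $\rho_i$ to be abelian, so $\eta$ sends the corresponding meridians into $\{\pm\i\}$, and combining this with Proposition \ref{prop:strat} (which excludes the configurations where a corner-valued boundary coordinate would otherwise leave $\sin\beta\sin(\theta-\beta)$ nonzero) gives $s(c([\rho_1+\rho_2]))=0$ as before. For the analogue of Lemma \ref{lem:endop}: the explicit images of $[\rho_1+\rho_2]_0$ and $[\rho_1+\rho_2]_\pi$ recorded in the proof of Lemma \ref{lem:revengH} are valid under any gluing, so the same four-case sign computation yields $s(c([\rho_1+\rho_2]_0)) = -s(c([\rho_1+\rho_2]_\pi))$. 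This sign bookkeeping, carried out carefully in the two rotated configurations, is the step I expect to be the main obstacle; the rest is formal.

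Finally, once the analogues of Lemmas \ref{lem:sint} and \ref{lem:endop} are in hand, Theorems \ref{thm:intpert}, \ref{thm:coneres}, and \ref{thm:cross} go through unchanged, since their proofs invoke only the structure of $\RR_{\Pert_t}(C_3)$ (Section \ref{sec:pertC3}, unaffected), the decomposition of $\RR_{\Pert_t\cup\pi_1\cup\pi_2}(T_1+T_2)$ into $\K^\epsilon_t$ and the local pieces $V^\rho_t$, the regular-homotopy Lemma \ref{lem:Kgood} (whose proof uses only Condition \ref{con:generic} and the transversality of Proposition \ref{prop:transverse}, both symmetric under the relabeling), and the two sign lemmas. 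Consequently, for each of $D$, $D'$, $D''$ the main components arise from one and the same cut-and-reglue recipe applied to the same unperturbed character variety $\RR_{\pi_1\cup\pi_2}(T_1+T_2)$ and its circle components; and, as noted after Theorem \ref{thm:cross}, the outcome of that recipe --- replacing a neighborhood of each circle $C_i$ by two arcs joining $p^{\rho^i_1}_{A^+}$ to $p^{\rho^i_2}_{A^-}$ and $p^{\rho^i_1}_{A^-}$ to $p^{\rho^i_2}_{A^+}$, with image a fixed regular homotopy of the image of $C_i$ (Figure \ref{fig:Cross}) --- does not depend on the sign of $s$ at the relevant points. Hence the three sets of main components are homeomorphic and their images in $P_3$ are regularly homotopic, which proves the proposition.
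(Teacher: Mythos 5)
Your proposal is correct and follows essentially the same route as the paper: reinterpret the rotated perturbation curves as regluing $T_1\sqcup T_2$ along a different pair of boundary spheres, observe that Section \ref{sec:unpert} is symmetric and Section \ref{sec:pertC3} involves only $(C_3,\Pert)$, and reduce the remaining work to checking the analogues of Lemmas \ref{lem:sint} and \ref{lem:endop}, after which Theorems \ref{thm:intpert}, \ref{thm:coneres}, and \ref{thm:cross} apply unchanged. The paper leaves that sign-checking to the reader exactly as you do, so the two arguments agree in both structure and level of detail.
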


\section{Lagrangian Floer Methods for Character Varieties}
\label{sec:Hnat}

\subsection{The Earring Cobordism}
\label{sec:Earring}

\begin{figure}
    \centering
    \includegraphics[height=3in]{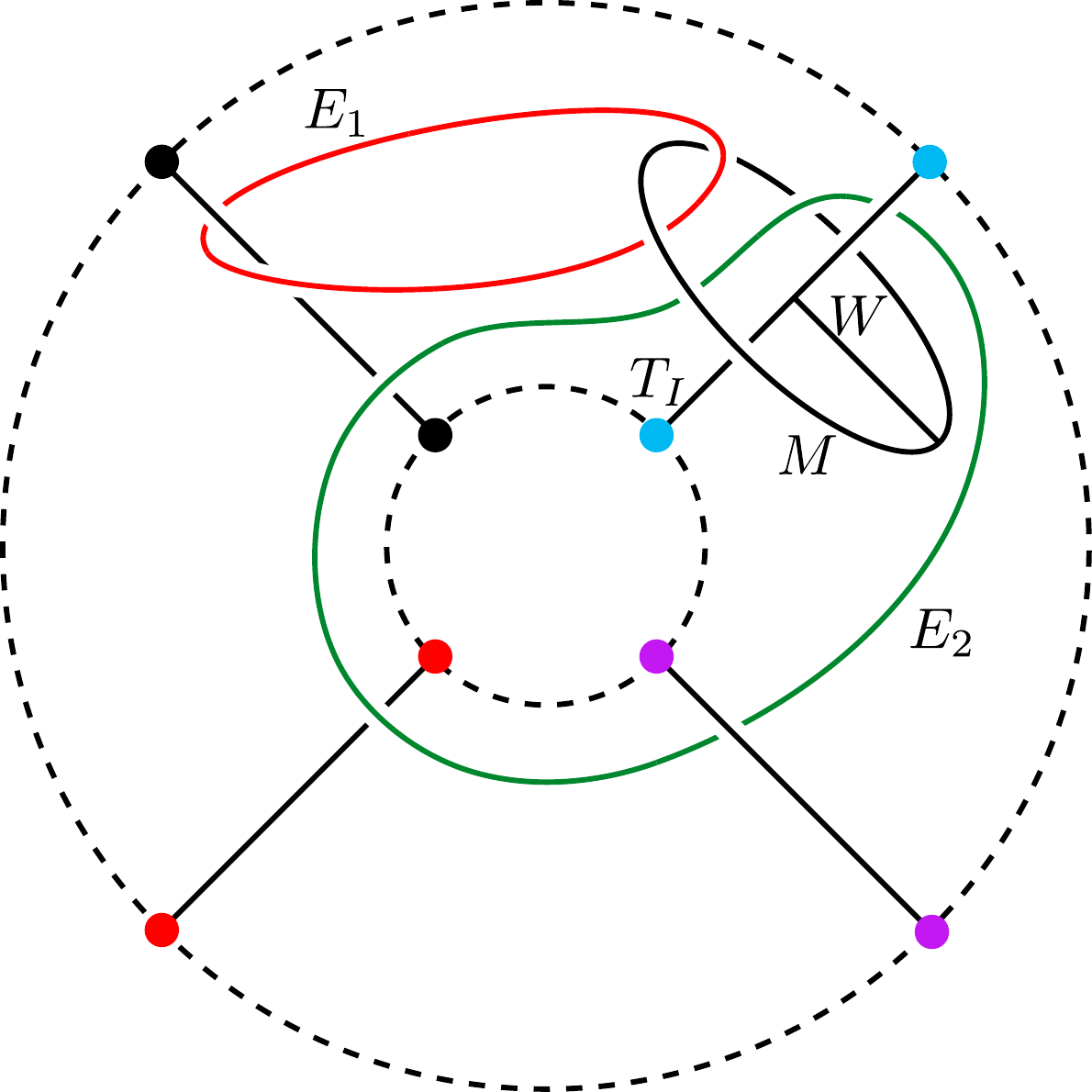}
    \caption{The earring cobordism, $\natural$.}
    \label{fig:EarringCob}
\end{figure}

Let $\natural$ denote the earring cobordism, as depicted in Figure \ref{fig:EarringCob}. It is made up from the identity cobordism $T_I = (S^2,4)\times I$. Let $x\in T_I$ and let $M$ be a meridian of $T_I$ at $x$. Then let $W$ be an arc from $M$ to $x$. Let $E = E_1\sqcup E_2$ be the perturbation curves shown in Figure \ref{fig:EarringCob}. Given a tangle $T$ with (possibly empty) perturbation data $\pi$, let $T^\natural$ be the result of gluing $T$ to the inner boundary of $\natural$.
Then we can define a new character variety, $\RR^\natural(T)$ which is the space of representations mod conjugation of $\pi_1(D^3\setminus T^\natural)$ into $\SU$ satisfying the following conditions:

\begin{itemize}
    \item Meridians of $T$ and $T_I$ get sent to traceless elements, $C(\i)$.
    
    \item Meridians of $M$ get sent to $C(\i)$.
    
    \item Meridians of $W$ get sent to $-1\in \SU$.
\end{itemize}

Similarly, for a perturbation $\pi$ in $T$, one can define the perturbed character variety of $T^\natural$, $\RR_{\pi\cup E}(T^\natural)$ which is the space of representations mod conjugation of $\pi_1(D^3\setminus (T^\natural\cup \pi\cup E))$ into $\SU$ satisfying the above conditions, and in addition, all perturbation curves in $\pi$ and $E$ satisfy the perturbation condition, Equation \ref{eq:pert}.

The effect of adding an earring to any tangle was studied in \cite{Earring}. Briefly, if $\pi$ is perturbation data so that $\RR_\pi(T)$ is a 1-manifold, then $\RR^\natural_{\pi\cup E}(T)$ is also a 1-manifold such that:

\begin{itemize}
    \item Any arc component of $\RR_\pi(T)$ with endpoints at corners of the pillowcase is turned into a figure 8 which loops around the corners.
    \item Every other component (each homeomorphic to a circle) is doubled.
\end{itemize}

$\RR^\natural_{E_1}(Q_0)$ was explicitly calculated in \cite{PCI}*{Section 7.3}. 

\subsection{Lagrangian Floer Homology}

Let $K: S^1\to S^3$ be a knot or link. Let $C \subset S^3$ be an embedded 2-sphere which intersects $K$ in 4 points. This is called a \textit{Conway sphere}. $C$ splits $S^3$ into two 3-balls, $D_1$ and $D_2$, and splits $K$ into two two-stranded tangles, $T_1$ and $T_2$, so $(S^3,K) = (D_1, T_1)\cup_{(C,4)} (D_2,T_2)$. Let $\pi_1$ be perturbation data for $(D_1,T_1)$ and let $\pi_2$ be perturbation data for $(D_2, T_2)$ and let $\pi:=\pi_1\cup \pi_2$. Let $L_1:=\RR_{\pi_1}(T_1)$, $L_2:=\RR_{\pi_2 \cup E}^\natural(T_2)$, and $p_i:L_i\to \RR(C,4)\cong P$ be the map described in Section \ref{sec:pillowcase}. Define $L^*_i$ to be the interior of $L_i$. The perturbations $\pi_i$ should be chosen to satisfy the following conditions: 

\begin{enumerate}[label={\bfseries (T\arabic*):}, ref=(T\arabic*), leftmargin=3\parindent]
    \item\label{con:lagimmerse} $L_1$ and $L_2$ are 1-manifolds with boundary. $p_i$ sends the boundary of $L_i$ to corners of $P$ and its restriction to the interior is an immersion.  
    \item\label{con:lagtransverse} $p_1(L_1) \pitchfork p_2(L_2)$.
\end{enumerate}

\noindent Theorem \ref{thm:reg} shows that perturbations can always be found which satisfy the first condition, and then shearing perturbations can be applied to ensure the second. From this point on, assume that these conditions are satisfied.

Define $p^*_i:L^*_i\to P^*$ to be the restriction of $p_i$. $p^*_i$ is a Lagrangian immersion \cite{HKRegularity}. Now we review the construction of the Lagrangian Floer homology $HF(L_1, L_2)$ following \cites{PCII, Abouz, Floer}.
Because the symplectic manifold in question, $P^*$, is 2-dimensional, the Riemann mapping theorem implies that the Lagrange Floer homology can be calculated with techniques from differential topology instead of symplectic topology.

The chain group $CF(L_1, L_2)$ is an $\mathcal{F}_2$ vector space freely generated by $\I:=L^*_1\times_{P^*} L^*_2$. Let $\pi_{L^*_i}:\I\to L^*_i$ be the projection map. Conditions \ref{con:lagimmerse} and \ref{con:lagtransverse} guarantee that this is a finite set. Because Condition \ref{con:lagimmerse} is satisfied, Lemma \ref{lem:generic} tells us that $L^*_1\times_{P^*} L^*_2 \cong \RR^\natural_{\pi\cup E}(K)$. The correspondence between flat connections and representations gives that $CF(L_1, L_2) \cong CI^\natural(K)$ (see Theorem \ref{thm:translate}). For $x,y\in \I$ such that $\pi_{L^*_i}(x)$ and $\pi_{L^*_i}(y)$ are connected by a path on $L^*_i$, the $\Z/4$ relative grading between $x$ and $y$ can be calculated by the method described in \cite{PCII}*{Section 5}. There is a distinguished generator, $x\in \I$, which does not depend on the choice of Conway sphere or perturbation. It was conjectured in \cite{PCII} and proved in \cite{PoudSav} that the relative grading on $\I$ can be promoted to an absolute $\Z/4$ grading by setting the grading of $x$ to $\sigma(K)\text{ mod } 4$ and that by doing so $CH^\natural \cong CI^\natural$ as $\Z/4$ graded groups. So $\rank(I^\natural(K))\le \rank(CF(L_1, L_2))$ in each grading. Let $\gr(y)$ denote this grading of $y\in\I$. %

\begin{figure}
    \centering
    \includegraphics[height=2in]{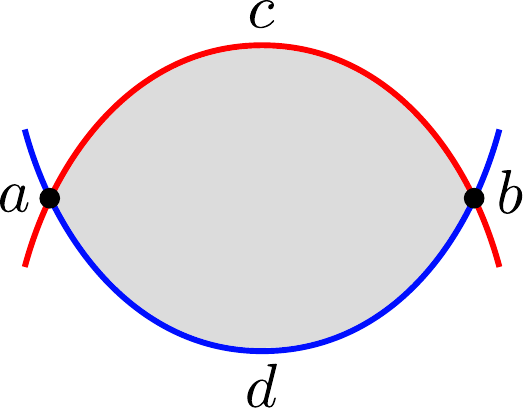}
    \caption{The standard bigon, $B$.}
    \label{fig:Bigon}
\end{figure}

Let $B$ be the \textit{standard bigon} shown in Figure \ref{fig:Bigon}. Then for $x,y\in \I$, let $\widetilde{\B}(x,y)$ be the space of orientation-preserving immersions $f:B\to P^*$ satisfying the following conditions:

\begin{itemize}
    \item $f(a) = x$
    \item $f(b) = y$
    \item $f(c) \subset p_1(L_1)$
    \item $f(d) \subset p_2(L_2)$
\end{itemize}

\noindent Let $\B(x,y) = \widetilde{\B}(x,y)/{\sim}$ where $f_1\sim f_2$ if $f_1 = f_2\circ g$ for an automorphism $g:B\to B$.
By \cite{PCII}*{Remark 3.14}, since Conditions \ref{con:lagimmerse} and \ref{con:lagtransverse} are met, $\B(x, y)$ is finite for any $x,y\in\I$. Let $\#\B(x,y)$ be the number of elements in $\B(x,y)$ mod 2. If $\gr(x)-\gr(y)\not\equiv 1 ~(\text{mod } 4)$ then $\#\B(x,y) = 0$. Define the differential $d$ so that for $x\in\I$, 
\[d(x)=\sum\limits_{y\in \I} \#\B(x,y) y.\]

By \cite{Abouz}, $d^2 = 0$. Then $HF(L_1, L_2)$ is the homology of $CF(L_1, L_2)$ with respect to the differential $d$.
As explained in \cite{PCII}, a similar count of immersed $(n+1)$-gons can be used to define higher composition maps $\mu^n$, where $\mu^1 = d$.%

In \cite{Earring}, an example was found of a knot $(S^3,K) = (D_1,T_1)\cup_{(C,4)} (D_2,T_2)$ with $T_1$ trivial such that $I^\natural(K) = HF(\RR^\natural_\pi(T_1), \RR_\pi(T_2)) \ne HF(\RR_\pi(T_1),\RR^\natural_\pi(T_2))$. The reason for this is that composing with the earring tangle can give rise to a phenomenon known as \emph{figure-eight bubbling}.
Inspired by work of Bottman and Wehrheim, Cazassus, Herald, Kirk, and Kotelskiy conjecture in \cite{Earring} that a link invariant can be defined via a modified Lagrangian Floer homology using bounding cochains. The precise statement is given shortly in Conjecture \ref{con:boundingcc}.

\subsection{Bounding Cochains}

\begin{definition}
If $L$ is an immersed Lagrangian of some symplectic manifold, then a \emph{bounding} cochain of $L$ is an element $b\in CF(L,L)$ satisfying the Maurer-Cartan equation

\begin{equation}
\label{eq:mc}
\sum\limits_{k\ge 0} \mu^k(b,\dots, b) = 0.
\end{equation}
\end{definition}

If $L_1$ and $L_2$ are immersed Lagrangians of the pillowcase and $b_i\in CF(L_i,L_i)$ are bounding cochains, then it is possible to define a Lagrangian Floer homology of the pairs $(L_1,b_1)$ and $(L_2, b_2)$. As groups $CF((L_1, b_1), (L_2, b_2))\cong CF(L_1, L_2)$. Just as the differential for $CF(L_1, L_2)$ could be obtained by counting smooth immersions of bigons to the pillowcase, the differential of $CF((L_1, b_1), (L_2, b_2))$ can be computed by a count of smooth immersions of polygons in the pillowcase. Two of the vertices of such a polygon must map to elements of $CF(L_1, L_2)$ while every other vertex must map to $b_1$ or $b_2$. For details, see \cite{FOOO}. Let $HF((L_1, b_1), (L_2, b_2))$ be the homology of the chain complex $CF((L_1, b_1), (L_2, b_2))$. Note that $CF(L_1, L_2) = CF((L_1, 0), (L_2, 0))$

\begin{conjecture}[\cite{Earring}*{Conjecture D}]
\label{con:boundingcc}
There exists an assignment that associates to every 2-tangle $T$ and its holonomy perturbed traceless character variety $\RR_\pi(T)\looparrowright P^*$ a bounding cochain $b\in CF(\RR_\pi(T), \RR_\pi(T))$, satisfying

\begin{itemize}
    \item $(R_\pi(T), b)$ is a well defined tangle invariant, as an object of the wrapped Fukaya category of $P^*$;
    \item This assignment of bounding cochains extends to tangles modified by the earring, which results in a tangle invariant $(\RR^\natural_\pi(T), b)$;
    \item Given a decomposition of a link $(S^3, L) = (D^3, T_1)\cup_{(S^2, 4)}(D^3, T_2)$, the corresponding Lagrangian Floer homology recovers the reduced singular instanton homology: \[HF((\RR_\pi(T_1), b_1), (\RR^\natural_\pi(T_2), b_2))\cong I^\natural(L).\]
\end{itemize}
\end{conjecture}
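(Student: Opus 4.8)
The plan is to realize Conjecture \ref{con:boundingcc} as an Atiyah--Floer statement for singular instanton homology, constructing the bounding cochain from instanton data on the tangle complement and deducing the gluing formula from a neck-stretching degeneration. For the construction of $b_T$: given a tangle $(D^3, T)$ and a perturbation $\pi$ chosen by Theorem \ref{thm:reg} so that $\RR_\pi(T)\looparrowright P^*$ is an immersed Lagrangian, I would count finite-energy singular instantons (in the sense of \cite{KMinst, Taubes}) on $(D^3, T)\times \R$ with a cylindrical end modeled on $(S^2,4)\times\R$ and asymptotics on $\RR_\pi(T)$; assembling these counts, graded by expected dimension, produces an element $b_T\in CF(\RR_\pi(T),\RR_\pi(T))$. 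Equivalently, $b_T$ should be the curved deformation class of $\RR_\pi(T)$ inside the Fukaya category of $P^*$ determined by the relative quilt $(D^3,T)$, whose figure-eight bubbling contributions assemble into this element in the framework of Bottman--Wehrheim. The first thing to prove is that $b_T$ solves the Maurer--Cartan equation \ref{eq:mc}, which is the assertion that the codimension-one boundary of the compactified one-dimensional moduli spaces --- broken trajectories, polygon degenerations, and figure-eight bubbles --- cancels in pairs: a signed count-of-ends argument once transversality and coherent orientations are in place, as in \cite{Abouz, FOOO}.

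Well-definedness as an object of the wrapped Fukaya category of $P^*$ would follow by showing that an isotopy of $T$, or a change of auxiliary perturbation, induces an $A_\infty$ continuation functor carrying $b_T$ to a gauge-equivalent Maurer--Cartan element, together with the invariance of $\RR_\pi(T)$ up to Hamiltonian isotopy. The explicit descriptions of $\RR_\pi(T_1+T_2)$ in Theorems \ref{thm:notnice}, \ref{thm:gpsum}, and \ref{thm:cross}, combined with the additivity of holonomy perturbations, give a concrete toolkit for checking compatibility with the basic cut-and-paste move of tangle sum. For the earring, adding $\natural$ is gluing in the fixed cobordism of Section \ref{sec:Earring}, which replaces $\RR_\pi(T)$ by $\RR^\natural_{\pi\cup E}(T)$ (doubling circle components, converting corner arcs into figure eights); one proves the instanton/quilt construction is functorial under this gluing, so that $(\RR^\natural_\pi(T), b^\natural_T)$ is again a Maurer--Cartan pair. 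The point is that the figure-eight bubbling the earring introduces at the corners of the pillowcase is precisely what the bounding cochain is designed to absorb, as identified in \cite{Earring}.

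For the gluing formula, the identification on chains $CF(\RR_{\pi_1}(T_1),\RR^\natural_{\pi_2}(T_2))\cong CI^\natural(L)$ is already available from Lemma \ref{lem:generic} and Theorem \ref{thm:translate}; what remains is to match the differentials and the higher structure. I would stretch the neck along the Conway sphere $C$ in $(S^3,L^\natural)\times\R$ and analyze the limit of the anti-self-dual moduli spaces. By a singular analogue of the Dostoglou--Salamon and Daemi--Fukaya--Lipyanskiy compactness theorems \cite{DostSal, DaemiFukLip}, index-one instantons should converge to configurations built from an instanton on each stretched piece joined along a holomorphic polygon in $P^*$ whose corners alternate between generators of $\I$ and the inside contributions $b_1, b_2$; a matching gluing theorem would recover instantons from such configurations. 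This identifies the instanton differential with $\mu^1$ of the deformed complex $CF((\RR_{\pi_1}(T_1),b_1),(\RR^\natural_{\pi_2}(T_2),b_2))$, and passing to homology yields $HF((\RR_{\pi_1}(T_1),b_1),(\RR^\natural_{\pi_2}(T_2),b_2))\cong I^\natural(L)$, independent of all choices since both sides are.

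The hard part will be the analysis in the last step: the compactness and gluing theory for singular instantons across a stretched neck through which the four-point singular locus passes, with complete control of every bubbling stratum --- especially the figure-eight bubbles at the stretched boundary that are the whole reason a bounding cochain is needed --- carried out with consistent perturbations and orientations. A second, structural obstacle is establishing that the curved $A_\infty$-algebra produced by the relative quilt agrees, up to the appropriate equivalence, with the one coming from instanton counts; this is itself an Atiyah--Floer-type matching at the level of $A_\infty$-algebras, and it is what reconciles the two descriptions of $b_T$ used above.
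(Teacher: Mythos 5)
The statement you have been asked to prove is labeled a \emph{conjecture} in the paper (Conjecture~\ref{con:boundingcc}, attributed to Cazassus--Herald--Kirk--Kotelskiy as Conjecture~D of \cite{Earring}); the paper does not prove it, and in fact it remains open. The present article only uses it as a hypothesis: Theorem~\ref{thm:main} is a conditional statement of the form \emph{if} Conjecture~\ref{con:boundingcc} holds, \emph{then} there exist tangles with nontrivial $b_T$. So there is no proof of this statement in the paper to compare against, and there is nothing for you to match.

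As for the proposal itself: it is not a proof but a research program. You sketch the expected architecture --- define $b_T$ by counting singular instantons on $(D^3,T)\times\R$ (or equivalently as a curved deformation class in the Bottman--Wehrheim quilt framework), verify the Maurer--Cartan equation by a count of ends, establish invariance by $A_\infty$ continuation functors, and deduce the gluing formula $HF\cong I^\natural$ from a neck-stretching degeneration along the Conway sphere in the style of Dostoglou--Salamon and Daemi--Fukaya--Lipyanskiy. This is a plausible outline of the strategy that experts have in mind, and you correctly identify where the genuine difficulty lies: the compactness and gluing theory for singular instantons across a stretched neck that meets the four-point singular locus, with full control of figure-eight bubbling, transversality, and coherent orientations. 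But every one of those steps is stated as something that ``should'' hold or ``would follow''; none is established. A conditional chain of plausible expectations does not constitute a proof, and the paper itself makes no attempt at one. If you wish to engage with this paper's actual content, the appropriate target is Theorem~\ref{thm:main} (the conditional nontriviality of $b_T$) or the cut-and-paste results (Theorems~\ref{thm:notnice}, \ref{thm:gpsum}, \ref{thm:cross}), which are the statements the paper does prove.
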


If the conjecture holds, $HF((\RR_\pi(T_1), b_1), (\RR^\natural_\pi(T_2), b_2))$ is called the \emph{pillowcase homology} of $L$.

For the torus knot $(S^3, T(4,5))$, a decomposition $(S^3, T(4,5)) = (D^3, T_1)\cup_{(S^2, 4)} (D^3,T_2)$ is identified in \cite{Earring} where $(D^3, T_2)$ is a rational tangle. A computation shows that \[\rank(HF(\RR_\pi(T_1),\RR^\natural_\pi(T_2)) = 7 \text{ and } \rank(HF(\RR^\natural_\pi(T_1), \RR_\pi(T_2)) = 9.\] It is known that $\rank(I^\natural(S^3,T(4,5))) = 7$. Because $(D^3, T_2)$ is a rational tangle, by Proposition \ref{prop:rational}, the image of $\RR_\pi(T_1)$ in the pillowcase has no self intersections and so $b_2 = 0$. A bounding cochain $b\in CF(\RR^\natural_\pi(T_1), \RR^\natural_\pi(T_1))$ is identified such that \[\rank(HF((\RR^\natural_\pi(T_1), b), (\RR_\pi(T_2), 0)) = 7.\] This proves that if Conjecture \ref{con:boundingcc} holds, the associated bounding cochain to a tangle with earring can be nontrivial.

\begin{figure}
    \centering
    \begin{subfigure}[b]{0.25\textwidth}
        \centering
        \includegraphics[width=\textwidth]{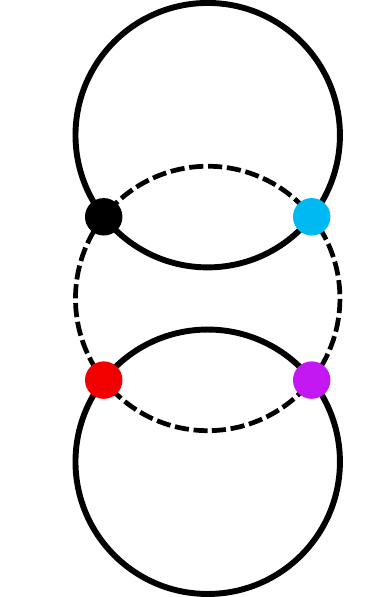}
        \caption{}
        \label{fig:Unlink}
    \end{subfigure}
    \hspace{.5cm}
    \begin{subfigure}[b]{0.2\textwidth}
        \centering
        \includegraphics[width=\textwidth]{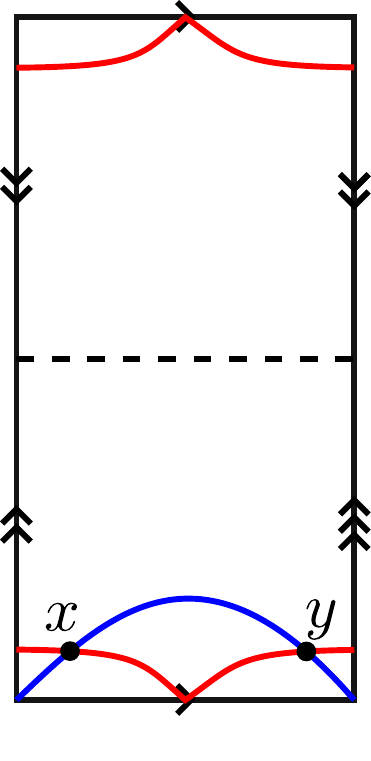}
        \caption{}
        \label{fig:UnlinkA}
    \end{subfigure}
    \hspace{.5cm}
    \begin{subfigure}[b]{0.2\textwidth}
        \centering
        \includegraphics[width=\textwidth]{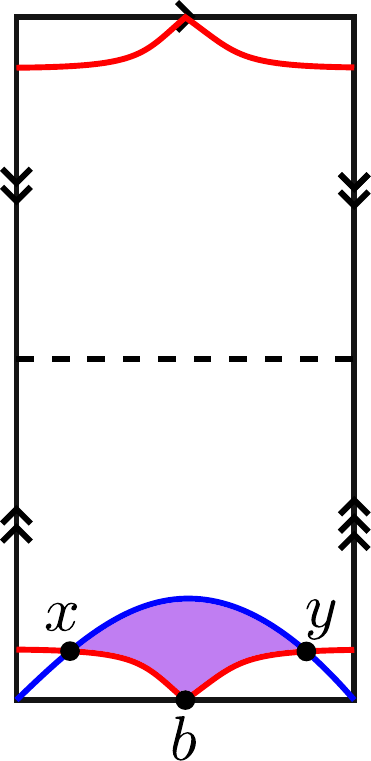}
        \caption{}
        \label{fig:UnlinkB}
    \end{subfigure}
    \caption{(A) shows a decomposition of the unlink $(S^3, U_2)$ into two copies of $Q_0$. (B) shows the maps of $\RR_\pi(Q_0)$ and $\RR^\natural(Q_0)$ into the pillowcase where $\pi$ is a shearing map. (C) shows the differential that appears when allowing for the bounding cochain $b$.}
    \label{fig:Unlinks}
\end{figure}

\begin{lemma}
\label{lem:trivialbc}
If Conjecture \ref{con:boundingcc} is true, the bounding cochain associated to a rational tangle with an earring (equivalent to $Q_0^\natural$) must be 0.
\end{lemma}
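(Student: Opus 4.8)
The plan is to reduce to the model tangle $Q_0^\natural$ and then analyze the self-Floer complex of its image in the pillowcase by hand. By the first bullet of Conjecture \ref{con:boundingcc}, the pair $(\RR^\natural_\pi(T),b)$ is a tangle invariant, so it is enough to treat $Q_0^\natural$; the case of a general rational tangle with an earring follows either from tangle invariance once one knows these are all equivalent to $Q_0^\natural$, or because $\RR^\natural_\pi$ of any rational tangle is again a figure eight and the argument below applies verbatim. Write $b := b_{Q_0^\natural}\in CF(\RR^\natural_\pi(Q_0),\RR^\natural_\pi(Q_0))$. The input is the explicit description of $\RR^\natural_{E_1}(Q_0)$ from \cite{PCI}*{Section 7.3} together with the effect of the earring recorded in Section \ref{sec:Earring}: after a small shearing perturbation $\pi$, the arc $\RR_\pi(Q_0)$ (slope $0$, running between two adjacent corners of $P$) is carried to an embedded figure eight in $P^*$ with a single transverse self-intersection point $x$, looping once around each of those two corners. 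Crucially, as in the remark following Theorem \ref{thm:cross}, this figure eight bounds zero signed area: its two lobes are swept with opposite orientation and equal area.

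Next I would determine the part of $CF(\RR^\natural_\pi(Q_0),\RR^\natural_\pi(Q_0))$ relevant to the Maurer--Cartan equation \eqref{eq:mc}. The immersed generators are the two ordered branches $x_+,x_-$ at the self-intersection, which lie in two consecutive $\Z/4$ gradings, and the Morse/local-system generators of the domain circle; an admissible bounding cochain must be homogeneous of the degree dictated by \eqref{eq:mc}, so only a one-dimensional family $b=\lambda x_\pm$ of immersed candidates survives (the Morse-type candidate is eliminated in the pairings of the next paragraph, where the other Lagrangian is embedded and carries no twisting). I would then use the zero-signed-area geometry, together with the Riemann-mapping description of holomorphic polygons in the two-dimensional $P^*$ from \cite{PCII}, to show that the teardrop contributions to $\mu^0$ at $x$ and the bigon contributions to $\mu^1(x_\pm)$ cancel in pairs (one per lobe, with opposite orientation), so that $\mu^0=0$ and $\mu^1$ vanishes on the immersed generators, and that the higher $\mu^k(x_\pm,\dots,x_\pm)$, $k\ge 2$, which count immersed $(k{+}1)$-gons with all corners at the single point $x$, are empty. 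Thus every $b=\lambda x_\pm$ is formally a bounding cochain, and an external constraint is needed to pin down $\lambda$.

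Finally, and this is where the conclusion $b=0$ is actually forced, I would use the decomposition $(S^3,U_2)=(D^3,Q_0)\cup_{(S^2,4)}(D^3,Q_0)$ of the two-component unlink (Figure \ref{fig:Unlinks}): the non-earring side is rational, so by Proposition \ref{prop:rational} its pillowcase image is embedded and its bounding cochain vanishes, and by the third bullet of Conjecture \ref{con:boundingcc} one gets $HF((\RR_\pi(Q_0),0),(\RR^\natural_\pi(Q_0),b))\cong I^\natural(U_2)$, which has rank $2$. With $b=0$ the arc and the figure eight meet transversely in exactly two points, one inside each lobe, with no bigons between them, so the differential is zero and the homology already has rank $2$; a nonzero $b=\lambda x_\pm$ would add to the twisted differential the count of immersed triangles with two corners at these two intersection points and one corner at $x$, and reading this off the figure eight shows the count is odd, dropping the rank below $2$ and contradicting $\operatorname{rank} I^\natural(U_2)=2$. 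Hence $\lambda=0$ and $b=0$. I expect the main obstacle to be the polygon bookkeeping underlying the second and third paragraphs --- controlling the operations $\mu^k$ on the self-Floer complex of the figure eight, ruling out teardrop and figure-eight bubbling at $x$, and computing the triangle count in the unlink pairing --- for which the zero-signed-area geometry of the figure eight and the two-dimensionality of $P^*$ are the essential simplifications, and which can be made rigorous using the explicit pillowcase picture of \cite{PCI}.
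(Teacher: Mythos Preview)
Your proposal is correct and follows essentially the same route as the paper: decompose the unlink $U_2$ as $Q_0\cup Q_0$, note that the non-earring side has embedded image so carries the zero bounding cochain, and then check that assigning the unique nonzero self-intersection generator of the figure eight as bounding cochain produces a single triangle differential between the two intersection points, dropping the rank to $0$, in contradiction with $\operatorname{rank} I^\natural(U_2)=2$.

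The one substantive difference is your second paragraph, where you attempt to verify directly that the nonzero candidate actually satisfies the Maurer--Cartan equation and to rule out Morse-type bounding cochains. The paper skips this entirely, and with good reason: the logic of the argument does not require it. Conjecture~\ref{con:boundingcc} asserts that \emph{some} bounding cochain is assigned; over $\mathcal{F}_2$ and with a single self-intersection there is only one nonzero immersed candidate, so either $b=0$ or $b$ equals that candidate, and in the latter case the rank computation already gives the contradiction. Whether or not that candidate independently satisfies \eqref{eq:mc} is irrelevant. Your treatment of the Morse generators is also a bit loose (a nontrivial Morse-type bounding cochain would amount to a nontrivial local system and \emph{would} affect the pairing), but in the framework of \cite{Earring} the bounding cochains under consideration are supported on self-intersections, so the paper's tacit restriction is the intended one.
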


\begin{proof}
Consider the unlink $(S^3, U_2)$ with the tangle decomposition into two copies of $Q_0$ shown in Figure \ref{fig:Unlink}. Let $\pi$ be a shearing perturbation on one copy of $Q_0$. Figure \ref{fig:UnlinkA} shows the images of $\RR_\pi(Q_0)$ and $\RR^\natural(Q_0)$ in the pillowcase. $CF(\RR_\pi(Q_0), \RR^\natural(Q_0))$ is generated by the intersections $x$ and $y$ and there are no bigons so
$\rank(HF(\RR_\pi(Q_0), \RR^\natural(Q_0))) = 2$.

The image of $\RR_\pi(Q_0)$ in the pillowcase has no self intersections, so it must be assigned the trivial bounding cochain.
The image of $\RR^\natural(Q_0)$ in the pillowcase has a single self intersection. This means that there is one potential nontrivial bounding cochain (if it satisfies Equation \ref{eq:mc}) for $\RR^\natural(Q_0)$. Call this bounding cochain $b$.
$CF((\RR_\pi(Q_0), 0), (\RR^\natural(Q_0), b))$ is again generated by $x$ and $y$ but now contains exactly one differential shown in Figure \ref{fig:UnlinkB}. Thus $\rank(HF((\RR_\pi(Q_0),0), (\RR^\natural(Q_0), b))) = 0$.
It is known that $\rank(I^\natural(U_2)) = 2$ \cite{XieZhang}*{Lemma 4.3}. Therefore, for Conjecture \ref{con:boundingcc} to hold, the associated bounding cochain for $Q_0^\natural$ is trivial.
\end{proof}

For all examples computed in \cites{PCI, PCII, Earring}, if $(S^3, L) = (D^3, T_1)\cup_{(S^2, 4)}(D^3, T_2)$ where $(D^3, T_2)$ is a rational tangle, $HF((\RR_\pi(T_1), 0), (\RR^\natural_\pi(T_2),0))$ is isomorphic to the known or conjectured value of $I^\natural(S^3, L)$. This leaves open the possibility that the assignment of Conjecture \ref{con:boundingcc} assigns a trivial bounding cochain to each tangle $(D^3, T_1)$ with no earring. Theorem \ref{thm:main} will show that this cannot be the case.

\subsection{Some Simple Classes of Knots}

We will briefly review the definitions for some classes of knots for which the results of Sections \ref{sec:unpert} and \ref{sec:Perturbing} can be easily applied.

\begin{figure}
    \centering
    \begin{subfigure}[b]{0.22\textwidth}
        \centering
        \includegraphics[width=\textwidth]{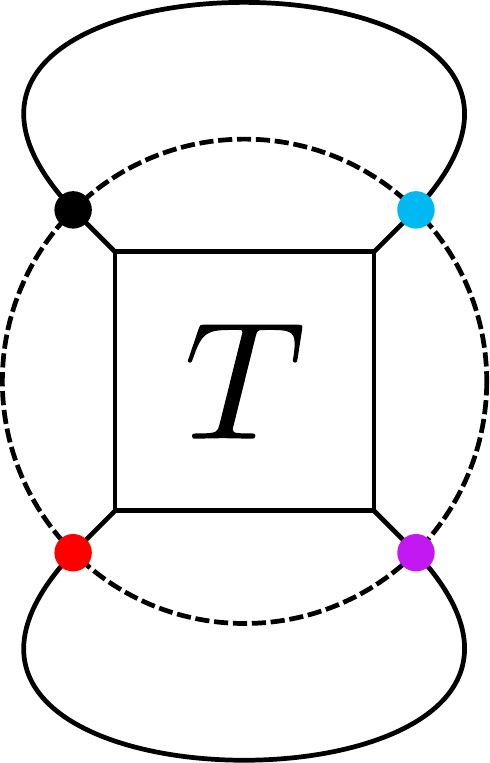}
        \caption{}
        \label{fig:num}
    \end{subfigure}
    \hspace{2cm}
    \begin{subfigure}[b]{0.38\textwidth}
        \centering
        \includegraphics[width=\textwidth]{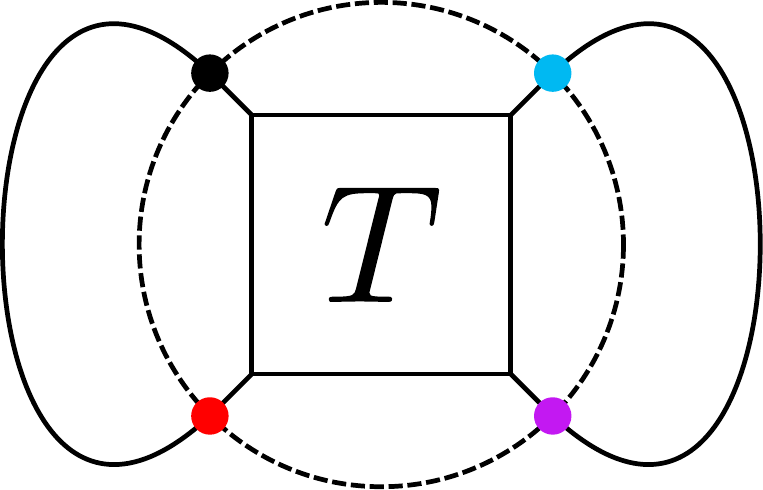}
        \caption{}
        \label{fig:den}
    \end{subfigure}
    \caption{The numerator and denominator closures of a tangle.}
\end{figure}

\begin{definition}
If $(X, T)$ is a tangle such that $\partial(X, T) = (S^2,4)$, then define the \emph{numerator closure} of a tangle $T$, $\operatorname{num}(T)$, to be the knot in Figure \ref{fig:num} and let the \emph{denominator closure} of $T$, $\operatorname{den}(T)$, be the knot in Figure \ref{fig:den}.
\end{definition}

\begin{definition}
An \emph{arborescent link} or \emph{arborescent tangle} is a link or tangle that can be decomposed into rational tangles and tangles homeomorphic to $C_3$ \cite{BonSieb}. 
\end{definition}

Similarly, the set of arborescent tangles are precisely those which can be obtained by starting with a collection of rational tangles and iteratively summing and composing with the tangle cobordism in Figure \ref{fig:rotate}. If $T$ is an arborescent tangle, then $\operatorname{num}(T)$ and $\operatorname{den}(T)$ are arborescent knots. Every arborescent link can be realized as the closure of an arborescent tangle.

\begin{remark}
Conway originally called such links and tangles \emph{algebraic}, but this terminology was changed in order to avoid confusion with links of singularities of algebraic curves.
\end{remark}

\begin{definition}
A special class of links within arborescent knots are \textit{Montesinos links}. Define the Montesinos knot $M(r_1, \dots, r_n)$ to be $\operatorname{num}(Q_{r_1} + \dots + Q_{r_n})$ for $r_i\in\Q\cup\{\infty\}$. Similarly, let a \emph{Montesinos tangle} be a tangle of the form $Q_{r_1} + \dots + Q_{r_n}$.
\end{definition}

A further subset of Montesinos links are \emph{pretzel links}. Define the pretzel link \[P(q_1, \dots, q_n) := M(\frac{1}{q_1}, \dots, \frac{1}{q_n}).\] As expected, a \emph{pretzel tangle} is a tangle of the form $Q_\frac{1}{q_1} + \dots + Q_\frac{1}{q_n}$.

\subsection{Computations}

At first the result of Theorem \ref{thm:cross} may seem less than helpful because it does not determine the number of auxiliary components. However, the following proposition shows that for computations of Lagrangian Floer homology, the auxiliary components may be ignored.

\begin{proposition}
Let $(T_1, \pi_1)$ and $(T_2, \pi_2)$ be a good pair such that $\RR_\pis(T_1+T_2)$ has no corner circles. By Theorem \ref{thm:cross} $\RR_{\t\cup\pis}(T_1+T_2)$ can be separated into main components and auxiliary components. Let $\RR^\dag_{\t\cup\pis}(T_1+T_2)$ be the main components of $\RR_{\t\cup\pis}(T_1+T_2)$. If $b$ is a bounding cochain on $\RR_{\t\cup\pis}(T_1+T_2)$, let $b^\dag$ be the restriction of $b$ to $\RR^\dag_{\t\cup\pis}(T_1+T_2)$. Let $(T_3, \pi_3)$ be another tangle with perturbation data such that $\RR_{\pi_3}(T_3)$ is a 1-manifold. Then there exists an arbitrarily small perturbation $\pi$ for $T_3$ such that \[HF((\RR_{\t\cup\pis}(T_1+T_2),b), (\RR_{\pi_3\cup \pi}(T_3),b')) = HF((\RR^\dag_{\t\cup\pis}(T_1+T_2),b), (\RR_{\pi_3}(T_3),b')).\]
This also applies if either of the tangles $T_1+T_2$ or $T_3$ has an earring.
\end{proposition}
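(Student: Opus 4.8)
The plan is to show that the auxiliary components, together with any generators or differentials they could contribute to the Lagrangian Floer complex, can be removed by a perturbation without affecting the homology. First I would recall from Theorem \ref{thm:cross} the key structural facts about the auxiliary components: each is homeomorphic to a circle, and as $t\to 0$ the image of each auxiliary circle in the pillowcase shrinks to one of the finitely many points in $p_3(\J^0_0)$. Moreover, by the remark following Theorem \ref{thm:cross} (invoking \cite{Herald}), each auxiliary circle bounds a signed area of $0$ in $P^*$, so in particular its image is either a very small loop, or a very small figure-eight-like curve enclosing zero net area, all contained in an arbitrarily small neighborhood of a point of $p_3(\J^0_0)$.

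The main step is then to arrange that the image of $\RR_{\pi_3}(T_3)$ (or $\RR^\natural_{\pi_3\cup E}(T_3)$) misses these small neighborhoods entirely. Since $p_3(\J^0_0)$ is a finite set of points in $P^*$ (none of which is a corner, by Theorem \ref{thm:gpsum}), and since the image of $\RR_{\pi_3}(T_3)$ under its pillowcase map is a compact 1-dimensional subset, a generic arbitrarily small shearing perturbation $\pi$ of $T_3$ moves $p(\RR_{\pi_3\cup\pi}(T_3))$ off of each of these finitely many points. Because being off a finite set is an open condition and the auxiliary components collapse to those points as $t\to 0$, for sufficiently small $t$ (chosen after $\pi$) the image of $\RR_{\pi_3\cup\pi}(T_3)$ is disjoint from the image of every auxiliary component of $\RR_{\t\cup\pis}(T_1+T_2)$. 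Consequently no generator of $CF$ lies on an auxiliary component: $\I = L_1^*\times_{P^*}L_2^*$ receives no contribution from the auxiliary circles, so as chain complexes $CF((\RR_{\t\cup\pis}(T_1+T_2),b),(\RR_{\pi_3\cup\pi}(T_3),b')) = CF((\RR^\dag_{\t\cup\pis}(T_1+T_2),b),(\RR_{\pi_3}(T_3),b'))$, and the differentials (and higher $\mu^n$, hence the effect of the bounding cochains) agree as well since any immersed polygon contributing to either has all its edges and vertices on the main components. Taking homology gives the claimed isomorphism. When $T_1+T_2$ or $T_3$ carries an earring, the same argument applies verbatim: by the description in Section \ref{sec:Earring} the earring only doubles circle components and converts corner-arcs into figure-eights, so the auxiliary components remain circles collapsing to $p_3(\J^0_0)\subset P^*$, away from the corners, and the transversality/disjointness argument is unchanged.

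The main obstacle I anticipate is making precise the interchange of quantifiers between the perturbation parameter $t$ on $C_3$ (which must be small for Theorem \ref{thm:cross} to apply and which controls how tightly the auxiliary components hug $p_3(\J^0_0)$) and the perturbation $\pi$ on $T_3$ (which must be chosen to avoid that finite set while keeping $(T_1,\pi_1),(T_2,\pi_2)$ a good pair and preserving Conditions \ref{con:lagimmerse}, \ref{con:lagtransverse}). The resolution is to first fix $\pi$ small enough that $p(\RR_{\pi_3\cup\pi}(T_3))$ avoids the finite set $p_3(\J^0_0)$ and meets the main components transversally — this is possible since both conditions are open and dense under shearing, and $\pi$ can be taken arbitrarily small — and then, with that distance bound in hand, shrink $t$ so that every auxiliary component's image lies within that distance of $p_3(\J^0_0)$; the stability remark for good pairs guarantees that shrinking $t$ does not destroy the good-pair or no-corner-circles hypotheses. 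A secondary point worth checking carefully is that the bounding cochain $b$ on $\RR_{\t\cup\pis}(T_1+T_2)$ genuinely splits as a sum of a cochain supported on the main components and one supported on the auxiliary components — this follows because $CF(\RR_{\t\cup\pis}(T_1+T_2),\RR_{\t\cup\pis}(T_1+T_2))$ is generated by self-intersection points, each of which lies entirely on the main components or entirely on an auxiliary component (the two families have disjoint images once $t$ is small relative to the spacing of $p_3(\J^0_0)$), so the Maurer–Cartan equation decouples and $b^\dag$ is itself a bounding cochain on the main components.
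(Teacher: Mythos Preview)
Your proposal is correct and follows essentially the same approach as the paper: use Theorem \ref{thm:cross} to see that the auxiliary components limit to a finite set of points $p_3(\J^0_0)\subset P^*$, apply an arbitrarily small (shearing) perturbation $\pi$ to $T_3$ so that $p(\RR_{\pi_3\cup\pi}(T_3))$ avoids those points, and then shrink $t$ so the auxiliary circles miss the image of the perturbed $T_3$. Your write-up is in fact more careful than the paper's own proof on two points the paper leaves implicit---the order in which $\pi$ and $t$ are chosen, and why the bounding cochain $b$ restricts to a bounding cochain $b^\dag$ on the main components---so nothing further is needed.
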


\begin{proof}
By Theorem \ref{thm:cross} there are a finite number of points $\{p_1,\dots, p_n\}$ in the smooth stratum of the pillowcase, $P^*$, such that as $t\to 0$ all auxiliary components of $\RR_{\t\cup\pis}(T_1+T_2)$ limit to the set $\{p_1,\dots, p_n\}$. It is possible to find an arbitrarily small perturbation $\pi$ (in particular, one can construct a composition of shearing perturbations similar to the proof of Lemma \ref{lem:internal}) such that the image of $\RR_{\pi_3\cup \pi}(T_3)$ in the pillowcase misses the points $\{p_1,\dots, p_n\}$. Then there exists a small $t>0$ such that $\RR_{\pi_3\cup \pi}(T_3)$ misses the auxiliary components of $\RR_{\t\cup\pis}(T_1+T_2)$. The same idea works if one of the tangles has an earring.
\end{proof}

Call a 2-stranded tangle $T$ \textit{linear} if the image of $\RR(T)$ in the pillowcase is piecewise-linear. It was shown in \cite{FKP} that not all tangles are linear. However, they showed that certain tangles associated to pretzel knots are linear. This can be generalized by the following proposition:

\begin{proposition}
\label{lem:arbPL}
Arborescent tangles are linear with rational slopes. The endpoints of each linear segment lie in $(\pi\Q)^2$.
\end{proposition}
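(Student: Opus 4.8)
The plan is to induct on the construction of arborescent tangles, showing that the two building operations—summing two tangles (via $C_3$) and composing with the rotation cobordism of Figure \ref{fig:rotate}—both preserve the property of being linear with rational slopes and endpoints in $(\pi\Q)^2$. The base case is rational tangles, which by Proposition \ref{prop:rational} satisfy $\RR(Q_{p/q})\cong I$ with $\pcs(Q_{p/q})=p/q$; Lemma \ref{lem:fkpc} (together with the explicit lift in Definition \ref{def:pcs}) shows the image is the line segment from $(0,0)$ to $(\gamma',\theta')\in\pi\Z^2$ with $\theta'/\gamma'=p/q$, reparametrized into a fundamental domain of the pillowcase. Folding this segment into $[0,\pi]\times[0,2\pi]$ via the isometries of Lemma \ref{lem:R2lift} produces finitely many linear pieces, each with slope $\pm p/q$ and breakpoints occurring where the segment crosses the lines $\gamma\in\pi\Z$ or $\theta\in\pi\Z$; these crossings happen at rational multiples of $\pi$, so all vertices lie in $(\pi\Q)^2$. (Alternatively one invokes \cite{FKP}*{Theorem 3.2} directly for this.)

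For the inductive step with the rotation move, Lemma \ref{lem:rotate} tells us $p(\RR(-\tfrac1T)) = \bigl(\begin{smallmatrix}0&1\\1&0\end{smallmatrix}\bigr)p(\RR(T))$, and more generally $\widehat{(\cdot)}$, the $n$-twist of Lemma \ref{lem:nTwist}, and the flip of Lemma \ref{lem:flip} all act on the pillowcase image by elements of $\operatorname{GL}(2,\Z)$ (composed with the involution descending from Lemma \ref{lem:R2lift}). Since such a linear map carries a piecewise-linear set with rational slopes and $(\pi\Q)^2$ vertices to another such set—integer matrices preserve $\pi\Z^2$, hence preserve the fundamental-domain folding combinatorics, and rational slopes are closed under $\operatorname{GL}(2,\Z)$—linearity with rational data is preserved. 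One only needs to note that after applying the matrix the image may need refolding into the fundamental domain, introducing new breakpoints along $\gamma\in\pi\Z$, $\theta\in\pi\Z$, but these again land in $(\pi\Q)^2$.

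The main work, and the expected obstacle, is the tangle-sum step: given that $\RR_\emptyset(T_1)$ and $\RR_\emptyset(T_2)$ are linear with rational slopes and $(\pi\Q)^2$ vertices, one must show $\RR_\emptyset(T_1+T_2)$ is too, using the \emph{unperturbed} version of Theorem \ref{thm:notnice}. Here $\RR(T_1+T_2)$ decomposes as $\RR^{\ol\A}\cup\RR^{\ol\H}$. On the $\ol\A$ part, Proposition \ref{thm:notniceA} (with $\pi_i=\emptyset$) says the image is parametrized by the fiber product $\RR(T_1)\times_{[0,\pi]}\RR(T_2)$ with $p_3 = (\gamma(\rho_1),\theta(\rho_1)\pm\theta(\rho_2))$: since each $\RR(T_i)$ is a finite union of line segments $\theta = m_i\gamma + c_i$ with $m_i,c_i/\pi\in\Q$, the fiber product over the common coordinate $\gamma$ is again a union of segments, and the output coordinate $\theta(\rho_1)\pm\theta(\rho_2) = (m_1\pm m_2)\gamma + (c_1\pm c_2)$ is linear in $\gamma$ with rational slope; intersections of the two $\gamma$-intervals and crossings of $\pi\Z$-lines contribute only $(\pi\Q)^2$ vertices. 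On the $\ol\H$ part, Proposition \ref{thm:notniceH} shows these representations occur only over $\gamma\in\{0,\pi\}$, where the image lies in the single vertical line $\gamma=\gamma(\rho_1)\in\{0,\pi\}$—so it is trivially piecewise-linear (a union of sub-segments of $\{0,\pi\}\times[0,2\pi]$), and its endpoints are the $\ol\A$-representations $\rho_0,\rho_\pi$, whose $\theta$-coordinates $\arccos(\cos(\theta(\rho_1)\pm\theta(\rho_2)))$ reduce—because $\theta(\rho_i)\in\pi\Q$—to values of the form $\pm(\theta(\rho_1)\pm\theta(\rho_2)) \bmod 2\pi$, again in $\pi\Q$. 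The delicate point is bookkeeping: one must verify that the finitely many $\gamma$-values where segments of $\RR(T_1)$ and $\RR(T_2)$ begin, end, or have $\gamma\in\{0,\pi\}$ are all rational multiples of $\pi$ (immediate from the inductive hypothesis), so that the fiber product is combinatorially finite and all resulting breakpoints are controlled. Finally, since associativity of the tangle sum (noted in the remark after Theorem \ref{thm:gpsum}) lets one build any arborescent tangle by iterating these two operations starting from rational tangles, induction completes the proof; one should also record that each step keeps the slopes rational, so the final slope is rational as claimed.
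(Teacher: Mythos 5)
Your argument follows the same approach as the paper: induction on the construction of arborescent tangles, with Proposition \ref{prop:rational} supplying the base case and Lemma \ref{lem:rotate} and Theorem \ref{thm:notnice} handling the rotation and tangle-sum steps. The paper's own proof is four sentences; yours spells out the same three checks in more detail, in particular splitting the tangle-sum step into the $\ol{\A}$ and $\ol{\H}$ contributions.
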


\begin{proof}
Arborescent tangles are constructed by starting with rational tangles and allowing the operations of rotating (composing with Figure \ref{fig:rotate}) and tangle addition. Proposition \ref{prop:rational} shows that rational tangles are linear with endpoints in $(\pi\Q)^2$. Lemma \ref{lem:rotate} shows that rotating preserves linearity and any endpoint will be sent from $(\gamma,\theta)$ to $(\theta,\gamma)$ and thus will stay in $(\pi\Q)^2$. Finally, Theorem \ref{thm:notnice} show that linearity is preserved under tangle addition and if the linear segments of the summands are in $(\pi\Q)^2$ then all linear segments of the sum have endpoints in $(\pi\Q)^2$.
\end{proof}

The fact that arborescent tangles are linear means that their character varieties are readily computed by computer. Recall that any pair of rational tangles $Q_\frac{p}{q}$ and $Q_\frac{p'}{q'}$ without perturbations form a good pair (unless $\frac{p}{q} = \frac{p'}{q'} = Q_\infty$). Furthermore, if $q$ and $q'$ are coprime $\RR(T_1+T_2)$ has no corner circles. Thus the limit of the perturbed character variety $\RR_\t(T_1+T_2)$ as $t\to 0$ has piecewise-linear image in the pillowcase (since the image is the same as the unperturbed character variety) and can also be computed by computer. The author's program \texttt{pcase} \cite{pcase} can do these computations as well as compute $CF(L_1, L_2)$ and $HF((L_1,0), (L_2,0))$ for piecewise-linear curves $L_1$ and $L_2$ in the pillowcase.
Thus $CF$ and $HF$ can be computed for certain decompositions of many arborescent knots.

{%

}

First we remark on an observation from computing the chain complexes $CF$ for a number of small knots. Let $(S^3, K)$ be a knot or link with decomposition $(B^3, T_1)\cup_{(S^2,4)}(B^3, T_2)$.
Recall that \[CF(\RR_{\pi_1}(T_1), \RR^\natural_{\pi_2}(T_2))\cong CI^\natural(K).\] Thus $\rank(I^\natural(K))\le \rank(CF(\RR_{\pi_1}(T_1), \RR^\natural_{\pi_2}(T_2)))$. This fact was applied to torus knots in \cite{PCI} to find new bounds for $I^\natural$. In several cases, combining this bound with the lower bound $l(K)$, $I^\natural$ was identified exactly.

It is observed in that \cite{LZ} that for computed examples of pretzel knots, the upper bound coming from $\rank(CF(\RR_{\pi_1}(T_1), \RR_{\pi_2}(T_2)))$ is no better than the bound coming from Khovanov homology. Using \texttt{pcase} and \texttt{khoca} \cite{khoca}, the two upper bounds have been computed for all 3-bridge pretzel knots $P(p,q,r)$ with $\abs{p}$, $\abs{q}$, $\abs{r}\le 13$ and $p$, $q$, and $r$ pairwise coprime. For all such knots, the computations agreed with the observation of \cite{LZ}. Additionally, these computations were computed for other small 3-bridge Montesinos knots and small arborescent knots of the form $\operatorname{num}(Q_a+Q_b+\frac{1}{-(Q_c+Q_d)})$ for $a,b,c,d\in\Q$. There were no cases where the bound from $\rank (CF(\RR_{\pi_1}(T_1), \RR_{\pi_2}(T_2)))$ was strictly better than the bound coming from Khovanov homology, suggesting that the pattern noticed in \cite{LZ} for pretzel knots extends to arborescent knots more generally.

Finally, we use a computation of $HF$ to show that the assignment of bounding cochain $b$ for tangles without earring from Conjecture \ref{con:boundingcc} must be nontrivial if the conjecture is to hold, which was speculated to be the case in \cite{Earring}*{Section 11.1}.

\begin{figure}
    \centering
    \begin{subfigure}[b]{0.45\textwidth}
        \centering
        \includegraphics[width=\textwidth]{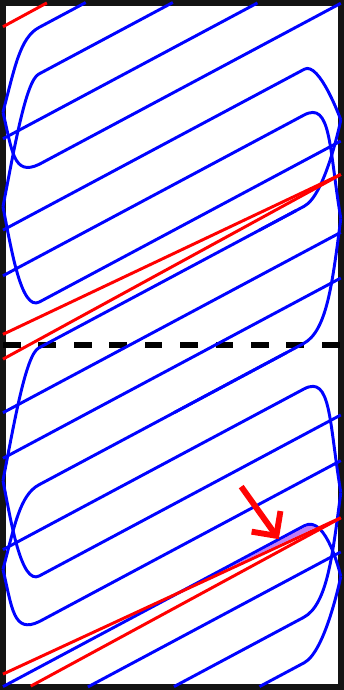}
        \caption{}
        \label{fig:2bigonsA}
    \end{subfigure}
    \hspace{1cm}
    \begin{subfigure}[b]{0.45\textwidth}
        \centering
        \includegraphics[width=\textwidth]{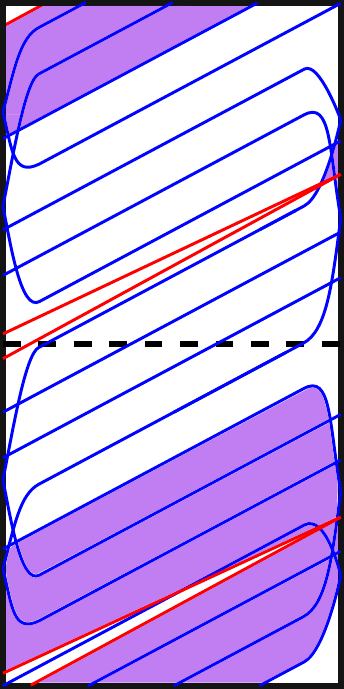}
        \caption{}
        \label{fig:2bigonsB}
    \end{subfigure}
    \caption{This depicts the calculation of the Lagrangian Floer homology associated to a particular decomposition of $P(-2,3,5)$. The Lagrangian intersections of $\RR_\pi^\natural(\widehat{Q_{-\frac{1}{2}}})$ (red) and $\RR_t(Q_\frac{1}{3}+Q_\frac{1}{5})$ (blue). Both figures show the same Lagrangians, but highlight a different bigons in purple. In total there are nine intersection points and two immersed bigons whose vertices are sent to distinct intersections, giving $\rank(HF(\RR_\pi^\natural(\widehat{Q_{-\frac{1}{2}}}), \RR_t(Q_\frac{1}{3}+Q_\frac{1}{5}))) = 5$.}
    \label{fig:2bigons}
\end{figure}

\begin{theorem}
\label{thm:main}
If Conjecture \ref{con:boundingcc} holds, then the assignment of bounding cochains to tangles cannot be trivial on every tangle without an earring. 
\end{theorem}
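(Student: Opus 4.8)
The plan is to exhibit a knot $L$ together with a Conway-sphere decomposition $(S^3,L) = (D^3,T_1)\cup_{(S^2,4)}(D^3,T_2)$ in which $T_2$ is a rational tangle --- so that Lemma \ref{lem:trivialbc} forces its earring bounding cochain to vanish --- and for which the Lagrangian Floer homology computed with \emph{both} bounding cochains set to zero has rank strictly smaller than $\rank I^\natural(L)$. If the assignment of Conjecture \ref{con:boundingcc} were trivial on every earring-free tangle, then $b_{T_1}=0$ as well, so the third bullet of the conjecture would identify $I^\natural(L)$ with this strictly-too-small group --- a contradiction. Hence $b_{T_1}\ne 0$, and $T_1$ carries no earring, which is exactly the assertion.

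Concretely I would take $L = P(-2,3,5) = \operatorname{num}(Q_{-\frac12}+Q_{1/3}+Q_{1/5})$ with the Conway sphere of Figure \ref{fig:ExDecomp}, which splits $L$ into $T_2 = \widehat{Q_{-\frac12}}$ and $T_1 = Q_{1/3}+Q_{1/5}$. First I would produce the two immersed pillowcase Lagrangians. Since $3$ and $5$ are coprime, $(Q_{1/3},\emptyset)$ and $(Q_{1/5},\emptyset)$ form a good pair with no corner circles, so Theorems \ref{thm:notnice} and \ref{thm:cross} describe $\RR_t(Q_{1/3}+Q_{1/5})$, and Proposition \ref{lem:arbPL} together with the proposition allowing the auxiliary components to be discarded in Floer computations gives an explicit piecewise-linear image of its main components in $P^*$ as $t\to 0$. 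On the other side, $\widehat{Q_{-\frac12}}$ is again a rational tangle --- Lemma \ref{lem:flip} only reflects its linear image --- so $\RR_\pi(\widehat{Q_{-\frac12}})$ is an embedded corner-to-corner arc (Proposition \ref{prop:rational}), and appending the earring replaces it by a figure-eight looping around two corners with piecewise-linear image (Section \ref{sec:Earring}). Then I would count, by hand or with \texttt{pcase}, the transverse intersections and immersed bigons of these two curves in $P^*$ after a generic shearing perturbation: the count yields nine generators and exactly two bigons with pairwise distinct vertices (Figure \ref{fig:2bigons}), so
\[
\rank HF\big(\RR^\natural_\pi(\widehat{Q_{-\tfrac12}}),\,\RR_t(Q_{1/3}+Q_{1/5})\big) = 9 - 2\cdot 2 = 5 .
\]

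Next I would bring in the value of $I^\natural$. Because $P(-2,3,5)$ is the torus knot $T(3,5)$, its Alexander polynomial is $t^{4}-t^{3}+t-1+t^{-1}-t^{-3}+t^{-4}$, so $l(P(-2,3,5)) = 7$ and hence $\rank I^\natural(P(-2,3,5)) \ge 7 > 5$. Since $T_2 = \widehat{Q_{-\frac12}}$ is a rational tangle whose pillowcase image has no self-intersection, Lemma \ref{lem:trivialbc} gives $b_{T_2}=0$. If the assignment of Conjecture \ref{con:boundingcc} were trivial on all earring-free tangles we would also have $b_{T_1}=0$, and the third bullet of the conjecture would yield
\[
I^\natural(P(-2,3,5)) \cong HF\big((\RR_t(Q_{1/3}+Q_{1/5}),0),\,(\RR^\natural_\pi(\widehat{Q_{-\tfrac12}}),0)\big),
\]
a group of rank $5$, contradicting $\rank I^\natural(P(-2,3,5))\ge 7$. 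Therefore $b_{Q_{1/3}+Q_{1/5}}\ne 0$, proving the statement.

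The hard part will be the middle step: correctly drawing both immersed curves (carefully tracking the $A^\pm$/$H^\pm$ bookkeeping through Theorem \ref{thm:cross} and the effect of the earring), checking that after a generic perturbation there are exactly nine transverse intersections, and verifying that precisely two immersed bigons contribute to the differential with no further cancellation --- i.e.\ that the concrete combinatorics (or the \texttt{pcase} output) really gives rank $5$. A secondary point needing care is that Lemma \ref{lem:trivialbc} genuinely applies to $\widehat{Q_{-\frac12}}$ rather than only to $Q_0$; this follows because every rational tangle has embedded corner-to-corner pillowcase image, so all rational tangles with an earring present the same immersed figure-eight and the unknot/unlink argument of Lemma \ref{lem:trivialbc} (or the $\mathrm{PSL}(2,\Z)$-action on the pillowcase) carries over.
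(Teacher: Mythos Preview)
Your proposal is correct and follows essentially the same route as the paper: the same decomposition of $P(-2,3,5)$, the same Floer computation (nine generators, two bigons, rank $5$), and the same contradiction with $\rank I^\natural \ge 7$. The only cosmetic differences are that you swap the labels $T_1\leftrightarrow T_2$ to match the conjecture's convention, and you invoke only the Alexander lower bound $l(K)=7$ (noting $P(-2,3,5)=T(3,5)$) rather than the paper's pinning $l(K)=u(K)=7$, which is already enough for the contradiction.
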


\begin{proof}
Let $K = P(-2,3,5)$. Consider the decomposition of $K$  into $T_1=\widehat{Q_{-\frac{1}{2}}}$ and $T_2 = Q_\frac{1}{3}+Q_\frac{1}{5}$ (see Figure \ref{fig:ExDecomp}). Assume $b_1$ and $b_2$ are the bounding cochains of $T_1$ and $T_2$ satisfying Conjecture \ref{con:boundingcc}. Since we are going to add the earring to $T_1$, Lemma \ref{lem:trivialbc} implies that $b_1 = 0$. Thus, $HF((\RR^\natural(T_1), 0), (\RR_\Pert(T_2), b_2)) = I^\natural(K)$. Calculating the reduced Khovanov homology and Alexander polynomials for $K$ gives the bounds $l(K) = u(K) = 7$ and thus $\rank(I^\natural(K)) = 7$. So to prove the theorem we need to show that $b_2\ne 0$, which can be done by showing $HF((\RR^\natural(T_1), 0), (\RR_\t(T_2), 0)) \ne 7$. To do this, we first compute the images of $\RR^\natural(T_1)$ and $\RR_\t(T_2)$ in the pillowcase using Theorems \ref{thm:nice} and \ref{thm:cross}, which is shown in Figure \ref{fig:ExDecomp}. These Lagrangians have nine intersection points, so $\rank(CF((\RR^\natural(T_1), 0), (\RR_\Pert(T_2), 0))) = 9$. Next we must identify all immersed bigons which give the differentials of the chain complex. There are two such bigons which are shown in Figure \ref{fig:2bigons}. Note that the vertices of these bigons are distinct, so taking the homology of our chain complex gives $\rank(HF_1(K)) = HF((\RR^\natural(T_1), 0), (\RR_\Pert(T_2), 0)) = 5$. Thus, $b_2\ne 0$.
\end{proof}

Computations of $HF$ including the one in the preceding proof give specific examples of tangles for which the associated bounding cochain must be nontrivial. Finding which choices of bounding cochain can correct the rank of $HF$ to match that of $I^\natural$ should give insight into Conjecture \ref{con:boundingcc}.

\renewcommand{\bibname}{References}

\bibliographystyle{alpha}
\bibliography{bib}

\end{document}